\theoremstyle{plain}%
\newtheorem{theorem}{Theorem}[section]
\newtheorem{corollary}[theorem]{Corollary}
\newtheorem{lemma}[theorem]{Lemma}
\theoremstyle{definition}
\newtheorem{example}[theorem]{Example}
\theoremstyle{remark}
\newtheorem{remark}[theorem]{Remark}
\numberwithin{equation}{section}
\newcommand{\PP}{\mathbb{P}}
\newcommand{\E}{\mathbb{E}}
\newcommand{\Z}{\mathbb{Z}}
\newcommand{\R}{\mathbb{R}}
\newcommand{\defeq}{\vcentcolon=}
\newcommand{\aaa}{\alpha}
\newcommand{\hh}{\mathfrak{h}}
\newcommand{\vertiii}[1]{{\left\vert\kern-0.25ex\left\vert\kern-0.25ex\left\vert #1 
    \right\vert\kern-0.25ex\right\vert\kern-0.25ex\right\vert}}
\newcounter{bln}
\title[Bilinear Differential--Difference Equations in Integrable KPZ Models]{Bilinear Differential--Difference Equations and One-Point Distributions of Some KPZ-Class Models}
\author{C. Alexander Rodriguez} 
\address{Department of Mathematics, University of Toronto, 40 St.\ George Street, Toronto, Ontario, Canada M5S2E4}
\email{alexander.rodriguez@mail.utoronto.ca} 
\date{September 19, 2025}
\begin{document}
\begin{abstract}
     We introduce a collection of nonlinear integrable partial differential--difference equations that are satisfied by the one-point distribution functions of some classical integrable KPZ models. Moreover, these equations can be regarded as reparametrizations or as scaling limits of the Hirota bilinear difference equation (HBDE), a canonical discretization for many important integrable systems such as the Korteweg--de Vries (KdV) equation, the Kadomtsev--Petviashvili (KP) equation, and the two-dimensional Toda lattice (2DTL).  Our contributions are threefold: (i) general Fredholm determinant solutions; (ii) verification that known formulas for classical integrable KPZ models fit within our framework; and (iii) zero-curvature/Lax pair formulations. As an application, we derive formal scaling limits of the equations, including the KP limit under 1:2:3 KPZ scaling.  
\end{abstract}
\maketitle
\tableofcontents
\section{Introduction}\label{sec: 1}
In a seminal 1986 paper \cite{KPZ}, Kardar, Parisi, and Zhang proposed a paradigmatic stochastic equation for a class of interface growth models predicted to exhibit universal asymptotic fluctuations. Thus began the study of the $1+1$ dimensional KPZ universality class, a broad collection of mathematical and physical models linked by their shared universal scaling behaviour. While the KPZ equation cannot serve as a universal fixed point for the class (due to its lack of scaling invariance), the field has grown in recent decades through deep connections to interacting particle systems, random planar geometry, and both classical and quantum integrable systems (see, e.g.\ \cite{corwin2011kardarparisizhangequationuniversalityclass, Quas11, hairer2012solvingkpzequation, borodin2016lecturesintegrableprobabilitystochastic, MQR17, Dauvergne_2022, aggarwal2024scalinglimitcoloredasep}).

 A recent advance in \cite{Quastel_2022} established that the KPZ fixed point -- the conjectural universal scale-invariant Markov process first constructed in \cite{MQR17} -- has distribution functions satisfying the Kadomtsev--Petviashvili (KP) equation. Shortly thereafter, it was shown that the classical Polynuclear Growth (PNG) model has distribution functions satisfying the two-dimensional Toda lattice (2DTL) equation \cite{matetski2024polynucleargrowthtodalattice}. 
These results suggest a broader question: do other integrable KPZ models admit closed equations tied to classical integrability theory? 

 In this article, we answer in the affirmative. Indeed, by studying several classical KPZ models (see Table \ref{tab:eqs_models}), we establish a novel collection of closed nonlinear equations satisfied by one-point distribution functions. Moreover, these equations admit a bilinear Hirota form and can be regarded as reparametrizations or as scaling limits of the Hirota bilinear difference equation~(HBDE), namely
 \begin{equation}\label{HBDE}
     \left[z_1e^{D_{1}} + z_2e^{D_2} + z_3e^{D_3} \right] f\cdot f = 0, 
 \end{equation}
where $z_i$ are arbitrary constants and $D_i$ are linear combinations of binary operators $D_{x_i}$ with $(e^{\gamma D_{x_i}}f\cdot g) (x_i) \defeq f(x_i+\gamma)g(x_i-\gamma)$ and other variables kept fixed.  The Taylor expansion $$(e^{\epsilon D_x}f\cdot g) (x) = \sum_{k=0}^{\infty} (D_x^k f\cdot g)(x) \frac{\epsilon^k}{k!} $$ defines the $k$-th Hirota derivative via the binary operator $(f, g) \mapsto D_x^k f\cdot g$. The HBDE, first introduced in 1981 \cite{doi:10.1143/JPSJ.50.3785}, has a deceptively simple form that conceals its rich and far-reaching structure. Indeed, the HBDE not only serves as a discretization for myriad classical integrable systems (e.g.\ KP, 2DTL) but also emerges in quantum integrable systems as the model-independent functional relations for eigenvalues of quantum transfer matrices \cite{Krichever_1997, Zabrodin_1997, zabrodin2012betheansatzhirotaequation}.
\subsection{Main Results}
\begin{table}[tbp]
\centering
\begingroup
  \setlength{\tabcolsep}{5pt}      
  \renewcommand{\arraystretch}{1.2}

  \begin{tabularx}{\textwidth}{@{}%
    >{\centering\arraybackslash}p{0.8cm}%
    @{\hspace{7pt}}%
    >{\centering\arraybackslash}p{0.54\textwidth}%
    >{\raggedright\arraybackslash}X%
    @{}}
    \toprule
    \textbf{\#} & \textbf{Bilinear Equation} & \textbf{Model(s)} \\
    \midrule

    (1) &
    {\hypersetup{hidelinks}\hyperref[RBM Kernel Thm]{$\displaystyle \bigl[D_t - \tfrac{1}{2}D_a^2\bigr]\,F_{t,a,n}\cdot F_{t,a,n-1}=0$}} &
    {\hypersetup{hidelinks}\hyperref[RBM Particle Dist Thm]{Reflected Brownian Motions (RBM)}}; {\hypersetup{hidelinks}\hyperref[RBM Particle b Dist Thm]{RBM with Moving Wall}} \\
    
    (2) &
    {\hypersetup{hidelinks}\hyperref[TASEP K THM]{$\displaystyle \left[D_t - (e^{-D_a}-1)\right]\,F_{t,a,n}\cdot F_{t,a,n-1}=0$}} &
    {\hypersetup{hidelinks}\hyperref[TASEP part thm]{The Totally Asymmetric Simple Exclusion Process (TASEP)}};                {\hypersetup{hidelinks}\hyperref[TASEP b part thm]{\, TASEP with Moving Wall}} \\
    
    (3) &
    {\hypersetup{hidelinks}\hyperref[PUSHTASEP K thm]{$\displaystyle \left[D_t - (e^{D_a}-1)\right]\,F_{t,a,n}\cdot F_{t,a+1,n-1}=0$}} &
    {\hypersetup{hidelinks}\hyperref[Push TASEP part thm]{Push-TASEP}} \\
    
    (4) &
    {\hypersetup{hidelinks}\hyperref[PTASEP K THM]{$\displaystyle \left[e^{D_t} - p e^{-D_a} - (1-p)\right]\,F_{t,a,n}\cdot F_{t,a,n-1}=0$}} &
    {\hypersetup{hidelinks}\hyperref[Parallel TASEP part thm]{Discrete-time Parallel TASEP}}  \\
    
    (5) &
    {\hypersetup{hidelinks}\hyperref[RBTASEP K THM]{$\displaystyle \left[e^{D_t} - p e^{-D_a} - (1-p)\right]\,F_{t,a,n}\cdot F_{t+1,a,n-1}=0$}} &
    {\hypersetup{hidelinks}\hyperref[RB Part Thm]{Bernoulli Jumps with Blocking}} \\
    
    (6) &
    {\hypersetup{hidelinks}\hyperref[LBTASEP K THM]{$\displaystyle \left[e^{D_t} - q e^{D_a} - (1-q)\right]\,F_{t,a,n}\cdot F_{t+1,a+1,n-1}=0$}} &
    {\hypersetup{hidelinks}\hyperref[LB part thm]{Bernoulli Jumps with Pushing}}  \\
    
    \bottomrule
  \end{tabularx}
\endgroup
\caption{Bilinear differential--difference equations and corresponding KPZ models.}
\label{tab:eqs_models}
\end{table}
We present a selection of our main results. First, we provide a general Fredholm determinant solution theory for the bilinear equation (\hyperref[tab:eqs_models]{1}). For ease of reference, we refer to each equation by the KPZ model(s) it corresponds to. Throughout, let $K_{t, a, n}$ be a family of trace-class integral operators acting on a separable Hilbert space $L^2(X, \mu)$ and suppose further that $I- K_{t, a, n}$ is invertible for all $(t, a, n) \in V \subseteq \R_+ \times \R \times \Z$. Define
    \begin{equation}\label{Intro FTAN def}
        F_{t, a, n} = \det(I-K_{t, a, n})_{L^2(X, \mu)}, \qquad (t, a, n) \in V.
    \end{equation} 
\begin{theorem}[\textbf{RBM Eq.\ General Solutions}]\label{intro RBM gen sol}
    Suppose $K_{t, a, n}$ is in the regularity class $C_r^{1, 2}(U, \mathcal{I}_1)$, with $U = \R_+ \times \R \times \Z$, such that the following three conditions hold: 
    \begin{enumerate}[(i)]
        \item  
       $\begin{aligned}[t]
            \partial_a K_{t, a, n} &= \psi_{t, a, n} \otimes \phi_{t, a, n}.  
        \end{aligned}$
        \item $\begin{aligned}
            \nabla_n^+ \psi_{t, a, n} = \partial_a \psi_{t, a, n}, \quad \nabla_n^- \phi_{t ,a, n} = \partial_a \phi_{t, a, n}.
        \end{aligned}$
        \item $\begin{aligned}
            \partial_t \psi_{t, a, n} &= \frac{1}{2}(\partial_a^2 + 2\partial_a + I)\psi_{t, a, n}, \medspace  \partial_t \phi_{t, a, n} = -\frac{1}{2}(\partial_a^2 - 2\partial_a + I)\phi_{t, a, n}.
        \end{aligned}$
    \end{enumerate}
Then for $V \subseteq U$,  $F_{t, a, n}$ satisfies
    \begin{align}
        \Bigl[D_t - \frac{1}{2}D_a^2 \Bigr]F_{t, a, n}\cdot F_{t, a, n-1} = 0. 
    \end{align}
    i.e.\
    \begin{align}
        F_{n-1} \partial_t F_{n} - F_{ n} \partial_t F_{ n-1}  - \frac{1}{2}F_{ n-1}\partial_a^2 F_{ n} + \partial_a F_{ n} \partial_a F_{n-1} - \frac{1}{2}F_{ n} \partial_a^2 F_{n-1} = 0. 
    \end{align}
\end{theorem}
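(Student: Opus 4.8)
The plan is to divide the bilinear equation by $F_{t,a,n}F_{t,a,n-1}$ and recast it entirely in terms of logarithmic derivatives. Writing $u_n \defeq \log F_{t,a,n}$ and $\sigma_n \defeq \partial_a u_n$, the expanded identity in the statement is equivalent to the scalar relation $\partial_t(u_n - u_{n-1}) = \tfrac12\partial_a^2(u_n + u_{n-1}) + \tfrac12(\sigma_n - \sigma_{n-1})^2$, so the goal becomes producing exactly this perfect square on the right. I would compute each ingredient through Jacobi's formula $\partial u_n = -\operatorname{tr}(R_n\,\partial K_n)$, with resolvent $R_n \defeq (I-K_{t,a,n})^{-1}$, together with the resolvent identity $\partial R_n = R_n(\partial K_n)R_n$. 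The rank-one hypothesis (i) is what makes this tractable: setting $Q_n \defeq R_n\psi_{t,a,n}$ and $P_n \defeq R_n^*\phi_{t,a,n}$, one gets $\sigma_n = -q_n$ with $q_n \defeq \langle\phi_{t,a,n},Q_n\rangle$, and $\partial_a R_n = Q_n\otimes P_n$, reducing everything to scalar pairings of the dressed vectors.

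Next I would handle the $t$-derivative, the only place the evolution (iii) enters. Since (i) gives $\partial_a K_n = \psi_{t,a,n}\otimes\phi_{t,a,n}$ but no direct formula for $\partial_t K_n$, I would use commutativity $\partial_a\partial_t K_n = \partial_t\partial_a K_n = (\partial_t\psi_{t,a,n})\otimes\phi_{t,a,n} + \psi_{t,a,n}\otimes(\partial_t\phi_{t,a,n})$ and substitute the heat-type relations in (iii); after the $\tfrac12\psi\otimes\phi$ contributions cancel, the right-hand side collapses to $\partial_a\bigl[\tfrac12((\partial_a\psi_n)\otimes\phi_n - \psi_n\otimes\partial_a\phi_n) + \partial_a K_n\bigr]$. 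Integrating in $a$ and discarding the $a$-independent operator (using the decay built into the class $C_r^{1,2}$) yields $\partial_t K_n = \tfrac12((\partial_a\psi_n)\otimes\phi_n - \psi_n\otimes\partial_a\phi_n) + \partial_a K_n$. Feeding this into $\partial_t u_n = -\operatorname{tr}(R_n\partial_t K_n)$ and combining with $\partial_a^2 u_n = -\partial_a q_n$, the combination $w_n \defeq \partial_t u_n - \tfrac12\partial_a^2 u_n$ simplifies to $w_n = B_n - q_n + \tfrac12 q_n^2$, where $B_n \defeq \langle\partial_a\phi_n, R_n\psi_n\rangle$; the target then reduces to the purely algebraic statement $B_n + A_{n-1} = q_n - q_{n-1} - q_n q_{n-1}$, with $A_{n-1} \defeq \langle\phi_{n-1}, R_{n-1}\partial_a\psi_{n-1}\rangle$.

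It remains to exploit the difference structure in (ii). The relations $\partial_a\phi_n = \phi_n - \phi_{n-1}$ and $\partial_a\psi_{n-1} = \psi_n - \psi_{n-1}$ let me rewrite $B_n = q_n - \langle\phi_{n-1}, R_n\psi_n\rangle$ and $A_{n-1} = \langle\phi_{n-1}, R_{n-1}\psi_n\rangle - q_{n-1}$, so the algebraic identity is equivalent to the mixed-level relation $\langle\phi_{n-1},(R_{n-1}-R_n)\psi_n\rangle = -q_n q_{n-1}$. To evaluate the left side I first pin down how $K_n$ depends on $n$: taking the $a$-derivative of $K_n - K_{n-1}$ and using (ii) shows $\partial_a(K_n - K_{n-1}) = \partial_a(\psi_{n-1}\otimes\phi_n)$, whence $K_n - K_{n-1} = \psi_{n-1}\otimes\phi_n$ after again killing the $a$-independent term by the decay hypothesis.

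The crux, and the step I expect to be the main obstacle, is controlling these two integration-in-$a$ constants rigorously within $C_r^{1,2}$ — the entire derivation hinges on the $a$-independent parts of both $\partial_t K_n$ and $K_n - K_{n-1}$ vanishing, so the substance is verifying that the regularity class forces the correct decay. Granting this, the resolvent identity finishes cleanly: $R_{n-1} - R_n = R_{n-1}(K_{n-1}-K_n)R_n = -Q_{n-1}\otimes P_n$, so $(R_{n-1}-R_n)\psi_n = -\langle P_n,\psi_n\rangle Q_{n-1} = -q_n Q_{n-1}$ and hence $\langle\phi_{n-1},(R_{n-1}-R_n)\psi_n\rangle = -q_n q_{n-1}$, exactly as required. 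This factorization of the mixed term into the product $q_n q_{n-1}$ is precisely what converts the right-hand side into the perfect square $\tfrac12(\sigma_n-\sigma_{n-1})^2$, completing the verification of the bilinear equation.
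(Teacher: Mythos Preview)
Your proof is correct and follows essentially the same route as the paper's: both compute $\partial_t K_n$ and $K_n-K_{n-1}$ as rank-one (or sums of rank-one) operators from the flow hypotheses plus the decay in $C_r^{1,2}$, then reduce the bilinear identity to a single rank-one resolvent computation yielding the product $q_nq_{n-1}$. The only difference is organizational: the paper applies the $n$--flows immediately to rewrite $\partial_a\psi_n=\psi_{n+1}-\psi_n$, $\partial_a\phi_n=\phi_n-\phi_{n-1}$, which makes the quadratic resolvent terms cancel on sight and gives the clean single-term formulas $(\partial_t-\tfrac12\partial_a^2)F_n=-F_n\langle R_n\psi_n,\phi_{n-1}\rangle$ and $-(\partial_t+\tfrac12\partial_a^2)F_{n-1}=F_{n-1}\langle R_{n-1}\psi_n,\phi_{n-1}\rangle$, so the bilinear expression collapses in one line via Lemma~\ref{Fredholm fg}; you instead keep $\partial_a\psi,\partial_a\phi$ and defer the $n$--flows, which forces you to carry the auxiliary scalars $A_n,B_n$ through an intermediate ``perfect-square'' reformulation before arriving at the same endpoint. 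Your identification of the two integration constants (for $\partial_t K_n$ and $K_n-K_{n-1}$) as the place where the $C_r^{1,2}$ decay is genuinely used is exactly right and matches what the paper does.
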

We refer the reader to Section \ref{sec: RBM equation} for a definition of the regularity class $C^{1, 2}_r(U, \mathcal{I}_1)$. Next, let $\mathbf{y} = (y_0, y_1, y_2, \dots)$ with $y_0 \geq y_1 \geq y_2 \geq \dots$, and let $\mathbf{B}(t) = (b(t), B_1(t), B_2(t) \dots)$, where $B_k(t)$ are standard i.i.d.\ Brownian motions and $b(t) \in C(\R_+, \R)$ with $b(0) = 0$. We define the RBM process, with data $(\mathbf{y}, b(\cdot))$, recursively via the Skorokhod reflection map: 
\begin{equation}
    Y_0(t) = y_0 + b(t), \quad Y_n(t) = y_n + B_n(t) - \sup_{0 \leq s \leq t}[y_n + B_n(s) - Y_{n-1}(s)]^+
\end{equation}
 where $[\,\cdot\,]^+ = \max( \, \cdot \,, 0)$, $n \geq 1$ (see Fig.\ \ref{fig:rbm_panels1}--\ref{fig:rbm_panels}). As a consequence of Thm.\ \ref{intro RBM gen sol} and known Fredholm determinant formulas for RBM models, we have the following corollary (see Sec.\ \ref{sec: RBM/BLPP Models} for references). 

    \begin{corollary}[\textbf{RBM Solutions}]
     Let $Y_n^{\text{RBM}}(t)$ denote the $n$-th RBM particle, and let 
     \begin{equation}
         F_{t, a, n} =  \PP\bigl(Y_{n}^{\text{RBM}}(t) > a \, | \, \mathbf{Y}(0)= \mathbf{y}, \, Y_0(\cdot) = b(\cdot ) \bigr).
     \end{equation}
    \begin{enumerate}
        \item(\textbf{General One-Sided Initial Condition}): Take initial data $\mathbf{y}$ as above, and set $y_0 = \infty$ so that $b(t) \equiv \infty$. Notice then $Y_1^{\text{RBM}}(\cdot)$ is just a standard Brownian motion. Then, with $V = \R_+ \times \R \times \Z_{\geq 1}$, $F_{t, a, n}$ satisfies $\left[D_t  - \frac{1}{2}D_a^2\right] F_{t, a, n} \cdot F_{t, a, n-1} = 0$, with $ F_{0, a, n} = 1_{y_n > a}$ and $\bigl(\partial_t-\frac{1}{2}\partial_a^2\bigr) F_{t,a, 1} = 0.$
        \item(\textbf{RBM with Moving Wall}): Take initial data $\mathbf{y} \equiv \mathbf{0}$ and fix a moving wall $b(\cdot)$. Then, with  $V = \{(t, a, n) : t \in \R_+, a < b(t), n \in \Z_{\geq 1}\}$, $F_{t, a, n}$ satisfies $\left[D_t  - \frac{1}{2}D_a^2\right] F_{t, a, n} \cdot F_{t, a, n-1} = 0$, with $ F_{0, a, n} = 1_{0 > a}$, $\bigl(\partial_t - \frac{1}{2}\partial_a^2\bigr)F_{t, a, 1} = 0$  for  $a < b(t)$, and   
             $F_{t, a, 1} = 0$ for $a \geq b(t).$
    \end{enumerate}
\end{corollary}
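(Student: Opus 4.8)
The plan is to reduce both parts to Theorem~\ref{intro RBM gen sol}. The first step is to realize the probability $F_{t,a,n}$ as a Fredholm determinant $\det(I-K_{t,a,n})_{L^2(X,\mu)}$ using the known RBM one-point formulas collected in Section~\ref{sec: RBM/BLPP Models}. With an explicit kernel in hand, the remaining work is to check that $K_{t,a,n}$ lies in the regularity class $C_r^{1,2}(U,\mathcal{I}_1)$ and satisfies hypotheses (i)--(iii); membership in the regularity class should be a routine consequence of the smoothness of the Gaussian transition densities together with the trace-class decay of the kernel. Once (i)--(iii) are verified, the bilinear identity $[D_t - \tfrac12 D_a^2]\,F_{t,a,n}\cdot F_{t,a,n-1}=0$ follows immediately, and the stated initial and boundary conditions are read off directly from the probabilistic description of the process.

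For the verification I would extract explicit functions $\psi_{t,a,n}$ and $\phi_{t,a,n}$ from the kernel. In the RBM formulas the spatial variable $a$ enters only through a shift of the underlying Gaussian densities, so $\partial_a K_{t,a,n}$ collapses to a single rank-one term, giving (i). The level index $n$ enters through convolution powers of the one-step kernel, which I expect to produce the difference--differential relations $\nabla_n^+\psi_{t,a,n}=\partial_a\psi_{t,a,n}$ and $\nabla_n^-\phi_{t,a,n}=\partial_a\phi_{t,a,n}$ of (ii). For (iii), the key observation is that $\tfrac12(\partial_a^2+2\partial_a+I)=\tfrac12(\partial_a+I)^2$ and $-\tfrac12(\partial_a^2-2\partial_a+I)=-\tfrac12(\partial_a-I)^2$, so writing $\psi_{t,a,n}=e^{-a}g_{t,a,n}$ and $\phi_{t,a,n}=e^{a}h_{t,a,n}$ reduces (iii) to the statement that $g_{t,a,n}$ and $h_{t,a,n}$ solve the free heat equation $\partial_t g=\tfrac12\partial_a^2 g$ and its time-reversal, respectively. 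This is exactly the structure one expects from the heat-kernel building blocks dressed by the exponential drifts produced by the Skorokhod reflection.

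For the data in part (1), $F_{0,a,n}=1_{y_n>a}$ is immediate from $Y_n^{\text{RBM}}(0)=y_n$, and the base case follows because, with $y_0=\infty$, the particle $Y_1^{\text{RBM}}(\cdot)$ is an unreflected Brownian motion, so its survival probability $F_{t,a,1}$ solves $(\partial_t-\tfrac12\partial_a^2)F_{t,a,1}=0$. For part (2), the same heat equation holds for $F_{t,a,1}$ on the region $a<b(t)$, while the hard constraint $Y_1^{\text{RBM}}(t)\le Y_0(t)=b(t)$ forces $F_{t,a,1}=0$ once $a\ge b(t)$; this is precisely why the domain of validity contracts to $V=\{(t,a,n):a<b(t)\}$.

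I expect the main obstacle to be the moving-wall case. Reflection off the time-dependent barrier $b(\cdot)$ makes the $t$-dependence of the kernel substantially more intricate than in the one-sided setting, so confirming (iii) -- and in particular that the drift terms assemble into exactly $\tfrac12(\partial_a+I)^2$ and $-\tfrac12(\partial_a-I)^2$ -- is where the real effort lies. A secondary difficulty is reconciling the normalization conventions of the cited Fredholm determinant formulas with the $(\psi,\phi)$ decomposition required by Theorem~\ref{intro RBM gen sol}; matching the conjugating factors $e^{\mp a}$ and the measure $\mu$ is where sign and scaling errors are most likely to creep in.
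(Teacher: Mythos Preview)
Your proposal is correct and follows essentially the same route as the paper: realize $F_{t,a,n}$ via the known Fredholm determinant formulas for RBM (from \cite{MQR16} and \cite{rahman2025}), massage the kernel into the form $K_{t,a,n}=\int_{-\infty}^a \psi_{t,r,n}\otimes\phi_{t,r,n}\,dr$, and then verify the flow hypotheses (i)--(iii) of Theorem~\ref{intro RBM gen sol} via the Hermite raising/lowering identities (your $e^{\mp a}$ conjugation trick is exactly how the paper's $\psi_{t,a,n}(u)=e^{u-a}\varphi_n(t,u-a)$ is structured). The only refinement is that the $n$-dependence is carried not by convolution powers per se but by the Hermite index and a hitting-time expectation, and the paper uses a cyclicity step ($\det(I-AB)=\det(I-BA)$) to put the cited kernels into the required $\int^a \psi\otimes\phi$ form---both points would surface immediately once you open the explicit formulas.
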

\newlength{\panelH}
\setlength{\panelH}{0.20\textheight} 
\begin{figure}[t]
  \centering
  \begin{subfigure}[c]{0.36\textwidth}
    \centering
    \includegraphics[height=\panelH,width=\linewidth, keepaspectratio]{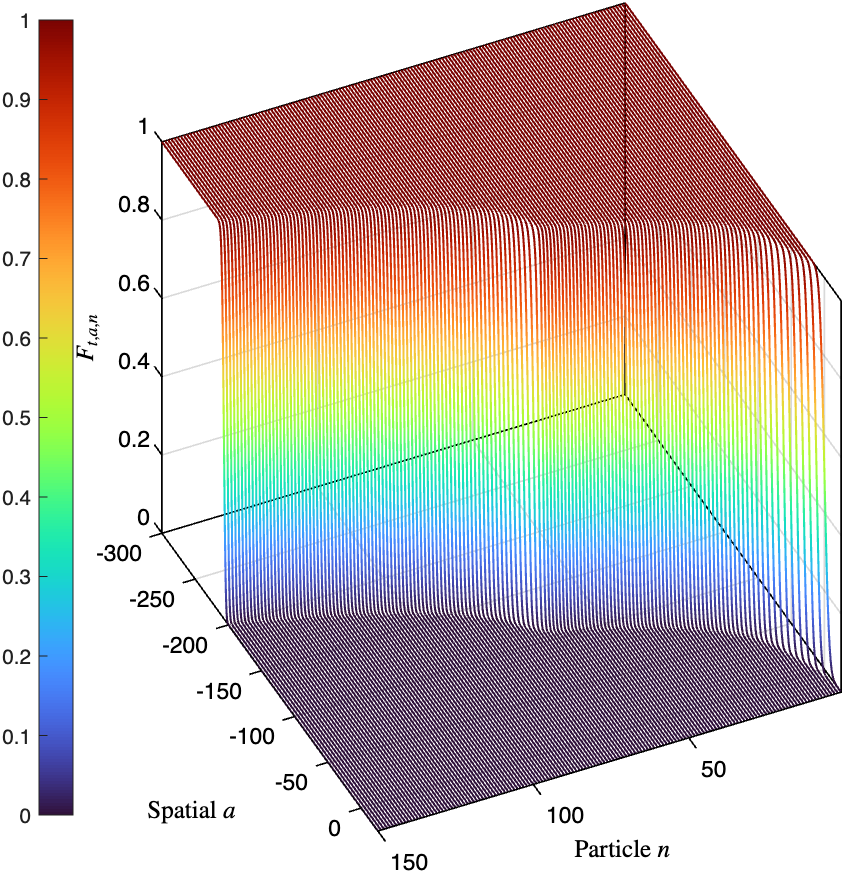}
  \end{subfigure}\hfill
  \begin{subfigure}[c]{0.36\textwidth}
    \centering
    \includegraphics[height=\panelH,width=\linewidth, keepaspectratio]{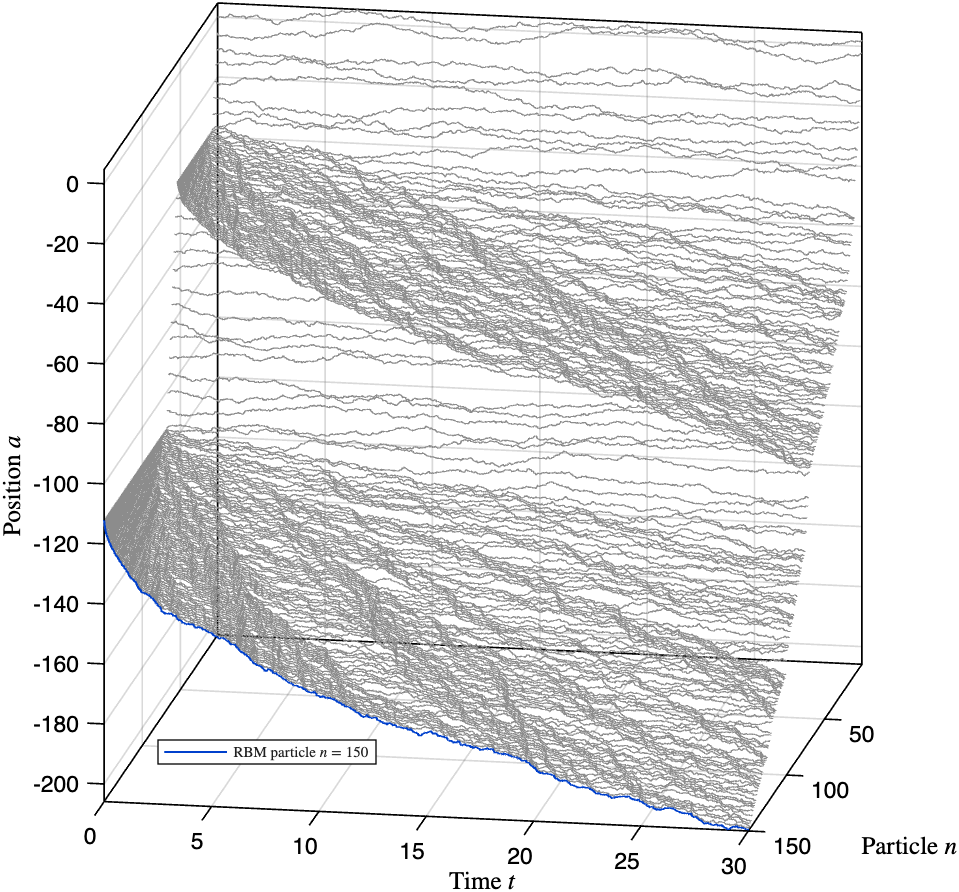}
  \end{subfigure}\hfill
  \begin{subfigure}[c]{0.26\textwidth}
    \centering
    \includegraphics[height=\panelH, width=\linewidth, keepaspectratio]{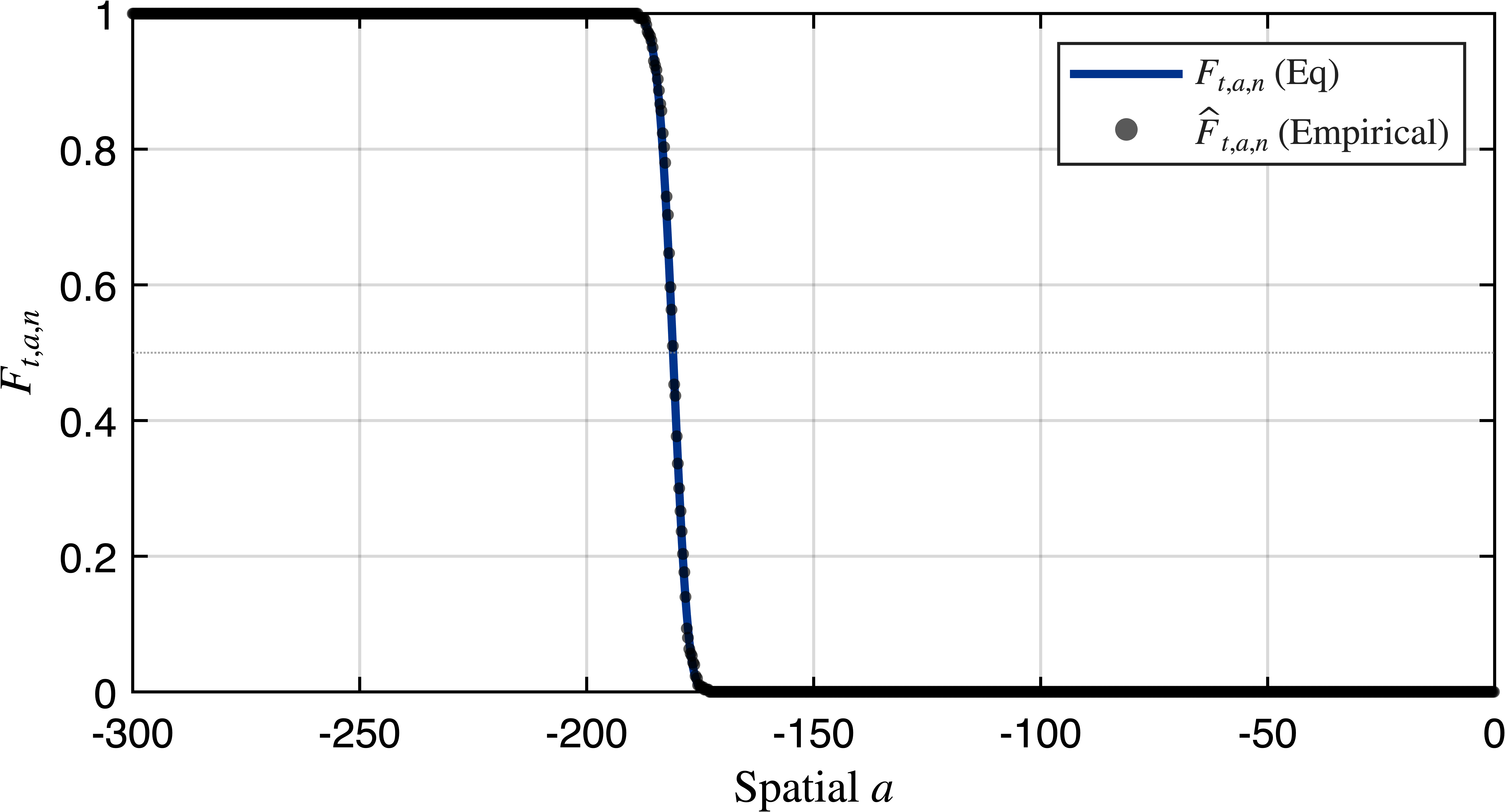}%
  \end{subfigure}
 \caption{Simulations of RBM with an alternating affine-packed initial condition. Left: PDE solution $F_{t,a,n}$ at $t=30$ (finite-difference scheme, initial data $F_{0, a, n} = 1_{y_n > a}$); Middle: RBM trajectories $Y_n(\cdot)$ (initial condition $Y_n(0) = y_n$); Right: $F_{t, a, N}$ vs.\ empirical $\hat{F}_{t, a, N}$ ($t=30, N=150$, and $300$ independent Monte Carlo runs).}
  \label{fig:rbm_panels1}
\end{figure}
Next, we connect equation (\hyperref[tab:eqs_models]{1}) to classical integrability theory by establishing a zero-curvature/Lax pair formulation.
We denote a backward shift operator by $a_ne^{-\partial_n}$ i.e.\ $(a_{\cdot}e^{-\partial_n}f)_n = a_nf_{n-1}$.
\begin{theorem}[\textbf{RBM Eq.\ Zero-Curvature Condition}]
Fix $M \in \Z$ and a collection of non-vanishing functions $\{F_{t, a, n}\}_{n\in \Z}$ with boundary condition $F_{t,a,m} \equiv 1$ for all $m \leq M$. Define 
\begin{align*}
    a_{n} &\defeq \frac{F_{n+1}F_{n-1}}{F_n^2}, \quad u_n \defeq \partial_a \log(F_{n}), \quad \nabla^s_n u_n \defeq \frac{u_{n+1}-u_{n-1}}{2}, \quad \mathcal{R} \defeq a_ne^{-\partial_n}.
\end{align*}
Consider the operators 
\begin{align}
    \mathcal{M} &\defeq \partial_t + (\nabla^s_n u_n)\mathcal{R} + \frac{1}{2}\mathcal{R}^2, \qquad 
    \bar{\mathcal{M}} \defeq \partial_a + \mathcal{R}, 
\end{align}
    acting on functions $f_n(t, a)$. Then 
    \begin{align}
       [\mathcal{M}, \bar{\mathcal{M}}] = 0 \Longleftrightarrow \left[D_t - \frac{1}{2}D_a^2 \right]F_{t, a, n}\cdot F_{t,a, n-1} = 0. 
    \end{align}
\end{theorem}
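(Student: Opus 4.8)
The plan is to treat $[\mathcal{M}, \bar{\mathcal{M}}]$ as a finite-order difference operator in the backward shift $e^{-\partial_n}$ and read off the zero-curvature condition coefficient-by-coefficient. Writing $b_n \defeq \nabla_n^s u_n = \tfrac12(u_{n+1}-u_{n-1})$ and using $\mathcal{R} = a_n e^{-\partial_n}$, $\mathcal{R}^2 = a_n a_{n-1} e^{-2\partial_n}$, I would expand the six brackets $[\partial_t, \partial_a]$, $[\partial_t, \mathcal{R}]$, $[b_n \mathcal{R}, \partial_a]$, $[\tfrac12 \mathcal{R}^2, \partial_a]$, $[b_n \mathcal{R}, \mathcal{R}]$, $[\tfrac12 \mathcal{R}^2, \mathcal{R}]$ by acting on a test sequence $f_n(t,a)$. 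The $\partial_t,\partial_a$ part vanishes and $[\mathcal{R}^2, \mathcal{R}] = 0$, so only shift-degrees one and two survive:
\begin{align*}
[\mathcal{M}, \bar{\mathcal{M}}] = \bigl(\partial_t a_n - \partial_a(b_n a_n)\bigr) e^{-\partial_n} + \Bigl(a_n a_{n-1}(b_n - b_{n-1}) - \tfrac12 \partial_a(a_n a_{n-1})\Bigr) e^{-2\partial_n}.
\end{align*}
Hence $[\mathcal{M}, \bar{\mathcal{M}}] = 0$ is equivalent to the two scalar equations
\begin{align*}
\text{(A)}\quad \partial_t a_n = \partial_a(b_n a_n), \qquad \text{(B)}\quad b_n - b_{n-1} = \tfrac12 \partial_a \log(a_n a_{n-1}),
\end{align*}
the second already divided by the non-vanishing factor $a_n a_{n-1}$.

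Next I would pass to potential variables $u_n = \partial_a \log F_n$ and $w_n \defeq \partial_t \log F_n$. From $a_n = F_{n+1}F_{n-1}/F_n^2$ one gets $\partial_a \log a_n = u_{n+1} + u_{n-1} - 2u_n$ and $\partial_t \log a_n = w_{n+1} + w_{n-1} - 2w_n$. Substituting $b_n = \tfrac12(u_{n+1}-u_{n-1})$ into (B), both sides collapse to $\tfrac12(u_{n+1} - u_n - u_{n-1} + u_{n-2})$, so \emph{equation (B) is an identity} valid for any non-vanishing family $\{F_n\}$, regardless of the bilinear equation. The whole condition therefore reduces to (A). Dividing (A) by $a_n$ and inserting the logarithmic expressions recasts it as
\begin{align*}
w_{n+1} - 2w_n + w_{n-1} = \tfrac12(u_{n+1}' - u_{n-1}') + \tfrac12(u_{n+1} - u_{n-1})(u_{n+1} - 2u_n + u_{n-1}),
\end{align*}
with $u_n' \defeq \partial_a u_n$.

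The crux is to recognize this as a discrete derivative of the bilinear equation. Dividing $[D_t - \tfrac12 D_a^2]F_n \cdot F_{n-1} = 0$ by $F_n F_{n-1}$ and using $\partial_a^2 F_n / F_n = u_n' + u_n^2$ yields the potential form
\begin{align*}
E_n:\quad w_n - w_{n-1} - \tfrac12(u_n' + u_{n-1}') - \tfrac12(u_n - u_{n-1})^2 = 0.
\end{align*}
Computing $E_{n+1} - E_n$ and factoring the difference of squares as $(u_{n+1} - 2u_n + u_{n-1})(u_{n+1} - u_{n-1})$ reproduces exactly the reduced form of (A). Thus (A) holds for all $n$ if and only if $E_{n+1} = E_n$ for all $n$, i.e.\ the bilinear quantity $E_n$ is independent of $n$.

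This establishes the equivalence up to a constant-in-$n$ ambiguity, and the boundary data removes it. The implication $(\Leftarrow)$ is immediate: if $E_n \equiv 0$ then $E_{n+1} - E_n = 0$, so (A) and (B) hold and the commutator vanishes. For $(\Rightarrow)$, the zero-curvature condition only yields that $E_n$ is constant in $n$; here I would invoke $F_{t,a,m} \equiv 1$ for $m \leq M$, which forces $u_m = w_m = 0$ and hence $E_m = 0$ for every $m \leq M$, and combine this anchor with $n$-independence to propagate $E_n = 0$ to all $n \in \Z$. I expect this last step---promoting ``constant in $n$'' to ``identically zero'' through the boundary condition---to be the only genuinely delicate point; the remainder is the routine but lengthy shift-operator bookkeeping of the first paragraph and the Hirota-to-potential translation.
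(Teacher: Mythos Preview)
Your proof is correct and follows essentially the same route as the paper's: both expand the commutator by shift-degree, observe that the degree-two piece vanishes identically (your equation (B) being a tautology is exactly the paper's cancellation $[\nabla_n^s u_n\,\mathcal{R},\mathcal{R}] = -\tfrac12[\mathcal{R}^2,\partial_a]$), reduce the degree-one piece to the statement that $E_n = (D_t - \tfrac12 D_a^2)F_n\cdot F_{n-1}/(F_nF_{n-1})$ is independent of $n$, and then anchor $E_n\equiv 0$ via the boundary condition. The only cosmetic difference is that the paper reaches the bilinear form $\mathcal{K}_n$ (your $E_n$) directly from each bracket, whereas you detour through the potential variables $u_n,w_n$ before re-identifying $E_n$.
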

\begin{corollary}[\textbf{RBM Eq.\ Lax Pair}]
    In the same setting as above, define 
    \begin{align}
        L \defeq \partial_a + \mathcal{R}, \quad P \defeq (\nabla^s_n u_n) \mathcal{R} + \frac{1}{2}\mathcal{R}^2.
    \end{align}
    Then $(L, P)$ form a Lax pair for the RBM equation, i.e. 
    \begin{align}
        \partial_t L + [P, L] = 0 \Longleftrightarrow \left[D_t - \frac{1}{2}D_a^2 \right]F_{t, a, n}\cdot F_{t,a, n-1} = 0. 
    \end{align}
\end{corollary}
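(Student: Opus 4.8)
The plan is to recognize that this corollary is essentially a repackaging of the preceding zero-curvature theorem, so the strategy is to reduce the Lax equation $\partial_t L + [P, L] = 0$ to the condition $[\mathcal{M}, \bar{\mathcal{M}}] = 0$ and then invoke that theorem. The crucial observation is that the two operators coincide up to an additive shift: one has $\bar{\mathcal{M}} = \partial_a + \mathcal{R} = L$ and $\mathcal{M} = \partial_t + (\nabla^s_n u_n)\mathcal{R} + \tfrac{1}{2}\mathcal{R}^2 = \partial_t + P$. It therefore suffices to establish the single operator identity
\begin{align}
    [\mathcal{M}, \bar{\mathcal{M}}] = [\partial_t + P, L] = \partial_t L + [P, L],
\end{align}
after which the equivalence with the bilinear equation follows immediately from the earlier result.

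First I would expand the commutator by bilinearity, writing $[\partial_t + P, L] = [\partial_t, L] + [P, L]$. The only nontrivial point is the claim $[\partial_t, L] = \partial_t L$, where the right-hand side denotes the operator obtained by differentiating the \emph{coefficient functions} of $L$ in $t$ (the standard Lax convention), not a genuine appearance of the differential operator $\partial_t$. To verify this I would act on a test function $f_n(t,a)$ using $L = \partial_a + a_{(\cdot)}e^{-\partial_n}$ and compute $[\partial_t, L]f = \partial_t(Lf) - L(\partial_t f)$. Since the part $\partial_a$ carries no $t$-dependent coefficient and commutes with $\partial_t$ under the assumed regularity, it contributes nothing; the shift part produces $(\partial_t a_n)\,f_{n-1}$ by the product rule, which is exactly the action of $\partial_t L = (\partial_t a_n)\,e^{-\partial_n}$. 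This confirms $[\partial_t, L] = \partial_t L$.

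Combining these steps gives $[\mathcal{M}, \bar{\mathcal{M}}] = \partial_t L + [P, L]$, so $\partial_t L + [P, L] = 0$ holds if and only if $[\mathcal{M}, \bar{\mathcal{M}}] = 0$. Applying the zero-curvature theorem, the latter is equivalent to $\bigl[D_t - \tfrac{1}{2}D_a^2\bigr]F_{t,a,n}\cdot F_{t,a,n-1} = 0$, which completes the proof.

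The main obstacle here is not computational but conceptual bookkeeping: one must be careful that $L$ contains no genuine $\partial_t$ term, so that $t$ enters only through the coefficient $a_n$, and that $\partial_t$ and $\partial_a$ genuinely commute under the stated regularity. These two facts are what make the operator-theoretic commutator $[\partial_t, L]$ faithfully reproduce the coefficient-differentiation convention $\partial_t L$, and they constitute the standard—but easy to mishandle—dictionary between the zero-curvature and Lax formulations.
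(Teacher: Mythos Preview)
Your proposal is correct and follows exactly the approach the paper intends: the corollary is stated without proof precisely because $\mathcal{M} = \partial_t + P$ and $\bar{\mathcal{M}} = L$, so $[\mathcal{M},\bar{\mathcal{M}}] = [\partial_t,L] + [P,L] = \partial_t L + [P,L]$, and the equivalence then reduces to Thm.~\ref{ZC for RBM}. Your careful verification that $[\partial_t,L]$ equals the coefficient-wise derivative $\partial_t L$ is the only detail to check, and you handle it correctly.
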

\begin{figure}[t]
  \centering
  \begin{subfigure}[t]{0.36\textwidth}
    \centering
    \includegraphics[height=\panelH, width=\linewidth, keepaspectratio]{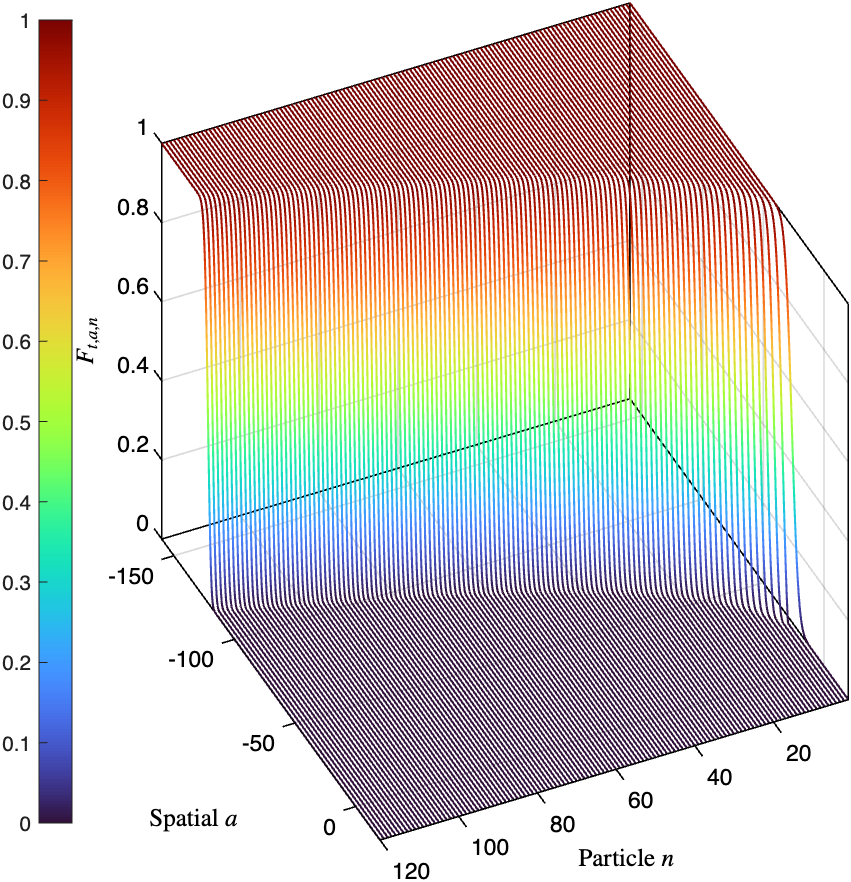} 
    \label{fig:rbm_cdf}
  \end{subfigure}\hfill
  \begin{subfigure}[t]{0.36\textwidth}
    \centering
    \includegraphics[height=\panelH, width=\linewidth, keepaspectratio]{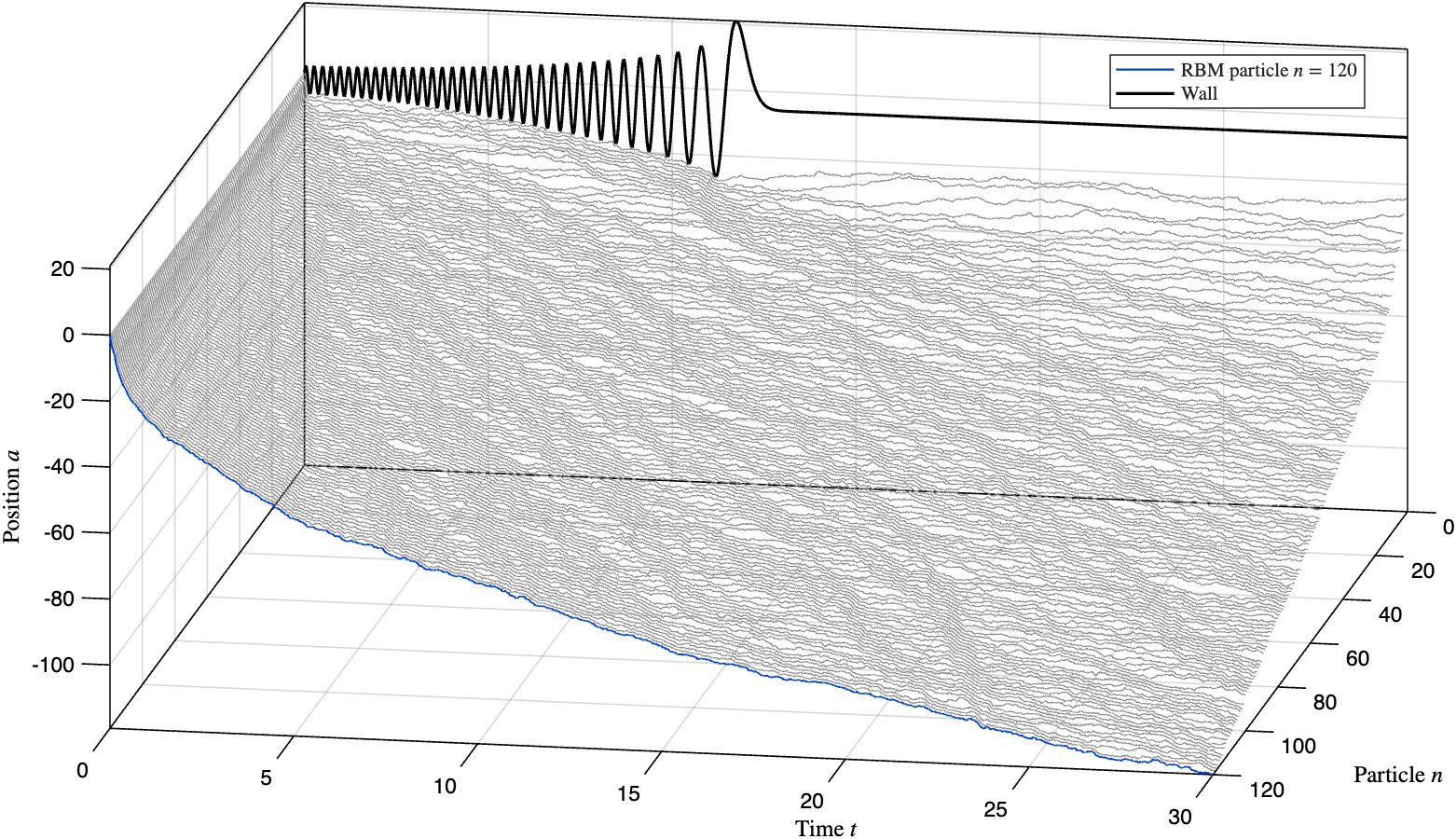}
    \label{fig:rbm_traj}
  \end{subfigure}\hfill
  \begin{subfigure}[b]{0.26\textwidth}
    \centering
    \includegraphics[height=\panelH,width=\linewidth, keepaspectratio]{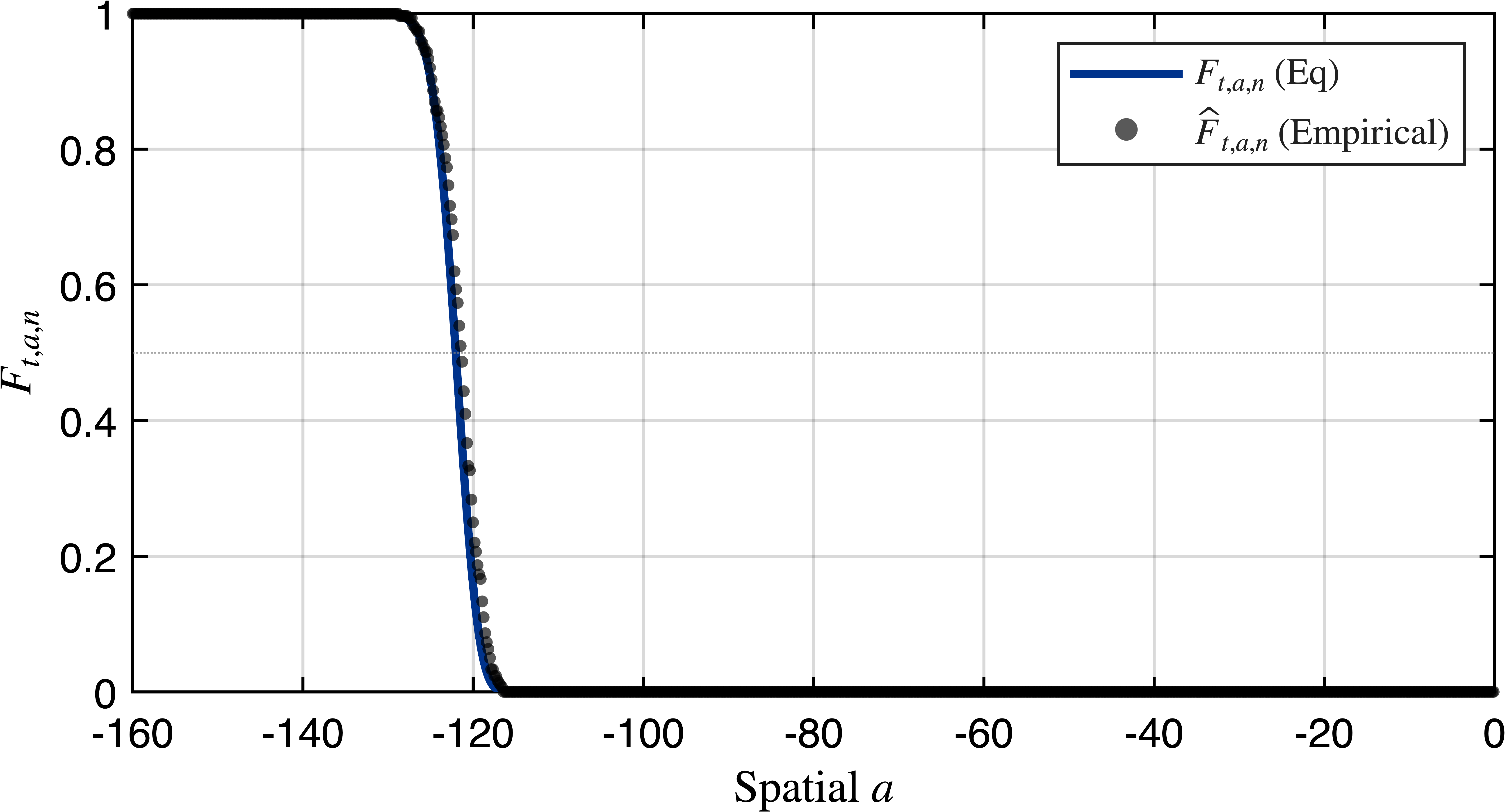}
    \label{fig:rbm_emp_theory}
  \end{subfigure}
\caption{Simulations of RBM with Moving Wall. Left: PDE solution $F_{t,a,n}$ at $t=30$ (finite-difference scheme, initial data $F_{0, a, n} = 1_{0 > a}$, and Dirichlet boundary condition $F_{t, b(t), 1} = 0$); Middle: RBM trajectories $Y_n(\cdot)$; Right: $F_{t, a, N}$ vs.\ empirical $\hat{F}_{t, a, N}$ ($t=30, N=120$, and $300$ independent Monte Carlo runs).} \label{fig:rbm_panels}
\end{figure}
Next, we present our results for the TASEP equation. In the same setting as \eqref{Intro FTAN def}, with $V \subseteq \R_+ \times \Z \times \Z$, we  provide an analogous Fredholm determinant solution theory for equation (\hyperref[tab:eqs_models]{2}). 
\begin{theorem}[\textbf{TASEP Eq.\ General Solutions}]\label{Intro TASEP Eq General SOl}
    Suppose $K_{t, a, n}$ is in the regularity class $C^1_r(U, \mathcal{I}_1)$, with $U = \R_+ \times \Z \times \Z$, such that the following three conditions hold: 
    \begin{enumerate}[(i)]
        \item  $\begin{aligned}[t] 
        \nabla_a^- K_{t, a, n} &= \psi_{t, a, n} \otimes \phi_{t, a, n}. 
        \end{aligned}$
        \item  $\begin{aligned}[t]
             \nabla_n^+ \psi_{t, a, n} = 2 \nabla_a^+ \psi_{t, a, n}, \medspace \nabla_n^- \phi_{t ,a, n} = 2 \nabla_a^- \phi_{t, a, n}.
        \end{aligned}$
        \item $\begin{aligned}[t]
            \partial_t \psi_{t, a, n} &= -\frac{1}{2}\nabla_a^- \psi_{t, a, n}, \medspace \partial_t \phi_{t, a, n} = -\frac{1}{2}\nabla_a^+ \phi_{t, a, n}.
        \end{aligned}$
    \end{enumerate}
  Then for $V \subseteq U$,  $F_{t, a, n}$ satisfies
    \begin{align}
        \left[D_t - (e^{-D_a} - 1) \right] F_{t, a, n} \cdot F_{t, a, n-1} = 0, 
    \end{align}
    i.e.
    \begin{align}
        F_{t, a, n-1}\partial_t F_{t, a, n} - F_{t, a, n} \partial_t F_{t, a, n-1} - F_{t, a-1, n}F_{t, a+1, n-1} + F_{t, a, n}F_{t, a, n-1} = 0.
    \end{align}
\end{theorem}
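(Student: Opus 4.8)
The plan is to mirror the argument for the RBM equation (Theorem~\ref{intro RBM gen sol}) in the discrete-$a$ setting: first convert the three hypotheses into rank-one structural identities for the kernel itself, then translate these into determinant ratios and log-derivatives, and finally collapse the resulting scalar identity onto a single functional relation supplied by~(ii). Throughout I write $R \defeq (I-K_{t,a,n})^{-1}$, let $\langle\cdot,\cdot\rangle$ be the pairing on $L^2(X,\mu)$, and suppress $t$, writing $F_{a,n}$ for $F_{t,a,n}$. Condition~(i) is already the $a$-identity $\nabla_a^- K_{a,n} = \psi_{a,n}\otimes\phi_{a,n}$. Applying $\nabla_a^-$ to $\partial_t K$ and inserting the evolutions~(iii) makes the right-hand side telescope to $-\tfrac12\,\nabla_a^-(\psi_{a,n}\otimes\phi_{a+1,n})$; since antidifferences are unique under the decay built into $C^1_r(U,\mathcal{I}_1)$, this yields the $t$-identity $\partial_t K_{a,n} = -\tfrac12\,\psi_{a,n}\otimes\phi_{a+1,n}$. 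Finally, telescoping $K_{a,n}=\sum_{b\le a}\psi_{b,n}\otimes\phi_{b,n}$, rewriting both levels via~(ii) so that $\psi$ sits at level $n-1$ and $\phi$ at level $n$, and telescoping once more would give the $n$-identity $\nabla_n^- K_{a,n} = 2\,\psi_{a+1,n-1}\otimes\phi_{a,n}$.

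Each rank-one identity then becomes a determinant ratio via $\det(I-K+u\otimes v)=\det(I-K)(1+\langle v,Ru\rangle)$ together with Jacobi's formula $\partial_t\log\det(I-K)=-\mathrm{tr}[R\,\partial_t K]$. Setting $q\defeq\langle\phi_{a,n},R\psi_{a,n}\rangle$, $p\defeq\langle\phi_{a+1,n},R\psi_{a,n}\rangle$, and $r\defeq\langle\phi_{a,n},R\psi_{a+1,n-1}\rangle$, the three identities read $F_{a-1,n}/F_{a,n}=1+q$, $\partial_t\log F_{a,n}=\tfrac12 p$, and $F_{a,n-1}/F_{a,n}=1+2r$. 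Dividing the target equation by $F_{a,n}F_{a,n-1}$ and pushing every factor to the base point $(t,a,n)$ with these ratios (rewriting $\partial_t\log F_{a,n-1}$ through $\partial_t r$ via $\log F_{a,n-1}=\log F_{a,n}+\log(1+2r)$) reduces the claim to the single scalar identity $(1+2r)-2\,\partial_t r=(1+q)(1+2r_{a+1,n})/(1+q_{a+1,n})$.

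The heart of the matter is the evolution of $r$. Using $\partial_t R=R(\partial_t K)R=-\tfrac12(R\psi_{a,n})\otimes(R^*\phi_{a+1,n})$ from the $t$-identity and the evolutions~(iii), every inner product produced is reducible by the functional relation $\psi_{a,n-1}=2\psi_{a+1,n-1}-\psi_{a,n}$ of~(ii); this gives $\partial_t r=r-\tfrac12 q-\tfrac12 s(1+q)$ with $s\defeq\langle\phi_{a+1,n},R\psi_{a+1,n-1}\rangle$, so the left side of the reduced identity equals $(1+q)(1+s)$. For the right side, a Sherman--Morrison step using $K_{a+1,n}=K_{a,n}+\psi_{a+1,n}\otimes\phi_{a+1,n}$ expresses $q_{a+1,n}$ and $r_{a+1,n}$ through $R$, yielding $1+q_{a+1,n}=(1-\tilde q)^{-1}$ and $1+2r_{a+1,n}=(1-\tilde q+2s')/(1-\tilde q)$ with $\tilde q\defeq\langle\phi_{a+1,n},R\psi_{a+1,n}\rangle$ and $s'\defeq\langle\phi_{a+1,n},R\psi_{a+2,n-1}\rangle$; the right side then becomes $(1+q)(1-\tilde q+2s')$, so the whole identity collapses to $s+\tilde q=2s'$. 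This last equation is exactly the pairing against $R^*\phi_{a+1,n}$ of $\psi_{a+1,n}+\psi_{a+1,n-1}=2\psi_{a+2,n-1}$, i.e.\ condition~(ii) shifted by one in $a$, which would close the proof.

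The computation is purely algebraic once the three structural identities are in hand, so I expect the main obstacle to be bookkeeping: the expansion of $\partial_t r$ and of the right-hand ratio generates several inner products living at two base points ($R_{a,n}$ and $R_{a+1,n}$) and two levels ($n$ and $n-1$), and the argument succeeds only because each of them is forced---through Sherman--Morrison and the two shift relations in~(ii)---onto the single functional identity above. The one genuinely analytic (rather than algebraic) point is the justification of the telescoping sums and of the uniqueness of antidifferences used to extract the $t$- and $n$-identities; this is precisely where the summability and decay encoded in $C^1_r(U,\mathcal{I}_1)$ must be invoked.
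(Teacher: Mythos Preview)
Your proof is correct and relies on the same three rank-one structural identities as the paper (the $a$-, $t$-, and $n$-identities for $K$), but organizes the scalar computation differently. The paper works symmetrically with the two resolvents $R_n$ and $R_{n-1}$: it computes $\partial_t\log F_n$ and $\partial_t\log F_{n-1}$ directly from the $t$-identity at each level, expands $F_{a-1,n}F_{a+1,n-1}/(F_nF_{n-1})$ via two $a$-shifts, and then closes by applying the rank-one resolvent identity (Lemma~\ref{Fredholm fg}) along the $n$-direction, i.e.\ using $K_n-K_{n-1}=2\psi_{a+1,n-1}\otimes\phi_{a,n}$. Your route instead anchors everything at the single resolvent $R=R_{a,n}$: you obtain $\partial_t\log F_{a,n-1}$ by differentiating $\log(1+2r)$ (which forces you through the resolvent derivative $\partial_t R$), and you close via a Sherman--Morrison step in the $a$-direction, using $K_{a+1,n}=K_{a,n}+\psi_{a+1,n}\otimes\phi_{a+1,n}$. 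Both arguments collapse to the same functional relation $\psi_{a+1,n}+\psi_{a+1,n-1}=2\psi_{a+2,n-1}$ coming from~(ii). The paper's organization is slightly shorter because it avoids the intermediate computation of $\partial_t r$; your organization has the advantage of never leaving the base resolvent until the final Sherman--Morrison step, which makes the dependence on~(ii) more transparent.
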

We refer the reader to Section \ref{sec: TASEP gensol} for the definition of the regularity class $C_r^{1}(U, \mathcal{I}_1)$. Next, we introduce the TASEP model, which is an interacting particle system on the integer lattice $\Z$ with at most one particle per site. Given a strictly decreasing initial configuration $\mathbf{y} = (y_0, y_1, y_2, \dotsc)$,  the dynamics run in continuous time as follows: Particle $Y_0(t)$ is deterministic and unaffected by other particles. Next,  particles with label $n \geq 1$ carry independent rate one exponential clocks, and when a particle's clock rings, it attempts to jump to the right by one unit. The jump is performed only if the destination site is empty; otherwise it is suppressed. After each particle's (attempted) jump its independent clock is instantaneously reset (see Fig.\ \ref{fig:rtasep_panels1}). As a consequence of Thm.\ \ref{Intro TASEP Eq General SOl} and known Fredholm determinant formulas for TASEP models, we have the following corollary (see Sec.\ \ref{sec: KPZ TASEP models}). 
\begin{figure}[t]
  \centering
  \begin{subfigure}[c]{0.36\textwidth}\centering
    \includegraphics[width=\linewidth,height=\panelH, keepaspectratio]{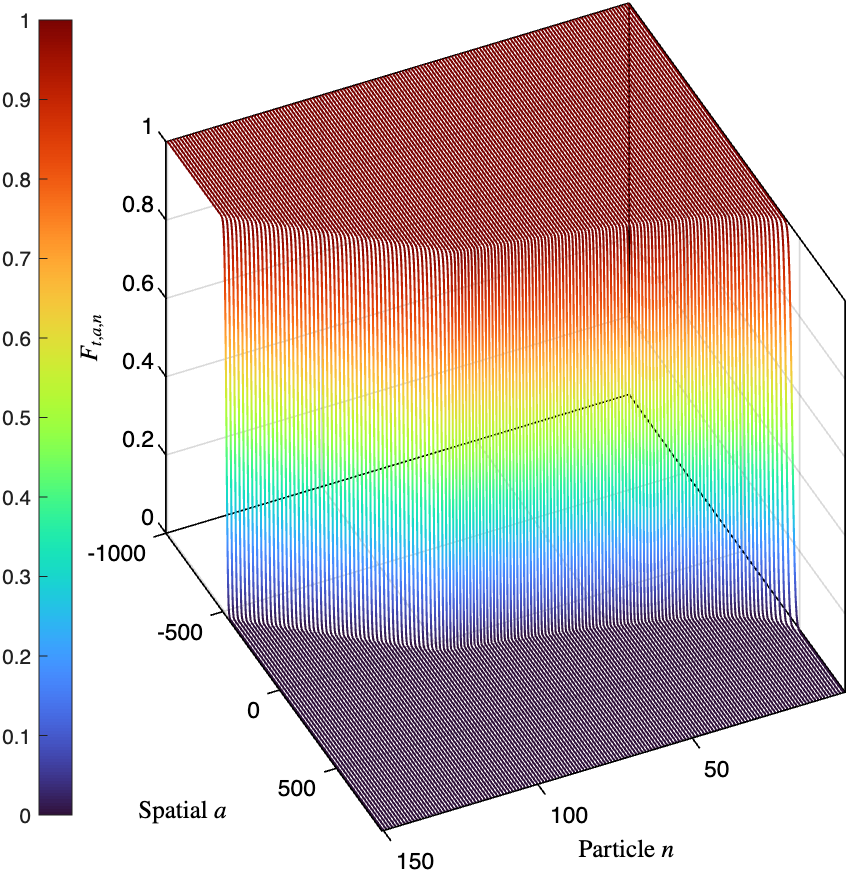}
  \end{subfigure}\hfill
  \begin{subfigure}[c]{0.36\textwidth}\centering
    \includegraphics[width=\linewidth,height=\panelH, keepaspectratio]{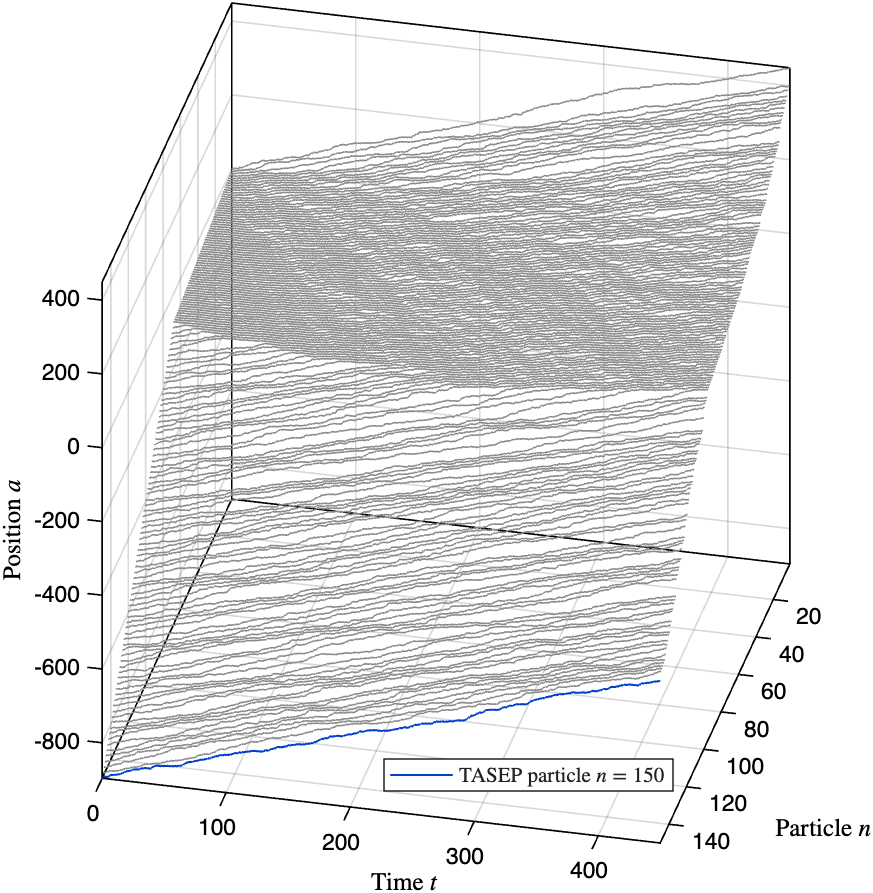}
  \end{subfigure}\hfill
  \begin{subfigure}[c]{0.26\textwidth}\centering
    \includegraphics[width=\linewidth, height=\panelH, keepaspectratio]{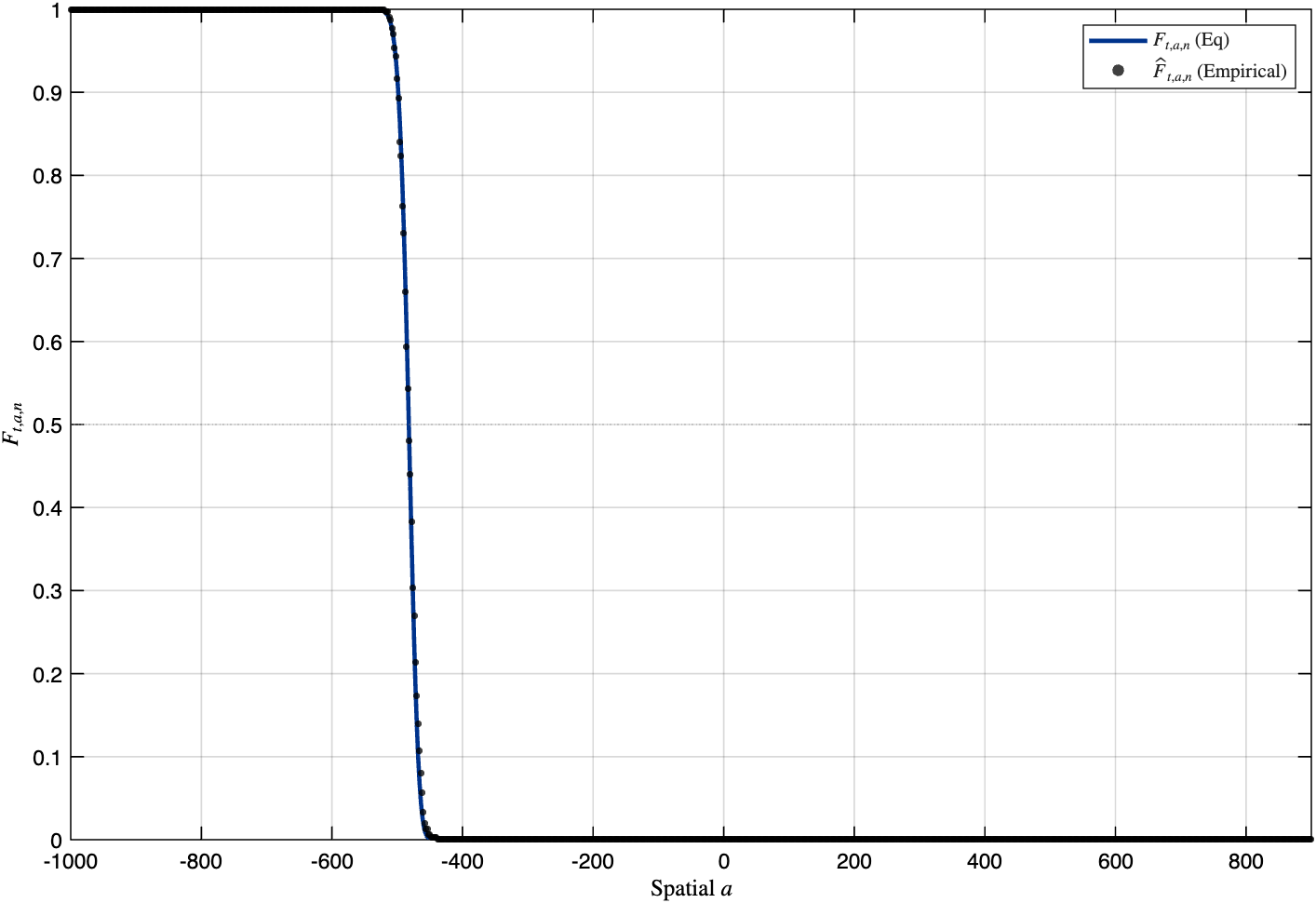}
  \end{subfigure}
  \caption{Simulations of TASEP with a shock initial condition. Left: PDE solution $F_{t,a,n}$ at $t=450$ (finite-difference scheme, initial data $F_{0, a, n} = 1_{y_n>a}$); Middle: TASEP trajectories $Y_n(\cdot)$ (initial condition $Y_n(0) = y_n)$; Right: $F_{t, a, N}$ vs.\ empirical $\hat{F}_{t, a, N}$ ($t = 450, N=150$, and $300$ independent Monte Carlo runs).}\label{fig:rtasep_panels1}
\end{figure}
\begin{corollary}[\textbf{TASEP Solutions}]
     Let $Y_n^{\text{TASEP}}(t)$ denote the $n$-th TASEP particle, and let 
     \begin{equation}
         F_{t, a, n} =  \PP\bigl(Y_{n}^{\text{TASEP}}(t) > a \, | \, \mathbf{Y}(0)= \mathbf{y}, Y_0(\cdot) = b(\cdot )\bigr).
     \end{equation}
    \begin{enumerate}
        \item(\textbf{General One-Sided Initial Condition}): Take initial data $\mathbf{y}$ as above, and set $y_0 = \infty$ so that $b(t) \equiv \infty$. Then, with $V = \R_+ \times \Z \times \Z_{\geq 1}$, $F_{t, a, n}$ satisfies $\left[D_t  - (e^{-D_a} -1)\right] F_{t, a, n} \cdot F_{t, a, n-1} = 0$, with $F_{0, a, n} = 1_{y_n > a} $ and $\bigl(\partial_t+\nabla_a^-\bigr) F_{t,a, 1}= 0$.
        \item(\textbf{TASEP with Moving Wall}): Take initial data $y_n = -n$, and suppose $Y_0(t) = b(t)$ evolves as follows: choose times $0 = s_0 < s_1 < s_2 < \dots$ with $s_k \rightarrow \infty$. At the times $s_k$, $Y_0$ jumps one unit rightward so that $Y_0(t) =  \max\{k \geq 0: s_k \leq t\}$, with $Y_0(0) =0$. Then, with  $V = \{(t, a, n) : t \in \R_+, a +n < b(t), n \in \Z_{\geq 1}\}$, $F_{t, a, n}$ satisfies $\left[D_t  - (e^{-D_a} -1)\right] F_{t, a, n} \cdot F_{t, a, n-1} = 0$, with $F_{0, a, n} = 1_{-n > a}$, $ (\partial_t + \nabla_a^-) F_{t, a, 1} = 0$ for $a < b(t)$, and $F_{t, a,1} = 0$ for $a \geq b(t)$.
    \end{enumerate}
\end{corollary}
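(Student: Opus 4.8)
The proof is an application of Theorem~\ref{Intro TASEP Eq General SOl}. For each of the two initial conditions the plan is to exhibit an explicit trace-class kernel $K_{t,a,n}$ with $F_{t,a,n} = \det(I - K_{t,a,n})_{L^2(X,\mu)}$, to verify that it lies in $C^1_r(U,\mathcal{I}_1)$ and satisfies hypotheses (i)--(iii), and then to read off the bilinear equation on $V$. The initial data and the auxiliary $n=1$ equations are checked separately, directly from the dynamics.

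First I would recall from Section~\ref{sec: KPZ TASEP models} the known one-point Fredholm determinant formula $F_{t,a,n} = \PP(Y_n^{\mathrm{TASEP}}(t) > a) = \det(I - K_{t,a,n})$ of biorthogonal-ensemble / Matetski--Quastel--Remenik type. In this representation the dependence on $a$ enters through a single projection, so that the backward difference $\nabla_a^- K_{t,a,n}$ is rank one; this yields hypothesis (i) together with explicit contour-integral expressions for $\psi_{t,a,n}$ and $\phi_{t,a,n}$. These functions depend on $(t,a,n)$ only through elementary factors -- a Poisson exponential in $t$ and powers of $w$ and $(1-w)$ in the integration variable -- so that hypotheses (ii) and (iii), namely $\nabla_n^+\psi = 2\nabla_a^+\psi$, $\nabla_n^-\phi = 2\nabla_a^-\phi$, $\partial_t\psi = -\tfrac12\nabla_a^-\psi$ and $\partial_t\phi = -\tfrac12\nabla_a^+\phi$, reduce to the statement that each integrand transforms correctly under the operators on both sides; these follow by differentiating and shifting under the integral sign. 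With (i)--(iii) and membership in $C^1_r$ in hand, Theorem~\ref{Intro TASEP Eq General SOl} gives $[D_t - (e^{-D_a}-1)]F_{t,a,n}\cdot F_{t,a,n-1} = 0$ on $V$.

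It then remains to check the initial and boundary data. The identity $F_{0,a,n} = \PP(y_n > a) = 1_{y_n > a}$ is immediate from $Y_n(0) = y_n$, giving $1_{y_n>a}$ in case (1) and $1_{-n>a}$ in case (2). For the auxiliary $n=1$ equation in case (1), setting $y_0 = \infty$ leaves particle $1$ unobstructed, so $Y_1(t) = y_1 + N(t)$ with $N$ a rate-one Poisson process; writing $m = a - y_1$ and $p_k(t) = e^{-t}t^k/k!$, the Kolmogorov forward equations give $\partial_t F_{t,a,1} = p_m(t)$ after a telescoping sum, while $\nabla_a^- F_{t,a,1} = -p_m(t)$ directly, so $(\partial_t + \nabla_a^-)F_{t,a,1} = 0$. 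In case (2) the same Poisson computation gives the free equation for $a < b(t)$, and the hard constraint $Y_1(t) < Y_0(t) = b(t)$ forces $F_{t,a,1} = 0$ once $a$ reaches the wall, i.e.\ for $a \geq b(t)$.

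The main obstacle is the moving-wall model. There the deterministic wall $Y_0(\cdot)$ must be encoded into the kernel -- for instance by a conjugation, a Dirichlet-type modification, or an altered hitting contour in the path-integral kernel -- and one must confirm that the rank-one structure (i) and the free equations (ii)--(iii) survive this modification, and that $K_{t,a,n} \in C^1_r(U,\mathcal{I}_1)$ with $I - K_{t,a,n}$ invertible, now only on the sheared region $V = \{a+n < b(t)\}$ rather than on all of $U$. Establishing these regularity and invertibility properties on $V$, and matching the resulting Dirichlet boundary behaviour at $a = b(t)$ with the $n=1$ equation above, is where the bulk of the technical care is needed.
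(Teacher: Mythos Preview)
Your proposal is correct and follows essentially the same route as the paper. The paper's proofs (Theorems~\ref{TASEP part thm} and~\ref{TASEP b part thm}) proceed exactly as you describe: take the known Fredholm determinant formulas from \cite{MQR17} and \cite{rahman2025}, rewrite the kernel via cyclicity (Lemma~\ref{AB BA}) into the form $K_{t,a,n}=\sum_{r\le a}\psi_{t,r,n}\otimes\phi_{t,r,n}$, verify the contour-integral flow identities (i)--(iii) by shifting/differentiating under the integral, and invoke Theorem~\ref{TASEP K THM}.

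Two points the paper makes explicit that you leave implicit: first, the $\phi^{\mathbf{y}}$ (or $\psi^b$ in the wall case) carries a random-walk hitting-time expectation, and one must check that the discrete $n$-flow survives this---the paper notes that the boundary term at $\tau=n-1$ vanishes because $\bar\varphi_{t,a,0}$ does not depend on $n$ (so its $\nabla_n^-$ contribution is zero). Second, for the moving wall the raw kernel from \cite{rahman2025} is an integral $\int_t^\infty \mathcal{S}^b_{a,n}(u,z)\tilde{\mathcal{S}}_{a,n}(z,v)\,dz$ rather than a sum over $r\le a$; the paper converts it by an integration-by-parts/telescoping argument using $\nabla_a^-\tilde{\mathcal{S}}_{a,n}=-\tilde{\mathcal{S}}_{a,n-1}$ and $\partial_z\bar{\mathcal{S}}_{a,n}=-\bar{\mathcal{S}}_{a-1,n+1}$, which is precisely the ``Dirichlet-type modification'' you anticipate. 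Your direct Poisson verification of the $n=1$ equation is a minor variant: the paper instead remarks that extending $y_m=+\infty$ for $m<1$ makes $F_{t,a,0}\equiv 1$, so the bilinear equation itself collapses to $(\partial_t+\nabla_a^-)F_{t,a,1}=0$.
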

We also present a zero-curvature/Lax pair formulation for the TASEP equation. 
\begin{theorem}[\textbf{TASEP Eq.\ Zero-Curvature Condition}]
    Fix $M \in \Z$ and a collection of non-vanishing functions $\{F_{t, a, n}\}_{a, n\in \Z}$ with boundary condition $F_{t,a,m} \equiv 1$ for all $m \leq M$. Define 
    \begin{align*}
    r_{a, n} \defeq \frac{F_{a-1, n+1}}{F_{a, n}}, 
    \end{align*} 
    and consider the operators 
    \begin{align}
        \mathcal{M} &\defeq \partial_t - \frac{r_{a, n}}{r_{a+1, n-1}}e^{\partial_a - \partial_n}, \quad 
        \bar{\mathcal{M}} \defeq e^{-\partial_a} + \frac{r_{a, n}}{r_{a, n-1}}e^{-\partial_n},
    \end{align}
    acting on functions $f_{a, n}(t)$. Then
    \begin{align}
       [\mathcal{M}, \bar{\mathcal{M}}] = 0 \Longleftrightarrow \left[D_t - (e^{-D_a} -1)\right]F_{t, a, n}\cdot F_{t, a ,n-1}= 0. 
    \end{align}
\end{theorem}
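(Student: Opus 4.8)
\emph{Plan.} The plan is to compute the commutator $[\mathcal{M},\bar{\mathcal{M}}]$ directly as an operator acting on an arbitrary sequence $f_{a,n}(t)$, reduce its vanishing to two scalar equations, and identify these with the bilinear equation. First I would record the closed forms of the coefficients in terms of $F$. Writing $\alpha_{a,n}\defeq r_{a,n}/r_{a+1,n-1}$ and $\beta_{a,n}\defeq r_{a,n}/r_{a,n-1}$, the telescoping of the defining ratios $r_{a,n}=F_{a-1,n+1}/F_{a,n}$ gives $\alpha_{a,n}=F_{a-1,n+1}F_{a+1,n-1}/F_{a,n}^2$ and $\beta_{a,n}=F_{a-1,n+1}F_{a,n-1}/(F_{a,n}F_{a-1,n})$, so that $\mathcal{M}=\partial_t-\alpha_{a,n}e^{\partial_a-\partial_n}$ and $\bar{\mathcal{M}}=e^{-\partial_a}+\beta_{a,n}e^{-\partial_n}$. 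The key elementary rule is that a spatial shift moves past a multiplication operator at the cost of shifting the coefficient, while $\partial_t$ commutes with every spatial shift but obeys Leibniz against the ($t$-dependent) coefficients $\alpha,\beta$.

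Next I would expand $\mathcal{M}\bar{\mathcal{M}}f$ and $\bar{\mathcal{M}}\mathcal{M}f$ and subtract. By design the $\partial_t f_{a-1,n}$ and $\beta_{a,n}\partial_t f_{a,n-1}$ contributions cancel, as does the pure $e^{-\partial_a}$ term, leaving a finite-range difference operator supported only on $f_{a,n-1}$ and $f_{a+1,n-2}$. Thus $[\mathcal{M},\bar{\mathcal{M}}]=0$ is equivalent to the simultaneous vanishing of two coefficients: an evolution relation from the coefficient of $f_{a,n-1}$, of the form $\partial_t\beta_{a,n}=\alpha_{a-1,n}-\alpha_{a,n}$ (with the precise orientation to be fixed by the careful bookkeeping below), and a purely algebraic relation from the coefficient of $f_{a+1,n-2}$, namely $\alpha_{a,n}\beta_{a+1,n-1}=\beta_{a,n}\alpha_{a,n-1}$.

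I would then dispatch the algebraic relation first: substituting the closed forms shows both sides equal $F_{a-1,n+1}F_{a+1,n-2}/(F_{a,n}F_{a,n-1})$, so it holds identically. This reflects the fact that $\alpha,\beta$ descend from the scalar potential $r$, so the spatial curvature is automatically flat. For the evolution relation, set $\rho_{a,n}\defeq F_{a-1,n}F_{a+1,n-1}/(F_{a,n}F_{a,n-1})$, the exact ratio appearing in the expanded Hirota equation, and observe the factorizations $\alpha_{a,n}=\beta_{a,n}\rho_{a,n}$ and $\alpha_{a-1,n}=\beta_{a,n}\rho_{a-1,n+1}$, which are direct cancellations. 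Dividing by the nonvanishing $\beta_{a,n}$ and using $\partial_t\log\beta_{a,n}=\partial_t\log r_{a,n}-\partial_t\log r_{a,n-1}$ recasts the relation, in terms of $w_{a,n}\defeq\partial_t\log F_{a,n}$ and $\Delta_{a,n}\defeq w_{a,n}-w_{a,n-1}$, via the identity $\partial_t\log\beta_{a,n}=\Delta_{a-1,n+1}-\Delta_{a,n}$, into the diagonal-invariance statement $(\Delta-\rho)_{a-1,n+1}=(\Delta-\rho)_{a,n}$.

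Finally I would match this with the bilinear equation, which, after dividing by $F_{a,n}F_{a,n-1}$, reads exactly $\Delta_{a,n}-\rho_{a,n}=-1$. The forward implication is then immediate, since a constant is diagonal-invariant. For the converse I would invoke the boundary condition $F_{t,a,m}\equiv 1$ for $m\le M$: there $w\equiv 0$ and $\rho\equiv 1$, so $\Delta-\rho=-1$ on the boundary strip, and diagonal-invariance propagates this value to all $(a,n)$. I expect the main obstacle to be the non-commutative bookkeeping in the commutator expansion --- correctly shifting the $(a,n)$- and $t$-dependent coefficients past the shift operators, confirming the cancellation of the $\partial_t$-terms, and nailing the sign of the $f_{a,n-1}$ coefficient --- together with the final step of using the boundary data to upgrade the diagonal-invariance of $\Delta-\rho$ to the pointwise bilinear identity.
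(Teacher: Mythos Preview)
Your proposal is correct and follows essentially the same approach as the paper: both compute the commutator term by term, observe that the mixed shift piece $[\alpha e^{\partial_a-\partial_n},\beta e^{-\partial_n}]$ vanishes identically (your ``algebraic relation''), and reduce the remaining $e^{-\partial_n}$ coefficient to the diagonal-invariance of $\mathcal{K}_{a,n}=\Delta_{a,n}-\rho_{a,n}$, then use the boundary $F_{t,a,m}\equiv 1$ for $m\le M$ to pin $\mathcal{K}\equiv -1$. The paper packages the computation slightly more compactly by writing the surviving piece directly as $\beta_{a,n}(\mathcal{K}_{a-1,n+1}-\mathcal{K}_{a,n})e^{-\partial_n}$ without your intermediate factorizations $\alpha=\beta\rho$, but the content is identical.
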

\begin{corollary}[\textbf{TASEP Eq.\ Lax Pair}] In the same setting as above, define 
\begin{align}
    L \defeq e^{-\partial_a} + \frac{r_{a, n}}{r_{a, n-1}}e^{-\partial_n}, \quad P \defeq -\frac{r_{a, n}}{r_{a+1, n-1}}e^{\partial_a -\partial_n}.
\end{align}
Then $(L, P)$ form a Lax pair for the TASEP equation, i.e. 
\begin{align*}
    \partial_t L + [P, L] = 0 \Longleftrightarrow \left[D_t - (e^{-D_a} -1)\right]F_{t, a, n}\cdot F_{t, a ,n-1}= 0.
\end{align*}
\end{corollary}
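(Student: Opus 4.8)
The plan is to reduce the Lax equation $\partial_t L + [P,L]=0$ directly to the zero-curvature condition $[\mathcal{M},\bar{\mathcal{M}}]=0$ proved in the preceding theorem, rather than recomputing the commutator from scratch. The first thing I would do is note, by simple inspection of the definitions, that the two formulations share their ingredients: $L$ is \emph{literally} $\bar{\mathcal{M}}$, while $P = \mathcal{M}-\partial_t$, so that $\mathcal{M} = \partial_t + P$. Consequently the whole corollary should follow once I relate the single commutator $[\mathcal{M},\bar{\mathcal{M}}]$ to the combination $\partial_t L + [P,L]$ appearing in the Lax equation, and the only genuine content is an operator identity that isolates how $\partial_t$ interacts with the $t$-dependent coefficients of $L$.

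The key step is the expansion
\begin{align*}
[\mathcal{M},\bar{\mathcal{M}}] = [\partial_t + P,\, L] = [\partial_t, L] + [P,L],
\end{align*}
after which I would carefully justify the identity $[\partial_t, L] = \partial_t L$, where the right-hand side denotes the difference operator obtained by differentiating each coefficient of $L$ in $t$ (and \emph{not} an operator retaining a factor of $\partial_t$). Concretely, since $L = e^{-\partial_a} + \frac{r_{a,n}}{r_{a,n-1}}\,e^{-\partial_n}$ is a shift operator whose only $t$-dependence lives in the scalar coefficient $r_{a,n}/r_{a,n-1}$, and since $\partial_t$ commutes with the shifts $e^{-\partial_a}, e^{-\partial_n}$ (these act on different variables with constant coefficients), the Leibniz rule gives, for any test function $f_{a,n}(t)$,
\begin{align*}
[\partial_t, L]\,f = \partial_t(Lf) - L(\partial_t f) = \left(\partial_t \frac{r_{a,n}}{r_{a,n-1}}\right) e^{-\partial_n} f,
\end{align*}
which is precisely multiplication of the shifted $f$ by the $t$-derivative of the coefficient, i.e. the action of $\partial_t L$. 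The term $e^{-\partial_a}$ has constant coefficients and contributes nothing to this bracket.

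Combining the two displays yields $[\mathcal{M},\bar{\mathcal{M}}] = \partial_t L + [P,L]$ as an identity of operators, so $\partial_t L + [P,L]=0$ holds if and only if $[\mathcal{M},\bar{\mathcal{M}}]=0$; invoking the zero-curvature theorem then gives the equivalence with $\bigl[D_t - (e^{-D_a}-1)\bigr]F_{t,a,n}\cdot F_{t,a,n-1}=0$, completing the proof. I do not expect a serious analytic obstacle here: the difficulty is purely one of bookkeeping and definitional care. Specifically, the one point that must be pinned down precisely is the meaning of $\partial_t L$ as a coefficient-derivative operator, together with the verification that $\bar{\mathcal{M}}$ carries no explicit $\partial_t$ so that the splitting $\mathcal{M}=\partial_t+P$ is clean; once these conventions are fixed, the equivalence is an immediate corollary of the zero-curvature theorem.
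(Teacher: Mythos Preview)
Your proposal is correct and matches the paper's treatment: the corollary is stated without proof precisely because it is an immediate consequence of the zero-curvature theorem via the identifications $L=\bar{\mathcal{M}}$, $\mathcal{M}=\partial_t+P$, and the standard identity $[\partial_t,L]=\partial_t L$ that you spell out. There is nothing to add.
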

Finally, we present our results for the Parallel TASEP equation (\hyperref[tab:eqs_models]{4}). Here, we make direct contact with the HBDE as there is a change of variables which transforms equation (\hyperref[tab:eqs_models]{4}) into the HBDE form \eqref{HBDE} (see Sec.\ \ref{sec: Parallel-TASEP-Scal}). It appears, however, that our Fredholm determinant solution framework is novel. We again work in the setting of \eqref{Intro FTAN def} with now $V \subseteq \Z_{> 0} \times \Z \times \Z$.
\begin{theorem}[\textbf{Parallel TASEP Eq.\ General Solutions}]\label{Intro Parallel TASEP Eq. General Solutions}
    Suppose $K_{t, a, n}$ is in the regularity class $B_r(U, \mathcal{I}_1)$, with $U = \Z_{> 0} \times \Z \times \Z$, such that the following three conditions hold: 
    \begin{enumerate}[(i)]
        \item  $\begin{aligned}[t] 
        \nabla_a^- K_{t, a, n} &= \psi_{t, a, n} \otimes \phi_{t, a, n} .
        \end{aligned}$
        \item  $\begin{aligned}[t]
             \nabla_n^+ \psi_{t, a, n} &= \delta \psi_{t, a+1, n} - \psi_{t, a, n} - \gamma \psi_{t-1, a, n}, \medspace
             \nabla_n^- \phi_{t ,a, n} = -(\delta \phi_{t, a-1, n} -\phi_{t, a, n }  - \gamma \phi_{t+1, a, n}).
        \end{aligned}$
        \item $\begin{aligned}[t]
            \nabla_t^+ \psi_{t, a, n} &= \beta \nabla_a^- \psi_{t, a, n}, \medspace \nabla_t^- \phi_{t, a, n} = \beta\nabla_a^+ \phi_{t, a, n},
        \end{aligned}$
    \end{enumerate}
    for some arbitrary constants $\beta, \delta, \gamma$ with $\gamma \neq 0$. Then for $V \subseteq U$,  $F_{t, a, n}$ satisfies
    \begin{align}
        \left[e^{D_t} - p e^{-D_a} -(1-p) \right] F_{t, a, n} \cdot F_{t, a, n-1} = 0, \qquad p = -\frac{\beta \delta}{\gamma}, 
    \end{align}
    i.e. 
    \begin{align}
       F_{t+1,a,  n}F_{t-1, a, n-1}-pF_{t, a-1, n}F_{t, a+1, n-1} -(1-p)F_{t, a, n}F_{t, a, n-1} = 0. 
    \end{align}
\end{theorem}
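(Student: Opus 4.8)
\section*{Proof proposal}

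The plan is to reduce the bilinear identity to a finite-dimensional determinantal (Plücker/Jacobi) relation by expressing each of the six Fredholm determinants that occur as a \emph{low-rank} modification of the single base operator $K_{t,a,n}$, and then to recognize the resulting polynomial identity among resolvent inner products. First I would use the regularity hypothesis $B_r(U,\mathcal{I}_1)$ to reconstruct $K_{t,a,n}$ from its first difference in $a$: since condition (i) gives $\nabla_a^- K_{t,a,n}=\psi_{t,a,n}\otimes\phi_{t,a,n}$ and the class $B_r$ forces the appropriate decay, one has the trace-norm convergent representation $K_{t,a,n}=\sum_{a'\le a}\psi_{t,a',n}\otimes\phi_{t,a',n}$. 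Writing $R=(I-K_{t,a,n})^{-1}$, the rank-one determinant formula $\det(I-K+\psi\otimes\phi)=\det(I-K)\bigl(1+\langle\phi,R\psi\rangle\bigr)$ then yields at once the $a$-shift ratio $F_{t,a-1,n}/F_{t,a,n}=1+\langle\phi_{t,a,n},R\psi_{t,a,n}\rangle$, together with its companion for $F_{t,a+1,n}/F_{t,a,n}$.

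The heart of the argument is to obtain the same kind of closed form for the $t$- and $n$-shifts, for which $K$ itself must be differenced. Differencing the series and applying the discrete Leibniz rule gives $\nabla_t^+K_{t,a,n}=\sum_{a'\le a}\bigl[(\nabla_t^+\psi_{a'})\otimes\phi_{t+1,a'}+\psi_{t,a'}\otimes(\nabla_t^+\phi_{a'})\bigr]$. Substituting the recurrences of (iii), namely $\nabla_t^+\psi=\beta\nabla_a^-\psi$ and $\nabla_t^-\phi=\beta\nabla_a^+\phi$, and summing by parts in $a'$, the bulk terms telescope and collapse to a single boundary contribution, leaving the rank-one relation $K_{t+1,a,n}=K_{t,a,n}+\beta\,\psi_{t,a,n}\otimes\phi_{t+1,a+1,n}$ and hence $F_{t+1,a,n}/F_{t,a,n}=1-\beta\langle\phi_{t+1,a+1,n},R\psi_{t,a,n}\rangle$. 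The analogous computation for $\nabla_n^+K$, now fed by the $n$-recurrences in (ii) (which carry the constants $\delta$ and $\gamma$), produces a rank-one modification linking level $n$ to level $n-1$; this is exactly the cross-level bridge needed below. I expect each elementary shift to be rank one, in keeping with the vertex-operator/Miwa-shift interpretation of $F_{t,a,n}$ as a tau function.

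With these formulas in hand I would refer \emph{every} determinant to the single resolvent $R=R_{t,a,n}$. The level-$n$ factors $F_{t+1,a,n}$ and $F_{t,a-1,n}$ are already rank-one updates of $K_{t,a,n}$. For the level-$(n-1)$ factors $F_{t,a,n-1}$, $F_{t-1,a,n-1}$, $F_{t,a+1,n-1}$ I would compose the rank-one $n$-shift with the $a$- and $t$-shifts and use the Sherman--Morrison formula to rewrite $R_{t,a,n-1}$ through $R$ and the rank-one data; each then becomes $\det(I-K_{t,a,n})$ times the determinant of a $2\times2$ matrix whose entries are of the form $\delta_{ij}+\langle\phi_i,R\psi_j\rangle$. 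Dividing the target identity by $F_{t,a,n}F_{t,a,n-1}$ turns it into a polynomial identity in these inner products, which should be precisely the $2\times2$ Plücker (Desnanot--Jacobi) relation generated by the two independent shift directions; the structure constants $\beta,\delta,\gamma$ of (ii)--(iii) should then combine to give exactly the coefficients $p=-\beta\delta/\gamma$ and $1-p$, and matching these is the natural consistency check that closes the proof.

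The main obstacle will be the second and third steps rather than the final algebra. Two points require care. First, I must justify the series reconstruction of $K$ and the term-by-term differencing as trace-norm-convergent operations, and check that the boundary terms in the summation by parts vanish; this is where the regularity class $B_r(U,\mathcal{I}_1)$ does its work. Second, because the two factors in each product sit on adjacent levels $n$ and $n-1$, the Sherman--Morrison bookkeeping that pins the level-$(n-1)$ inner products to the single resolvent $R=R_{t,a,n}$ must be carried out consistently, so that a \emph{single} determinantal identity—and not merely a relation between two unrelated tau functions—closes the computation. Once all shifted operators are expressed as rank-$\le 2$ updates of $K_{t,a,n}$, the remainder is routine finite-dimensional linear algebra.
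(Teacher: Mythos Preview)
Your strategy---reconstruct $K$ as the telescoping sum, compute rank-one increments in each direction, and reduce the bilinear identity to a resolvent/Pl\"ucker computation anchored at a single resolvent $R$---is exactly the route the paper takes. The $a$-shift and $t$-shift computations you sketch are correct and match the paper's $\nabla_a^- K=\psi_{t,a,n}\otimes\phi_{t,a,n}$ and $\nabla_t^+ K=\beta\,\psi_{t,a,n}\otimes\phi_{t+1,a+1,n}$.

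There is one imprecision worth flagging. The $n$-shift of $K$ is \emph{not} rank one after summation by parts: feeding (ii) into $\nabla_n^- K$ and telescoping yields two independent rank-one pieces, because the recurrence mixes an $a$-shift with a $t$-shift. What the paper actually proves is that particular \emph{combinations} are rank one, namely
\[
K_{t,a-1,n}-K_{t,a,n-1}=(1+\beta)\gamma\,\psi_{t-1,a,n-1}\otimes\phi_{t+1,a,n},\qquad
K_{t+1,a,n}-K_{t,a,n-1}=(1+\beta)\delta\,\psi_{t,a+1,n-1}\otimes\phi_{t+1,a,n}.
\]
Getting these requires a step you do not mention: combining (ii) and (iii) to derive the scalar alignment identities
\[
\psi_{t,a,n}=\delta\psi_{t,a+1,n-1}-\gamma\psi_{t-1,a,n-1}=(1+\beta)\psi_{t-1,a,n}-\beta\psi_{t-1,a-1,n}
\]
(and the dual pair for $\phi$). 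These are what let you pass between the several shifted $\psi$'s and $\phi$'s that appear and are indispensable for collapsing the algebra; without them your inner products $\langle\phi_i,R\psi_j\rangle$ will involve too many independent vectors for a single $2\times2$ Desnanot--Jacobi relation to close.

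The paper then does not invoke Desnanot--Jacobi explicitly; instead it expands $F_{t+1,a,n}/F_{t,a,n}=1-\beta\langle R_n\psi_{t,a,n},\phi_{t+1,a+1,n}\rangle$ and repeatedly applies the scalar identities above together with the rank-one resolvent lemmas (your Sherman--Morrison step) to rewrite each inner product until only ratios of $F$'s remain. Your Pl\"ucker framing is a legitimate repackaging of the same computation, but the alignment identities are the real engine either way, and the coefficient $p=-\beta\delta/\gamma$ emerges from them rather than from the Jacobi identity itself.
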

We refer the reader to Section \ref{sec: Discrete TASEP gensol} for a definition of the regularity class $B_r(U, \mathcal{I}_1)$. Next, we introduce the Parallel TASEP model, which is a discrete-time variant of the TASEP model (here, we take $Y_0 \equiv \infty)$. In discrete time, at each update $t \mapsto t+1$ every particle independently attempts to jump one unit to the right with probability $p$. Each particle's jump is performed only if the destination site was empty at time $t$, otherwise it is suppressed. As a consequence of Thm.\ \ref{Intro Parallel TASEP Eq. General Solutions} and known Fredholm determinant formulas for Parallel TASEP, we have the following corollary (see Sec.\ \ref{sec: KPZ discrete tasep}). 
\begin{corollary}[\textbf{Parallel TASEP Solutions}]
    Let $Y_n^{\text{Parallel-TASEP}}(t)$ denote the $n$-th Parallel-TASEP particle, and fix one-sided initial data $\mathbf{y} = (y_1, y_2, \dots)$ with 
   \begin{align}
       F_{t,a, n} = \PP(Y_n^{\text{Parallel-TASEP}}(t) > a \, | \, \mathbf{Y}(0) = \mathbf{y}).
   \end{align}
   Then, with $V = \{(t, a, n) : t \in \Z_{>0}, a < y_n + t, n \in \Z_{\geq 1} \}$, $F_{t, a, n}$ satisfies $ \left[e^{D_t} - pe^{-D_a} - (1-p)\right] F_{t,a,n}\cdot F_{t, a, n-1} = 0$, with $F_{0, a, n} = 1_{y_n > a}$ and $(\nabla_t^+ + p\nabla_a^- )F_{t, a, 1} = 0$.
\end{corollary}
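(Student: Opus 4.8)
The plan is to deduce the corollary from Theorem~\ref{Intro Parallel TASEP Eq. General Solutions} together with a known Fredholm determinant representation of the Parallel TASEP one-point marginal. First I would recall the explicit kernel $K_{t,a,n}$ (the formula assembled in Section~\ref{sec: KPZ discrete tasep}) for which $F_{t,a,n} = \det(I - K_{t,a,n})$ equals $\PP(Y_n^{\text{Parallel-TASEP}}(t) > a \mid \mathbf{Y}(0) = \mathbf{y})$, valid on the region $V$ where the defining contour integrals converge and $I - K_{t,a,n}$ is invertible. The entire verification then splits into two independent tasks: checking the three structural hypotheses (i)--(iii) of the theorem for this kernel, which produces the bilinear equation on $V$, and confirming the stated initial and boundary data by direct probabilistic reasoning.

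For the structural hypotheses I would write the kernel's building blocks as contour integrals $\psi_{t,a,n}(x)$ and $\phi_{t,a,n}(x)$, read off from the rank-one backward difference $\nabla_a^- K_{t,a,n} = \psi_{t,a,n} \otimes \phi_{t,a,n}$ of hypothesis (i). Because these building blocks have the schematic form $\psi_{t,a,n}(x) = \oint \frac{dw}{2\pi i}\, A(w)^{\,t}\, B(w)^{\,n}\, w^{\,a-x}\, \rho(w)$ (and similarly for $\phi$ with reciprocal factors), each discrete shift acts multiplicatively on the integrand: $t \mapsto t+1$ inserts $A(w)$, $a \mapsto a+1$ inserts $w$, and $n \mapsto n+1$ inserts $B(w)$. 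Thus the mixed relation in (ii) and the time evolution in (iii) collapse to the pointwise integrand identities $B(w) = \delta w - \gamma A(w)^{-1}$ and $A(w) - 1 = \beta\bigl(1 - w^{-1}\bigr)$, which hold for the model's specific $A, B, \rho$. Matching the coefficients produced here identifies $\beta,\delta,\gamma$ (with $\gamma \neq 0$) and, crucially, pins down $p = -\beta\delta/\gamma$ to be exactly the Bernoulli jump probability. I would also verify membership in the regularity class $B_r(U,\mathcal{I}_1)$, which amounts to uniform trace-norm control of $K_{t,a,n}$ and its discrete differences on compact subsets of $V$, again inherited from decay of the integrands.

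The initial and boundary data I would treat separately, without the Fredholm formula. The identity $F_{0,a,n} = \PP(y_n > a) = 1_{y_n > a}$ is immediate from $Y_n(0) = y_n$. For the leading particle, with $Y_0 \equiv \infty$ the particle $Y_1$ is never blocked and so performs an unobstructed Bernoulli walk stepping right by one with probability $p$ at each update; this yields the one-step recursion $F_{t+1,a,1} = p\,F_{t,a-1,1} + (1-p)\,F_{t,a,1}$, which rearranges precisely to $(\nabla_t^+ + p\,\nabla_a^-)F_{t,a,1} = 0$. With hypotheses (i)--(iii) checked and these boundary data in hand, applying Theorem~\ref{Intro Parallel TASEP Eq. General Solutions} gives the bilinear equation on $V$.

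The main obstacle I anticipate is hypothesis (ii), which couples the $n$-shift of $\psi$ simultaneously to an $a$-shift and a $t$-shift (and symmetrically for $\phi$) -- a relation with no continuous analogue in the RBM setting and one that encodes the synchronous parallel-update structure of the model. Establishing it requires choosing the representation of the building blocks in which all three shifts act multiplicatively on one common integrand, and then verifying the three-term relation among the shift factors $A(w)$, $w$, $B(w)$. Any mismatch at this stage feeds directly into a wrong value of $p$, so the computation must be carried out carefully and cross-checked against the $n=1$ recursion, where the independently derived boundary condition fixes $p$ unambiguously.
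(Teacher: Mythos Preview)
Your proposal is correct and follows essentially the same route as the paper: start from the Fredholm determinant formula of \cite{Matetski_2022} (after the cyclicity/transpose massage), verify the three flow hypotheses of Theorem~\ref{PTASEP K THM} by exploiting the multiplicative action of the shifts on the contour integrands, and read off the initial/boundary data. One point you gloss over: for general initial data the $\phi$-leg is not a bare contour integral but the hitting-time expectation $\phi^{\mathbf y}_{t,a,n}(v)=\E_{B_0=v}[\bar\varphi_{t,a,n-\tau}(B_\tau)1_{\tau<n}]$, so the $n$-flow for $\phi^{\mathbf y}$ requires the additional observation that $\bar\varphi_{t,a,0}\equiv 0$ kills the boundary term at $\tau=n-1$; the paper makes this explicit, and you should too.
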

We also present a zero-curvature formulation for the Parallel TASEP equation. Alternatively, this may be derived from the well-known zero-curvature conditions of the HBDE (see \cite{Zabrodin_19972}). 
\begin{theorem}[\textbf{Parallel TASEP Eq.\ Zero-Curvature Condition}]
    Fix $M \in \Z$ and a collection of non-vanishing functions $\{F_{t, a, n}\}_{t, a, n\in \Z}$ with boundary condition $F_{t,a,m} \equiv 1$ for all $m \leq M$. Define 
\begin{align*}
    r_{t, a, n} &\defeq \frac{F_{t+1, a-1, n+1}}{F_{t, a, n}},
\end{align*}
    and consider the operators 
    \begin{align}
        \mathcal{M} &\defeq e^{\partial_t} - c \frac{r_{t, a, n}}{r_{t, a+1, n-1}}e^{\partial_a - \partial_n}, \quad 
        \bar{\mathcal{M}} \defeq - \bar{c}e^{-\partial_a} + \frac{r_{t, a, n}}{r_{t-1, a, n-1}}e^{-\partial_t - \partial_n},
    \end{align}
    acting on functions $f_{t, a, n}$, where $c, \bar{c}$ are arbitrary constants such that $c\bar{c}= p$. Then 
    \begin{align}
        [\mathcal{M}, \bar{\mathcal{M}}] = 0 \Longleftrightarrow \left[e^{D_t} -pe^{-D_a} -(1-p)\right] F_{t, a, n}\cdot F_{t, a, n-1} = 0.
    \end{align}
\end{theorem}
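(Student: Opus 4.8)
The plan is to compute the commutator $[\mathcal{M}, \bar{\mathcal{M}}]$ directly, collect the resulting terms according to the distinct shift operators they carry, and show that its vanishing is equivalent to the Hirota equation after one ``integration'' anchored at the boundary data. Abbreviating $g \defeq c\, r_{t,a,n}/r_{t,a+1,n-1}$ and $h \defeq r_{t,a,n}/r_{t-1,a,n-1}$, I would first expand bilinearly
\[
[\mathcal{M}, \bar{\mathcal{M}}] = [e^{\partial_t}, -\bar{c}e^{-\partial_a}] + [e^{\partial_t}, h\, e^{-\partial_t - \partial_n}] + [-g\, e^{\partial_a - \partial_n}, -\bar{c}e^{-\partial_a}] + [-g\, e^{\partial_a - \partial_n}, h\, e^{-\partial_t - \partial_n}],
\]
using the intertwining rule $T\,\phi_{t,a,n} = \phi_{T(t,a,n)}\,T$ for a shift operator $T$ and a multiplication operator $\phi$. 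The first commutator vanishes since constant-coefficient shifts commute; the two middle commutators each produce a term carrying $e^{-\partial_n}$; and the last produces a term carrying $e^{-\partial_t + \partial_a - 2\partial_n}$. As these two shift types are distinct, $[\mathcal{M},\bar{\mathcal{M}}]=0$ if and only if both coefficients vanish.

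Next I would dispatch the $e^{-\partial_t + \partial_a - 2\partial_n}$ coefficient, which equals $-(g_{t,a,n}h_{t,a+1,n-1} - h_{t,a,n}g_{t-1,a,n-1})$. Substituting the definitions of $g,h$ in terms of $r$, and then $r$ in terms of $F$, both products collapse to the single expression $c\, F_{t+1,a-1,n+1}F_{t-1,a+1,n-2}/(F_{t,a,n}F_{t,a,n-1})$, so this coefficient vanishes identically. This is the consistency (cocycle) identity expressing that $g$ and $h$ descend from the single potential $r_{t,a,n} = F_{t+1,a-1,n+1}/F_{t,a,n}$.

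The heart of the matter is the $e^{-\partial_n}$ coefficient $(h_{t+1,a,n} - h_{t,a,n}) + \bar{c}(g_{t,a,n} - g_{t,a-1,n})$. Rewriting the four rational terms in $F$ and using $c\bar c = p$, I would group them into two pairs sharing common factors, clear denominators in each bracket, and recognize the resulting numerators as the Hirota expression
\[
H_{s,b,m} \defeq F_{s+1,b,m}F_{s-1,b,m-1} - pF_{s,b-1,m}F_{s,b+1,m-1} - (1-p)F_{s,b,m}F_{s,b,m-1}
\]
evaluated at $(s,b,m)=(t,a,n)$ and at $(t+1,a-1,n+1)$, the spurious $(1-p)$ terms cancelling between the two pairs. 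The upshot is the identity
\[
(h_{t+1,a,n} - h_{t,a,n}) + \bar c(g_{t,a,n} - g_{t,a-1,n}) = \frac{1}{F_{t,a-1,n}F_{t+1,a,n}}\left[\frac{F_{t,a,n-1}}{F_{t+1,a-1,n}}H_{t+1,a-1,n+1} - \frac{F_{t+1,a-1,n+1}}{F_{t,a,n}}H_{t,a,n}\right].
\]
Since the $F$'s are non-vanishing, this coefficient vanishes if and only if $G_{t+1,a-1,n+1}=G_{t,a,n}$, where $G_{s,b,m} \defeq H_{s,b,m}/(F_{s,b,m}F_{s,b,m-1})$.

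Finally I would close both implications. If the Hirota equation $H\equiv 0$ holds, then $G\equiv 0$, the invariance $G_{t+1,a-1,n+1}=G_{t,a,n}$ holds trivially, both coefficients vanish, and $[\mathcal{M},\bar{\mathcal{M}}]=0$. Conversely, the boundary condition $F_{t,a,m}\equiv 1$ for $m\leq M$ forces $H_{t,a,n} = 1 - p - (1-p) = 0$, hence $G_{t,a,n}=0$, for all $n\leq M$; iterating the invariance along the characteristic $(t,a,n)\mapsto(t-1,a+1,n-1)$ until the third index drops to $\leq M$ then yields $G_{t,a,n}=0$, whence $H_{t,a,n}=0$, for every $(t,a,n)$. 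The main obstacle I anticipate is purely the index bookkeeping in the third step---tracking how each shift relabels $g,h$ when moved across the multiplication operators, and matching the four cleared numerators to the two shifted copies of $H$ while verifying the $(1-p)$ cancellation. Once that identity is secured, the equivalence follows at once from the non-vanishing hypothesis together with the telescoping argument anchored at the boundary values $m\leq M$.
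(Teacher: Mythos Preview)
Your proposal is correct and follows essentially the same route as the paper: expand the commutator into four pieces, dispose of the constant-coefficient and the $e^{-\partial_t+\partial_a-2\partial_n}$ pieces as identically zero, then rewrite the $e^{-\partial_n}$ coefficient as a common prefactor times the difference $G_{t+1,a-1,n+1}-G_{t,a,n}$ and telescope to the boundary. The only cosmetic difference is that the paper works with $\mathcal{K}_{s,b,m} \defeq (F_{s+1,b,m}F_{s-1,b,m-1}-pF_{s,b-1,m}F_{s,b+1,m-1})/(F_{s,b,m}F_{s,b,m-1})$, so that the boundary value is $\mathcal{K}\equiv 1-p$ rather than your $G\equiv 0$; since $G=\mathcal{K}-(1-p)$, the two arguments are literally the same computation.
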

\begin{remark}
    Notice the Parallel TASEP equation is invariant under the ``gauge" transformation $F_{t, a, n} \mapsto g_0(n)g_1(a+n)g_2(-t+n)g_3(-t+a+n) F_{t, a, n}$, where $g_i$ are arbitrary functions, which motivates the choice of operators and coefficients in $\mathcal{M}, \bar{\mathcal{M}}$. Analogous statements can be shown to hold for the bilinear TASEP and RBM equations. This suggests a gauge reformulation of our bilinear equations in the spirit of \cite{gauge}, which we leave for future work.
\end{remark}

\subsection{Outline}
The paper is organized as follows. In the remainder of Section \ref{sec: 1} we state our notational conventions used throughout the paper. In
Section \ref{sec: 2}, we develop a Fredholm determinant solution theory for the six bilinear equations in Table \ref{tab:eqs_models}. In Section \ref{sec: 3}, we verify that the corresponding KPZ models satisfy the conditions of the solution theory presented in the previous section. In Section \ref{sec: 4}, we
derive several formal scaling limits of the bilinear equations, including the KP limit under 1:2:3 KPZ scaling. In Section \ref{sec: 5}, we present Lax pairs and zero-curvature conditions. Appendix \ref{Fredholm Determinants and Elemantary Lemmas} provides a list of Fredholm determinant identities used throughout the
paper, while Appendix \ref{sec: B} rederives the KP equation for the one-point distributions of the KPZ fixed point using the methods presented in this paper.

\subsection{Preliminaries and Conventions}
\paragraph{Trace-Class Operators} 
Let $\mathcal{H} = L^2(X, \mu)$ be a separable Hilbert space equipped with its standard inner product $\langle  \cdot , \cdot  \rangle$. Denote  by $\mathcal{I}_1(\mathcal{H})$ the  set of compact linear operators $A$ such that 
\begin{align}
    \norm{A}_{1} \defeq   \sum_n \langle \sqrt{A^*A} e_n, e_n \rangle < \infty, 
\end{align}
where $\{e_n\}_{n}$ is an orthonormal basis of $\mathcal{H}$. Of particular importance to us are the trace-class \emph{integral operators} $K$, acting on $\mathcal{H}$ via a kernel $K(x, y)$ as  
\begin{equation}
    Kf(x) = \int_X K(x, y)f(y)d\mu(y).
\end{equation}
For such $K$, the Fredholm determinant, 
\begin{align}
    \det(I+K) = \sum_{n=0}^{\infty} \frac{1}{n!} \int_{X^n} \det \bigl[ K(x_i, x_j) \bigr]_{i,j=1}^n \, d\mu(x_1)\cdots d\mu(x_n).
\end{align}
is well-defined and finite (see Appendix~\ref{Fredholm Determinants and Elemantary Lemmas}). If, in addition, $K$ is rank one, we will use the notation 
\begin{equation}
    K  = \psi \otimes \phi, \quad \text{ i.e.\ } \quad Kf(x) = \psi (x) \int_X\phi(y) f(y) d\mu(y).
\end{equation}
 More generally, given a parameter space $(Y, \nu)$, we will consider sums of rank one operators over $Y$, denoted as $A = \int_Y \psi_r \otimes \phi_r \, \nu (dr)$ and acting by $ Af(x) = \int_Y \psi_r(x) \left(\int_{X} \phi_r(y)f(y) d\mu(y) \right) d\nu(r).$
When defined, we will denote the resolvent of an operator $K$ as 
\begin{equation}
    R = (I-K)^{-1}. 
\end{equation}
\paragraph{Difference Operators and Parameter Indices}
For a function $f$ depending on a parameter $z$, denote
\begin{align*}
    \nabla_z^+ f(z) = f(z+1) - f(z), \quad \nabla_z^- f(z) = f(z) - f(z-1)
\end{align*}
and also the shift operators $e^{\partial_z}f(z) = f(z+1), e^{-\partial_z}f = f(z-1) $. For maps depending on parameters such as $F_{t, a, n}$, we will often suppress indices and simply write $F_n = F_{t, a, n}, R_n = (I-K_{t,a, n})^{-1}$, when it is clear from context the other variables are fixed. \paragraph{Topological Conventions} Common topological spaces will be assumed to have their standard topologies unless otherwise stated. When we consider product spaces with the product topology, we will refer to ``open" sets to mean that continuous spaces are open in their standard topology, and, in order to avoid overly pedantic theorem statements, that  discrete sets contain the required lattice points for the theorem (e.g. contain $n, n-1$). We denote $\bar{\R}= \R\cup\{\pm \infty\}$ and $\R_+ = (0, \infty)$. 
\subsection*{Acknowledgments}
I thank my advisor, Jeremy Quastel, for generous support, patient guidance, and many helpful discussions throughout the development of this work.
\section{Fredholm Determinant General Solutions}\label{sec: 2}
In this section we present general Fredholm determinant solutions to the bilinear equations listed in Table ~\ref{tab:eqs_models}. For ease of reference, we label each equation by the probabilistic model it corresponds to, though the solutions we present may apply more generally. Let $\mathcal{H} = L^2(X, \mu)$ denote a separable Hilbert space equipped with its standard inner product $\langle  \cdot , \cdot  \rangle$, and let $\mathcal{I}_1$ be the space  of trace-class operators on $\mathcal{H}$. 

\subsection{RBM Equation}\label{sec: RBM equation}
Let $U = I \times \R \times \{M, \dots, N\}$, where $M < N \in \Z \cup \{\pm \infty\}$, and $I \subseteq \R$ is open. We say a family of trace-class integral operators $A_{t, a, n} \in C_r^{1, 2}(U, \mathcal{I}_1)$ if for all $(t, a, n) \in U$, $A_{t,a, n}$ is trace-class, the map $t \mapsto A_{t,\cdot,\cdot}$ is $C^1$ in trace norm, the map $a \mapsto A_{\cdot, a, \cdot}$ is $C^2$ in trace norm, and for each $(t, a, n) \in U$ and a.e.\ $(x, y) \in X \times X$,  $\lim_{r\rightarrow -\infty}  A_{t,r, n}(x, y) = 0$ with
\begin{equation}
    \int_{-\infty}^{a}  \abs{\partial_t A_{t,r, n}(x, y)}  + \abs{\partial_r A_{t, r, n}(x, y)} + \abs{\partial_r^2 A_{t, r, n}(x, y)}dr < \infty.
\end{equation}

\begin{theorem}[\textbf{RBM Eq.\ General Solutions}]\label{RBM Kernel Thm}
    Let $K_{t, a, n} \in C^{1, 2}_r \bigl(U,    \mathcal{I}_1\bigr)$ be a family of trace-class integral operators acting on $ L^2(X, \mu)$ such that the following three conditions hold: 
    \begin{enumerate}
        \item\label{BLPP DECOMP} ($a$--flows): 
       $\begin{aligned}[t]
            \partial_a K_{t, a, n} &= \psi_{t, a, n} \otimes \phi_{t, a, n}.  
        \end{aligned}$
        \item ($n$--flows): $\begin{aligned}
            \nabla_n^+ \psi_{t, a, n} = \partial_a \psi_{t, a, n}, \quad \nabla_n^- \phi_{t ,a, n} = \partial_a \phi_{t, a, n}.
        \end{aligned}$
        \item\label{RBM t flows}($t$--flows): $\begin{aligned}
            \partial_t \psi_{t, a, n} &= \frac{1}{2}(\partial_a^2 + 2\partial_a + I)\psi_{t, a, n}, \medspace  \partial_t \phi_{t, a, n} = -\frac{1}{2}(\partial_a^2 - 2\partial_a + I)\phi_{t, a, n}.
        \end{aligned}$
    \end{enumerate}
Suppose further $I- K_{t, a, n}$ is invertible for all $(t, a, n) \in V$, with $V \subseteq U$ open and $n>M$. Then 
\begin{equation}
    F_{t, a, n} = \det(I-K_{t, a, n})_{L^2(X, \mu)}, \qquad (t, a, n) \in V
\end{equation} satisfies
    \begin{align}\label{RBM Formula}
        \Bigl[D_t - \frac{1}{2}D_a^2 \Bigr]F_{t, a, n}\cdot F_{t, a, n-1} = 0. 
    \end{align}
\end{theorem}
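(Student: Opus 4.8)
The plan is to reduce the bilinear equation to a verifiable identity among resolvent matrix elements, using three ingredients: the standard determinant calculus, a rank-one relation linking $K_{t,a,n}$ and $K_{t,a,n-1}$ produced by the $n$-flows, and the heat-type $t$-flows. Throughout I write $F_n=\det(I-K_n)$, $R_n=(I-K_n)^{-1}$, and lean on the basic formulas $\partial_x\log F_n=-\operatorname{tr}(R_n\,\partial_x K_n)$ and $\partial_x R_n = R_n(\partial_x K_n)R_n$. Their validity under $\partial_t$ and $\partial_a$ is exactly what the regularity class $C^{1,2}_r(U,\mathcal{I}_1)$ and the decay/integrability hypotheses are designed to guarantee: differentiation under the Fredholm determinant, and recovery of $K_n$ itself as $\int_{-\infty}^{a}\partial_r K_n\,dr$ from the condition $\lim_{r\to-\infty}K_n(x,y)=0$.

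First I would establish the key structural lemma. Rewriting the $n$-flows (ii) as $\psi_n=(I+\partial_a)\psi_{n-1}$ and $\phi_{n-1}=(I-\partial_a)\phi_n$, a short computation using condition (i) gives $\partial_a(K_n-K_{n-1})=\partial_a\psi_{n-1}\otimes\phi_n+\psi_{n-1}\otimes\partial_a\phi_n=\partial_a(\psi_{n-1}\otimes\phi_n)$. Integrating in $a$ from $-\infty$ and using the decay hypothesis to kill the boundary term yields the rank-one update $K_n=K_{n-1}+\psi_{n-1}\otimes\phi_n$. The matrix determinant lemma then produces the bridge relation $F_n/F_{n-1}=1-\langle\phi_n,R_{n-1}\psi_{n-1}\rangle$, which is the engine of the argument: it converts the two-index bilinear object into the logarithmic derivative of a single explicit inner product, and it supplies the second resolvent identity $R_n=R_{n-1}+R_{n-1}(\psi_{n-1}\otimes\phi_n)R_n$ that I will use to pass between $n$ and $n-1$.

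Next I would recast the target equation. Dividing the expanded form through by $F_nF_{n-1}$ and writing $g_n=\log F_n$, $w_n=g_n-g_{n-1}$, $u_n=\partial_a g_n=-\langle\phi_n,R_n\psi_n\rangle$, the Hirota form collapses (after noting $-\tfrac12 u_n^2-\tfrac12 u_{n-1}^2+u_nu_{n-1}=-\tfrac12(u_n-u_{n-1})^2$ and $u_n-u_{n-1}=\partial_a w_n$) to
\[ \partial_t w_n \;=\; \tfrac12\Big[\partial_a^2(g_n+g_{n-1})+(\partial_a w_n)^2\Big]. \]
Both sides are now expressible through resolvent matrix elements of $\psi_{(\cdot)},\phi_{(\cdot)}$: the left side via $\partial_t\log(1-\langle\phi_n,R_{n-1}\psi_{n-1}\rangle)$ from the bridge relation, the right side via the derivative formulas above. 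The $n$-flow relations let me trade $\partial_a$ for index shifts (e.g.\ $\partial_a\psi_n=\psi_{n+1}-\psi_n$), and the $t$-flows, which are the perfect squares $\partial_t\psi_n=\tfrac12(\partial_a+I)^2\psi_n$ and $\partial_t\phi_n=-\tfrac12(\partial_a-I)^2\phi_n$, supply the $t$-derivatives once I express $\partial_t K_n$ by integrating $\partial_a\partial_t K_n=\partial_t\psi_n\otimes\phi_n+\psi_n\otimes\partial_t\phi_n$ from $-\infty$.

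I expect the main obstacle to be this final matching of the $t$-derivative to the right-hand side. Since $\partial_t K_n$ is available only in integrated form, and the flows carry the lower-order pieces $+2\partial_a+I$ and $-2\partial_a+I$, the computation of $\partial_t w_n$ will generate, besides the desired $\partial_a^2$ terms, numerous first- and zeroth-order contributions together with cross terms coming from $\partial_t R_{n-1}=R_{n-1}(\partial_t K_{n-1})R_{n-1}$. The crux is to show, using cyclicity of the trace, the second resolvent identity, and the index-shift relations, that all these extra terms either cancel between the $n$ and $n-1$ contributions or reassemble exactly into $\tfrac12(\partial_a w_n)^2$. Organizing this cancellation — rather than any single estimate — is where the real work lies; the regularity hypotheses enter only to license the term-by-term differentiation and the integrations from $-\infty$.
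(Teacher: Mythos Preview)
Your approach is correct and uses the same ingredients as the paper: the rank-one update $K_n-K_{n-1}=\psi_{n-1}\otimes\phi_n$ obtained by integrating the $n$-flows from $-\infty$, the integrated form of $\partial_t K_n$, and the rank-one resolvent identity to close. The one organizational difference worth knowing is that the paper works directly with $F_n$ rather than the logarithmic ratio: after using the $n$-flows to rewrite $\partial_a\psi_n=\psi_{n+1}-\psi_n$ and $\partial_a\phi_n=\phi_n-\phi_{n-1}$, one finds $\partial_t K_n=\tfrac12(\psi_{n+1}\otimes\phi_n+\psi_n\otimes\phi_{n-1})$ is already a sum of two rank-one pieces, so $\partial_t F_n$ and $\partial_a^2 F_n$ each reduce to exactly two inner products and the combinations $(\partial_t-\tfrac12\partial_a^2)F_n=-F_n\langle R_n\psi_n,\phi_{n-1}\rangle$ and $-(\partial_t+\tfrac12\partial_a^2)F_{n-1}=F_{n-1}\langle R_{n-1}\psi_n,\phi_{n-1}\rangle$ each collapse to a single bracket. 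The ``organized cancellation'' you anticipate as the main obstacle is then a one-line application of the rank-one resolvent identity (Lem.~\ref{Fredholm fg}), not a separate battle.
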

\begin{remark}
    Note that \eqref{RBM Formula} can be written out explicitly as
    \begin{align}
        F_{t, a, n-1} \partial_t F_{t,a ,n} - F_{t, a, n} \partial_t F_{t, a, n-1}  - \frac{1}{2}F_{t, a, n-1}\partial_a^2 F_{t,a, n} + \partial_a F_{t, a, n} \partial_a F_{t,a ,n-1} - \frac{1}{2}F_{t, a, n} \partial_a^2 F_{t, a, n-1} = 0. 
    \end{align}
    Moreover, making the substitution $G_{t, a, n} = \log F_{t, a, n}$, we can rewrite the equation as 
    \begin{align}
        \partial_t (\nabla_n^- G_{t,a, n})  - \frac{1}{2}(\partial_a^2 G_{t, a, n} + \partial_a^2 G_{t,a, n-1}) - \frac{1}{2}(\partial_a \nabla_n^- G_{t, a,n})^2 = 0 .
    \end{align}
    Indeed, this was the original form the equation was discovered in before it was realized it could be put into bilinear form. 
\end{remark}

\begin{proof}[Proof of Thm.\ \ref{RBM Kernel Thm}]
First, notice that due to \eqref{BLPP DECOMP}--\eqref{RBM t flows} and since  $K_{t, a, n} \in C_r^{1, 2}(U, \mathcal{I}_1)$, we have
\begin{align}
    K_{t, a, n} &= \int_{-\infty}^a \psi_{t, r, n} \otimes \phi_{t, r, n}\, dr, \label{BLPP a cond}\\  
        \nabla_n^- K_{t, a, n} &= \psi_{t, a, n-1} \otimes \phi_{t, a, n}\label{BLPP n cond},\\
        \partial_t K_{t, a, n} &= \frac{1}{2}( \psi_{t, a, n+1} \otimes \phi_{t, a, n} + \psi_{t, a, n} \otimes \phi_{t, a, n-1}). \label{BLPP t cond}
\end{align}
To see \eqref{BLPP n cond}, we compute
    \begin{align*}
        \nabla_n^- K_{t,a, n} 
        &= \int_{-\infty}^a \nabla_n^- \psi_{t, r, n} \otimes \phi_{t, r, n} +  \psi_{t, r, n-1} \otimes \nabla_n^- \phi_{t, r, n}\, dr = \int_{-\infty}^a \partial_r( \psi_{t, r, n-1} \otimes \phi_{t, r, n}) dr \\
        &= \psi_{t, a, n-1} \otimes \phi_{t, a, n}.
    \end{align*}
    To see \eqref{BLPP t cond}, we compute
    \begin{align*}
        \partial_t K_{t, a, n} &= \int_{-\infty}^a \partial_t (\psi_{t, r, n} \otimes \phi_{t, r, n})  \\
        &= \frac{1}{2}\int_{-\infty}^a \partial_r \bigl[\partial_r \psi_{t, r, n} \otimes \phi_{t, r,n}  + 2 \psi_{t, r, n} \otimes \phi_{t, r, n}- \psi_{t, r, n} \otimes \partial_r \phi_{t, r, n}\bigr] dr\\
        &= \frac{1}{2}(\partial_a \psi_{t, a, n} \otimes \phi_{t, a, n} + 2\psi_{t, a, n} \otimes \phi_{t, a, n} - \psi_{t, a, n} \otimes \partial_a \phi_{t, a, n}) \\ 
        &= \frac{1}{2}( \psi_{t, a, n+1} \otimes \phi_{t, a, n} + \psi_{t, a, n} \otimes \phi_{t, a, n-1})
    \end{align*}
    as desired. With the help of Lem.\ ~\ref{Fredholm det der}–\ref{lem:resolvent-derivative}, we can now compute
    \begin{align*}
        \partial_t F_n &= -\frac{1}{2}F_n \left( \langle R_n \psi_{n+1}, \phi_n \rangle + \langle R_n \psi_n, \phi_{n-1} \rangle \right),  \\
        \partial_a F_n &= -F_n \langle R_n \psi_n, \phi_n \rangle,  \\
        \partial_a^2 F_n &= -F_n \left(\langle R_n \psi_{n+1}, \phi_n \rangle - \langle R_n \psi_n, \phi_{n-1}\rangle \right).
    \end{align*}
    The first two identities follow directly, and the last one follows from the computation $ \partial_a^2 F_n = -F_n ( -\langle R_n \psi_n, \phi_n \rangle^2 + \langle R_n \psi_n, \phi_n \rangle^2 + \langle R_n \psi_{n+1}, \phi_n \rangle -\langle R_n \psi_n, \phi_n \rangle + \langle R_n \psi_n, \phi_n \rangle - \langle R_n \psi_n, \phi_{n-1}\rangle)$. 
    Now, notice that 
    \begin{align*}
        (\partial_t - \frac{1}{2}\partial_a^2)F_{n} &= -F_{ n}\langle R_n \psi_n, \phi_{n-1}\rangle, \\
        -(\partial_t + \frac{1}{2}\partial_a^2)F_{ n-1} &= F_{n-1}\langle R_{n-1}\psi_{n}, \phi_{n-1}\rangle.
    \end{align*}
Therefore, we have 
\begin{align*}
    &\frac{1}{F_{t, a, n} F_{t, a, n-1}}(D_t F_{t, a , n} \cdot F_{t, a, n-1} - \frac{1}{2}D_a^2 F_{t, a, n} \cdot F_{t,a,n-1}) \\
    &= - \langle R_n \psi_n, \phi_{n-1}\rangle + \langle R_{n-1}\psi_{n}, \phi_{n-1}\rangle + \langle R_n \psi_n, \phi_n\rangle \langle R_{n-1}\psi_{n-1}, \phi_{n-1}\rangle \\
    &= -\frac{F_n}{F_{n-1}}\langle R_n \psi_{n-1}, \phi_{n-1}\rangle \langle R_n \psi_n, \phi_n\rangle +  \langle R_n \psi_n, \phi_n\rangle \langle R_{n-1}\psi_{n-1}, \phi_{n-1}\rangle \\
    &= - \langle R_{n-1} \psi_{n-1}, \phi_{n-1}\rangle \langle R_n \psi_n, \phi_n\rangle +  \langle R_n \psi_n, \phi_n\rangle \langle R_{n-1}\psi_{n-1}, \phi_{n-1}\rangle  \\
    &= 0, 
\end{align*}
where on the third and fourth lines we used Lem.\ ~\ref{Fredholm fg} with $f = \psi_n, g = \phi_{n-1}$ and $\psi= \psi_{n-1}, g = \phi_{n-1}$, respectively. 
\end{proof}

\subsection{TASEP Equations}\label{sec: TASEP gensol}
Let $U = I \times \Z \times \{M, \dots, N\}$, where $M < N \in \Z \cup \{\pm \infty\}$, and $I \subseteq \R$ is open. We say a family of trace-class integral operators $A_{t, a, n} \in C_r^1(U, \mathcal{I}_1)$ if for all $(t, a, n) \in U$, $A_{t,a,n}$ is trace-class, the map $t \mapsto A_{t, \cdot, \cdot}$ is $C^1$ in trace norm, and for each $(t, a, n) \in U$ and a.e.\ $(x, y) \in X \times X$, $\lim_{r\rightarrow -\infty} A_{t,r, n}(x, y) = 0$ with   
\begin{align}
    \sum_{r=-\infty}^{a} \abs{\partial_t A_{t,r, n}(x, y)} + \abs{\nabla_r^+ A_{t, r, n}(x, y)} < \infty. 
\end{align}
\begin{theorem}[\textbf{TASEP Eq.\ General Solutions}]\label{TASEP K THM}
    Let $K_{t, a, n} \in C^1_r(U, \mathcal{I}_1)$ be a family of trace-class integral operators acting on $L^2(X, \mu)$ such that the following three conditions hold: 
    \begin{enumerate}
        \item ($a$--flows): $\begin{aligned}[t] 
        \nabla_a^- K_{t, a, n} &= \psi_{t, a, n} \otimes \phi_{t, a, n}. \label{TASEP grad a decomp}
        \end{aligned}$
        \item\label{TASEP n flows} ($n$--flows): $\begin{aligned}[t]
             \nabla_n^+ \psi_{t, a, n} = 2 \nabla_a^+ \psi_{t, a, n}, \medspace \nabla_n^- \phi_{t ,a, n} = 2 \nabla_a^- \phi_{t, a, n}.
        \end{aligned}$
        \item\label{TASEP t flows} ($t$--flows): $\begin{aligned}[t]
            \partial_t \psi_{t, a, n} &= -\frac{1}{2}\nabla_a^- \psi_{t, a, n}, \medspace \partial_t \phi_{t, a, n} = -\frac{1}{2}\nabla_a^+ \phi_{t, a, n}.
        \end{aligned}$
    \end{enumerate}
   Suppose further $I- K_{t, a, n}$ is invertible for all $(t, a, n)\in V$, with $V \subseteq U$ open and $n > M$. Then 
\begin{equation}
    F_{t, a, n} = \det(I-K_{t, a, n})_{L^2(X, \mu)}, \qquad (t, a, n) \in V
\end{equation} satisfies
    \begin{align}
        \left[D_t - (e^{-D_a} - 1) \right] F_{t, a, n} \cdot F_{t, a, n-1} = 0. \label{TASEP FORM}
    \end{align}
\end{theorem}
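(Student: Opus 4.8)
The plan is to follow the template of the proof of Theorem~\ref{RBM Kernel Thm}, replacing the continuous $a$-calculus by its lattice analogue. First I would integrate (now \emph{sum}) condition~(1): since $\nabla_a^- K_{t,a,n} = \psi_{t,a,n}\otimes\phi_{t,a,n}$ and $K_{t,r,n}(x,y)\to 0$ as $r\to-\infty$, the regularity hypothesis $C^1_r(U,\mathcal{I}_1)$ gives $K_{t,a,n} = \sum_{r=-\infty}^{a}\psi_{t,r,n}\otimes\phi_{t,r,n}$. From this representation I would derive rank-one expressions for the remaining two flows by the discrete Leibniz rule together with telescoping. Using the $n$-flows of (2) in the form $\nabla_n^-\psi_{t,r,n}=2\nabla_r^+\psi_{t,r,n-1}$ and $\nabla_n^-\phi_{t,r,n}=2\nabla_r^-\phi_{t,r,n}$, the summand $\nabla_n^-(\psi_{t,r,n}\otimes\phi_{t,r,n})$ collapses to the forward difference $2\nabla_r^+(\psi_{t,r,n-1}\otimes\phi_{t,r-1,n})$, so that
\begin{equation*}
\nabla_n^- K_{t,a,n} = 2\,\psi_{t,a+1,n-1}\otimes\phi_{t,a,n}.
\end{equation*}
Likewise, feeding the $t$-flows of (3) into $\partial_t(\psi_{t,r,n}\otimes\phi_{t,r,n})$ produces $-\tfrac12\nabla_r^+(\psi_{t,r-1,n}\otimes\phi_{t,r,n})$, which telescopes to
\begin{equation*}
\partial_t K_{t,a,n} = -\tfrac12\,\psi_{t,a,n}\otimes\phi_{t,a+1,n}.
\end{equation*}

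Next, with Lemmas~\ref{Fredholm det der}--\ref{lem:resolvent-derivative} and the rank-one determinant identity, I would convert each term of the explicit bilinear equation into a resolvent pairing. Writing $R_n=(I-K_{t,a,n})^{-1}$, the $t$-derivatives become $\partial_t F_n/F_n = \tfrac12\langle R_n\psi_{t,a,n},\phi_{t,a+1,n}\rangle$, while the $a$-shifts, read off from $K_{t,a-1,n}=K_{t,a,n}-\psi_{t,a,n}\otimes\phi_{t,a,n}$ and $K_{t,a+1,n-1}=K_{t,a,n-1}+\psi_{t,a+1,n-1}\otimes\phi_{t,a+1,n-1}$, become $F_{t,a-1,n}/F_{t,a,n}=1+\langle R_n\psi_{t,a,n},\phi_{t,a,n}\rangle$ and $F_{t,a+1,n-1}/F_{t,a,n-1}=1-\langle R_{n-1}\psi_{t,a+1,n-1},\phi_{t,a+1,n-1}\rangle$. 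Dividing the target equation by $F_{t,a,n}F_{t,a,n-1}$ then reduces it to a scalar identity among these pairings.

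The main obstacle is that these pairings live at mismatched indices: the $e^{-D_a}$ term forces shifts in $a$ that interlock with the shifts in $n$, so unlike the RBM case the resolvents $R_n$, $R_{n-1}$ and the test functions at levels $n$, $n-1$ and positions $a$, $a+1$ do not line up directly. To resolve this I would first use the algebraic consequences of the $n$-flows, namely $\psi_{t,a,n}=2\psi_{t,a+1,n-1}-\psi_{t,a,n-1}$ and $\phi_{t,a+1,n}=2\phi_{t,a,n}-\phi_{t,a+1,n-1}$, to rewrite every pairing with test functions at level $n-1$. This exposes differences $\langle R_n f,g\rangle-\langle R_{n-1}f,g\rangle$, which I would collapse with the Sherman--Morrison resolvent identity (Lem.~\ref{Fredholm fg}) applied to $K_{t,a,n}=K_{t,a,n-1}+2\psi_{t,a+1,n-1}\otimes\phi_{t,a,n}$, each such difference yielding a factor $F_{n-1}/F_n$ times a product of pairings. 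After these substitutions I expect the whole expression to factor as
\begin{equation*}
\langle R_{n-1}\psi_{t,a+1,n-1},\phi_{t,a+1,n-1}\rangle\bigl[\,1+\langle R_n\psi_{t,a,n},\phi_{t,a,n}\rangle-\tfrac{F_{n-1}}{F_n}\bigl(1-\langle R_{n-1}\psi_{t,a,n-1},\phi_{t,a,n}\rangle\bigr)\bigr],
\end{equation*}
whose bracket vanishes precisely because the two rank-one relations $K_{t,a-1,n}=K_{t,a,n}-\psi_{t,a,n}\otimes\phi_{t,a,n}$ and $K_{t,a-1,n}=K_{t,a,n-1}+\psi_{t,a,n-1}\otimes\phi_{t,a,n}$ compute the same ratio $F_{t,a-1,n}/F_{t,a,n-1}$. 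The delicate points are thus the bookkeeping of the interlocking shifts and verifying this cocycle-type consistency of the chain of rank-one determinant ratios.
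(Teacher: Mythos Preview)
Your proposal is correct and follows the same route as the paper: derive the rank-one identities $\nabla_n^-K=2\psi_{a+1,n-1}\otimes\phi_{a,n}$ and $\partial_tK=-\tfrac12\psi_{a,n}\otimes\phi_{a+1,n}$ by telescoping, convert the bilinear expression into resolvent pairings, realign indices via $2\phi_{a,n}=\phi_{a+1,n}+\phi_{a+1,n-1}$ and $2\psi_{a+1,n-1}=\psi_{a,n}+\psi_{a,n-1}$, and close with Lemma~\ref{Fredholm fg} applied to $K_n=K_{n-1}+2\psi_{a+1,n-1}\otimes\phi_{a,n}$. The only cosmetic difference is the endgame: the paper reduces to $-\tfrac12\langle R_n\psi_{a,n},\phi_{a+1,n-1}\rangle+\tfrac12\langle R_{n-1}\psi_{a,n},\phi_{a+1,n-1}\rangle+\langle R_n\psi_{a,n},\phi_{a,n}\rangle\langle R_{n-1}\psi_{a+1,n-1},\phi_{a+1,n-1}\rangle$ and kills it with a single application of \eqref{sp rank one id}, whereas your factored bracket $F_{a-1,n}/F_{a,n}-(F_{n-1}/F_n)(F_{a-1,n}/F_{a,n-1})$ vanishes by the cocycle of determinant ratios; these are equivalent (your second rank-one relation $K_{a-1,n}=K_{a,n-1}+\psi_{a,n-1}\otimes\phi_{a,n}$ is exactly what makes \eqref{sp rank one id} apply).
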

\begin{remark}
    Note that \eqref{TASEP FORM} can be written out explicitly as 
    \begin{align*}
        F_{t, a, n-1}\partial_t F_{t, a, n} - F_{t, a, n} \partial_t F_{t, a, n-1} - F_{t, a-1, n}F_{t, a+1, n-1} + F_{t, a, n}F_{t, a, n-1} = 0.
    \end{align*}
\end{remark}
\begin{proof}[Proof of Thm.\ \ref{TASEP K THM}]
    First, notice that due to \eqref{TASEP grad a decomp}--\eqref{TASEP t flows} and since $K_{t ,a, n} \in C_r^{1}(U, \mathcal{I}_1)$, we have 
    \begin{align}
        K_{t, a, n} &= \sum_{r = -\infty}^a \psi_{t, r, n} \otimes \phi_{t, r, n}, \\
        \nabla_n^- K_{t, a, n} &= 2 \psi_{t, a+1, n-1} \otimes  \phi_{t, a, n} \label{TASEP grad n on K},\\
        \partial_t K_{t, a, n} &= -\frac{1}{2}\psi_{t, a, n} \otimes \phi_{t, a+1, n}. \label{TASEP partial t}
    \end{align}
    To see \eqref{TASEP grad n on K}, we compute 
    \begin{align*}
        K_{t, a, n} - K_{t, a, n-1} 
        &= \sum_{r= -\infty}^a \nabla_n^- \psi_{t, r, n} \otimes \phi_{t, r, n} + \psi_{t, r, n-1}\otimes \nabla_n^- \phi_{t, r, n} =  2 \sum_{r=-\infty}^a \nabla_r^+ (\psi_{t, r, n-1} \otimes \phi_{t, r-1, n}) \\
        &= 2 \psi_{t, a+1, n-1} \otimes \phi_{t, a, n},
    \end{align*}
    To see \eqref{TASEP partial t}, we compute
    \begin{align*}
        \partial_t K_{t, a, n} &= \sum_{r=-\infty}^a \partial_t \psi_{t, r, n} \otimes \phi_{t, r, n} + \psi_{t,r, n} \otimes \partial_t \phi_{t, r, n} = -\frac{1}{2}\sum_{r=-\infty}^a  \nabla_r^+ (\psi_{t, r-1, n} \otimes \phi_{t, r, n}) \\
        &= -\frac{1}{2}\psi_{t, a, n} \otimes \phi_{t, a+1, n}.
    \end{align*}
    With the help of Lem.\ \ref{Fredholm det der}--\ref{lem:resolvent-derivative}, we can now compute
    \begin{align*}
        \frac{1}{F_nF_{n-1}}D_t F_n \cdot F_{n-1} &= \frac{1}{2}\langle R_n \psi_{a, n}, \phi_{a+1, n} \rangle - \frac{1}{2}\langle R_{n-1} \psi_{a, n-1}, \phi_{a+1, n-1} \rangle.
    \end{align*}
    In addition, using Lem.\ \ref{Fredholm rank one} and the $n$--flows, we have 
    \begin{align*}
        \frac{F_{a-1, n}F_{a+1, n-1}}{F_n F_{n-1}} &= \bigl(1+\langle R_n \psi_{a, n}, \phi_{a, n}\rangle \bigr)\bigl(1-\langle R_{n-1}\psi_{a+1, n-1}, \phi_{a+1, n-1}\rangle \bigr) \\
        &= 1 + \langle R_n \psi_{a, n}, \phi_{a, n}\rangle-\langle R_{n-1}\psi_{a+1, n-1}, \phi_{a+1, n-1}\rangle \\
        &\qquad - \langle R_n \psi_{a, n}, \phi_{a, n}\rangle\langle R_{n-1}\psi_{a+1, n-1}, \phi_{a+1, n-1}\rangle \\
        &= 1 + \tfrac{1}{2}\langle R_n \psi_{a, n}, \phi_{a+1, n}\rangle + \tfrac{1}{2}\langle R_n\psi_{a, n}, \phi_{a+1, n-1}\rangle \\
        &\qquad - \tfrac{1}{2}\langle R_{n-1}\psi_{a, n}, \phi_{a+1, n-1}\rangle - \tfrac{1}{2}\langle R_{n-1}\psi_{a, n-1}, \phi_{a+1, n-1}\rangle \\
        &\qquad  - \langle R_n \psi_{a, n}, \phi_{a, n}\rangle\langle R_{n-1}\psi_{a+1, n-1}, \phi_{a+1, n-1}\rangle.
    \end{align*}
    Therefore, we have
    \begin{align*}
        & \frac{1}{F_{t, a, n} F_{t, a, n-1}}\left[D_t F_{t,a, n} \cdot F_{t,a, n-1} - F_{t, a-1, n}F_{t, a+1, n-1} +F_{t,a,n}F_{t, a, n-1}\right] \\
        &= -\frac{1}{2}\langle R_n \psi_{a, n}, \phi_{a+1, n-1}\rangle + \frac{1}{2}\langle R_{n-1} \psi_{a, n}, \phi_{a+1, n-1}\rangle + \langle R_n \psi_{a, n}, \phi_{a, n}\rangle\langle R_{n-1}\psi_{a+1, n-1}, \phi_{a+1, n-1}\rangle \\
        &= 0. 
    \end{align*}
    The last line follows since by using Lem.\ \ref{Fredholm fg} with $\psi = 2\psi_{a+1, n-1}, \phi = \phi_{a, n}$, we have
    \begin{align*}
        &-\frac{1}{2}\langle R_n \psi_{a, n}, \phi_{a+1, n-1}\rangle + \frac{1}{2}\langle R_{n-1} \psi_{a, n}, \phi_{a+1, n-1}\rangle \\
        &= -\frac{F_n}{F_{n-1}}\langle R_n \psi_{a+1, n-1}, \phi_{a+1, n-1}\rangle\langle R_n \psi_{a, n}, \phi_{a, n}\rangle \\
        &= - \langle R_{n-1}\psi_{a+1, n-1}, \phi_{a+1, n-1}\rangle \langle R_n \psi_{a, n}, \phi_{a, n}\rangle.
    \end{align*}
\end{proof}
\begin{theorem}[\textbf{Push-TASEP Eq.\ General Solutions}]\label{PUSHTASEP K thm}
   Let $K_{t, a, n} \in C^1_r(U, \mathcal{I}_1)$ be a family of trace-class integral operators acting on $L^2(X, \mu)$ such that the following three conditions hold: 
       \begin{enumerate}
        \item\label{Push-Tasep a flows} ($a$--flows): $\begin{aligned}[t]
            \nabla_a^- K_{t, a, n} &= \psi_{t, a, n} \otimes \phi_{t, a,n}.
        \end{aligned}$ 
    \item\label{Push-TASEP n flows} ($n$--flows): $
    \begin{aligned}[t]
        \nabla_n^+ \psi_{t, a, n } &= 2 \nabla_a^+ \psi_{t, a, n}, \medspace \nabla_n^- \phi_{t, a, n} = 2 \nabla_a^- \phi_{t, a, n}.
    \end{aligned} $
    \item\label{Push-TASEP t flows} ($t$--flows): $
    \begin{aligned}[t]
        \partial_t \psi_{t, a, n} =  2  \nabla_a^+ \psi_{t, a, n}, \medspace \partial_t \phi_{t, a, n} =   2 \nabla_a^- \phi_{t, a, n}.
    \end{aligned}$
    \end{enumerate}
   Suppose further $I- K_{t, a, n}$ is invertible for all $(t, a, n)\in V$, with $V \subseteq U$ open and $n > M$. Then 
\begin{equation}
    F_{t, a, n} = \det(I-K_{t, a, n})_{L^2(X, \mu)}, \qquad (t, a, n) \in V
\end{equation} satisfies
    \begin{align}
        \left[D_t -  (e^{D_a} - 1)\right]F_{t, a, n} \cdot F_{t, a+1, n-1} &= 0. \label{Push Tasep FOrm} 
    \end{align}
\end{theorem}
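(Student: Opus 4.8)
The plan is to follow the proof of Thm.~\ref{TASEP K THM} closely, since the $a$-- and $n$--flows here are \emph{identical} to those of the TASEP equation; only the $t$--flow requires a fresh computation, and the combinatorial skeleton carries over. First I would integrate the flow conditions over $r \le a$, using the decay hypothesis built into $C^1_r(U,\mathcal{I}_1)$ together with summation by parts. The $a$-- and $n$--identities reproduce \eqref{TASEP grad n on K}, namely $K_{t,a,n} = \sum_{r \le a}\psi_{t,r,n}\otimes\phi_{t,r,n}$ and $\nabla_n^- K_{t,a,n} = 2\,\psi_{t,a+1,n-1}\otimes\phi_{t,a,n}$. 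The only genuinely new step is the $t$--flow: substituting the $t$--flow hypotheses and telescoping via $2\nabla_a^+\psi_{t,r,n}\otimes\phi_{t,r,n} + 2\psi_{t,r,n}\otimes\nabla_a^-\phi_{t,r,n} = 2\nabla_r^+(\psi_{t,r,n}\otimes\phi_{t,r-1,n})$ yields the clean rank-one form $\partial_t K_{t,a,n} = 2\,\psi_{t,a+1,n}\otimes\phi_{t,a,n}$.

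Writing the target equation out explicitly as
\[
(\partial_t F_{t,a,n})F_{t,a+1,n-1} - F_{t,a,n}(\partial_t F_{t,a+1,n-1}) - F_{t,a+1,n}F_{t,a,n-1} + F_{t,a,n}F_{t,a+1,n-1} = 0,
\]
I would then invoke Lem.~\ref{Fredholm det der}--\ref{lem:resolvent-derivative} to obtain $\partial_t F_{t,a,n} = -2F_{t,a,n}\langle R_{t,a,n}\psi_{t,a+1,n},\phi_{t,a,n}\rangle$ (and its analogue at $(a+1,n-1)$), and Lem.~\ref{Fredholm rank one} to factor $F_{t,a+1,n}F_{t,a,n-1}/(F_{t,a,n}F_{t,a+1,n-1})$ as $(1-\langle R_{t,a,n}\psi_{t,a+1,n},\phi_{t,a+1,n}\rangle)(1+\langle R_{t,a+1,n-1}\psi_{t,a+1,n-1},\phi_{t,a+1,n-1}\rangle)$. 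Dividing the whole equation by $F_{t,a,n}F_{t,a+1,n-1}$ reduces the claim to an identity among resolvent inner products anchored at the two base points $(a,n)$ and $(a+1,n-1)$.

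The main obstacle, and the sole structural departure from the TASEP proof, is that the bilinear product pairs $F$ at $(a,n)$ against $(a+1,n-1)$, so the two resolvents $R_{t,a,n}$ and $R_{t,a+1,n-1}$ sit at base points differing in \emph{both} $a$ and $n$; composing $\nabla_n^-$ with $\nabla_a^-$ shows their kernels differ a priori by a rank-\emph{two} operator. The observation that unlocks the argument is that the $n$--flow, rewritten as $\phi_{t,a+1,n-1} = 2\phi_{t,a,n} - \phi_{t,a+1,n}$, collapses this difference to rank one:
\[
K_{t,a+1,n-1} = K_{t,a,n} - \psi_{t,a+1,n-1}\otimes\phi_{t,a+1,n}.
\]
Using the $n$--flow identities $\phi_{t,a,n} = \tfrac12(\phi_{t,a+1,n-1}+\phi_{t,a+1,n})$ and $\psi_{t,a+2,n-1} = \tfrac12(\psi_{t,a+1,n}+\psi_{t,a+1,n-1})$, the left-hand side (divided by $F_{t,a,n}F_{t,a+1,n-1}$) telescopes to
\begin{align*}
&-\langle R_{t,a,n}\psi_{t,a+1,n},\phi_{t,a+1,n-1}\rangle + \langle R_{t,a+1,n-1}\psi_{t,a+1,n},\phi_{t,a+1,n-1}\rangle \\
&\quad + \langle R_{t,a,n}\psi_{t,a+1,n},\phi_{t,a+1,n}\rangle\,\langle R_{t,a+1,n-1}\psi_{t,a+1,n-1},\phi_{t,a+1,n-1}\rangle.
\end{align*}

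Finally, I would apply the Sherman--Morrison identity Lem.~\ref{Fredholm fg} with the rank-one perturbation $\psi_{t,a+1,n-1}\otimes\phi_{t,a+1,n}$ to combine the first two terms into $-\tfrac{F_{t,a,n}}{F_{t,a+1,n-1}}\langle R_{t,a,n}\psi_{t,a+1,n-1},\phi_{t,a+1,n-1}\rangle\langle R_{t,a,n}\psi_{t,a+1,n},\phi_{t,a+1,n}\rangle$. A second application of the same lemma with the diagonal pairing $f=\psi_{t,a+1,n-1}$, $g=\phi_{t,a+1,n-1}$, together with $1+\langle R_{t,a,n}\psi_{t,a+1,n-1},\phi_{t,a+1,n}\rangle = F_{t,a+1,n-1}/F_{t,a,n}$, gives the identity $\tfrac{F_{t,a,n}}{F_{t,a+1,n-1}}\langle R_{t,a,n}\psi_{t,a+1,n-1},\phi_{t,a+1,n-1}\rangle = \langle R_{t,a+1,n-1}\psi_{t,a+1,n-1},\phi_{t,a+1,n-1}\rangle$. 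Substituting this makes the remaining two terms cancel identically, giving $0$. I expect the fiddly part to be the bookkeeping of the $n$--flow substitutions and tracking which resolvent is anchored at which base point; the conceptual heart is the rank-one collapse of $K_{t,a+1,n-1}-K_{t,a,n}$, after which the cancellation mirrors the TASEP computation.
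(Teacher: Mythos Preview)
Your proposal is correct and follows essentially the same route as the paper's proof: the same three rank-one kernel identities (including the key $\partial_t K_{t,a,n} = 2\,\psi_{t,a+1,n}\otimes\phi_{t,a,n}$), the same rank-one collapse $K_{t,a,n}-K_{t,a+1,n-1}=\psi_{t,a+1,n-1}\otimes\phi_{t,a+1,n}$, and the same two applications of Lem.~\ref{Fredholm fg} to close the cancellation. The only cosmetic difference is that the paper invokes the special case \eqref{sp rank one id} directly rather than routing through the auxiliary identity $1+\langle R_{t,a,n}\psi_{t,a+1,n-1},\phi_{t,a+1,n}\rangle=F_{t,a+1,n-1}/F_{t,a,n}$, but this is the same content.
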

\begin{remark}
    Note that \eqref{Push Tasep FOrm} can be written out explicitly as 
    \begin{align*}
        F_{t, a+1, n-1}\partial_t F_{t,a, n} - F_{t, a, n}\partial_t F_{t, a+1, n-1} - F_{t, a+1, n}F_{t, a, n-1} + F_{t, a, n}F_{t, a+1, n-1} = 0. 
    \end{align*}
\end{remark}
\begin{proof}[Proof of Thm.\ \ref{PUSHTASEP K thm}]
    First, notice that due to \eqref{Push-Tasep a flows}--\eqref{Push-TASEP t flows} and since $K_{t, a, n} \in C_r^1(U, \mathcal{I}_1)$, we have
    \begin{align}
        K_{t, a, n} &= \sum_{r= -\infty}^a \psi_{t, r, n}\otimes \phi_{t, r,n}, \\
        \nabla_n^- K_{t, a, n} &= 2 \psi_{t, a+1, n-1} \otimes \phi_{t, a, n}, \\
        \partial_t K_{t, a, n} &=  2 \psi_{t, a+1, n} \otimes \phi_{t, a, n}, \label{PushTASEp dev on t}
    \end{align}
    where \eqref{PushTASEp dev on t} follows from 
    \begin{align*}
    \partial_t K_{t, a, n} &=2 \sum_{r=-\infty}^a \psi_{t, r+1, n} \otimes \phi_{t, r, n} - \psi_{t, r, n}\otimes \phi_{t, r-1, n} =  2 \psi_{t, a+1, n}\otimes \phi_{t, a, n}.
\end{align*}
 With the help of Lem.\ \ref{Fredholm det der}--\ref{lem:resolvent-derivative}, we can now compute
\begin{align*}
    \frac{1}{F_{a, n}F_{a+1, n-1}}D_t F_{a, n} \cdot F_{a+1, n-1} = -2 \langle R_{a, n} \psi_{a+1, n}, \phi_{a, n}\rangle + 2 \langle R_{a+1, n-1}\psi_{a+2, n-1}, \phi_{a+1, n-1}\rangle.
\end{align*}
In addition, using Lem.\ \ref{Fredholm rank one} and the $n$--flows, we have 
\begin{align*}
    \frac{F_{a+1, n}F_{a, n-1}}{F_{a, n}F_{a+1, n-1}} &= \bigl(1- \langle R_n \psi_{a+1, n}, \phi_{a+1, n}\rangle \bigr)\bigl(1+ \langle R_{a+1, n-1}\psi_{a+1, n-1}, \phi_{a+1, n-1}\rangle \bigr) \\
    &= 1 - \langle R_n \psi_{a+1, n}, \phi_{a+1, n}\rangle+ \langle R_{a+1, n-1}\psi_{a+1, n-1}, \phi_{a+1, n-1}\rangle \\
    &\qquad - \langle R_n \psi_{a+1, n}, \phi_{a+1, n}\rangle\langle R_{a+1, n-1}\psi_{a+1, n-1}, \phi_{a+1, n-1}\rangle \\
    &= 1 - 2\langle R_n \psi_{a+1, n}, \phi_{a, n}\rangle + \langle R_n \psi_{a+1, n}, \phi_{a+1, n-1}\rangle \\
    &\qquad + 2\langle R_{a+1, n-1}\psi_{a+2, n-1}, \phi_{a+1, n-1}\rangle - \langle R_{a+1, n-1}\psi_{a+1, n}, \phi_{a+1, n-1}\rangle \\
    &\qquad - \langle R_n \psi_{a+1, n}, \phi_{a+1, n}\rangle\langle R_{a+1, n-1}\psi_{a+1, n-1}, \phi_{a+1, n-1}\rangle
\end{align*}
Therefore, we have 
\begin{align*}
    &\frac{1}{F_{t, a, n} F_{t, a+1, n-1}} \left[ D_t F_{ t, a, n}\cdot F_{t, a+1, n-1} -  F_{t, a+1, n}F_{t, a, n-1} +  F_{t, a, n}F_{t, a+1, n-1} \right] \\
    &= - \langle R_{n} \psi_{a+1, n}, \phi_{a+1, n-1}\rangle +  \langle R_{a+1, n-1}\psi_{a+1, n}, \phi_{a+1, n-1}\rangle \\
    &\quad +  \langle R_n \psi_{a+1, n}, \phi_{a+1, n}\rangle\langle R_{a+1, n-1}\psi_{a+1, n-1}, \phi_{a+1, n-1}\rangle \\
    &= 0,
\end{align*}
where the last line follows from  the fact 
$K_{t, a, n} - K_{t, a+1, n-1} = \psi_{t, a+1, n-1} \otimes \phi_{t, a+1, n}$, and so applying Lem.\ \ref{Fredholm fg} we have 
\begin{align*}
    &- \langle R_{n} \psi_{a+1, n}, \phi_{a+1, n-1}\rangle +  \langle R_{a+1, n-1}\psi_{a+1, n}, \phi_{a+1, n-1}\rangle \\
    &= -  \frac{F_{n}}{F_{a+1, n-1}}\langle R_n \psi_{a+1, n}, \phi_{a+1, n}\rangle \langle R_{n} \psi_{a+1, n-1}, \phi_{a+1, n-1}\rangle \\
    &= - \langle R_n \psi_{a+1, n}, \phi_{a+1, n}\rangle\langle R_{a+1, n-1}\psi_{a+1, n-1}, \phi_{a+1, n-1}\rangle.
\end{align*}

\end{proof}

\subsection{Discrete-Time TASEP Equations}\label{sec: Discrete TASEP gensol}
Let $U = I \times \Z \times \{M, \dots, N\}$, where $M < N \in \Z \cup \{\pm \infty\}$, and $I \subseteq \Z$. We say a family of trace-class integral operators $A_{t, a, n} \in B_r(U, \mathcal{I}_1)$ if for each $(t,a , n) \in U$, $A_{t,a,n}$ is trace-class and for a.e.\ $(x, y) \in X \times X$, $\lim_{r \rightarrow -\infty} A_{t,r, n}(x, y) = 0$ with 
\begin{align}
    \sum_{r=-\infty}^{a} \abs{\nabla_t^+ A_{t, r, n}(x, y)}+ \abs{\nabla_r^+ A_{t, r, n}(x, y)}  < \infty. 
\end{align}
\begin{theorem}[\textbf{Parallel TASEP Eq.\ General Solutions}]\label{PTASEP K THM}
    Let $K_{t, a, n} \in B_r(U, \mathcal{I}_1)$ be a family of trace-class integral operators acting on $L^2(X, \mu)$ such that the following three conditions hold: 
    \begin{enumerate}
        \item \label{PTASEP grad a decomp}($a$--flows): $\begin{aligned}[t] 
        \nabla_a^- K_{t, a, n} &= \psi_{t, a, n} \otimes \phi_{t, a, n} .
        \end{aligned}$
        \item\label{Parallel TASEP n flows} ($n$--flows): $\begin{aligned}[t]
             \nabla_n^+ \psi_{t, a, n} &= \delta \psi_{t, a+1, n} - \psi_{t, a, n} - \gamma \psi_{t-1, a, n}, \\
             \nabla_n^- \phi_{t ,a, n} &= -(\delta \phi_{t, a-1, n} -\phi_{t, a, n }  - \gamma \phi_{t+1, a, n}).
        \end{aligned}$
        \item\label{Parallel TASEP t flows}($t$--flows): $\begin{aligned}[t]
            \nabla_t^+ \psi_{t, a, n} &= \beta \nabla_a^- \psi_{t, a, n}, \medspace \nabla_t^- \phi_{t, a, n} = \beta\nabla_a^+ \phi_{t, a, n},
        \end{aligned}$
    \end{enumerate}
    for some arbitrary constants $\beta, \delta, \gamma$ with $\gamma \neq 0$. Suppose further $I-K_{t, a, n}$ is invertible for all $(t, a, n) \in V \subseteq U$ open with $n > M$.  Then 
\begin{equation}
    F_{t, a, n} = \det(I-K_{t, a, n})_{L^2(X, \mu)}, \qquad (t, a, n) \in V
\end{equation} satisfies
    \begin{align}
        \left[e^{D_t} - p e^{-D_a} -(1-p) \right] F_{t, a, n} \cdot F_{t, a, n-1} = 0, \qquad p = -\frac{\beta \delta}{\gamma}. \label{PTASEP FORM}
    \end{align}
\end{theorem}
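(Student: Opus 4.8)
The plan is to follow the template of the TASEP and Push--TASEP proofs: first convert the three flow hypotheses into rank-one (and, crucially, rank-two) expressions for the relevant shifts of $K_{t,a,n}$, then use the rank-one perturbation formula for Fredholm determinants to turn each determinant ratio in the bilinear equation into a resolvent inner product, and finally verify that the resulting algebraic identity collapses, using the constraint $p=-\beta\delta/\gamma$.

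First I would record the consequences of the flows. Since $K\in B_r(U,\mathcal{I}_1)$ and condition (i) holds, summing the $a$-gradient gives $K_{t,a,n}=\sum_{r\le a}\psi_{t,r,n}\otimes\phi_{t,r,n}$. Substituting the $t$-flows (iii) into $\nabla_t^{\pm}(\psi_{t}\otimes\phi_{t})$ and telescoping in $r$ yields the clean rank-one identities
\[
\nabla_t^{+}K_{t,a,n}=\beta\,\psi_{t,a,n}\otimes\phi_{t+1,a+1,n},\qquad \nabla_t^{-}K_{t,a,n}=\beta\,\psi_{t-1,a,n}\otimes\phi_{t,a+1,n}.
\]
The $n$-flow (ii) is the genuinely new ingredient. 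Writing $\nabla_n^{-}(\psi_{n}\otimes\phi_{n})=(\nabla_n^{-}\psi_{n})\otimes\phi_{n}+\psi_{n-1}\otimes(\nabla_n^{-}\phi_{n})$ and substituting (ii), the two ``diagonal'' contributions cancel; the $\delta$-terms telescope in $r$ to one rank-one operator, while the $\gamma$-terms assemble into a time-difference $\gamma\,\nabla_t^{-}\bigl(\sum_{r\le a}\psi_{t,r,n-1}\otimes\phi_{t+1,r,n}\bigr)$. The key point is that this time-difference \emph{itself} telescopes once the $t$-flows are applied, producing a \emph{second} rank-one piece. Hence
\[
\nabla_n^{-}K_{t,a,n}=\delta\,\psi_{t,a+1,n-1}\otimes\phi_{t,a,n}+\beta\gamma\,\psi_{t-1,a,n-1}\otimes\phi_{t+1,a+1,n},
\]
a \emph{rank-two} operator, in contrast to the single rank-one difference of the TASEP case. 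I would also record the two three-term recursions encoded by (ii), namely $\psi_{t,a,n}=\delta\psi_{t,a+1,n-1}-\gamma\psi_{t-1,a,n-1}$ and $\phi_{t,a,n-1}=\delta\phi_{t,a-1,n}-\gamma\phi_{t+1,a,n}$.

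Next I would divide the bilinear equation by $F_{t,a,n}F_{t,a,n-1}$ and evaluate the four determinant ratios with the rank-one perturbation lemma (Lem.\ \ref{Fredholm rank one}): the $t$-shifts give factors $1-\beta X_1$ and $1+\beta X_2$, and the $a$-shifts give $1+Y_1$ and $1-Y_2$, where $X_1=\langle R_n\psi_{t,a,n},\phi_{t+1,a+1,n}\rangle$, $X_2=\langle R_{n-1}\psi_{t-1,a,n-1},\phi_{t,a+1,n-1}\rangle$, $Y_1=\langle R_n\psi_{t,a,n},\phi_{t,a,n}\rangle$, and $Y_2=\langle R_{n-1}\psi_{t,a+1,n-1},\phi_{t,a+1,n-1}\rangle$. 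Expanding reduces the equation to $\beta(X_2-X_1)+p(Y_2-Y_1)-\beta^2X_1X_2+pY_1Y_2=0$. Using the $\phi$- and $\psi$-recursions to rewrite $\phi_{t+1,a+1,n}$ and $\psi_{t-1,a,n-1}$, together with $\beta\delta/\gamma=-p$, I obtain $\beta X_1=-pY_1-\tfrac{\beta}{\gamma}\langle R_n\psi_{t,a,n},\phi_{t,a+1,n-1}\rangle$ and $\beta X_2=-pY_2-\tfrac{\beta}{\gamma}\langle R_{n-1}\psi_{t,a,n},\phi_{t,a+1,n-1}\rangle$, so the linear part collapses and the equation becomes $-\tfrac{\beta}{\gamma}\langle(R_{n-1}-R_n)\psi_{t,a,n},\phi_{t,a+1,n-1}\rangle-\beta^2X_1X_2+pY_1Y_2=0$.

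The final step, which I expect to be the main obstacle, is the bridge between the two levels. Unlike the TASEP/Push--TASEP proofs, where a single rank-one $\nabla_n^{-}K$ lets one invoke Lem.\ \ref{Fredholm fg} directly, here I must use the general resolvent identity $R_{n-1}-R_n=-R_{n-1}(\nabla_n^{-}K)R_n$ with the \emph{rank-two} operator found above. This produces exactly two inner-product products, giving $\langle(R_{n-1}-R_n)\psi_{t,a,n},\phi_{t,a+1,n-1}\rangle=-\delta\,Y_1Y_2-\beta\gamma\,X_1X_2$. Substituting this into the reduced equation, the $\beta^2X_1X_2$ terms cancel and the remaining coefficient of $Y_1Y_2$ is $\tfrac{\beta\delta}{\gamma}+p$, which vanishes precisely because $p=-\beta\delta/\gamma$. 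The delicate points will be the bookkeeping of the $(t,a,n)$ shifts throughout, the correct signs in the rank-one perturbation formula, and verifying rigorously that the $\gamma$-term of $\nabla_n^{-}K$ does telescope to the stated second rank-one piece.
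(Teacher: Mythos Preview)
Your argument is correct, and in fact it is somewhat cleaner than the paper's own proof. The two proofs agree on the preliminaries---the rank-one expressions for $\nabla_t^\pm K$ and the rank-two expression $\nabla_n^-K=\delta\,\psi_{t,a+1,n-1}\otimes\phi_{t,a,n}+\beta\gamma\,\psi_{t-1,a,n-1}\otimes\phi_{t+1,a+1,n}$ are exactly what the paper derives as $(\mathrm{I})+(\mathrm{II})$---but they diverge at the bridging step. The paper does \emph{not} use the rank-two $\nabla_n^-K$ directly; instead it recombines to obtain the rank-one identity $(\nabla_n^- - \nabla_a^-)K_{t,a,n}=(1+\beta)\gamma\,\psi_{t-1,a,n-1}\otimes\phi_{t+1,a,n}$, derives an additional rank-one formula $K_{t+1,a,n}-K_{t,a,n-1}=(1+\beta)\delta\,\psi_{t,a+1,n-1}\otimes\phi_{t+1,a,n}$, and then starts from the single ratio $F_{t+1,a,n}/F_{t,a,n}$ and pushes it through a chain of four successive applications of Lemmas~\ref{Fredholm rank one}--\ref{Fredholm fg} until every resolvent term has been re-expressed as a ratio of $F$'s.

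Your route is more symmetric: you expand all four determinant ratios simultaneously about the base pair $(F_{t,a,n},F_{t,a,n-1})$, and close the $n\leftrightarrow n-1$ gap in one stroke via the second resolvent identity $R_{n-1}-R_n=-R_{n-1}(\nabla_n^-K)R_n$ applied to the full rank-two operator. This makes the role of the constraint $p=-\beta\delta/\gamma$ completely transparent---it is precisely the coefficient that kills the residual $Y_1Y_2$ term---whereas in the paper's chain the same constraint surfaces only after all substitutions are collected. The paper's approach, on the other hand, stays entirely within the rank-one Lemma~\ref{Fredholm fg}, so it never needs the general second resolvent identity; this is closer in spirit to the TASEP and Push-TASEP proofs, at the cost of more bookkeeping.
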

\begin{remark}
    Note that \eqref{PTASEP FORM} can be written explicitly as 
    \begin{align}
       F_{t+1,a,  n}F_{t-1, a, n-1}-pF_{t, a-1, n}F_{t, a+1, n-1} -(1-p)F_{t, a, n}F_{t, a, n-1} = 0. 
    \end{align}
\end{remark}
\begin{proof}[Proof of Thm.\ \ref{PTASEP K THM}]
    First, notice that due to \eqref{PTASEP grad a decomp}--\eqref{Parallel TASEP t flows} and since $K_{t, a, n} \in B_r(U, \mathcal{I}_1)$, we have
    \begin{align}
        K_{t, a, n} &= \sum_{r= -\infty}^a \psi_{t, r, n} \otimes \phi_{t, r, n} ,\\
        (\nabla_n^--\nabla_a^-) K_{t,a, n} &= (1+\beta)\gamma \psi_{t-1, a, n-1} \otimes \phi_{t+1, a, n}, \label{PTASEP grad n K}\\
        \nabla_t^+ K_{t, a, n} &= \beta \psi_{t,a, n} \otimes \phi_{t+1, a+1, n} \label{PTASEP grad t K}.
    \end{align}
    To see \eqref{PTASEP grad t K}, note we have 
\begin{align*}
    \nabla_t^+ K_{t, a, n} &= \sum_{r= -\infty}^a \nabla_t^+ \psi_{t, r, n} \otimes \phi_{t+1, r, n} + \psi_{t, r, n} \otimes \nabla_t^+ \phi_{t, r, n} \\
    &= \beta \sum_{r=-\infty}^a \nabla_r^+ (\psi_{t, r-1, n}\otimes \phi_{t+1, r, n})\\
    &= \beta \psi_{t, a, n} \otimes \phi_{t+1, a+1, n}.
\end{align*}
   Next, from the discrete flows, we can derive the identities
\begin{alignat}{2}
     \delta \psi_{t, a+1, n-1} - \gamma \psi_{t-1, a, n-1}  &= \psi_{t, a, n}   &&=  (1+\beta)\psi_{t-1, a, n} - \beta \psi_{t-1, a-1, n}, \label{p-flows identites 1}\\
     \delta \phi_{t, a-1, n} - \gamma \phi_{t+1, a, n}  &= \phi_{t, a, n-1}  &&= (1+\beta)\phi_{t+1, a, n-1} - \beta\phi_{t+1, a+1, n-1}. \label{p-flows identites 2}
\end{alignat}
    To see \eqref{PTASEP grad n K}, we compute 
    \begin{align*}
        \nabla_n^- K_{t, a, n} &= \sum_{r=-\infty}^a \nabla_n^- \psi_{t, r, n} \otimes \phi_{t,r, n} + \psi_{t,r,  n-1} \otimes \nabla_n^- \phi_{t, r, n} \\
        &= \sum_{r= -\infty}^a (\delta \psi_{t, r+1, n-1} -\gamma \psi_{t-1, r, n-1})\otimes \phi_{t, r, n} -  \psi_{t, r, n-1}\otimes (\delta \phi_{t, r-1, n}  -\gamma \phi_{t+1, r, n}) \\
          &= \delta \sum_{r=-\infty}^a \nabla_r^+ (\psi_{t, r, n-1} \otimes \phi_{t, r-1, n}) - \gamma\sum_{r= -\infty}^a \left(\psi_{t-1, r, n-1} \otimes \phi_{t, r, n} - \psi_{t, r, n-1} \otimes \phi_{t+1, r, n} \right).
    \end{align*}
    Labeling the $\delta$ summation by $(\mathrm{I})$ and noticing it telescopes, gives 
    \begin{align*}
        (\mathrm{I}) &=  \delta \psi_{t, a+1, n-1} \otimes \phi_{t, a, n}.
    \end{align*}
    Similarly, labeling the $\gamma$ term by $(\mathrm{II})$, and using \eqref{p-flows identites 1}--\eqref{p-flows identites 2}, gives 
    \begin{align*}
        ( \mathrm{II})&= \gamma \sum_{r= -\infty}^a \Bigl[-\left(\psi_{t-1, r, n-1} \otimes ((1+\beta)\phi_{t+1, r, n} - \beta \phi_{t+1, r+1, n})\right)  \\
        &\qquad \quad \quad \, \, \, +\left((1+\beta)\psi_{t-1, r, n-1} -\beta \psi_{t-1, r-1, n-1}) \otimes \phi_{t+1, r, n} \right)\Bigr]\\
        &=  \beta\gamma \sum_{r= -\infty}^a \nabla_r^+ (\psi_{t-1, r-1, n-1} \otimes \phi_{t+1, r, n})\\
        &= \beta \gamma \psi_{t-1, a, n-1} \otimes \phi_{t+1, a+1, n}.
    \end{align*}
    Therefore, combining these and once again using \eqref{p-flows identites 1}--\eqref{p-flows identites 2}, 
    \begin{align*}
      ( \mathrm{I}) +  ( \mathrm{II}) &= \delta \psi_{t, a+1, n-1} \otimes \phi_{t, a, n} + \beta \gamma \psi_{t-1, a, n-1} \otimes \phi_{t+1, a+1, n}\\
      &= (\delta \psi_{t, a+1, n-1} - \gamma \psi_{t-1, a, n-1}) \otimes \phi_{t, a, n} + (1+\beta)\gamma \psi_{t-1, a, n-1} \otimes \phi_{t+1, a, n} \\
        &= \psi_{t, a, n} \otimes \phi_{t,a, n} + (1+\beta)\gamma \psi_{t-1, a, n-1} \otimes \phi_{t+1, a, n}, 
\end{align*}
and so  
\begin{align*}
    (\nabla_n^- - \nabla_a^-)K_{t,a, n}= K_{t, a-1, n} - K_{t, a, n-1} &= (1+\beta)\gamma \psi_{t-1, a, n-1}\otimes \phi_{t+1, a,n}. 
\end{align*}
We will also use 
\begin{align}
    K_{t+1, a, n} - K_{t, a, n-1} &= (1+ \beta)\delta \psi_{t, a+1, n-1} \otimes \phi_{t+1, a, n} \label{parallel TASEP K t n-1},
\end{align}
which follows from
\begin{align*}
    K_{t+1, a, n} - K_{t, a, n-1} &= K_{t+1, a, n} - K_{t+1, a-1, n} + K_{t+1, a-1, n} - K_{t, a-1, n} +K_{t, a-1, n} - K_{t, a, n-1}\\
    &= (\psi_{t+1, a, n}  + \beta \psi_{t, a-1, n} + (1+\beta) \gamma \psi_{t-1, a, n-1}) \otimes \phi_{t+1, a, n} \\
    &= (1+ \beta)\delta \psi_{t, a+1, n-1} \otimes \phi_{t+1, a, n}.
\end{align*}
Next, we calculate  
\begin{align*}
    \frac{F_{t+1, a, n}}{F_{t, a, n}} &= 1 - \beta \langle R_n \psi_{n}, \phi_{t+1, a+1}\rangle \\
    &= 1-\beta \delta \gamma^{-1}\langle R_n \psi_n, \phi_n\rangle + \beta \gamma^{-1}\langle R_n \psi_n, \phi_{a+1, n-1}\rangle \\
    &= 1-\beta \delta \gamma^{-1}\langle R_n \psi_n, \phi_n\rangle + \beta \gamma^{-1} \frac{F_{t, a-1, n}}{F_{t, a, n}}\langle R_{a-1, n} \psi_n, \phi_{a+1, n-1}\rangle \\
    &= 1-\beta \delta \gamma^{-1}\langle R_n \psi_n, \phi_n\rangle - \beta \frac{F_{t, a-1, n}}{F_{t, a, n}}\langle R_{a-1,n} \psi_{t-1, n-1}, \phi_{a+1, n-1}\rangle \\
    &\quad +\beta \delta \gamma^{-1}\frac{F_{t, a-1, n}}{F_{t, a, n}}\langle R_{a-1,n} \psi_{a+1, n-1}, \phi_{a+1, n-1}\rangle, \\
    \intertext{ where the second equality follows from the LHS of \eqref{p-flows identites 2}, the third equality from Lem.\ ~\ref{Fredholm rank one} and $a$--flows, the fourth equality follows from the RHS of \eqref{p-flows identites 1}. Now, using Lem.\ ~\ref{Fredholm rank one}--Lem.\ ~\ref{Fredholm fg},  and identifying terms, we have }
     &= 1-\beta \delta \gamma^{-1}\langle R_n \psi_n, \phi_n\rangle - \beta \frac{F_{t, a-1, n}}{F_{t, a, n}}\frac{F_{t, a, n-1}}{F_{t, a-1, n}}\langle R_{n-1} \psi_{t-1, n-1}, \phi_{a+1, n-1}\rangle \\
    &\quad +\beta \delta \gamma^{-1}\frac{F_{t, a-1, n}}{F_{t, a, n}}\langle R_{n-1} \psi_{a+1, n-1}, \phi_{a+1, n-1}\rangle  \\
    &\quad +\beta \delta (1+\beta)\frac{F_{t, a, n-1}}{F_{t, a, n}}\langle R_{n-1} \psi_{t-1, n-1}, \phi_{a+1, n-1}\rangle \langle R_{n-1} \psi_{a+1, n-1}, \phi_{t+1}\rangle.\\
    &= 1+\beta \delta \gamma^{-1}\left(1-\frac{F_{a-1, n}}{F_{a, n}}\right) +  \frac{F_{a-1, n}F_{a, n-1}}{F_{a, n}F_{a-1, n}}\left(1-\frac{F_{t-1, n-1}}{F_{t, n-1}}\right) \\
    &\quad +\beta \delta \gamma^{-1}\frac{F_{a-1, n}}{F_{a, n}}\left(1-\frac{F_{a+1, n-1}}{F_{a, n-1}}\right) - \frac{F_{n-1}}{F_n}\left(1-\frac{F_{t-1, n-1}}{F_{t, n-1}}\right) \left(1-\frac{F_{t+1, n}}{F_{t, n-1}}\right).
\end{align*}
Multiplying through by $F_nF_{n-1}$, we obtain 
\begin{align*}
    F_{t+1, a, n}F_{t-1, a, n-1}+\beta\delta\gamma^{-1}F_{t, a-1, n}F_{t, a+1, n-1} -(1+ \beta \delta \gamma^{-1})F_{t, a, n}F_{t, a, n-1} = 0.
\end{align*}
\end{proof}

\begin{theorem}[\textbf{Bernoulli Jumps with Blocking Eq.\ General Solutions}]\label{RBTASEP K THM}
    Let $K_{t, a, n} \in B_r(U, \mathcal{I}_1)$ be a family of trace-class integral operators acting on $L^2(X, \mu)$ such that the following three conditions hold: 
    \begin{enumerate}
        \item ($a$--flows): $\begin{aligned}[t] 
        \nabla_a^- K_{t, a, n} &= \psi_{t, a, n} \otimes \phi_{t, a, n}. \label{RBTASEP grad a decomp}
        \end{aligned}$
        \item ($n$--flows): $\begin{aligned}[t]
             \nabla_n^+ \psi_{t, a, n} = \aaa \nabla_a^+ \psi_{t, a, n}, \medspace \nabla_n^- \phi_{t ,a, n} = \aaa \nabla_a^- \phi_{t, a, n}.
        \end{aligned}$
        \item ($t$--flows): $ \begin{aligned}[t]
            \nabla_t^+ \psi_{t, a, n} &= \beta \nabla_a^- \psi_{t, a, n}, \medspace \nabla_t^- \phi_{t, a, n} = \beta\nabla_a^+ \phi_{t, a, n},\label{RBTASEP grad t decomp}
        \end{aligned}$
    \end{enumerate}
    for some arbitrary constants $\aaa, \beta$  with $1+\beta-\aaa \neq 0$. Suppose further $I-K_{t,a, n}$ is invertible for all $(t, a, n) \in V \subseteq U$ open with $n > M$. Then
    \begin{equation}
        F_{t,a, n} = \det(I-K_{t,a, n})_{L^2(X, \mu)}, \qquad (t,a,n) \in V
    \end{equation}
    satisfies 
    \begin{align}
        \left[e^{D_t} - p e^{-D_a} - \left(1-p\right) \right] F_{t, a, n}\cdot F_{t+1, a, n-1} = 0, \qquad p = \frac{\aaa\beta}{1+\beta -\aaa} .\label{RBPTASEP FORM}
    \end{align}
\end{theorem}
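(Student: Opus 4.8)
The plan is to follow the template of the two preceding discrete-time proofs, Thms.~\ref{TASEP K THM} and~\ref{PTASEP K THM}: the $n$-flows here are exactly of the TASEP type, while the $t$-flows coincide verbatim with those of Parallel TASEP, so the argument is a hybrid of the two. First I would integrate the $a$-flow using the decay built into $B_r(U,\mathcal{I}_1)$ to write $K_{t,a,n}=\sum_{r=-\infty}^{a}\psi_{t,r,n}\otimes\phi_{t,r,n}$, and then telescope (as in the TASEP and Parallel-TASEP computations) to obtain the two rank-one difference formulas
\begin{align*}
\nabla_n^- K_{t,a,n} &= \aaa\,\psi_{t,a+1,n-1}\otimes\phi_{t,a,n}, &
\nabla_t^+ K_{t,a,n} &= \beta\,\psi_{t,a,n}\otimes\phi_{t+1,a+1,n}.
\end{align*}
Unfolding the flows also gives the pointwise identities I will need, notably $\psi_{t,a,n}=(1-\aaa)\psi_{t,a,n-1}+\aaa\psi_{t,a+1,n-1}$ from the $n$-flow and $\phi_{t,a,n}=(1+\beta)\phi_{t+1,a,n}-\beta\phi_{t+1,a+1,n}$ from the $t$-flow.

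The crux, and the analogue of the rank-one identity~\eqref{parallel TASEP K t n-1} that drives the Parallel-TASEP proof, is to combine the two displays above with the $\phi$ flow identity to collapse a rank-two combination into the single rank-one relation
\begin{align*}
K_{t+1,a,n}-K_{t,a-1,n}=\nabla_t^+ K_{t,a,n}+\nabla_a^- K_{t,a,n}=(1+\beta)\,\psi_{t,a,n}\otimes\phi_{t+1,a,n}.
\end{align*}
This is the key step: the mixed inner products $\langle R_n\psi_{t,a,n},\phi_{t+1,a,n}\rangle$ carrying the time shift do not obviously correspond to determinant ratios on their own, but this identity shows that they do, and it keeps every quantity anchored at the two base points $(t,a,n)$ and $(t,a,n-1)$ so that only a single $n$-level base change is ever required. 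Applying the rank-one update Lem.~\ref{Fredholm rank one} to this relation (and to its shift to $(t,a+1,n-1)$) yields
\begin{align*}
F_{t+1,a,n} &= F_{t,a-1,n}-(1+\beta)\langle R_n\psi_{t,a,n},\phi_{t+1,a,n}\rangle\,F_{t,a,n}, \\
F_{t+1,a+1,n-1} &= F_{t,a,n-1}\bigl(1-(1+\beta)\langle R_{n-1}\psi_{t,a+1,n-1},\phi_{t+1,a+1,n-1}\rangle\bigr),
\end{align*}
together with its level-$(n-1)$ analogue for $F_{t+1,a,n-1}$ and the ordinary $a$-ratio $F_{t,a-1,n}/F_{t,a,n}=1+\langle R_n\psi_{t,a,n},\phi_{t,a,n}\rangle$.

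With these in hand I would substitute into the explicit form obtained by expanding the Hirota operators on the base product $F_{t,a,n}\cdot F_{t+1,a,n-1}$ and clearing the denominator in $p=\aaa\beta/(1+\beta-\aaa)$, namely
\begin{align*}
(1+\beta-\aaa)F_{t+1,a,n}F_{t,a,n-1}-\aaa\beta\,F_{t,a-1,n}F_{t+1,a+1,n-1}-(1-\aaa)(1+\beta)F_{t,a,n}F_{t+1,a,n-1}=0,
\end{align*}
which is $(1+\beta-\aaa)$ times the stated equation since $(1-p)(1+\beta-\aaa)=(1-\aaa)(1+\beta)$. Using the identity $(1+\beta-\aaa)-\aaa\beta=(1-\aaa)(1+\beta)$, the ratio-free part collapses to $(1-\aaa)(1+\beta)\bigl[F_{t,a-1,n}F_{t,a,n-1}-F_{t,a,n}F_{t,a-1,n-1}\bigr]$, an $a$-difference bilinear which factors as $F_{t,a,n}F_{t,a,n-1}\bigl(\langle R_n\psi_{t,a,n},\phi_{t,a,n}\rangle-\langle R_{n-1}\psi_{t,a,n-1},\phi_{t,a,n-1}\rangle\bigr)$; the remaining terms assemble into pairings against $R_n$ and $R_{n-1}$. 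The last step, exactly as in the TASEP proof, is to eliminate $R_{n-1}$ in favour of $R_n$ through the Sherman--Morrison base change of Lem.~\ref{Fredholm fg} applied to $\nabla_n^- K=\aaa\psi_{t,a+1,n-1}\otimes\phi_{t,a,n}$ (whose normalizing factor is $F_{t,a,n-1}/F_{t,a,n}$), and then to simplify the resulting products of pairings with the flow identities until everything cancels.

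I expect the main obstacle to be precisely that final bookkeeping. Unlike Parallel TASEP, the $n$-flow here does not couple consecutive time slices, so there is no direct route to trade a time-$(t+1)$ factor for time-$t$ factors; one is forced to route the time shift entirely through the rank-one identity for $K_{t+1,a,n}-K_{t,a-1,n}$, and the delicate point is to check that the quadratic (product-of-pairings) terms generated by the base change match those produced by the two $F_{t+1,\cdot}$ substitutions, with all index shifts and signs correct. Everything else is routine telescoping and rank-one algebra.
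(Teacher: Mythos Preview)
Your proposal is correct and follows the same rank-one-perturbation/Sherman--Morrison template as the paper, but the organization differs in an interesting way. You anchor at time~$t$ and level~$n$, using the collapse
\[
K_{t+1,a,n}-K_{t,a-1,n}=(1+\beta)\,\psi_{t,a,n}\otimes\phi_{t+1,a,n}
\]
to express every time-$(t{+}1)$ determinant through $R_{t,a,n}$ or $R_{t,a,n-1}$, and you defer all level-changing to a single Sherman--Morrison step via $\nabla_n^-K$. The paper instead anchors at $(t{+}1,a,n)$: it starts from $F_{t,a,n}/F_{t+1,a,n}=1+\beta\langle R_{t+1,n}\psi_{t,a,n},\phi_{t+1,a+1,n}\rangle$ and expands using the \emph{mixed} flow identities $\psi_{t,a,n}=c_1\psi_{t+1,a+1,n-1}+c_2\psi_{t,a,n-1}$ and $\phi_{t+1,a+1,n}=c_3\phi_{t+1,a+1,n-1}-c_4\phi_{t,a,n}$ (with $c_i$ built from $\aaa,\beta$), together with the \emph{different} rank-one collapse
\[
K_{t+1,a,n-1}-K_{t,a-1,n}=(1+\beta-\aaa)\,\psi_{t,a,n-1}\otimes\phi_{t+1,a,n},
\]
which already mixes the $n$-shift with the $t,a$-shifts. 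Your route keeps the $n$- and $t$-structures decoupled until the last step, which makes the role of the $n$-base-change more transparent; the paper's route front-loads the $t$--$n$ coupling so that fewer intermediate terms survive to the final collection. Both lead to the same cancellation, and your identification of the ``quadratic bookkeeping'' as the only remaining work is accurate.
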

\begin{remark}
    Note that \eqref{RBPTASEP FORM} can be written out explicitly as 
    \begin{align}
    F_{t+1, a, n}F_{t, a, n-1} - pF_{t, a-1, n}F_{t+1, a+1, n-1}-(1-p)F_{t, a, n}F_{t+1, a, n-1} &= 0.
\end{align} 
\end{remark}
\begin{proof}[Proof of Thm.\ \ref{RBTASEP K THM}]
     First, notice that due to \eqref{RBTASEP grad a decomp}--\eqref{RBTASEP grad t decomp} and since $K_{t, a, n} \in B_r(U, \mathcal{I}_1)$, we have 
    \begin{align}
        K_{t, a, n} &= \sum_{r=-\infty}^a \psi_{t, r, n} \otimes \phi_{t, r, n},\\
        \nabla_n^- K_{t, a, n} &= \aaa \psi_{t, a+1, n-1} \otimes \phi_{t, a, n}, \\
        \nabla_t^+ K_{t, a, n} &= \beta \psi_{t, a, n} \otimes \phi_{t+1, a+1, n}. \label{RB timn K}
    \end{align}
    To see \eqref{RB timn K}, notice
    \begin{align*}
        \nabla_t^+K_{t, a, n} &= \sum_{r=-\infty}^a \nabla_t^+  \psi_{t, r, n} \otimes \phi_{t+1, r, n} + \psi_{t, r, n} \otimes \nabla_t^+ \phi_{t, r, n} = \beta\sum_{r=-\infty}^a \nabla_r^+ (\psi_{t, r-1, n} \otimes \phi_{t+1, r, n}) \\
        &= \beta \psi_{t, a, n} \otimes \phi_{t+1, a+1, n}.
    \end{align*}
    Next, from the discrete flows it is not hard to see the identities
    \begin{align}
        \psi_{t, a, n} 
        &= c_1 \psi_{t+1, a+1, n-1} + c_2 \psi_{t, a, n-1}, \qquad 
        \phi_{t+1, a+1, n}
        = c_3 \phi_{t+1,a+1, n-1}  -c_4 \phi_{t, a, n}, \label{RBTASEP identities}
    \end{align}
    with 
    \begin{align*}
        c_1 = \frac{\aaa}{1+\beta}, \quad c_2 = \frac{1+\beta -\aaa}{1+\beta}, \quad c_3 = \frac{1+\beta}{1+\beta-\aaa}, \quad  c_4 = \frac{\aaa}{1+\beta-\aaa}.
    \end{align*}
    With the help of Lem.\ ~\ref{Fredholm rank one}--~\ref{Fredholm fg} and \eqref{RBTASEP identities}, we compute
    \begin{align*}
        \frac{F_{t, a, n}}{F_{t+1, a, n}} &= 1+\beta \langle R_{t+1,a, n} \psi_{t, a, n}, \phi_{t+1, a+1, n}\rangle \\
        &= 1 - \beta c_4 \langle R_{t+1, n} \psi_n, \phi_{n}\rangle + \beta c_3 \langle R_{t+1, n} \psi_n, \phi_{t+1, a+1, n-1}\rangle \\ 
        &= 1 - \beta c_4 \langle R_{t+1, n} \psi_n, \phi_{n}\rangle + \beta c_3c_1 \langle R_{t+1, n} \psi_{t+1, a+1, n-1}, \phi_{t+1, a+1, n-1}\rangle \\
        &\quad + \beta c_3c_2 \langle R_{t+1, n} \psi_{n-1}, \phi_{t+1, a+1, n-1}\rangle \\
        &= 1 - \beta c_4 \langle R_{t+1, n} \psi_n, \phi_{n}\rangle + \beta c_3c_1  \frac{F_{t+1, n-1}}{F_{t+1}} \langle R_{t+1, n-1} \psi_{t+1, a+1, n-1}, \phi_{t+1, a+1, n-1}\rangle \\
        &\quad + \beta c_3c_2 \langle R_{t+1, n-1} \psi_{n-1}, \phi_{t+1, a+1, n-1}\rangle \\
        &\quad + \beta \aaa c_3c_2 \frac{F_{t+1, n-1}}{F_{t+1, n}}\langle R_{t+1, n-1}\psi_{n-1}, \phi_{t+1}\rangle \langle R_{t+1, n-1}\psi_{t+1, n-1, a+1}, \phi_{t+1, a+1, n-1}\rangle \\
     &= 1 - \beta c_4 \frac{F_{n}}{F_{t+1}}\left(\frac{F_{a-1, n}}{F_n} -1\right) +  \beta c_3c_1  \frac{F_{t+1, n-1}}{F_{t+1}} \left(1-\frac{F_{t+1, a+1, n-1}}{F_{t+1, n-1}}\right) \\
        &\quad +  c_3c_2 \left(\frac{F_{n-1}}{F_{t+1, n-1}}-1\right) + \frac{\beta \aaa c_3c_2}{1-\aaa + \beta} \frac{F_{t+1, n-1}}{F_{t+1, n}}\left(\frac{F_{a-1, n}}{F_{t+1 ,n-1}}-1\right) \left(1-\frac{F_{t+1, a+1, n-1}}{F_{t+1, n-1}}\right) 
\end{align*} 
where in the last line  we used Lem.\ ~\ref{Fredholm rank one} and noticed
\begin{align*}
    K_{t+1, a, n-1}- K_{t, a-1, n} &= (1+\beta-\aaa)\psi_{t, a, n-1}\otimes \phi_{t+1, a, n}, 
\end{align*}
by expanding the left-hand side as $K_{t+1, a, n-1} - K_{t+1, a, n} + K_{t+1, a, n} - K_{t+1, a-1, n} + K_{t+1, a-1, n}- K_{t, a-1, n}$. Collecting terms, we arrive at 
\begin{align*}
    F_{t+1, a, n}F_{t, a, n-1} - \frac{\aaa \beta}{1+\beta-\aaa}F_{t, a-1, n}F_{t+1, a+1, n-1}-\left(1-\frac{\aaa \beta}{1+\beta -\aaa}\right)F_{t, a, n}F_{t+1, a, n-1} &= 0.
\end{align*} 
\end{proof}

\begin{theorem}[\textbf{Bernoulli Jumps with Pushing Eq.\ General Solutions}]\label{LBTASEP K THM}
     Let $K_{t, a, n} \in B_r(U, \mathcal{I}_1)$ be a family of trace-class integral operators acting on $L^2(X, \mu)$ such that the following three conditions hold: 
    \begin{enumerate}
        \item ($a$--flows): $\begin{aligned}[t] 
        \nabla_a^- K_{t, a, n} &= \psi_{t, a, n} \otimes \phi_{t, a, n}. \label{LBTASEP grad a decomp}
        \end{aligned}$
        \item ($n$--flows): $\begin{aligned}[t]
             \nabla_n^+ \psi_{t, a, n} = \aaa \nabla_a^+ \psi_{t, a, n}, \medspace \nabla_n^- \phi_{t ,a, n} = \aaa \nabla_a^- \phi_{t, a, n}.
        \end{aligned}$
        \item \label{LBTASEP grad t decomp} ($t$--flows): $\begin{aligned}[t]
            \nabla_t^+ \psi_{t, a, n} &= \beta \nabla_a^+ \psi_{t, a, n}, \medspace \nabla_t^- \phi_{t, a, n} = \beta\nabla_a^- \phi_{t, a, n},
        \end{aligned}$
    \end{enumerate}
    for some arbitrary constants $\aaa, \beta$ with $\beta-\aaa \neq 0$. Suppose further $I-K_{t,a, n}$ is invertible for all $(t, a, n) \in V \subseteq U$ open with $n > M$. Then  
    \begin{equation}
        F_{t,a, n} = \det(I-K_{t,a, n})_{L^2(X, \mu)}, \qquad (t,a,n) \in V
    \end{equation}
    satisfies 
    \begin{align}
        \left[e^{D_t} - q e^{D_a} -\left(1-q\right) \right] F_{t, a, n} \cdot F_{t+1, a+1, n-1} = 0, \quad q =  \frac{\beta(1-\aaa)}{\beta -\aaa}. \label{LBPTASEP FORM}
    \end{align}
\end{theorem}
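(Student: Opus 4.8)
The plan is to mirror the proof of Thm.~\ref{RBTASEP K THM}, since the hypotheses of the two theorems differ only in the $a$--direction of the $t$--flows (here $\nabla_t^+\psi=\beta\nabla_a^+\psi$ and $\nabla_t^-\phi=\beta\nabla_a^-\phi$, with the forward/backward differences swapped relative to the blocking case). I first convert the flow hypotheses into rank-one reductions of the relevant finite differences of $K$. Starting from the $a$--flow representation $K_{t,a,n}=\sum_{r=-\infty}^{a}\psi_{t,r,n}\otimes\phi_{t,r,n}$, inserting the $n$-- and $t$--flow relations into the discrete Leibniz rule and telescoping (exactly as in \eqref{TASEP grad n on K} and \eqref{RB timn K}) should yield
\[
\nabla_n^- K_{t,a,n}=\aaa\,\psi_{t,a+1,n-1}\otimes\phi_{t,a,n}, \qquad \nabla_t^+ K_{t,a,n}=\beta\,\psi_{t,a+1,n}\otimes\phi_{t+1,a,n}.
\]
The single change from Thm.~\ref{RBTASEP K THM} is that the $a$--shift in the $t$--difference now lands on the $\psi$--slot rather than the $\phi$--slot.

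Next I would extract the scalar recurrences used to move between neighbouring indices. Eliminating $\psi_{t,a+1,n-1}$ between the $n$--flow $\psi_{t,a,n}=\aaa\psi_{t,a+1,n-1}+(1-\aaa)\psi_{t,a,n-1}$ and the $t$--flow $\psi_{t+1,a,n-1}=\beta\psi_{t,a+1,n-1}+(1-\beta)\psi_{t,a,n-1}$, and dually for $\phi$, I expect
\[
\psi_{t,a,n}=\tfrac{\aaa}{\beta}\psi_{t+1,a,n-1}+\tfrac{\beta-\aaa}{\beta}\psi_{t,a,n-1}, \qquad \phi_{t+1,a,n}=\tfrac{\beta}{\beta-\aaa}\phi_{t+1,a,n-1}-\tfrac{\aaa}{\beta-\aaa}\phi_{t,a,n}.
\]
The hypothesis $\beta-\aaa\neq 0$ is precisely what makes the second identity well defined, and it will supply the denominator of $q$. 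From the second identity I also obtain the key \emph{mixed} rank-one difference: expanding $K_{t+1,a,n-1}-K_{t,a,n}$ as $(K_{t+1,a,n-1}-K_{t,a,n-1})+(K_{t,a,n-1}-K_{t,a,n})$ and substituting gives
\[
K_{t+1,a,n-1}-K_{t,a,n}=\psi_{t,a+1,n-1}\otimes\bigl(\beta\phi_{t+1,a,n-1}-\aaa\phi_{t,a,n}\bigr)=(\beta-\aaa)\,\psi_{t,a+1,n-1}\otimes\phi_{t+1,a,n},
\]
a genuine rank-one perturbation exactly because $\beta\neq\aaa$. This is the analogue of the identity $K_{t+1,a,n-1}-K_{t,a-1,n}=(1+\beta-\aaa)\psi_{t,a,n-1}\otimes\phi_{t+1,a,n}$ used in Thm.~\ref{RBTASEP K THM}.

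With these in hand I would compute a single determinant ratio and reduce it. The $t$--flow reduction together with Lem.~\ref{Fredholm rank one} gives
\[
\frac{F_{t,a,n}}{F_{t+1,a,n}}=1+\beta\,\langle R_{t+1,a,n}\psi_{t,a+1,n},\phi_{t+1,a,n}\rangle.
\]
Substituting the two scalar identities for $\phi_{t+1,a,n}$ and $\psi_{t,a+1,n}$, using the $n$--flow difference $K_{t+1,a,n}-K_{t+1,a,n-1}=\aaa\psi_{t+1,a+1,n-1}\otimes\phi_{t+1,a,n}$ together with Lem.~\ref{Fredholm fg} to transport each resolvent from $R_{t+1,a,n}$ to $R_{t+1,a,n-1}$, and invoking the mixed identity above (via Lem.~\ref{Fredholm rank one}) to rewrite the surviving quadratic-in-resolvent term as a product of determinant ratios, every inner product should collapse to a ratio of Fredholm determinants. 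Collecting terms and clearing denominators should then produce the explicit form of \eqref{LBPTASEP FORM},
\[
F_{t+1,a,n}F_{t,a+1,n-1}-q\,F_{t,a+1,n}F_{t+1,a,n-1}-(1-q)\,F_{t,a,n}F_{t+1,a+1,n-1}=0, \qquad q=\tfrac{\beta(1-\aaa)}{\beta-\aaa}.
\]

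As in the companion theorems, the main obstacle is the bookkeeping in this final reduction rather than any single identity: one must apply Lem.~\ref{Fredholm fg} in the correct order so that the linear inner-product terms telescope and the quadratic term is recognized as the product $F_{t,a+1,n}F_{t+1,a,n-1}$ (up to the common denominator), with all the rational prefactors in $\aaa,\beta$ conspiring to leave exactly the coefficients $q$ and $1-q$. Tracking which resolvent index ($n$ versus $n-1$, $t$ versus $t+1$) is attached to each bracket is the delicate point; once the mixed rank-one identity closes the quadratic term, the value $q=\beta(1-\aaa)/(\beta-\aaa)$ is forced and the remaining cancellations are routine.
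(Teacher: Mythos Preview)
Your proposal is correct and follows essentially the same method as the paper: derive the rank-one reductions $\nabla_n^-K=\aaa\,\psi_{t,a+1,n-1}\otimes\phi_{t,a,n}$, $\nabla_t^+K=\beta\,\psi_{t,a+1,n}\otimes\phi_{t+1,a,n}$, and the mixed identity $K_{t,a,n}-K_{t+1,a,n-1}=-(\beta-\aaa)\psi_{t,a+1,n-1}\otimes\phi_{t+1,a,n}$, then expand a single determinant ratio via Lem.~\ref{Fredholm rank one}--\ref{Fredholm fg} until all inner products collapse into ratios of $F$'s.

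There is one mild difference in route worth noting. You truly mirror the blocking proof: you start from $F_{t,a,n}/F_{t+1,a,n}$ (resolvent at $(t{+}1,a,n)$) and use the $\phi$--identity $\phi_{t+1,a,n}=\tfrac{\beta}{\beta-\aaa}\phi_{t+1,a,n-1}-\tfrac{\aaa}{\beta-\aaa}\phi_{t,a,n}$, which links $t$-- and $n$--shifted $\phi$'s. The paper instead starts from the reciprocal $F_{t+1,a,n}/F_{t,a,n}$ (resolvent at $(t,a,n)$) and uses the $a$--shifted identity $(\beta-\aaa)\phi_{t+1,a,n}=(1-\aaa)\phi_{t,a+1,n}-(1-\beta)\phi_{t+1,a+1,n-1}$, together with $(\aaa-\beta)\psi_{t,a+1,n-1}=\aaa\psi_{t+1,a+1,n-1}-\beta\psi_{t,a+1,n}$. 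The advantage of the paper's choice is that the first substitution immediately produces $\langle R_n\psi_{a+1,n},\phi_{a+1,n}\rangle$, i.e.\ the $a$--flow ratio $F_{a+1,n}/F_n$, so the target shifts in $a$ appear at once and fewer resolvent transports are needed. Your identities are equally valid and will close to the same bilinear relation; you just have a little more bookkeeping before the $a{+}1$--shifted determinants appear.
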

\begin{remark}
    Note that \eqref{LBPTASEP FORM} can be written out explicitly as 
    \begin{align}
        F_{t+1, a, n}F_{t, a+1, n-1} - qF_{t, a+1, n}F_{t+1, a, n-1} -(1-q)F_{t, a, n}F_{t+1, a+1, n-1} = 0.
    \end{align}
\end{remark}
\begin{proof}[Proof of Thm.\ \ref{LBTASEP K THM}]
First, note that due to \eqref{LBTASEP grad a decomp}--\eqref{LBTASEP grad t decomp} and since $K_{t,a, n} \in B_r(U, \mathcal{I}_1)$, we have
\begin{align}
    K_{t, a, n} &= \sum_{r=-\infty}^a \psi_{t, r, n} \otimes \phi_{t, r, n}, \\
    \nabla_n^- K_{t, a, n} &= \aaa \psi_{t,a+1, n-1} \otimes \phi_{t, a, n}, \\
    \nabla_t^+ K_{t, a, n} &= \beta \psi_{t, a+1, n} \otimes \phi_{t+1, a, n} \label{LBTASEP T}. 
\end{align}
 To see \eqref{LBTASEP T}, we compute
    \begin{align*}
        \nabla_t^+ K_{t, a, n} &= \sum_{r=-\infty}^a \nabla_t^+ \psi_{t, r, n} \otimes \phi_{t+1, r, n} + \psi_{t, r, n} \otimes \nabla_t^+ \phi_{t, r, n} = \beta \sum_{r=-\infty}^a \nabla_r^+ (\psi_{t, r, n} \otimes \phi_{t+1, r-1, n})\\
        &= \beta \psi_{t, a+1, n} \otimes \phi_{t+1, a, n}
    \end{align*}
    Next, from the discrete flows it is not hard to see the identities
    \begin{align}
        (\aaa- \beta)\psi_{t, a+1, n-1} &= \aaa \psi_{t+1, a+1, n-1} - \beta \psi_{t, a+1, n}\label{LB tasep id}, \\
         (\beta-\aaa)\phi_{t+1, a, n} &= (1-\aaa)\phi_{t, a+1, n} - (1-\beta)\phi_{t+1, a+1, n-1}. \label{LB tasep id2}
    \end{align}
    Therefore, using Lem.\ ~\ref{Fredholm rank one}--\ref{Fredholm fg} and \eqref{LB tasep id}--\eqref{LB tasep id2}, we have
    \begin{align*}
        \frac{F_{t+1, a, n}}{F_{t, a, n}} &= 1-\beta\langle R_{t, a, n} \psi_{t, a+1, n}, \phi_{t+1, a, n}\rangle \\
        &= 1 - \frac{\beta(1-\aaa)}{\beta-\aaa}\langle R_n \psi_{a+1}, \phi_{a+1}\rangle + \frac{\beta(1-\beta)}{\beta-\aaa}\langle R_n \psi_{a+1}, \phi_{t+1, a+1, n-1}\rangle \\
        &= 1 - \frac{\beta(1-\aaa)}{\beta-\aaa}\langle R_n \psi_{a+1}, \phi_{a+1}\rangle + \frac{\aaa(1-\beta)}{\beta-\aaa}\langle R_n \psi_{t+1, a+1, n-1}, \phi_{t+1, a+1, n-1}\rangle  \\
        &\quad + (1-\beta) \langle R_n \psi_{ a+1, n-1}, \phi_{t+1, a+1, n-1}\rangle \\
        &=  1 - \frac{\beta(1-\aaa)}{\beta-\aaa}\langle R_n \psi_{a+1}, \phi_{a+1}\rangle + \frac{\aaa(1-\beta)}{\beta-\aaa}\langle R_{t+1, n-1} \psi_{t+1, a+1, n-1}, \phi_{t+1, a+1, n-1}\rangle \\
        &\quad - \aaa(1-\beta)  \langle R_{t+1, n-1}\psi_{t+1, a+1, n-1}, \phi_{t+1}\rangle \langle R_{n}\psi_{a+1, n-1}, \phi_{t+1, a+1, n-1}\rangle \\
        &\quad  + (1-\beta) \langle R_n \psi_{ a+1, n-1}, \phi_{t+1, a+1, n-1}\rangle, 
    \end{align*}
    where we used 
    \begin{align*}
        K_{t, a, n} - K_{t+1,a, n-1} &= -(\beta - \aaa) \psi_{t, a+1, n-1}\otimes \phi_{t+1,a,n}.
    \end{align*}
    Now, notice 
    \begin{align*}
        (1-\beta)\langle R_n \psi_{a+1, n-1}, \phi_{t+1, a+1, n-1}\rangle &= \langle R_n \psi_{a+1, n-1}, \phi_{a+1, n-1}\rangle - \beta \langle R_n \psi_{a+1, n-1}, \phi_{t+1, n-1}\rangle \\
        &= \frac{F_{n-1}}{F_n}\left(1-\frac{F_{a+1, n-1}}{F_{n-1}}\right) + \frac{F_{n-1}}{F_n}\left(\frac{F_{t+1, n-1}}{F_{n-1}}-1\right).
    \end{align*}
    This allows us to write 
    \begin{align*}
        \frac{F_{t+1, n}}{F_n} &=  1 - \frac{\beta(1-\aaa)}{\beta-\aaa}\left(1-\frac{F_{a+1, n}}{F_n}\right) + \frac{\aaa(1-\beta)}{\beta-\aaa}\left(1-\frac{F_{t+1, a+1, n-1}}{F_{t+1, n-1}}\right)\\
        &\quad \frac{F_{t+1, n}}{F_{t+1, n-1}}\left(\frac{F_{n-1}}{F_n}\left(1-\frac{F_{a+1, n-1}}{F_{n-1}}\right) + \frac{F_{n-1}}{F_n}\left(\frac{F_{t+1, n-1}}{F_{n-1}}-1\right)\right)
    \end{align*}
    Multiplying through by $F_{n}F_{t+1, n-1}$ and collecting terms yields
    \begin{align*}
        F_{t+1, a, n}F_{t, a+1, n-1} - \frac{\beta(1-\aaa)}{\beta-\aaa}F_{t, a+1, n}F_{t+1, a, n-1} + \frac{\aaa(1-\beta)}{\beta-\aaa}F_{t, a, n}F_{t+1, a+1, n-1} = 0.
    \end{align*}
\end{proof}
\section{KPZ Models and One-Point Distributions}\label{sec: 3}
\subsection{RBM/BLPP Models}\label{sec: RBM/BLPP Models} 
Let $\mathbf{y} = (y_0, y_1, y_2, \dots)$ with $y_0 \geq y_1 \geq y_2 \geq \dots$, and $y_n \in \bar{\R}$. Given $\mathbf{f}(t) = (f_0(t), f_1(t), \dots) \in C([0, \infty), \R^{\Z_{\geq 0}})$, we define the (negative) reflection process recursively by the formula 
\begin{align}
    f_0^{\Lambda}(t; \mathbf{y}) = y_0 + f_0(t), \quad f_n^{\Lambda}(t; \mathbf{y}) = y_n + f_n(t) - \sup_{0 \leq s \leq t}[y_n + f_n(s) - f_{n-1}^{\Lambda}(s; \mathbf{y})]^+
\end{align}
for $n \geq 1$, with $[\,\cdot\,]^+ = \max( \, \cdot \,, 0)$. For our probabilistic models, we fix a filtered complete probability space $(\Omega, \mathcal{F}, (\mathcal{F}_t)_{t \geq 0}, \PP)$ that supports infinitely many i.i.d.\ standard $(\mathcal{F}_{t})_{t\geq 0}$-Brownian motions $\{B_{n}\}_{n \geq 1}$, and take 
 $f_n(t)(\omega) = B_n(t)(\omega), \omega \in \Omega, n \geq 1$ to be the sample paths. The resulting process will be referred to as Reflected Brownian Motions (RBM), with data $\bigl(\mathbf{y}, f_0(\cdot)\bigr)$.
\begin{remark}
    For $\mathbf{f}(0) = \mathbf{0}$, let 
    \begin{align}
        \mathbf{f}[(0, m) \rightarrow (t, n)] = \sup_{0\leq t_m \leq \cdots \leq t_{n} = t} \left[f_m(t_m) + \sum_{k=m+1}^{n} \bigl(f_{k}(t_k) - f_k(t_{k-1})\bigr)\right]. \label{LPP formula}
    \end{align}
    Through a simple induction argument, one can show
    \begin{align}
         -f_n^{\Lambda}(t; \mathbf{y}) = \max_{0\leq m \leq n} \{   \mathbf{(-f)}[(0, m) \rightarrow (t, n)] - y_m\}. \label{reflection and lpp}
    \end{align}
    The above formulas have a geometric interpretation: a given $t_m \leq \dots \leq t_{n}$ may be thought of as specifying a sequence of jump times for a nondecreasing càdlàg path $\pi: [0, t] \rightarrow \{m, \dots, n\}$, with $\pi(0) \geq m, \pi(t) =n$, whose path length is defined by the term inside the $\sup$ of \eqref{LPP formula}. Therefore, $\mathbf{f}[(0, m) \rightarrow (t, n)]$ is the value of the maximal path length from $(0,m)$ to $(t, n)$ through the field $(f_m, \dotsc, f_n)$. When $f_k(t)(\omega) = B_k(t)(\omega)$, the model is referred to as Brownian Last Passage Percolation (BLPP), which is variationally dual to RBM by \eqref{reflection and lpp} (and since $\mathbf{B} \overset{d}{=}(\mathbf{-B})$). 
\end{remark}
\subsubsection{Reflected Brownian Motions with General Initial Condition}
Here, we take a general one-sided initial data $\mathbf{y}$ with $y_0 = \infty$ and  $f_0(t) \equiv \infty$ (so we may think $\mathbf{y} = (y_1, y_2, \dots)$). Let us denote the $n$-th particle as $Y_n^{\text{RBM}}(t)$, and note that $Y^{\text{RBM}}_1(t)$ is a standard Brownian motion started at $y_1$.
\begin{theorem}\label{RBM Particle Dist Thm}
     Fix initial data $\mathbf{y}=(y_1, y_2, \dots)$ with $y_1 \geq y_2 \geq \dots$, and let $V = \R_+ \times \R \times \Z_{\geq 1}$, 
    \begin{equation}
        F_{t, a, n} = \PP( Y_n^{\text{RBM}}(t) > a \, | \, \mathbf{Y}(0) = \mathbf{y}), \qquad (t, a, n) \in V. 
    \end{equation}
    Then $F_{t,a, n}$ satisfies 
    \begin{align}
        \left[D_t  - \frac{1}{2}D_a^2\right] F_{t, a, n} \cdot F_{t, a, n-1} &= 0, \qquad \bigl(\partial_t-\frac{1}{2}\partial_a^2\bigr) F_{t,a, 1}= 0, \qquad  F_{0, a, n} = 1_{y_n > a}. \label{RBM PART form}
    \end{align}
\end{theorem}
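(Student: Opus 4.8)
The plan is to deduce the statement from the general Fredholm determinant solution theory of Theorem \ref{RBM Kernel Thm}. Concretely, I would start from a known Fredholm determinant representation for the one-point marginal $F_{t,a,n} = \PP(Y_n^{\text{RBM}}(t) > a \mid \mathbf{Y}(0) = \mathbf{y})$, available through the determinantal structure of BLPP together with the variational duality \eqref{reflection and lpp} (see the references collected in Section \ref{sec: RBM/BLPP Models}), and rewrite it as $F_{t,a,n} = \det(I - K_{t,a,n})_{L^2(X,\mu)}$ for an explicit trace-class kernel $K_{t,a,n}$ on a suitable space $L^2(X,\mu)$. Once the kernel is in hand, the theorem reduces to checking the three structural hypotheses (i)--(iii), after which the bilinear identity $[D_t - \tfrac{1}{2}D_a^2]\,F_{t,a,n}\cdot F_{t,a,n-1} = 0$ is immediate from Theorem \ref{RBM Kernel Thm} (taking $M = 0$, so that the convention $Y_0 \equiv \infty$ gives $F_{t,a,0}\equiv 1$, i.e.\ $K_{t,a,0} = 0$, and the equation holds for all $n \ge 1$).

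The first substantive step is to extract the rank-one $a$-derivative and confirm regularity. I would isolate the dependence of $K_{t,a,n}$ on the spatial variable $a$ --- which in these representations enters through a shift of the kernel's arguments or through a one-dimensional projection at level $a$ --- and differentiate to obtain $\partial_a K_{t,a,n} = \psi_{t,a,n}\otimes\phi_{t,a,n}$, thereby reading off the pair $(\psi,\phi)$. I would then verify $K_{t,a,n}\in C_r^{1,2}(U,\mathcal{I}_1)$ with $U = \R_+\times\R\times\Z_{\ge 1}$: namely trace-norm $C^1$ in $t$ and $C^2$ in $a$, the decay $K_{t,r,n}(x,y)\to 0$ as $r\to-\infty$, and the integrability of $\partial_t K$, $\partial_r K$, $\partial_r^2 K$ needed to justify the integral representations \eqref{BLPP a cond}--\eqref{BLPP t cond}. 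These are standard dominated-convergence and Gaussian-tail estimates once $\psi,\phi$ are written as explicit contour integrals or transforms of the initial data.

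The heart of the proof --- and the step I expect to be the main obstacle --- is verifying the three flow relations for the explicit $\psi_{t,a,n}$ and $\phi_{t,a,n}$. The $n$-flows $\nabla_n^+\psi = \partial_a\psi$ and $\nabla_n^-\phi = \partial_a\phi$ couple the particle-index shift to the spatial derivative and should follow from the recursive Skorokhod/last-passage structure linking consecutive levels, realized at the level of the integrand. The $t$-flows are the delicate point: one needs $\partial_t\psi = \tfrac12(\partial_a + I)^2\psi$ and $\partial_t\phi = -\tfrac12(\partial_a - I)^2\phi$ (using $\partial_a^2 \pm 2\partial_a + I = (\partial_a \pm I)^2$), so I must confirm that the $t$-dependence of the transform kernel produces exactly these shifted heat operators, with the $\pm I$ and $\pm 2\partial_a$ terms tracking the drift introduced by the reflection. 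I would check each identity by differentiating the explicit integral formula for $\psi$ (resp.\ $\phi$) and matching the exponent and weight against the claimed operator. With (i)--(iii) in place, Theorem \ref{RBM Kernel Thm} yields the bilinear equation on $V$.

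Finally, the two side conditions are elementary and I would dispatch them directly rather than through the determinant. At $t = 0$ the reflection map gives $Y_n^{\text{RBM}}(0) = y_n$ deterministically, so $F_{0,a,n} = \PP(y_n > a) = 1_{y_n > a}$. For $n = 1$, the construction makes $Y_1^{\text{RBM}}(t) = y_1 + B_1(t)$ a standard Brownian motion, so writing $p_t(x) = (2\pi t)^{-1/2} e^{-x^2/2t}$ for the heat kernel we have $F_{t,a,1} = \int_a^\infty p_t(x - y_1)\,dx$; differentiating under the integral and using $\partial_t p_t = \tfrac12\partial_x^2 p_t$ gives $\bigl(\partial_t - \tfrac12\partial_a^2\bigr)F_{t,a,1} = 0$, as claimed.
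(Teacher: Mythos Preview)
Your proposal is correct and takes essentially the same approach as the paper: start from the known Fredholm determinant formula for the RBM one-point distribution (the paper uses \cite[Thm.~2.1]{MQR16}, rewritten via the $AB\mapsto BA$ trick), identify explicit $\psi,\phi$, and verify the three flow hypotheses of Theorem~\ref{RBM Kernel Thm} together with the regularity class. The paper makes the flow checks concrete by writing $\psi,\phi$ in terms of the Hermite functions $\varphi_n,\bar\varphi_n$ of \eqref{varphi def}, so that the $n$-- and $t$--flows reduce to the raising/lowering identities \eqref{Hermite calc}--\eqref{Hermite calc bar}; your handling of the side conditions $F_{0,a,n}=1_{y_n>a}$ and the $n=1$ heat equation matches the paper's remark.
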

\begin{remark}
    By extending the initial configuration by setting $y_m = + \infty$ if $m < 1$ (so $F_{t, a, m} \equiv 1$), the bilinear equation already implies the $n=1$ heat equation. 
\end{remark}
In order to prove Thm.\ \ref{RBM Particle Dist Thm} we start from the one-point distribution formula for RBM in \cite{MQR16} and, after a minor transformation, demonstrate that the kernel satisfies the conditions of Thm.\ \ref{RBM Kernel Thm}. To this end, let $H_n$ be the $n$-th Hermite polynomial, defined by
\begin{align*}
    H_n(x) = (-1)^n e^{x^2/2}\frac{d^n}{dx^n} e^{-x^2/2}.
\end{align*}
 We also define
\begin{align} \label{varphi def}
    \varphi_n(t, x) &= t^{-n/2}\frac{1}{\sqrt{2\pi t}}e^{-x^2/2t}H_n(x/\sqrt{t}), \quad
        \bar{\varphi}_n(t, x) = \frac{1}{n!}t^{n/2}H_n(x/\sqrt{t}).
\end{align}
These satisfy the raising and lowering identities 
\begin{equation}\label{Hermite calc}
\partial_x\varphi_n(t,x)=-\varphi_{n+1}(t,x),\qquad \partial_x\bar\varphi_n(t,x)=\bar\varphi_{n-1}(t,x),
\end{equation}
\begin{equation}\label{Hermite calc bar}
\partial_t\varphi_n(t,x)=\tfrac12\,\varphi_{n+2}(t,x),\qquad
\partial_t\bar\varphi_n(t,x)=-\tfrac12\,\bar\varphi_{n-2}(t,x).
\end{equation}
Next, suppose we are given a vector of initial data $\mathbf{y}$ as in Thm.\ \ref{RBM Particle Dist Thm}. Let $(B_k)_{k \geq 0}$ be a discrete time random walk taking $\text{Exp}(1)$ steps downwards and define the epigraph hitting time $$ \tau = \inf\{ k \geq 0: B_k \geq y_{k+1}\}.$$  Define
\begin{align}
    \psi_{t, a, n}(u) &= e^{u-a}\varphi_n(t, u-a), \quad 
    \phi_{t, a, n}^{\mathbf{y}}(v) = \E_{B_0 = v} \left[e^{a-B_{\tau}}\bar{\varphi}_{n-\tau -1}(t, B_{\tau} -a)1_{\tau < n} \right].
\end{align}
\begin{lemma}[{\cite[Thm.~2.1]{MQR16}}]
    With $F_{t, a ,n}$ as above,   
    \begin{align}
        F_{t, a, n} &= \det(I-K_{t, a, n}^{\mathbf{y}})_{L^2(\R)},  \quad \text{ with } \quad K_{t, a, n}^{\mathbf{y}} = \int_{-\infty}^a \psi_{t, r, n} \otimes \phi_{t, r, n}^{\mathbf{y}}\, dr. \label{mqr det form}
    \end{align}
\end{lemma}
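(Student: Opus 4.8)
The statement is quoted as \cite[Thm.~2.1]{MQR16}; for completeness I outline the strategy one would use to derive it, following the biorthogonalization framework developed there, which is the same machinery that produces the Hermite functions $\varphi_n,\bar\varphi_n$ and the hitting time $\tau$ appearing above. The starting point is the determinantal structure of the reflected process. By the Skorokhod recursion together with the duality \eqref{reflection and lpp}, the event $\{Y_n^{\text{RBM}}(t)>a\}$ is equivalent to a last-passage event for the Brownian field $(f_0,\dots,f_n)$; since the increments are Gaussian, the finite-dimensional law of $(Y_1(t),\dots,Y_n(t))$ admits a signed-determinant (Karlin--McGregor / Sch\"utz-type) representation whose building block is the Brownian transition density. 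The plan is to convert this into a Fredholm determinant for the single marginal $\PP(Y_n^{\text{RBM}}(t)>a)$ and then to identify the resulting correlation kernel with the integrated rank-one form $\int_{-\infty}^a\psi_{t,r,n}\otimes\phi^{\mathbf{y}}_{t,r,n}\,dr$.

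First I would write the one-point distribution as a gap probability, producing a Fredholm determinant $\det(I-\bar K_{t,a,n})$ on $L^2$, where $\bar K$ is the correlation kernel of the underlying determinantal point process obtained after summing out the positions of particles $1,\dots,n-1$. In the Brownian setting this kernel appears in ``pre-biorthogonal'' form $\bar K(u,v)=\sum_k \Psi_k(u)\,\Phi_k(v)$, in which the forward functions $\Psi_k$ are completely explicit: they are the matrix elements of the Brownian semigroup entering the Karlin--McGregor determinant, identified with the Hermite functions $\varphi_k(t,\cdot)$ of \eqref{varphi def}, whose raising/lowering relations \eqref{Hermite calc}--\eqref{Hermite calc bar} encode the Brownian dynamics. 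This fixes the factor $\psi_{t,a,n}(u)=e^{u-a}\varphi_n(t,u-a)$, the exponential prefactor being a conjugation that renders the operator trace class on the relevant half-line.

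The crux is the biorthogonalization: the partner functions $\Phi_k$ are defined only implicitly, by a biorthogonality relation against $\{\Psi_k\}$ subject to the initial data $\mathbf{y}$. Following \cite{MQR16}, I would solve this relation by encoding the constraint $y_0\ge y_1\ge\cdots$ as an absorption condition for the auxiliary random walk $(B_k)$ with $\mathrm{Exp}(1)$ downward steps, and showing that the unique solution is the hitting-time expectation $\phi^{\mathbf{y}}_{t,a,n}(v)=\E_{B_0=v}[e^{a-B_\tau}\bar\varphi_{n-\tau-1}(t,B_\tau-a)\mathbf{1}_{\tau<n}]$ with $\tau=\inf\{k\ge0:B_k\ge y_{k+1}\}$. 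Verifying this amounts to checking that the hitting-time representation reproduces the correct boundary values encoding $\mathbf{y}$ and satisfies the same backward recursion as the implicit biorthogonal functions, both of which follow from the Hermite recursions and a one-step decomposition of the walk. Once biorthogonality is established, collapsing the spectral sum into a single integral over $r$ yields the stated $\int_{-\infty}^a\psi\otimes\phi\,dr$ representation.

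The hard part will be precisely this biorthogonalization step, together with the analytic bookkeeping: one must justify trace-class membership and the interchange of sum and integral with the determinant expansion, and ensure the hitting-time series converges for the general (possibly infinite) initial data $\mathbf{y}$. Here the indicator $\mathbf{1}_{\tau<n}$ truncates the walk before its $n$-th step, which is what makes the representation finite and the operator trace class, while the Hermite-function decay and the $e^{u-a}$ conjugation deliver the integrability needed near $\pm\infty$.
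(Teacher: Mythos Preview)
Your outline is a fair sketch of how \cite{MQR16} itself proves the underlying determinantal formula, but that is not what the paper does here. The paper takes \cite[Thm.~2.1]{MQR16} as a black box: that theorem already delivers $F_{t,a,n}=\det(I-K_{t,a,n})$ with a kernel in the projected form $K_{t,a,n}(u,v)=1_{\{u\le a\}}\bigl(\mathcal{S}_{-t,-n}^*\,\bar{\mathcal{S}}_{t,n}^{\text{epi}(\mathbf{y})}\bigr)(u,v)\,1_{\{v\le a\}}$, where $\mathcal{S}_{-t,-n}(u,v)=e^{u-v}\varphi_n(t,u-v)$ and $\bar{\mathcal{S}}^{\text{epi}(\mathbf{y})}$ is the hitting-time dressing of $e^{v-u}\bar\varphi_{n-1}(t,u-v)$. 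The only content of the paper's proof is then a one-line algebraic move: write $K=AB$ with $A(u,z)=1_{\{u\le a\}}\mathcal{S}_{-t,-n}(z,u)$ and $B(z,v)=\bar{\mathcal{S}}^{\text{epi}(\mathbf{y})}_{t,n}(z,v)1_{\{v\le a\}}$, and use the cyclicity identity $\det(I-AB)=\det(I-BA)$ (Lem.~\ref{AB BA}) to swap the factors, which turns the sandwiched indicators into the integral $\int_{-\infty}^a \psi_{t,r,n}\otimes\phi^{\mathbf{y}}_{t,r,n}\,dr$. Your route would re-establish the biorthogonalization and trace-class estimates from scratch, buying self-containment at the cost of redoing the main work of \cite{MQR16}; the paper instead isolates exactly the step it needs---recasting an already-known kernel into the integrated rank-one form required by Thm.~\ref{RBM Kernel Thm}---and that step is elementary once you spot the $AB\mapsto BA$ trick.
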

\begin{proof}
    Let
    \begin{align*}
        \mathcal{S}_{-t, -n}(u, v) &= e^{u-v}\varphi_n(t, u-v), \quad 
        \bar{\mathcal{S}}_{-t, n}(u, v) = e^{v-u}\bar{\varphi}_{n-1}(t, u-v),
    \end{align*}
    and
    \begin{align*}
        \bar{\mathcal{S}}_{t, n}^{\text{epi}(\mathbf{y})}(u, v) = \E_{B_0 = u}\left[\bar{\mathcal{S}}_{-t, n-\tau}(B_{\tau}, v)1_{\tau < n}\right].
    \end{align*}
   A special case of \cite[Thm.~2.1]{MQR16} gives the following trace-class integral operator on $L^2(\R)$ 
    \begin{align*}
        K_{t, a, n}(u, v) &= 1_{\{u\leq a\}} \left(\mathcal{S}^*_{-t, -n}\bar{\mathcal{S}}^{\text{epi}(\mathbf{y})}_{t, n} \right)(u, v) \, 1_{\{v\leq a\}}.
    \end{align*}
    Let $A(u, z) = 1_{\{u \leq a\}} \mathcal{S}_{-t, -n}(z, u)$, $B(z, v) = \bar{\mathcal{S}}_{t, n}^{\text{epi}(\mathbf{y})}(z, v)1_{\{v \leq a\}}$,  then Lem.\ \ref{AB BA} gives $\det(I-AB) = \det(I-BA)$ and hence \eqref{mqr det form}. 
    \end{proof}
\begin{proof}[Proof of Thm.\ \ref{RBM Particle Dist Thm}]
   We show the kernel given in \eqref{mqr det form} satisfies the conditions of Thm.\ \ref{RBM Kernel Thm}. The $a$--flow is immediate from the integral representation. For $\psi$, with $x=u-a$, we have $$\partial_a \psi_{t,a,n}(u) = -e^{u-a}(\varphi_n(t, x) + \partial_x \varphi_n(t, x)) = \nabla_n^+ \psi_{t, a, n}(u),$$ using $\partial_x \varphi_n = -\varphi_{n+1}$.
   For $\phi$, we have
    \begin{align*}
        \partial_a \phi_{t, a, n}^{\mathbf{y}}(v) 
        &=  \E_{B_0 = v} \left[e^{a-B_{\tau}}\bar{\varphi}_{n-\tau -1}(t, B_{\tau} -a)1_{\tau < n} \right] - \E_{B_0 = v} \left[e^{a-B_{\tau}}\bar{\varphi}_{n-\tau -2}(t, B_{\tau} -a)1_{\tau < n-1} \right] \\
        &= \nabla_n^- \phi_{t, a, n}^{\mathbf{y}} (v), 
    \end{align*}
    where we used $\partial_x \bar{\varphi}_n = \bar{\varphi}_{n-1}$ and that when $\tau = n-1$, the derivative of $\bar{\varphi}_{n-\tau - 1}$ vanishes. The derivatives with respect to time follow similarly using $\partial_t\varphi_n = \frac{1}{2}\varphi_{n+2}$ and $\partial_t 
    \bar{\varphi}_{n} = -\frac{1}{2}\bar{\varphi}_{n-2}$, together with the $n$--flow identities. Finally, by the standard Gaussian-polynomial estimates for $\varphi_n, \bar{\varphi}_n$, the fact that $\phi_{t, a, n}^{\mathbf{y}}(v)$ is supported on $\{ v \geq y_n\}$, and noting $B_{\tau} = v$ when $v \geq y_1$, we have that $K_{t, a, n}^{\mathbf{y}} \in C_r^{1, 2}$, as required.
\end{proof}

\subsubsection{Reflected Brownian Motions with Moving Wall}
In this section, we take $\mathbf{y} \equiv 0$, $f_0(t) = b(t) $ for some continuous function with $b(0) = 0$. Let us denote the $n$-th particle as $Y_n^{\text{RBM}, b}(t)$. We note that due to the variational formula \eqref{reflection and lpp}, $-Y_n^{\text{RBM}, b}(t)$ is the value of BLPP from $(0, 0)$ to $(t, n)$ with the lower boundary $f_0(\cdot) = -b(\cdot)$. 
\begin{theorem}\label{RBM Particle b Dist Thm}
    Fix $\mathbf{y}=0$ and $b(t) \in C(\R_+, \R)$ with $b(0) = 0$. Let $V = \{(t, a, n) : t \in \R_+, a < b(t), n \in \Z_{\geq 1}\}$ with
    \begin{align}
        F_{t, a, n} &= \PP( Y_n^{\text{RBM}, b}(t) > a \, | \, \mathbf{Y}(0) = 0, \,  Y_0(\cdot) = b(\cdot)), \qquad (t,a, n) \in V. 
    \end{align}
    Then $F_{t, a, n}$ satisfies
    \begin{gather}
        \left[D_t  - \frac{1}{2}D_a^2\right] F_{t, a, n} \cdot F_{t, a, n-1} = 0, \qquad  F_{0, a, n} = 1_{0 > a},\label{RBM PART form b} \\
        \bigl(\partial_t - \frac{1}{2}\partial_a^2\bigr)F_{t, a, 1} = 0 \quad  \text{for } a < b(t), \qquad  
             F_{t, a, 1} = 0 \quad \text{ for } a \geq b(t). 
        \label{RBM with b ic}
    \end{gather}
\end{theorem}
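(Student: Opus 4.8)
The plan is to mirror the proof of Thm.\ \ref{RBM Particle Dist Thm}: begin from a Fredholm determinant representation for the moving-wall process, show that its kernel lies in $C_r^{1,2}$ and satisfies conditions (i)--(iii) of Thm.\ \ref{RBM Kernel Thm}, read off the bilinear equation on $V$, and then dispatch the initial, boundary, and $n=1$ conditions separately. Concretely, the variational identity \eqref{reflection and lpp} identifies $-Y_n^{\text{RBM},b}(t)$ with Brownian last passage from $(0,0)$ to $(t,n)$ carrying the deterministic lower-boundary field $-b(\cdot)$ at level $0$, and the associated one-point distribution admits a formula of the same shape as \eqref{mqr det form}, namely $F_{t,a,n} = \det(I - K_{t,a,n}^{b})_{L^2(\R)}$ with $\partial_a K_{t,a,n}^{b} = \psi_{t,a,n}\otimes \phi_{t,a,n}^{b}$, where $\psi_{t,a,n}(u) = e^{u-a}\varphi_n(t,u-a)$ is exactly as in the general-IC case and $\phi^{b}$ encodes the wall in place of the epigraph data $\mathbf{y}$.

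The heart of the argument is verifying (i)--(iii). Condition (i) is immediate from the integral representation. For $\psi$ the computations for (ii)--(iii) are verbatim those in the proof of Thm.\ \ref{RBM Particle Dist Thm}, since $\psi$ is unchanged and the identities reduce to the Hermite raising/lowering relations \eqref{Hermite calc}--\eqref{Hermite calc bar}. For $\phi^{b}$ I would check that the $\bar\varphi$-factor structure which produced $\nabla_n^- \phi = \partial_a \phi$ and the $t$--flow in the general case persists after $\mathbf{y}$ is replaced by the wall: the wall enters only through the support and the absorption mechanism, whereas the differential--difference identities are driven entirely by the $\bar\varphi$ recurrences. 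Combined with the Gaussian--polynomial decay of $\varphi_n,\bar\varphi_n$, this gives $K_{t,a,n}^{b} \in C_r^{1,2}$, so Thm.\ \ref{RBM Kernel Thm} applies and yields $\bigl[D_t - \tfrac12 D_a^2\bigr]F_{t,a,n}\cdot F_{t,a,n-1} = 0$ on $V$.

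It remains to establish the auxiliary conditions. The initial condition is immediate: at $t=0$ the Skorokhod map gives $Y_n^{\text{RBM},b}(0) = y_n = 0$, whence $F_{0,a,n} = 1_{0>a}$. The Dirichlet condition follows from ordering: the reflection enforces $Y_1(t) \le Y_0(t) = b(t)$ almost surely, so for $a \ge b(t)$ the event $\{Y_1(t) > a\}$ is empty and $F_{t,a,1} = 0$. For the $n=1$ heat equation I would invoke the reduction noted after Thm.\ \ref{RBM Particle Dist Thm}: on $V$ we have $a < b(t)$, so $F_{t,a,0} = \PP(b(t) > a) = 1$ identically; taking $M=0$ in Thm.\ \ref{RBM Kernel Thm} (so that $K_{t,a,0}^{b} \equiv 0$ on $V$), all derivatives of the constant $F_{t,a,0}\equiv 1$ vanish and the bilinear equation at $n=1$ collapses to $\bigl(\partial_t - \tfrac12 \partial_a^2\bigr)F_{t,a,1} = 0$ for $a < b(t)$.

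The main obstacle is the verification of the $n$-- and $t$--flow identities for $\phi^{b}$ together with the regularity $K^{b} \in C_r^{1,2}$: unlike the purely level-indexed data $\mathbf{y}$, the wall $b(\cdot)$ is a continuous-time object sitting on the boundary of the last-passage domain, so one must confirm both that it leaves the Hermite-driven algebraic recurrences intact and that the resulting $\phi^{b}$ retains the decay needed for the trace-norm differentiability hypotheses. A secondary point requiring care is the clean identification $K_{t,a,0}^{b} \equiv 0$ on $V$, which is what legitimizes using $F_{t,a,0}\equiv 1$ as the boundary value in the $n=1$ reduction.
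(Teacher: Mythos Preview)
Your high-level plan---start from a Fredholm determinant formula, verify conditions (i)--(iii) of Thm.~\ref{RBM Kernel Thm}, and then handle the auxiliary conditions---is exactly what the paper does. However, you have the kernel structure backwards, and this is not a cosmetic slip.

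In the moving-wall formula (from \cite[Cor.~1.1]{rahman2025}, stated here as the lemma preceding the proof), it is the \emph{left} leg that carries the wall data: one has
\[
\psi^{b}_{t,a,n}(u) = \E_{B_0=u}\bigl[e^{-a+u^2/(4T)}\varphi_n(t-\tau, B(\tau)-a)\,1_{\tau\le t}\bigr],
\qquad
\phi_{t,a,n}(v) = e^{a-v^2/(4T)}\bar\varphi_{n-1}(t,v-a),
\]
where $\tau$ is the Brownian hitting time of $b(\cdot)$. So $\phi$ is clean and the Hermite recurrences \eqref{Hermite calc}--\eqref{Hermite calc bar} give its flows immediately, while $\psi^b$ is the object requiring care---the reverse of your sketch. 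Your appeal to ``$\phi^b$ inherits the $\bar\varphi$-recurrences'' is aimed at the wrong function.

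This matters for a concrete technical reason you do not address. When computing $\partial_t \psi^b$, the indicator $1_{\tau\le t}$ depends on $t$, so differentiation produces a boundary contribution at $\tau=t$. One must argue this vanishes: on $V$ we have $a<b(t)$, so $|B(\tau)-a|=|b(\tau)-a|>0$ near $\tau=t$ by continuity of $b$, and the Gaussian factor in $\varphi_n(t-\tau,\cdot)$ kills the contribution as $t-\tau\downarrow 0$. This is the one nontrivial step in the flow verification for the wall case, and it lives on the $\psi$ side, not the $\phi$ side. Your discussion of the $n=1$ reduction and the Dirichlet/initial conditions is fine.
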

In order to prove Thm.\ \ref{RBM Particle b Dist Thm} we start from the one-point distribution formula for RBM with moving wall from \cite{rahman2025}, and, after a sequence of minor modifications, demonstrate that the resulting kernel satisfies the conditions of Thm.\ \ref{RBM Kernel Thm}. To this end, recall $\varphi_n, \bar{\varphi}_n$ defined in \eqref{varphi def}, and for a standard Brownian motion $B(t)$, define the hitting time $\tau = \inf\{ s \geq 0 \, | \, B(s) \geq b(s)\}$. Fix $T> 0$ and assume $t \in (0, T)$. Define
\begin{align*}
    \psi_{t, a, n}^b(u) &= \E_{B_{0}=u}\left[e^{-a + \frac{u^2}{4T}}\varphi_n(t-\tau, B(\tau) - a)1_{\tau \leq t}\right], \quad 
    \phi_{t, a, n}(v) = e^{a-\frac{v^2}{4T}}\bar{\varphi}_{n-1}(t, v-a).
\end{align*}

\begin{lemma}[{\cite[Cor.~1.1]{rahman2025}}] 
        With $F_{t, a, n}$ as above and $t \in (0, T)$ for some $T > 0$, 
    \begin{align}
        F_{t,a, n} = \det(I-K_{t,a, n}^b)_{L^2(\R)}, \quad \text{with} \quad K_{t, a, n}^{b} = \int_{-\infty}^a \psi_{t, r, n}^b \otimes \phi_{t, r, n} dr. \label{r det form}
    \end{align}
\end{lemma}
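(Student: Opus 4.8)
The proof proceeds exactly as in the general one-sided initial data case treated just above, the only change being that the Brownian hitting time of the wall $b(\cdot)$ now plays the role that the epigraph hitting time of $\mathbf y$ played there. The plan is to start from the determinantal formula of \cite[Cor.~1.1]{rahman2025}, which expresses $F_{t,a,n}$ as a Fredholm determinant whose kernel is, after restriction to $(-\infty,a]$, a composition of two operators,
\[
  F_{t,a,n} \;=\; \det\bigl(I - 1_{\{u\le a\}}\,(\mathcal A\,\mathcal B)(u,v)\,1_{\{v\le a\}}\bigr)_{L^2(\R)},
\]
where $\mathcal A$ is built from the $\varphi_n$ and carries the reflection off the wall through the hitting time $\tau=\inf\{s\ge0:B(s)\ge b(s)\}$, while $\mathcal B$ is the propagator built from the $\bar\varphi_{n-1}$. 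I would then apply the cyclic determinant identity of Lem.\ \ref{AB BA} to slide the single-integration projection across the product and read off the claimed rank-one integral kernel \eqref{r det form}.

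Concretely, set $A(u,z)=1_{\{u\le a\}}\mathcal A(u,z)$ and $B(z,v)=\mathcal B(z,v)1_{\{v\le a\}}$, so that $AB$ is precisely the kernel of \cite[Cor.~1.1]{rahman2025}. By Lem.\ \ref{AB BA}, $\det(I-AB)=\det(I-BA)$, and the kernel of $BA$ is
\[
  (BA)(z,z') \;=\; \int_{-\infty}^{a} \mathcal B(z,v)\,\mathcal A(v,z')\,dv .
\]
Matching the two factors in the integrand against the definitions, $\mathcal B(z,v)$ becomes $\phi_{t,v,n}(z)$ and $\mathcal A(v,z')$ becomes $\psi^b_{t,v,n}(z')$ after conjugating by the multiplication operator $e^{x^2/4T}$, which leaves the Fredholm determinant invariant and supplies the Gaussian factors $e^{\pm x^2/4T}$ appearing in $\psi^b$ and $\phi$. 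Since $\det(I-BA)=\det\bigl(I-(K^b_{t,a,n})^{\mathsf T}\bigr)=\det(I-K^b_{t,a,n})$, this yields \eqref{r det form}. The hitting-time expectation and the time shift $t-\tau$ in $\psi^b_{t,a,n}$ are already present in the cited formula; the cyclic swap merely relocates them from the $\mathcal A$-factor into the $\psi^b$-slot, and the indicator $1_{\tau\le t}$ is inherited unchanged.

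The main obstacle is the bookkeeping in this last matching step: one must verify that, after the cyclic swap and the Gaussian conjugation, the composed factors reproduce \emph{exactly} the functions $\psi^b_{t,a,n}$ and $\phi_{t,a,n}$ as written — in particular the correct time arguments ($t-\tau$ inside $\varphi_n$ and $t$ inside $\bar\varphi_{n-1}$) and the sign and placement of the $e^{\pm x^2/4T}$ factors. One must also check the integrability hypotheses under which Lem.\ \ref{AB BA} applies, and confirm that the resulting $K^b_{t,a,n}$ lies in the regularity class $C_r^{1,2}$ needed to invoke Thm.\ \ref{RBM Kernel Thm}; this follows from the standard Gaussian–polynomial bounds on $\varphi_n,\bar\varphi_n$ together with the fact that, for $t\in(0,T)$, the wall constrains the relevant support so that the integral representation converges in trace norm, but it must be recorded carefully rather than asserted.
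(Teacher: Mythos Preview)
Your high-level approach matches the paper's: start from the determinantal formula of \cite[Cor.~1.1]{rahman2025}, conjugate, and apply the cyclic identity of Lem.~\ref{AB BA} to move the projection into the integration variable. However, your sketch misstates the cited formula in ways that hide most of the actual work, and the manipulations you list are not the ones required.

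Concretely: the kernel in \cite[Cor.~1.1]{rahman2025} is restricted to $\{u\ge -a\}\times\{v\ge -a\}$, not $(-\infty,a]^2$; the two factors are ordered as $\mathcal{S}_{t,n}\mathcal{S}^{\mathrm{epi}(b)}_{t,n}$ with the $\bar\varphi_{n-1}$-factor first and the hitting-time factor second (you have them reversed); the hitting-time factor carries an explicit $(-1)^n$ and the argument $-B(\tau)-v$, so after the cyclic swap and the change of variables $r\mapsto -r$ one must invoke the parity of $\varphi_n$ to remove the sign and flip $-B(\tau)+r$ to $B(\tau)-r$; and the conjugation is by $e^{\pm u}$ \emph{and} $e^{\pm z^2/4T}$, not by the Gaussian alone. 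Finally, a further change of variables $v\mapsto -v$ on the $L^2$ space is needed to land on the stated $\phi_{t,r,n}$. The paper also records explicit bounds from \cite[Prop.~5.1]{rahman2025} to justify the cyclic swap; you flag this but do not supply it. None of these steps is deep, but your proposal as written would not produce \eqref{r det form} without them, so the ``bookkeeping'' caveat understates what is missing.
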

\begin{proof}
Let
\begin{align*}
    \mathcal{S}_{t, n}(u, v) &= \bar{\varphi}_{n-1}(t, u-v), \quad 
    \mathcal{S}^{\text{epi}(b)}_{t, n}(u, v) = (-1)^n\E_{B_0 = u}[\varphi_n(t-\tau, -B(\tau) - v)1_{\tau \leq t}].
\end{align*}
A special case of \cite[Cor.~1.1]{rahman2025} gives the following trace-class integral operator on $L^2(\R)$
\begin{align*}
    K_{t, a, n} &= 1_{\{u \geq -a\}}\left(\mathcal{S}_{t, n}\mathcal{S}^{\text{epi}(b)}_{t, n}\right)(u, v)1_{\{v \geq -a\}}.
\end{align*} 
Now we apply some simple manipulations to transform our kernel into the desired form. Define
\begin{align*}
    A(u, z) &= e^{-u} \mathcal{S}_{t, n}(u, z)e^{-\frac{z^2}{4T}}, \qquad 
    B(z, v) = e^{\frac{z^2}{4T}} \mathcal{S}_{t, n}^{\text{epi}(b)}(z, v)e^{v}.
\end{align*}
Now, using the estimates in \cite[Prop.~5.1]{rahman2025} there exists a constant $C_{n,T}$ such that
\begin{align}
    \abs{A_{t, n}(u, z)} &\leq C_{n, T}e^{-u}(\abs{u-z}^{n-1} + 1)e^{-\frac{z^2}{4T}}, \label{ra est 1}\\
    \abs{B_{t, n}(z, v)} &\leq C_{n, T} t^{-1/2} e^{\frac{z^2}{4T}}e^{v-\frac{(v-z)^2}{4t}} (\abs{z}^n+\abs{v}^n + 1).  \label{ra est 2}
\end{align}
Using Lem.\ \ref{AB BA} with $\tilde{A}_{t, n}(u, z) =1_{\{u \geq - a\}}A_{t, n}(u, z), \tilde{B}_{t, n}(z, v) = B_{t, n}(z,v)1_{\{v \geq - a\}}$, and a simple change of variables, we therefore get 
\begin{align*}
    K_{t, a, n} &\rightarrow \int_{-\infty}^a B_{t, n}(u, -r)  A_{t, n}(-r, v)dr.
\end{align*}
 Now notice that 
 \begin{align*}
     B_{t, n}(u, -r) &= e^{-r+ \frac{u^2}{4T}}(-1)^n\E_{B_0 = u}[\varphi_n(t-\tau, -B(\tau) + r)1_{\tau \leq t}] \\
     &= e^{-r+ \frac{u^2}{4T}}\E_{B_0 = u}[\varphi_n(t-\tau, B(\tau) - r)1_{\tau \leq t}].
 \end{align*}
 Finally, we can replace $A_{t, n}(-r, v) \rightarrow A_{t, n}(-r, -v)$ (by passing the change of variables onto the $L^2$ function being integrated on), yielding the desired form. 
\end{proof}
\begin{proof}[Proof of Thm.\ \ref{RBM Particle b Dist Thm}]
For a fixed $T> 0$ with $t \in (0, T)$, the proof is completely analogous to the proof of Thm.\ \ref{RBM Particle Dist Thm}, noting the estimates \eqref{ra est 1}--\eqref{ra est 2} and that only $\varphi$ contributes to the time derivative on $\psi^b$ since as $t-\tau \downarrow 0$, we have $\abs{B(\tau) - a}> 0$ by the continuity of $b(\cdot)$, so the small time Gaussian contribution of $\varphi_n(t-\tau, B(\tau)-a)$ kills the contribution of the indicator. 
\end{proof}
\subsection{Continuous-Time Particle Exclusion Models}\label{sec: KPZ TASEP models}
\subsubsection{TASEP}
The totally asymmetric simple exclusion process (TASEP) is an interacting particle system on the one-dimensional integer lattice $\Z$ with at most one particle per site. Given a strictly decreasing initial configuration $\mathbf{y} = (y_1, y_2, y_3, \dotsc)$, the dynamics run in continuous time as follows: each particle carries an independent rate one exponential clock, and when particle $n$'s clock rings, it attempts to jump to the right by one unit. The jump is performed only if the destination site is empty, otherwise it is suppressed. After each particle's (attempted) jump its independent clock is instantaneously reset.  

\begin{theorem}\label{TASEP part thm}
    Fix one-sided initial data $\mathbf{y} = (y_1, y_2, \dots)$ with $y_1 > y_2 > \dots$, and let $V = \R_+ \times \Z \times \Z_{\geq 1}$ with  
   \begin{align}
       F_{t,a, n} = \PP(Y_n^{\text{TASEP}}(t) > a \, | \, \mathbf{Y}(0) = \mathbf{y}), \qquad (t, a ,n) \in V.
   \end{align}
   Then $F_{t, a, n}$ satisfies
   \begin{align}
       \left[D_t -\left(e^{-D_a} - 1\right)\right] F_{t,a,n}\cdot F_{t, a, n-1} &= 0, \qquad (\partial_t + \nabla_a^-) F_{t, a, 1} = 0, \qquad  F_{0, a, n} = 1_{y_n > a}. 
   \end{align}
\end{theorem}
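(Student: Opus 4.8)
The plan is to follow the proof of Thm.\ \ref{RBM Particle Dist Thm} verbatim in structure, with the Hermite pair $(\varphi_n,\bar\varphi_n)$ replaced by its discrete (Poisson--Charlier type) counterpart adapted to TASEP. First I would invoke the known biorthogonalized Fredholm determinant representation of the continuous-time TASEP one-point marginal $\PP(Y_n^{\mathrm{TASEP}}(t)>a\mid \mathbf Y(0)=\mathbf y)$; after the usual $AB\to BA$ cyclic swap (Lem.\ \ref{AB BA}) this exhibits $F_{t,a,n}=\det(I-K_{t,a,n})$ with $K_{t,a,n}=\sum_{r\le a}\psi_{t,r,n}\otimes\phi_{t,r,n}$, where $\psi_{t,a,n}$ is built from the free discrete heat functions and $\phi_{t,a,n}$ from the epigraph hitting-time functional of a random walk against the barrier $\mathbf y$, in direct analogy with the RBM definitions.

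Once the kernel is in this summed rank-one form, the three hypotheses of Thm.\ \ref{TASEP K THM} are checked as follows. The $a$-flow $\nabla_a^- K_{t,a,n}=\psi_{t,a,n}\otimes\phi_{t,a,n}$ is read off immediately from the summed representation. The $n$-flows $\nabla_n^+\psi=2\nabla_a^+\psi$, $\nabla_n^-\phi=2\nabla_a^-\phi$ and the $t$-flows $\partial_t\psi=-\tfrac12\nabla_a^-\psi$, $\partial_t\phi=-\tfrac12\nabla_a^+\phi$ reduce to the discrete raising/lowering relations for the TASEP functions, together with the observation---exactly as in the RBM proof---that the boundary contribution from $\tau=n-1$ drops out of $\nabla_n^-\phi$ because the relevant function is constant there. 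Membership $K_{t,a,n}\in C_r^1(U,\mathcal I_1)$, i.e.\ trace-norm $C^1$ in $t$ and summability of $\nabla_a^+K$ with vanishing as $a\to-\infty$, follows from the Gaussian/Poisson tail estimates for $\psi,\phi$ and the fact that $\phi_{t,a,n}$ is supported on $\{v\ge y_n\}$; this summability is precisely what legitimizes the telescoping sums producing \eqref{TASEP grad n on K} and \eqref{TASEP partial t}. Invertibility of $I-K_{t,a,n}$ on $V$ holds since $F_{t,a,n}$ is a genuine (positive) probability there.

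It remains to address the two side conditions. The initial condition $F_{0,a,n}=1_{y_n>a}$ is immediate from the definition of $F$. For the $n=1$ equation, I would extend the configuration by setting $y_m=+\infty$ for $m<1$, so that $F_{t,a,0}\equiv 1$; specializing the bilinear identity at $n=1$ and dividing by $F_{t,a,0}=1$ then collapses $[D_t-(e^{-D_a}-1)]F_{t,a,1}\cdot F_{t,a,0}=0$ to the linear discrete heat equation $(\partial_t+\nabla_a^-)F_{t,a,1}=0$, exactly as in the Remark following Thm.\ \ref{RBM Particle Dist Thm}.

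The main obstacle is the verification of the $t$- and $n$-flow identities for the discrete TASEP functions: establishing the precise discrete raising/lowering relations, correctly handling the hitting-time boundary term in $\phi_{t,a,n}$, and ensuring the tail estimates are strong enough to justify the termwise differentiation and telescoping of the infinite sums defining $K_{t,a,n}$.
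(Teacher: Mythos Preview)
Your proposal is correct and follows essentially the same approach as the paper: invoke the MQR17 Fredholm determinant formula, rewrite the kernel via cyclicity (and transpose invariance, which the paper also uses) into the summed rank-one form $K_{t,a,n}=\sum_{r\le a}\psi_{t,r,n}\otimes\phi_{t,r,n}^{\mathbf y}$, then verify the flow hypotheses of Thm.~\ref{TASEP K THM} using the contour-integral identities for the discrete TASEP functions and the support conditions for regularity. One small imprecision: in the TASEP case the boundary term at $\tau=n-1$ drops because $\bar\varphi_{t,a,0}\equiv 0$ (no pole in the contour integral), not because the function is constant as in the RBM Hermite setting.
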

\begin{remark}
    By extending the initial configuration by setting $y_m = +\infty$ if $m<1$ (so $F_{t,a, m} \equiv 1$), the bilinear equation already implies the $n=1$ forward equation.
\end{remark}
In order to prove Thm.\ \ref{TASEP part thm} we start from the one-point distribution formula in \cite{MQR17} and, after a sequence of minor transformations, demonstrate that the kernel satisfies the conditions of Thm.\ \ref{TASEP K THM}. To this end, define
\begin{align}
    \varphi_{t, a, n}(u) &= \frac{1}{2\pi i} \oint_{\Gamma_0} \frac{(1-w)^n}{2^{a-u}w^{n+1+a-u}}e^{t(w-\frac{1}{2})}dw, \quad 
    \bar{\varphi}_{t, a, n}(v) = \frac{1}{2\pi i}\oint_{\Gamma_0} \frac{(1-w)^{a-v+n-1}}{2^{v-a}w^n}e^{t(w-\frac{1}{2})}dw
\end{align}
where $\Gamma_0$ is a simple counter-clockwise loop around $0$ but excluding $1$. It is elementary to see these satisfy 
\begin{alignat}{2}
    \nabla_n^+ \varphi_{t, a, n}(u) &= 2\nabla_a^+ \varphi_{t,a, n}(u), \quad \nabla_n^- \bar{\varphi}_{t,a, n}(v) &&= 2\nabla_a^- \bar{\varphi}_{t, a, n}, \label{Tasep psi n flow}\\
    \partial_t \varphi_{t, a, n}(u) &= -\frac{1}{2} \nabla_a^- \varphi_{t,a, n}(u), \quad \partial_t \bar{\varphi}_{t,a, n}(v) &&= -\frac{1}{2}\nabla_a^+ \bar{\varphi}_{t, a, n}. \label{TASEP psi t flows}
\end{alignat}
Next, for given initial data $\mathbf{y}$, let $B_m$ be a discrete-time Geom(1/2) random walk with jumps strictly in the negative direction and let $\tau = \min \{m \geq 0 : B_m > y_{m+1}\}$. Define
\begin{align}
    \psi_{t,a, n}(u) &= \varphi_{t, a, n}(u), \quad \phi_{t, a, n}^{\mathbf{y}}(v) = \E_{B_0 = v}\left[\bar{\varphi}_{t, a, n-\tau}(B_{\tau})1_{\tau < n}\right].
\end{align}
\begin{lemma}[{\cite[Thm.\ 2.6]{MQR17}}]
    With $F_{t,a ,n}$ as above,
    \begin{align}
        F_{t, a, n} &= \det(I-K_{t,a, n}^{\mathbf{y}})_{\ell^2(\Z)}, \quad \text{ with } \quad K_{t, a, n}^{\mathbf{y}} = \sum_{r=-\infty}^a \psi_{t, r, n} \otimes \phi_{t,r, n}^{\mathbf{y}} \label{TASEP FP kernel}.
    \end{align}
\end{lemma}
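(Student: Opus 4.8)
The plan is to reuse the template established in the proofs of Thm.~\ref{RBM Particle Dist Thm} and Thm.~\ref{RBM Particle b Dist Thm}, now in the discrete setting of $\ell^2(\Z)$. The starting point is \cite[Thm.~2.6]{MQR17}, which realizes the TASEP one-point distribution as a Fredholm determinant of the form $\det(I - \chi_a\,\mathcal{S}^{*}\bar{\mathcal{S}}^{\mathrm{epi}(\mathbf{y})}\chi_a)_{\ell^2(\Z)}$, where $\chi_a$ is the projection onto the relevant half-line, the free kernel $\mathcal{S}$ is assembled from the contour integrals defining $\varphi_{t,a,n}$, and the epigraph kernel $\bar{\mathcal{S}}^{\mathrm{epi}(\mathbf{y})}$ encodes the initial data $\mathbf{y}$ through a hitting-time expectation of the $\mathrm{Geom}(1/2)$ walk $B_m$. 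The contour-integral identities \eqref{Tasep psi n flow}--\eqref{TASEP psi t flows} for $\varphi,\bar{\varphi}$ are exactly the relations that will later make the kernel fit the hypotheses of Thm.~\ref{TASEP K THM}, so the only task here is to massage the MQR17 determinant into the rank-one-sum form \eqref{TASEP FP kernel}.

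First I would factor the projected product as $AB$, absorbing the projection $\chi_a$ into the factor carrying the free kernel and writing the epigraph factor as a composition over the intermediate walk index, so that $A$ reproduces $\mathcal{S}$ (projected) and $B$ reproduces $\bar{\mathcal{S}}^{\mathrm{epi}(\mathbf{y})}$. Then I would apply the cyclic determinant identity $\det(I-AB)=\det(I-BA)$ (Lem.~\ref{AB BA}) to transpose the order of composition. As in the RBM case, the effect of the swap is to convert the Hilbert-space projection $\chi_a$ into the range of the rank-one decomposition variable, producing
\begin{equation*}
    K^{\mathbf{y}}_{t,a,n}(u,v) = \sum_{r=-\infty}^{a} \psi_{t,r,n}(u)\,\phi^{\mathbf{y}}_{t,r,n}(v),
\end{equation*}
with $\psi_{t,a,n} = \varphi_{t,a,n}$ read off directly from the free kernel and $\phi^{\mathbf{y}}_{t,a,n}(v) = \E_{B_0=v}[\bar{\varphi}_{t,a,n-\tau}(B_\tau)1_{\tau<n}]$ recognized as the Markov representation of the epigraph kernel, where $\tau = \min\{m\ge 0 : B_m > y_{m+1}\}$. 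If the signs of the half-lines do not line up directly, a reflection $u\mapsto -u$ together with a relabeling of the summation variable (as performed in the moving-wall proof) should land the kernel on exactly the stated form.

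The routine ingredient is the cyclic identity itself; the main obstacle is the bookkeeping around it. Concretely, one must verify that the free and epigraph pieces of the MQR17 kernel factor cleanly through a single intermediate index, that the swap sends the projection precisely to the range $r\le a$ (rather than to its complement, which is where the reflection may be needed), and that the resulting operator is genuinely trace class on $\ell^2(\Z)$. The last point requires Gaussian/geometric decay bounds on the contour integrals $\varphi_{t,a,n}$ and $\bar{\varphi}_{t,a,n}$, combined with the support truncation furnished by $1_{\tau<n}$ and the geometry of $\mathbf{y}$, in the same spirit as the estimates \eqref{ra est 1}--\eqref{ra est 2} invoked in the moving-wall argument. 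Once trace-class membership and the explicit identification of $\psi$ and $\phi^{\mathbf{y}}$ are secured, \eqref{TASEP FP kernel} follows.
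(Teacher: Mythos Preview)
Your plan is correct and matches the paper's argument: start from the MQR17 kernel $1_{\{u\le a\}}\bigl(\mathcal{S}^{*}_{-t,-n}\bar{\mathcal{S}}^{\mathrm{epi}(\mathbf{y})}_{-t,n}\bigr)1_{\{v\le a\}}$, factor, and apply cyclicity (Lem.~\ref{AB BA}) to convert the projection $\chi_a$ into the summation range $r\le a$. The one bookkeeping step you do not name is that cyclicity alone produces $\sum_{r\le a}\phi^{\mathbf{y}}_{t,r,n}\otimes\psi_{t,r,n}$, with the tensor factors in the opposite order from \eqref{TASEP FP kernel}; the paper fixes this not by a reflection $u\mapsto -u$ but by invoking transpose invariance (Lem.~\ref{lem:det-transpose}), which swaps the factors without changing the determinant.
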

\begin{proof}
     Let
\begin{align*}
     \mathcal{S}_{-t, -n}(u, v) &= \frac{1}{2\pi i} \oint_{\Gamma_0} \frac{(1-w)^n}{2^{v-u}w^{n+1+v-u}}e^{t(w-1/2)}dw \\
     \bar{\mathcal{S}}_{-t, n}(u, v) &=  \frac{1}{2\pi i} \oint_{\Gamma_0} \frac{(1-w)^{v-u+n-1}}{2^{u-v}w^{n}}e^{t(w-1/2)}dw 
 \end{align*}
 and
 \begin{align*}
     \bar{\mathcal{S}}_{-t, n}^{\text{epi}(\mathbf{y})}(u, v) = \E_{B_0 = u} [\bar{\mathcal{S}}_{-t, n-\tau}(B_{\tau}, v)1_{\tau < n}].  
 \end{align*}
 A special case of \cite[Thm.\ 2.6]{MQR17} gives the following trace-class integral operator on $\ell^2(\Z)$
 \begin{align*}
     K_{t,a, n}(u, v) &= 1_{\{u \leq a\}}\Bigl(\sum_{r\in\Z} \mathcal{S}_{-t,-n}(r, u)\bar{\mathcal{S}}^{\text{epi}(\mathbf{y})}_{-t, n}(r, v) \Bigr)1_{\{v \leq a\}}.
 \end{align*}
 Using Lem.\ \ref{AB BA}--\ref{lem:det-transpose}, we can therefore write 
    \begin{align*}
     K_{t, a, n}(u, v) &\rightarrow  \sum_{r=-\infty}^a \bar{\mathcal{S}}_{-t, n}^{\text{epi}({\mathbf{y}})}(u, r)\mathcal{S}_{-t, -n}(v, r) \rightarrow  \sum_{r=-\infty}^a \mathcal{S}_{-t, -n}(u, r)\bar{\mathcal{S}}_{-t, n}^{\text{epi}({\mathbf{y}})}(v, r),
 \end{align*}
 without changing the value of the Fredholm determinant, yielding the stated representation. 
 \end{proof}
\begin{proof}[Proof of Thm.\ \ref{TASEP part thm}]
We show that the kernel given in \eqref{TASEP FP kernel} satisfies the conditions of Thm.\ \ref{TASEP K THM}. Note the $a$-flow is immediate from the kernel representation. The $n$-flows and $t$-flows for $\psi_{t, a, n}, \phi_{t, a, n}^{\mathbf{y}}$ follow from \eqref{Tasep psi n flow}--\eqref{TASEP psi t flows} and noticing that $\bar{\varphi}_{t, a, n-\tau-1} = 0$, when $\tau = n-1$. Moreover, since $\psi_{t, a, n}(u)$ has support on $a \geq u-n$, and $\phi_{t, a, n}^{\mathbf{y}}(v)$ has support on $v > y_n$, it is clear $K_{t,a, n}^{\mathbf{y}} \in C^1_r$, as required. 
\end{proof}
\subsubsection{TASEP with Moving Wall}
Consider again the TASEP particle model, and suppose we introduce a particle $Y_0(t)$ whose trajectory is deterministic in the following way: choose times $0 = s_0 < s_1 < s_2 < \dots$ with $s_k \rightarrow \infty$. The new particle jumps one unit rightward at the times $s_k$, so that $Y_0(t) = Y_0(0) + \max\{k \geq 0: s_k \leq t\}$, with $Y_0(0) > Y_1(0)$. The other TASEP particles evolve as before subject to the same exclusion rule. Denote the $n$-th particle as $Y_n^{\text{TASEP}, b}$ with the moving wall $Y_0(t) = b(t)$. 
\begin{theorem}\label{TASEP b part thm}
    Fix a moving wall $b(t)$ with $b(0) = 0$, initial data $\mathbf{y} = (-1, -2, \dots)$, and let $V=\{(t, a, n) : t \in \R_+, a+n < b(t), n \in \Z_{\geq 1} \}$ with 
   \begin{align}
       F_{t,a, n} = \PP(Y_n^{\text{TASEP}, b}(t) > a \, | \, \mathbf{Y}(0) = \mathbf{y}, Y_0(t) = b(t)), \qquad (t, a, n) \in V.
   \end{align}
   Then $F_{t, a, n}$ satisfies 
   \begin{gather}
       \left[D_t -\left(e^{-D_a} - 1\right)\right] F_{t,a,n}\cdot F_{t, a, n-1} = 0, \qquad  F_{0, a, n} = 1_{-n > a}, \\
       (\partial_t + \nabla_a^-) F_{t, a, 1} = 0 \quad \text{for }a < b(t), \qquad  F_{t, a,1} = 0 \quad \text{for } a \geq b(t).\label{tasep b ic}
    \end{gather}
\end{theorem}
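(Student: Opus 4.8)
The plan is to follow the same two-step template used for Thm.~\ref{TASEP part thm} and its moving-wall RBM analog Thm.~\ref{RBM Particle b Dist Thm}: produce a Fredholm determinant representation $F_{t,a,n} = \det(I - K_{t,a,n})_{\ell^2(\Z)}$ whose kernel is assembled from the contour-integral functions $\varphi_{t,a,n}, \bar\varphi_{t,a,n}$ together with a hitting-time average encoding the deterministic wall $Y_0(t) = b(t)$, and then show that, after an exponential conjugation and a transposition via Lem.~\ref{AB BA}--\ref{lem:det-transpose}, this kernel lies in the class $C^1_r(U,\mathcal{I}_1)$ and satisfies the three flow hypotheses of Thm.~\ref{TASEP K THM}. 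The bilinear equation on $V = \{(t,a,n): a+n < b(t),\ n \geq 1\}$ is then immediate from Thm.~\ref{TASEP K THM}; the domain $a + n < b(t)$ is exactly the region in which $\{Y_n^{\text{TASEP},b}(t) > a\}$ is a genuinely interior event, since the ordering forces $Y_n < Y_0 - n = b(t) - n$.

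First, I would record the one-point distribution formula for continuous-time TASEP with step data $\mathbf{y} = (-1,-2,\dots)$ and the wall $b(\cdot)$. As in \eqref{r det form}, the wall should enter through a stopped walk against the (discrete) epigraph of $b$, yielding a wall-modified building block $\psi^b_{t,a,n}$ of the schematic form $\E[\,\varphi_{t-\cdot,\,a,n}(B_\tau)\,1_{\tau \leq t}\,]$, paired against $\phi_{t,a,n} = \bar\varphi_{t,a,n}$ once the indicator factors are moved across the product using Lem.~\ref{AB BA}. The step initial profile contributes no epigraph average of its own (it is the free case), so only the wall piece requires the stopping construction.

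The verification of conditions (i)--(iii) of Thm.~\ref{TASEP K THM} then reduces to the difference identities \eqref{Tasep psi n flow}--\eqref{TASEP psi t flows} for $\varphi, \bar\varphi$, propagated through the stopped expectation. Regularity ($C^1_r$) follows from the support constraints (the $\psi$-factor is supported on $a \geq u - n$, the $\phi$-factor past the barrier) together with the standard exponential-times-polynomial bounds on $\varphi, \bar\varphi$. The one genuinely delicate point is the $t$-flow $\partial_t \psi^b = -\tfrac12 \nabla_a^- \psi^b$: differentiating in $t$ produces a boundary contribution at the wall-crossing time $\tau = t$, and one must check this term vanishes. This is the discrete-barrier analog of the continuity argument in Thm.~\ref{RBM Particle b Dist Thm}, and I expect it to be the main obstacle, since the moving-wall TASEP kernel is less standard than its RBM counterpart.

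Finally, the boundary data in \eqref{tasep b ic} is read off the model rather than from the determinant. The initial condition $F_{0,a,n} = 1_{-n > a}$ is just the step profile. The $n=1$ forward relation $(\partial_t + \nabla_a^-) F_{t,a,1} = 0$ for $a < b(t)$ is the free single-particle equation, identified exactly as in the remark after Thm.~\ref{TASEP part thm} by extending $y_m = +\infty$ for $m < 1$ so that $F_{t,a,m} \equiv 1$ and the bilinear relation at $n=1$ collapses to the forward equation. The hard-wall condition $F_{t,a,1} = 0$ for $a \geq b(t)$ follows from the exclusion ordering $Y_1^{\text{TASEP},b}(t) < Y_0(t) = b(t)$, which makes $\{Y_1 > a\}$ a null event once $a \geq b(t)$.
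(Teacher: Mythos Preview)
Your high-level plan---check that a Fredholm determinant representation of $F_{t,a,n}$ satisfies the three flow conditions of Thm.~\ref{TASEP K THM}---is exactly what the paper does, and your treatment of the initial/boundary data in \eqref{tasep b ic} is correct. However, the specific structure of the wall-modified kernel you anticipate is wrong, and this propagates into a misidentification of where the delicate boundary check actually lies.

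In the paper's representation (taken from \cite{rahman2025}) the determinant lives on $L^2(\R)$, not $\ell^2(\Z)$: the building blocks $\varphi_{t,a,n}(u), \bar\varphi_{t,a,n}(v)$ are contour integrals against $e^{(w-1)(t-u)}$ with $u,v \in \R$. The wall enters through a random walk $B_m$ with $\mathrm{Exp}(1)$ increments and stopping time $\tau = \inf\{m \geq 0 : B_m \leq s_{m+1}\}$ against the jump-time sequence $(s_k)$ of $b(\cdot)$; crucially, the wall shifts the \emph{spatial} index, not time: $\psi^b_{t,a,n}(u) = \E_{B_0=u}[\varphi_{t,\,a-\tau,\,n}(B_\tau)\,1_{\tau \leq a+n}]$. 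Your schematic $\E[\varphi_{t-\cdot,a,n}(B_\tau)1_{\tau\leq t}]$ is the RBM template, not the TASEP one. Consequently your anticipated obstruction---a boundary term at $\tau = t$ in the $t$-flow---never arises, since $\tau$ is independent of $t$ and $\partial_t$ passes straight through the expectation. The actual boundary check occurs in the $a$- and $n$-flows at $\tau = a+n+1$, where the indicator jumps; there one needs $\varphi_{t,\,-(n+1),\,n} \equiv 0$, which holds because the integrand has no pole at $w=0$. You have also missed a genuine step: the formula from \cite{rahman2025} comes as $\int_t^\infty (\cdots)\,dz$, and a nontrivial integration by parts (using $\partial_u \tilde{\mathcal{S}}_{a,n} = \tilde{\mathcal{S}}_{a+1,n-1}$ and its companions, after first deforming the $\mathcal{S}_{a,n}$ contour to remove an indicator) is needed to telescope it into the required form $\sum_{r=-\infty}^a \psi^b_{t,r,n}\otimes\phi_{t,r,n}$ before Thm.~\ref{TASEP K THM} applies.
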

In order to prove Thm.\ \ref{TASEP b part thm} we start from the one-point distribution formula for TASEP with moving wall from \cite{rahman2025}, and, after a sequence of minor modifications, demonstrate that the resulting kernel satisfies the conditions of Thm.\ \ref{TASEP K THM}. To this end, for $u, v \in \R$, define
\begin{align}
    \varphi_{t, a, n}(u) &= \frac{e^{t/2}}{2\pi i} \oint_{\abs{w} < 1} \frac{(1-w)^n}{2^a w^{a+n+1}}e^{(w-1)(t-u)}dw, \\
    \bar{\varphi}_{t, a, n}(v) &= -\frac{e^{-t/2}}{2\pi i} \int_{\Re(w) = \sigma} \frac{w^{a+n}}{2^{-a}(1-w)^n}e^{(w-1)(v-t)}dw
\end{align}
for some fixed $\sigma \in (0, 1)$. It is elementary to see these satisfy 
\begin{alignat}{2}
    \nabla_n^+ \varphi_{t, a, n}(u) &= 2\nabla_a^+ \varphi_{t,a, n}(u), \quad \nabla_n^- \bar{\varphi}_{t,a, n}(v) &&= 2\nabla_a^- \bar{\varphi}_{t, a, n}(v), \label{Tasep b psi n flow}\\
    \partial_t \varphi_{t, a, n}(u) &= -\frac{1}{2} \nabla_a^- \varphi_{t,a, n}(u), \quad \partial_t \bar{\varphi}_{t,a, n}(v) &&= -\frac{1}{2}\nabla_a^+ \bar{\varphi}_{t, a, n}(v). \label{TASEP b psi t flows}
\end{alignat}
Next, let $B_m$ be a random walk with $\text{Exp}(1)$ step distribution, so that the $m$-step distribution is given by $Q^m(u, v)= \frac{(v-u)^{m-1}}{(m-1)!}e^{u-v}1_{\{v \geq u\}}$. Let $\tau = \inf \{ m \geq 0 : B_m \leq s_{m+1} \}$, and define
\begin{align}
    \psi_{t, a, n}^b(u) = \E_{B_0 = u}[\varphi_{t, a-\tau, n}(B_{\tau})1_{\tau \leq a+n}], \quad \phi_{t, a, n}(v) = \bar{\varphi}_{t,a, n}(v). 
\end{align}
\begin{lemma}[{\cite[Thm.~2]{rahman2025}}]
    With $F_{t,a ,n}$ as above,
    \begin{align}
        F_{t, a, n} &= \det(I-K_{t,a, n}^{b})_{L^2(\R)}, \quad \text{with} \quad K_{t, a, n}^{b} = \sum_{r=-\infty}^a \psi_{t, r, n}^{b} \otimes \phi_{t,r, n} \label{TASEP b FP kernel}, 
    \end{align}
\end{lemma}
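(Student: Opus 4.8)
The plan is to follow the same template as the proofs of \eqref{mqr det form} and \eqref{r det form}: start from the Fredholm determinant representation supplied by a special case of \cite[Thm.~2]{rahman2025}, and then apply a chain of determinant-preserving manipulations—factorization into two kernels, exponential conjugation, the cyclic identity $\det(I-AB)=\det(I-BA)$, a transpose, and a reflection change of variables—to reduce it to the rank-one-sum form $K^b_{t,a,n}=\sum_{r=-\infty}^{a}\psi^b_{t,r,n}\otimes\phi_{t,r,n}$. Since the target here is Thm.~\ref{TASEP K THM}, I only need to land in the class $C^1_r(U,\mathcal{I}_1)$.

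First I would record the kernel produced by \cite[Thm.~2]{rahman2025}. By analogy with the moving-wall kernel of \eqref{r det form}, it should factor as
\[
K_{t,a,n}(u,v)=1_{\{u\ge -a\}}\bigl(\mathcal{S}^{\mathrm{epi}(b)}_{t,n}\,\bar{\mathcal{S}}_{t,n}\bigr)(u,v)\,1_{\{v\ge -a\}},
\]
where $\bar{\mathcal{S}}_{t,n}$ is the free kernel built from $\bar\varphi_{t,a,n}$, and $\mathcal{S}^{\mathrm{epi}(b)}_{t,n}$ is its epigraph modification built from $\varphi_{t,a,n}$ and the hitting time $\tau=\inf\{m\ge 0:B_m\le s_{m+1}\}$ of the $\mathrm{Exp}(1)$ random walk $B_m$, which encodes the moving wall through the jump times $s_k$. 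Pinning down the precise indicator orientation, the order of composition, and which factor inherits the epigraph decoration is the step where I would have to read \cite[Thm.~2]{rahman2025} carefully and reconcile its conventions with the definitions of $\varphi,\bar\varphi,\psi^b,\phi$ given above \eqref{TASEP b FP kernel}.

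Next I would set $A$ and $B$ to be the two factors (carrying the indicators and suitable exponential weights, here the $e^{\pm t/2}$ prefactors and the $2^{\mp a}$ factors already present in $\varphi,\bar\varphi$), so that $K=AB$, and invoke Lem.~\ref{AB BA} to pass to $\det(I-BA)$; combined with Lem.~\ref{lem:det-transpose} this swaps the two factors and relocates the intermediate variable to the free summation index $r$. A reflection $r\mapsto -r$, absorbed onto the $L^2$ function being integrated exactly as in the proof of \eqref{r det form}, then produces $\sum_{r=-\infty}^{a}\psi^b_{t,r,n}\otimes\phi_{t,r,n}$, while the conjugating exponentials drop out of the Fredholm determinant by similarity invariance. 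Matching $\psi^b_{t,a,n}(u)=\E_{B_0=u}[\varphi_{t,a-\tau,n}(B_\tau)1_{\tau\le a+n}]$ then amounts to recognizing $\mathcal{S}^{\mathrm{epi}(b)}$ as the expectation over the stopped walk, with the truncation $1_{\tau\le a+n}$ coming from the vanishing of the contour integral defining $\varphi_{t,a-\tau,n}$ once $\tau$ exceeds $a+n$ (the integrand is then holomorphic inside the loop $|w|<1$).

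The main obstacle is analytic rather than algebraic: I must certify that each rearrangement is legitimate at the trace-class level, i.e.\ that $K^b_{t,a,n}\in\mathcal{I}_1$ and that the intermediate composition is absolutely summable so that Lem.~\ref{AB BA} applies. This rests on decay estimates for $\varphi_{t,a,n}$ and $\bar\varphi_{t,a,n}$ analogous to \eqref{ra est 1}--\eqref{ra est 2}, obtainable by deforming the contours (the loop $|w|<1$ and the line $\Re(w)=\sigma$) and bounding the factor $e^{(w-1)(\cdot)}$, together with uniform control of the stopped-walk expectation defining $\psi^b$. The support and decay constraints inherited from these contours—the pole structure at $w=0$ in $\varphi$ and the choice $\Re(w)=\sigma\in(0,1)$ in $\bar\varphi$—are what force absolute convergence of $\sum_{r=-\infty}^{a}$; once $t$-differentiability and the $a$-telescoping are verified, this confirms membership in $C^1_r(U,\mathcal{I}_1)$ as needed to feed the kernel into Thm.~\ref{TASEP K THM}.
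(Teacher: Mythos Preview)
There is a genuine gap in your proposal. You assume the starting kernel from \cite[Thm.~2]{rahman2025} carries indicators $1_{\{u\ge -a\}}$ and $1_{\{v\ge -a\}}$, so that after cyclicity the intermediate variable immediately becomes the discrete summation index $r$ running over $(-\infty,a]$. This is the template from the general-IC TASEP and the RBM-with-wall cases, but it does not apply here: the cited kernel is $K_{t,a,n}(u,v)=1_{\{u\ge t\}}\bigl(\mathcal{S}_{a,n}\mathcal{S}^b_{a,n}\bigr)(u,v)1_{\{v\ge t\}}$, with indicators on the \emph{time} variable. After Lem.~\ref{AB BA} the intermediate variable is therefore a continuous $z\in[t,\infty)$, not a discrete position, and no reflection $r\mapsto -r$ will produce the sum $\sum_{r=-\infty}^a$.

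The paper's proof contains an additional step you have not anticipated. It first replaces the contour kernel $\mathcal{S}_{a,n}$ (which carries an indicator $1_{\{v\le u\}}$) by a line-integral version $\tilde{\mathcal{S}}_{a,n}$ along $\Re(w)=\sigma$ to make the algebra clean. It then records the differential--difference relations $\nabla_a^-\tilde{\mathcal{S}}_{a,n}=-\tilde{\mathcal{S}}_{a,n-1}$, $\partial_u\tilde{\mathcal{S}}_{a,n}=\tilde{\mathcal{S}}_{a+1,n-1}$ and their $\bar{\mathcal{S}}$ analogues, which combine to give $\tilde{\mathcal{S}}_{a,n}(z,v)=\tilde{\mathcal{S}}_{a-1,n}(z,v)-\partial_z\tilde{\mathcal{S}}_{a-1,n}(z,v)$ and $\mathcal{S}^b_{a,n}(u,z)+\partial_z\mathcal{S}^b_{a,n}(u,z)=\mathcal{S}^b_{a-1,n}(u,z)$. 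One integration by parts in $z$ over $[t,\infty)$ then peels off the boundary term $\mathcal{S}^b_{a,n}(u,t)\tilde{\mathcal{S}}_{a-1,n}(t,v)$ and lowers $a$ by one in the remaining integral; iterating this telescopes $\int_t^{\infty}\mathcal{S}^b_{a,n}(u,z)\tilde{\mathcal{S}}_{a,n}(z,v)\,dz$ into the discrete sum $\sum_{r=-\infty}^{a}\mathcal{S}^b_{r,n}(u,t)\tilde{\mathcal{S}}_{r-1,n}(t,v)$. This conversion of the continuous $z$-integral into a discrete $a$-sum via integration by parts is the heart of the argument here, and it is absent from your plan.
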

\begin{proof}
Let
\begin{align*}
    \mathcal{S}_{a, n}(u, v) &= -\frac{1}{2\pi i} \oint_{|w-1|<1} \frac{w^{a+n+1}}{(1-w)^n}e^{(w-1)(v-u)}dw 1_{\{v\leq u\}},\\
    \bar{\mathcal{S}}_{a, n}(u, v) &= \frac{1}{2\pi i} \oint_{|w| <1} \frac{(1-w)^n}{w^{a+n+1}}e^{(w-1)(v-u)}dw,
\end{align*}
and 
\begin{align*}
    \mathcal{S}^{b}_{a, n}(u, v) = \E_{B_0 = u}\left[\bar{\mathcal{S}}_{a-\tau, n}(B_\tau, v)1_{\tau \leq a+n}\right].
\end{align*}
A special case of \cite[Thm.~2]{rahman2025} gives the following trace-class integral operator on $L^2(\R)$
\begin{align*}
    K_{t, a, n}(u, v) = 1_{\{u \geq t\}}\left(\mathcal{S}_{a, n}\mathcal{S}^b_{a, n}\right)(u, v)1_{\{v \geq t\}}.
\end{align*}
It will be convenient for us to remove the $1_{\{v\leq u\}}$ indicator on $\mathcal{S}_{a, n}(u, v)$ while keeping the algebraic structure. To this end, notice that when $a+n+1 < 0$, we have $\bar{\mathcal{S}}_{a, n} \equiv 0$. Therefore, we may assume $a+n+1 \geq 0$, and we have a.e.\ 
\begin{align*}
    \tilde{\mathcal{S}}_{a, n}(u, v) \defeq -\frac{1}{2\pi i} \int_{\Re(w) = \sigma} \frac{w^{a+n+1}}{(1-w)^n}e^{(w-1)(v-u)}dw = \mathcal{S}_{a, n}(u, v). 
\end{align*}
Using Lem.\ \ref{AB BA}, we can take $ K_{t, a, n}(u, v) \rightarrow  \int_t^{\infty} \mathcal{S}_{a, n}^b(u, z) \tilde{\mathcal{S}}_{a, n}(z, v)dz$. Next, a straightforward calculation yields
\begin{alignat*}{2}
    \nabla_a^- \tilde{\mathcal{S}}_{a, n}(u, v)  &= - \tilde{\mathcal{S}}_{a, n-1}(u, v), \qquad 
    \nabla_a^+ \bar{\mathcal{S}}_{a, n}(u, v) &&= \bar{\mathcal{S}}_{a, n+1}(u, v), \\
    \partial_u \tilde{\mathcal{S}}_{a, n}(u, v) &= \tilde{\mathcal{S}}_{a+1, n-1}(u, v), \qquad 
    \partial_v \bar{\mathcal{S}}_{a, n}(u, v) &&= - \bar{\mathcal{S}}_{a-1, n+1}(u, v).
\end{alignat*}
Note that the corresponding identities hold for $\mathcal{S}^b_{a, n}$, noting that when $\tau = a+ 1+ n$, then $\bar{\mathcal{S}}_{a-\tau, n} = \bar{\mathcal{S}}_{-(n+1), n} = 0$. 
Using this, we compute
\begin{align*}
    &\int_t^{\infty} \mathcal{S}_{a, n}^b(u, z) \tilde{\mathcal{S}}_{a, n}(z, v)dz  = \int_{t}^{\infty}\mathcal{S}_{a, n}^b(u, z)\bigl(\tilde{\mathcal{S}}_{a-1, n}(z, v) - \partial_z \tilde{\mathcal{S}}_{a-1, n}(z, v)\bigr)dz \\
    &= \mathcal{S}^b_{a, n}(u, t) \tilde{\mathcal{S}}_{a-1, n}(t, v) + \int_{t}^{\infty} \bigl(\mathcal{S}_{a,n}^b(u,z) + \partial_z \mathcal{S}_{a, n}^b(u, z)\bigr)\tilde{\mathcal{S}}_{a-1, n}(z, v) dz \\
    &= \mathcal{S}^b_{a, n}(u, t) \tilde{\mathcal{S}}_{a-1, n}(t, v) + \int_{t}^{\infty} \mathcal{S}_{a-1,n}^b(u, z)\tilde{\mathcal{S}}_{a-1, n}(z, v) dz  = \sum_{r=-\infty}^a \mathcal{S}^b_{r, n}(u, t) \tilde{\mathcal{S}}_{r-1, n}(t, v),
\end{align*}
which, once conjugating by the appropriate factors, yields the stated representation.
\end{proof}
\begin{proof}[Proof of Thm.\ \ref{TASEP b part thm}]
We check the conditions of Thm.\ \ref{TASEP K THM}. Note the flow identities follow from \eqref{Tasep b psi n flow}--\eqref{TASEP b psi t flows}, and noting that when $\tau = a+ 1+ n$, then $\psi_{t, a-\tau, n}^b = \psi_{t, -(n+1), n}^b = 0$. Regularity follows from noticing $\psi_{t,a,n}^b(u)$ is supported on $a +n + 1 \geq 0, u \leq s_{a+n+1}$, and $\phi_{t, a, n}(v)$ is supported on $v \leq t$.
\end{proof}
\subsubsection{Push-TASEP} 
Like TASEP, Push-TASEP is an interacting particle system on the one-dimensional integer lattice $\Z$ with at most one particle per site. Given a strictly decreasing initial configuration $\mathbf{y} = (y_1, y_2, y_3, \dotsc)$, the dynamics run in continuous time as follows: each particle carries an independent rate one exponential clock, and when particle $n$'s clock rings, it jumps to the left by one unit; with any left neighbours also being pushed leftward so as to preserve exclusion. After each particle's jump its independent clock is instantaneously reset.  
\begin{theorem}\label{Push TASEP part thm}
    Fix one-sided initial data $\mathbf{y} = (y_1, y_2, \dots)$ with $y_1 > y_2 > \dots$ and let $V=\{(t, a, n) : t \in \R_+, a < y_n, n \in \Z_{\geq 1}\}$ with
   \begin{align}
       F_{t,a, n} = \PP(Y_n^{\text{Push-TASEP}}(t) > a \, | \, \mathbf{Y}(0) = \mathbf{y}).
   \end{align}
   Then $F_{t,a , n}$ satisfies 
   \begin{align}
       \left[D_t -\left(e^{D_a} - 1\right)\right] F_{t,a,n}\cdot F_{t, a+1, n-1} &= 0, \qquad (\partial_t -\nabla_a^+)F_{t, a, 1}  = 0, \qquad F_{0, a, n} = 1_{y_n > a}.
   \end{align}
\end{theorem}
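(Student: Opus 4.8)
The plan is to mirror the proof of Thm.\ \ref{TASEP part thm} for ordinary TASEP, exploiting the fact that the $n$--flows required by Thm.\ \ref{PUSHTASEP K thm} are \emph{identical} to those of Thm.\ \ref{TASEP K THM} and only the $t$--flows differ. First I would invoke a known one-point Fredholm determinant formula for Push-TASEP (the left-jumping, pushing analogue of the MQR representation), which after an $AB \mapsto BA$ rearrangement via Lem.\ \ref{AB BA}--\ref{lem:det-transpose} writes $F_{t,a,n} = \det(I - K_{t,a,n}^{\mathbf{y}})$ with $K_{t,a,n}^{\mathbf{y}} = \sum_{r=-\infty}^{a}\psi_{t,r,n}\otimes \phi_{t,r,n}^{\mathbf{y}}$, exactly the shape handled by Thm.\ \ref{PUSHTASEP K thm}.

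Next I would exhibit explicit contour-integral kernels $\varphi_{t,a,n}$ and $\bar\varphi_{t,a,n}$ of the same form as in the TASEP proof but with the time exponential adjusted so that, in place of \eqref{TASEP psi t flows}, they obey the Push-TASEP time evolution $\partial_t \varphi_{t,a,n} = 2\nabla_a^+\varphi_{t,a,n}$ and $\partial_t \bar\varphi_{t,a,n} = 2\nabla_a^-\bar\varphi_{t,a,n}$. Crucially, the $a$-- and $n$--exponents are left untouched, so the $n$--flow identities \eqref{Tasep psi n flow} persist verbatim; only the $w$-multiplier in the time factor changes (e.g.\ replacing $e^{t(w-1/2)}$ by $e^{t(w^{-1}-2)}$ converts the multiplier $w-\tfrac12$ into $w^{-1}-2=\tfrac{1-2w}{w}$, which is precisely $2\nabla_a^+$ acting on the $a$-exponent, and in fact coincides with the $\nabla_n^+$ multiplier). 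For general one-sided initial data I would then set $\psi_{t,a,n}=\varphi_{t,a,n}$ and dress $\bar\varphi$ with a discrete-time random walk, putting $\phi_{t,a,n}^{\mathbf{y}}(v) = \E_{B_0=v}[\bar\varphi_{t,a,n-\tau}(B_\tau)\,1_{\tau<n}]$ for the epigraph hitting time $\tau$ of $\mathbf{y}$, exactly as in the TASEP case.

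With these in hand the verification is routine. The $a$--flow decomposition is immediate from the summation form; the $n$--flows transfer from \eqref{Tasep psi n flow} together with the vanishing of the boundary term at $\tau = n-1$ (where $\bar\varphi_{t,a,-1}\equiv 0$); and the $t$--flows follow from the adjusted exponential combined with the $n$--flows. Membership in $C^1_r(U,\mathcal{I}_1)$ follows from the support properties of $\psi_{t,a,n}$ and $\phi_{t,a,n}^{\mathbf{y}}$ (the latter supported on $\{v>y_n\}$, consistent with the restriction $a<y_n$ defining $V$) and the geometric/Gaussian decay of the contour integrals, as in the TASEP argument. The initial condition $F_{0,a,n}=1_{y_n>a}$ is read off from the formula, and the $n=1$ relation $(\partial_t-\nabla_a^+)F_{t,a,1}=0$ is already forced by the bilinear equation upon extending the configuration with $y_0=+\infty$ so that $F_{t,a,0}\equiv 1$: evaluating $[D_t-(e^{D_a}-1)]F_{t,a,1}\cdot F_{t,a+1,0}$ with $F_{t,a+1,0}=1$ collapses to $\partial_t F_{t,a,1}-\nabla_a^+F_{t,a,1}=0$.

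The main obstacle I anticipate is pinning down the correct contour-integral representation carrying the Push-TASEP time flow and confirming that the adjusted exponential preserves the pole structure at $w=0$, so that the $n$-- and $t$--flow identities hold simultaneously and the determinant I start from is genuinely the Push-TASEP one-point law rather than a reparametrization that corrupts the support constraint $a<y_n$. Verifying the regularity-class membership may require marginally more care than for TASEP, since the new time factor alters the small-$w$ asymptotics, but the requisite summability should again reduce to the same geometric-series estimates.
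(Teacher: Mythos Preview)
Your proposal is correct and follows essentially the same route as the paper: the paper cites \cite[Thm.~4.1]{Nica_2020} for the Fredholm determinant formula, exhibits the contour integrals with the time factor $e^{t(w^{-1}-2)}$ (and $e^{t(2-\frac{1}{1-w})}$ for $\bar\varphi$) exactly as you anticipate, verifies the $n$-- and $t$--flows \eqref{pushtasep n id}--\eqref{pushtasep t id}, performs the same $AB\mapsto BA$ and transpose manipulation, and then states that the remaining verification is ``completely analogous to the proof of Thm.\ \ref{TASEP part thm}.'' Your identification of the obstacle---confirming the determinant really is the Push-TASEP one-point law---is resolved in the paper simply by citing the existing literature.
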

\begin{remark}
    By extending the initial configuration by setting $y_m = +\infty$ if $m<1$ (so $F_{t,a, m} \equiv 1$), the bilinear equation already implies the $n=1$ forward equation.
\end{remark}
 Define 
    \begin{align*}
        \varphi_{t, a, n}(u) &= \frac{1}{2\pi i} \oint_{\Gamma_0} \frac{(1-w)^n}{2^{a-u}w^{n+1 +a-u}}e^{t(\frac{1}{w} -2)}dw, \quad
        \bar{\varphi}_{t, a, n}(v) = \frac{1}{2 \pi i} \oint_{\Gamma_{0}} \frac{(1-w)^{a-v +n-1}}{2^{v-a}w^n}e^{ t(2-\frac{1}{1-w})}dw
    \end{align*}
    where $\Gamma_0$ is a simple counter-clockwise loop around $0$ but excluding $1$. It is elementary to see these satisfy 
    \begin{alignat}{2}
        \nabla_n^+ \varphi_{t,a, n} &= 2 \nabla_a^+ \varphi_{t,a, n}, \qquad \nabla_n^- \bar{\varphi}_{t, a, n} &&= 2 \nabla_a^- \bar{\varphi}_
        {t,a, n}, \label{pushtasep n id}\\
        \partial_t \varphi_{t, a, n} &= 2\nabla_a^+ \varphi_{t,a, n}, \qquad \partial_t \bar{\varphi}_{t, a, n} &&= 2 \nabla_a^- \bar{\varphi}_
        {t,a, n}.\label{pushtasep t id}
    \end{alignat}
    Next, for given initial data $\mathbf{y}$, let $B_m$ be a discrete-time Geom(1/2) random walk with jumps strictly in the negative direction, and let $\tau = \min \{m \geq 0 : B_m > y_{m+1}\}$. Define
\begin{align}
    \psi_{t,a, n}(u) &= \varphi_{t, a, n}(u), \quad \phi_{t, a, n}^{\mathbf{y}}(v) = \E_{B_0 = v}\left[\bar{\varphi}_{t, a, n-\tau}(B_{\tau})1_{\tau < n}\right].
\end{align}
\begin{lemma}[{\cite[Thm.~4.1]{Nica_2020}}]
With $F_{t,a ,n}$ as above,
    \begin{align}
        F_{t, a, n} &= \det(I-K_{t,a, n}^{\mathbf{y}})_{\ell^2(\Z)}, \quad \text{ with } \quad K_{t, a, n}^{\mathbf{y}} = \sum_{r=-\infty}^a \psi_{t, r, n} \otimes \phi_{t,r, n}^{\mathbf{y}} \label{PUSHTASEP FP kernel}. 
    \end{align}
\end{lemma}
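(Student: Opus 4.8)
The plan is to follow the template already established for the RBM and TASEP lemmas: begin from the Fredholm determinant formula furnished by \cite[Thm.~4.1]{Nica_2020}, and then reorganize the kernel into the summed rank-one form $K^{\mathbf{y}}_{t,a,n}=\sum_{r=-\infty}^a \psi_{t,r,n}\otimes\phi^{\mathbf{y}}_{t,r,n}$ by cycling and transposing its constituent factors. Concretely, I would first record two contour-integral building blocks $\mathcal{S}_{-t,-n}(u,v)$ and $\bar{\mathcal{S}}_{-t,n}(u,v)$ whose diagonal restrictions reproduce the functions $\varphi_{t,a,n}$ and $\bar\varphi_{t,a,n}$ fixed just before the statement; these differ from the TASEP kernels only by carrying the Push-TASEP exponential weights $e^{t(1/w-2)}$ and $e^{t(2-1/(1-w))}$ in place of $e^{t(w-1/2)}$.

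Next I would encode the initial data through the epigraph kernel $\bar{\mathcal{S}}^{\text{epi}(\mathbf{y})}_{-t,n}(u,v)=\E_{B_0=u}[\bar{\mathcal{S}}_{-t,n-\tau}(B_\tau,v)1_{\tau<n}]$, where $\tau=\min\{m\ge 0:B_m>y_{m+1}\}$ is the Geom$(1/2)$ hitting time and $B_m$ walks strictly downward, matching the stated $\phi^{\mathbf{y}}_{t,a,n}$. The cited theorem presents the one-point distribution as a Fredholm determinant on $\ell^2(\Z)$ with kernel $K_{t,a,n}(u,v)=1_{\{u\le a\}}\bigl(\sum_{r\in\Z}\mathcal{S}_{-t,-n}(r,u)\bar{\mathcal{S}}^{\text{epi}(\mathbf{y})}_{-t,n}(r,v)\bigr)1_{\{v\le a\}}$. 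Writing this as $AB$ with $A(u,z)=1_{\{u\le a\}}\mathcal{S}_{-t,-n}(z,u)$ and $B(z,v)=\bar{\mathcal{S}}^{\text{epi}(\mathbf{y})}_{-t,n}(z,v)1_{\{v\le a\}}$, I would apply Lem.\ \ref{AB BA} to pass to $\det(I-BA)$ and then Lem.\ \ref{lem:det-transpose} to transpose, converting the sum over the full lattice into the restricted sum $\sum_{r=-\infty}^a$ and arriving at $\psi_{t,r,n}=\varphi_{t,r,n}$ paired against $\phi^{\mathbf{y}}_{t,r,n}$. This is verbatim the TASEP computation with the new weights inserted.

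The main obstacle is not the operator bookkeeping but verifying that \cite[Thm.~4.1]{Nica_2020} can genuinely be placed into this MQR-style epigraph form: the left-jumping pushing dynamics enters through the $1/w$ and $1/(1-w)$ exponents, and one must check that the contour $\Gamma_0$ and its pole structure keep each constituent kernel well-defined and trace class after the cutoffs $1_{\{u\le a\}}$, $1_{\{v\le a\}}$, so that the cyclic identity of Lem.\ \ref{AB BA} is legitimately applicable. A secondary point, needed later when checking the hypotheses of Thm.\ \ref{PUSHTASEP K thm}, is confirming that $\bar\varphi_{t,a,n-\tau}$ degenerates correctly at $\tau=n-1$, so the truncation $1_{\tau<n}$ is consistent with the $n$-shift identities \eqref{pushtasep n id}.
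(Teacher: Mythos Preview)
Your proposal is correct and matches the paper's proof essentially step for step: the paper defines exactly the $\mathcal{S}_{-t,-n}$, $\bar{\mathcal{S}}_{-t,n}$, and epigraph kernel you describe, invokes \cite[Thm.~4.1]{Nica_2020} to write $K_{t,a,n}$ in the $1_{\{u\le a\}}\bigl(\sum_r\mathcal{S}_{-t,-n}(r,u)\bar{\mathcal{S}}^{\text{epi}(\mathbf{y})}_{-t,n}(r,v)\bigr)1_{\{v\le a\}}$ form, and then uses Lem.~\ref{AB BA}--\ref{lem:det-transpose} to cycle and transpose into $\sum_{r=-\infty}^a \psi_{t,r,n}\otimes\phi^{\mathbf{y}}_{t,r,n}$. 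The trace-class and $\tau=n-1$ degeneration checks you flag as obstacles are not addressed in the paper's proof of this lemma either; the latter is deferred to the proof of Thm.~\ref{Push TASEP part thm}, as you anticipate.
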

\begin{proof}
Define     
 \begin{align*}
        \mathcal{S}_{-t, -n}(u, v) &= \frac{1}{2\pi i} \oint_{\Gamma_0} dw \frac{(1-w)^n}{2^{v-u}w^{v-u+n+1}}e^{t(\frac{1}{w} -2)}, \\
        \bar{\mathcal{S}}_{-t, n}(u, v) &= \frac{1}{2 \pi i} \oint_{\Gamma_{0}}dw \frac{(1-w)^{v-u+n-1}}{2^{u-v}w^n}e^{ t(2-\frac{1}{1-w})}, 
    \end{align*}
    and 
    \begin{align*}
        \bar{\mathcal{S}}_{-t, n}^{\text{epi}(\mathbf{y})}(u, v) = \E_{B_0 = u} [\bar{\mathcal{S}}_{-t, n-\tau}(B_{\tau}, v)1_{\tau < n}]. 
    \end{align*}
    A special case of \cite[Thm.~4.1]{Nica_2020} gives the following trace-class integral operator on $\ell^2(\Z)$
 \begin{align*}
     K_{t,a, n}(u, v) &= 1_{\{u \leq a\}}\Bigl(\sum_{r\in\Z} \mathcal{S}_{-t,-n}(r, u)\bar{\mathcal{S}}^{\text{epi}(\mathbf{y})}_{-t, n}(r, v) \Bigr)1_{\{v \leq a\}}.
 \end{align*}
 Using Lem.\ \ref{AB BA}--\ref{lem:det-transpose}, we can therefore write 
    \begin{align*}
     K_{t, a, n}(u, v) &\rightarrow  \sum_{r=-\infty}^a \bar{\mathcal{S}}_{-t, n}^{\text{epi}({\mathbf{y}})}(u, r)\mathcal{S}_{-t, -n}(v, r) \rightarrow  \sum_{r=-\infty}^a \mathcal{S}_{-t, -n}(u, r)\bar{\mathcal{S}}_{-t, n}^{\text{epi}({\mathbf{y}})}(v, r),
 \end{align*}
 without changing the value of the Fredholm determinant, yielding the stated representation.
 \end{proof}
 \begin{proof}[Proof of Thm.\ \ref{Push TASEP part thm}]
     Using \eqref{PUSHTASEP FP kernel} and Thm.\ \ref{PUSHTASEP K thm}, the proof is completely analogous to the proof of Thm.\ \ref{TASEP part thm}. 
 \end{proof}

\subsection{Discrete-Time Particle Exclusion Models}\label{sec: KPZ discrete tasep}
\subsubsection{Parallel TASEP}
Parallel TASEP is a discrete-time variant of TASEP. At each  update $t \mapsto t+1$, every particle independently attempts to jump one unit to the right with probability $p$. Each particle's jump is performed only if the destination site was empty at time $t$, otherwise it is suppressed.
\begin{theorem}\label{Parallel TASEP part thm}
    Fix one-sided initial data $\mathbf{y} = (y_1, y_2, \dots)$ with $y_1 > y_2 > \dots$ and let $V = \{(t, a, n) : t \in \Z_{>0}, a < y_n + t, n \in \Z_{\geq 1} \}$ with
   \begin{align}
       F_{t,a, n} = \PP(Y_n^{\text{Parallel-TASEP}}(t) > a \, | \, \mathbf{Y}(0) = \mathbf{y}), \qquad (t, a, n) \in V. 
   \end{align}
   Then $F_{t, a, n}$ satisfies
   \begin{align}
       \left[e^{D_t} - pe^{-D_a} - (1-p)\right] F_{t,a,n}\cdot F_{t, a, n-1} &= 0, \quad (\nabla_t^+ + p\nabla_a^- )F_{t, a, 1} = 0, \quad F_{0, a, n} = 1_{y_n > a}.
   \end{align}
\end{theorem}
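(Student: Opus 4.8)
The plan is to follow the same template as the proof of Thm.\ \ref{TASEP part thm}: start from a known Fredholm determinant formula for the one-point distribution of Parallel TASEP, massage it (via Lem.\ \ref{AB BA}--\ref{lem:det-transpose} together with suitable conjugations and a change of variables) into the biorthogonal form $F_{t,a,n} = \det(I - K_{t,a,n}^{\mathbf{y}})$ with $K_{t,a,n}^{\mathbf{y}} = \sum_{r=-\infty}^{a}\psi_{t,r,n}\otimes\phi_{t,r,n}^{\mathbf{y}}$, and then verify that this kernel lies in $B_r(U,\mathcal{I}_1)$ and satisfies hypotheses (i)--(iii) of Thm.\ \ref{PTASEP K THM}. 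The conclusion of that theorem is precisely the stated bilinear equation, with $p = -\beta\delta/\gamma$ identified with the Bernoulli jump probability.

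First I would fix contour-integral functions $\varphi_{t,a,n}(u)$ and $\bar\varphi_{t,a,n}(v)$ carrying the discrete-time (parallel-update) generating factor, i.e.\ powers of a Möbius/linear factor built from $p$ and the contour variable $w$ (such as $(1-p)+pw$, or its reciprocal) in place of the exponential $e^{t(w-1/2)}$ that appears in the continuous-time TASEP case. Writing the integrand schematically as $a_w^{\,t}\,b_w^{\,a}\,d_w^{\,n}$ (suppressing the spectral $u$-factor), the shifts $\nabla_t^+,\nabla_a^\pm,\nabla_n^\pm$ act by multiplication by $(a_w-1)$, $(1-b_w^{-1})$, $(d_w-1)$, and so on, so that the flow identities (ii)--(iii) reduce to the \emph{functional} identities in $w$: $a_w - 1 = \beta(1-b_w^{-1})$ and $d_w = \delta b_w - \gamma a_w^{-1}$ (together with the dual relations for $\bar\varphi$). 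I would choose the factors so that these hold with fixed constants $\beta,\delta,\gamma$ satisfying $\gamma\neq 0$ and $-\beta\delta/\gamma = p$; these are elementary residue computations, entirely analogous to \eqref{Tasep psi n flow}--\eqref{TASEP psi t flows}.

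Next I would incorporate the initial data by defining $\phi_{t,a,n}^{\mathbf{y}}$ as an expectation $\E_{B_0=v}[\bar\varphi_{t,a,n-\tau}(B_\tau)\mathbf{1}_{\tau<n}]$ of the deterministic function against a geometric random walk stopped at the epigraph hitting time $\tau$ of $\mathbf{y}$, mirroring the TASEP construction. The deterministic flows pass through the expectation provided the boundary contribution at $\tau = n-1$ vanishes, which holds because the relevant shifted $\bar\varphi$ is identically zero there, exactly as in Thm.\ \ref{TASEP part thm}. Regularity, i.e.\ $K^{\mathbf{y}}_{t,a,n}\in B_r(U,\mathcal{I}_1)$, follows from the support of $\psi$ in a half-line in $u$ and of $\phi^{\mathbf{y}}$ in $\{v > y_n\}$, together with the standard exponential/geometric decay estimates for the contour integrals, which yield the required summability of $\nabla_t^+ K$ and $\nabla_r^+ K$. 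The initial condition $F_{0,a,n} = \PP(Y_n(0) > a) = \mathbf{1}_{y_n > a}$ is immediate from the probabilistic definition, and the $n=1$ relation $(\nabla_t^+ + p\nabla_a^-)F_{t,a,1}=0$ follows from the bilinear equation itself upon extending the configuration by $y_0 = +\infty$ (so $F_{t,a,0}\equiv 1$): setting $n=1$ and $F_{t,a,0}\equiv 1$ collapses the equation to $F_{t+1,a,1} - pF_{t,a-1,1} - (1-p)F_{t,a,1}=0$, which rearranges to $(\nabla_t^+ + p\nabla_a^-)F_{t,a,1}=0$.

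The main obstacle I anticipate is the first step: locating (or deriving) a clean MQR-type Fredholm formula for Parallel TASEP with \emph{general} one-sided initial data and bringing it into biorthogonal form, and then matching the three free constants $\beta,\delta,\gamma$ so that the intertwined discrete flows (ii)--(iii) hold simultaneously with $p=-\beta\delta/\gamma$. Unlike the continuous-time case, these flows mix shifts in all three variables $t,a,n$ (the $n$-flow for $\phi$ even carries a forward shift in $t$), so the rigidity of the discrete identities is greater; verifying that the random-walk dressing preserves them — and in particular that the $\tau=n-1$ boundary term genuinely drops out under the $t$-shifted relations — is the delicate part of the argument.
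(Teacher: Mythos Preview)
Your proposal is correct and follows essentially the same approach as the paper: it cites the Fredholm formula from \cite[Prop.~2.3]{Matetski_2022}, rewrites the kernel via Lem.~\ref{AB BA}--\ref{lem:det-transpose} into the biorthogonal form $\sum_{r\le a}\psi_{t,r,n}\otimes\phi_{t,r,n}^{\mathbf y}$, verifies the contour-integral factors satisfy the discrete flows of Thm.~\ref{PTASEP K THM} with constants $\delta=2$, $\gamma=(q+p/2)^{-1}$, $\beta=-\tfrac{p}{2(q+p/2)}$ (so $-\beta\delta/\gamma=p$), and checks support/regularity exactly as you outline. One small correction: the auxiliary walk is not a plain $\mathrm{Geom}(1/2)$ walk as in continuous-time TASEP but carries an extra factor $q^{\eta(x-y)}$ on the first step (transition matrix $Q(x,y)=\tfrac{1}{2(q+p/2)}(\tfrac12)^{x-y-1}q^{\eta(x-y)}1_{x>y}$), a feature of the parallel-update kernel you would discover upon unpacking the cited formula.
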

\begin{remark}
    By extending the initial condition by setting $y_m \equiv \infty$ if $m < 1$ (so $F_{t, a, m} \equiv 1$), the bilinear equation already implies the $n=1$ forward equation.
\end{remark}
In order to prove Thm.\ \ref{Parallel TASEP part thm} we start from the one-point distribution formula in \cite{Matetski_2022}, and, after some minor transformations, demonstrate that the kernel satisfies the conditions of Thm.\ \ref{PTASEP K THM}. To this end, let $q=1-p$, and define 
\begin{align*}
    \varphi_{t, a, n}(u) &= \frac{q^{n-1}}{2\pi i} \oint_{\gamma_r}  \frac{1}{2^{a-u}w^{a-u+n+1}}(1-w)^n(q+pw)^{t-(n-1)}(q+p/2)^{-t}dw  \\
    \bar{\varphi}_{t, a, n}(v) &= \frac{q^{-(n-1)}}{2\pi i} \oint_{\gamma_{\delta}} \frac{(1-w)^{a-v+n-1}}{2^{v-a}w^n} (1-pw)^{-t+n-1}(q+p/2)^t dw
\end{align*}
where $\gamma_r$, $\gamma_{\delta}$ are simple counter-clockwise loops around the origin with radius $r\in (0, 1)$ and $\delta \in (0, p^{-1})$, respectively. It is elementary to see these satisfy 
\begin{align}
             \nabla_n^+ \varphi_{t, a, n}(u) &= 2 \varphi_{t, a+1, n} - \varphi_{t, a, n} - \tfrac{1}{q+p/2} \varphi_{t-1, a, n}, \label{PTASEP n flows part}\\
             \nabla_n^- \bar{\varphi}_{t ,a, n} &= -(2 \bar{\varphi}_{t, a-1, n} -\bar{\varphi}_{t, a, n }  - \tfrac{1}{q+p/2} \bar{\varphi}_{t+1, a, n}),
\end{align}
and 
\begin{align}
            \nabla_t^+ \varphi_{t, a, n} &= -\tfrac{p}{2(q+p/2)} \nabla_a^- \varphi_{t, a, n}, \quad 
            \nabla_t^- \bar{\varphi}_{t, a, n} = -\tfrac{p}{2(q+p/2)}\nabla_a^+ \bar{\varphi}_{t, a, n}.\label{PTASEP t flows part}
\end{align}
Next, for given initial data $\mathbf{y}$, let $B_m$ be a discrete-time random walk with jumps with transition matrix $Q(x, y) = \tfrac{1}{2(q+p/2)}(\tfrac{1}{2})^{x-y-1}q^{\eta(x-y)}1_{x >y}$, where $\eta(z) = 1_{z=1}$, and let $\tau = \min \{m \geq 0 : B_m > y_{m+1}\}$. Define
\begin{align}
    \psi_{t,a, n}(u) &= \varphi_{t, a, n}(u), \quad \phi_{t, a, n}^{\mathbf{y}}(v) = \E_{B_0 = v}\left[\bar{\varphi}_{t, a, n-\tau}(B_{\tau})1_{\tau < n}\right].
\end{align}
\begin{lemma}[{\cite[Prop.~2.3]{Matetski_2022}}]
    With $F_{t,a ,n}$ as above,
    \begin{align}
        F_{t, a, n} &= \det(I-K_{t,a, n}^{\mathbf{y}})_{\ell^2(\Z)}, \quad \text{ with } \quad K_{t, a, n}^{\mathbf{y}} = \sum_{r=-\infty}^a \psi_{t, r, n} \otimes \phi_{t,r, n}^{\mathbf{y}} \label{PTASEP FP kernel}.
    \end{align}
\end{lemma}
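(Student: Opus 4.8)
The plan is to mirror the proofs of the analogous Fredholm-determinant representations for TASEP (Thm.~\ref{TASEP part thm}) and Push-TASEP (Thm.~\ref{Push TASEP part thm}): start from the biorthogonalization formula of \cite[Prop.~2.3]{Matetski_2022}, reduce it to a single trace-class integral operator on $\ell^2(\Z)$, and then rewrite it, without changing the Fredholm determinant, into the sum-of-rank-one form $\sum_{r=-\infty}^a \psi_{t,r,n}\otimes \phi^{\mathbf{y}}_{t,r,n}$. First I would define the two building-block kernels $\mathcal{S}_{-t,-n}(u,v)$ and $\bar{\mathcal{S}}_{-t,n}(u,v)$ as the contour integrals whose diagonal restrictions $v\mapsto a$ reproduce $\varphi_{t,a,n}$ and $\bar{\varphi}_{t,a,n}$ (carrying the $(q+pw)^{t-(n-1)}$ and $(1-pw)^{-t+n-1}$ weights together with the normalizations $(q+p/2)^{\mp t}$). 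I would then introduce the epigraph kernel $\bar{\mathcal{S}}^{\mathrm{epi}(\mathbf{y})}_{-t,n}(u,v)=\E_{B_0=u}[\bar{\mathcal{S}}_{-t,n-\tau}(B_\tau,v)1_{\tau<n}]$, where $B_m$ is the random walk with the stated transition matrix $Q$ and $\tau$ is the hitting time of the staircase $\mathbf{y}$, so that a special case of \cite[Prop.~2.3]{Matetski_2022} gives the operator $K_{t,a,n}(u,v)=1_{\{u\le a\}}\bigl(\sum_{r\in\Z}\mathcal{S}_{-t,-n}(r,u)\,\bar{\mathcal{S}}^{\mathrm{epi}(\mathbf{y})}_{-t,n}(r,v)\bigr)1_{\{v\le a\}}$.

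Second, I would apply Lem.~\ref{AB BA} (cyclicity $\det(I-AB)=\det(I-BA)$) followed by the transpose identity Lem.~\ref{lem:det-transpose}, exactly as in the TASEP case, to move the cutoffs and rewrite the Fredholm determinant as that of $\sum_{r=-\infty}^a \mathcal{S}_{-t,-n}(u,r)\,\bar{\mathcal{S}}^{\mathrm{epi}(\mathbf{y})}_{-t,n}(v,r)$. Matching $\psi_{t,r,n}(u)=\mathcal{S}_{-t,-n}(u,r)=\varphi_{t,r,n}(u)$ and $\phi^{\mathbf{y}}_{t,r,n}(v)=\bar{\mathcal{S}}^{\mathrm{epi}(\mathbf{y})}_{-t,n}(v,r)=\E_{B_0=v}[\bar{\varphi}_{t,r,n-\tau}(B_\tau)1_{\tau<n}]$ then yields the claimed representation \eqref{PTASEP FP kernel}, with the $\ell^2(\Z)$ determinant unchanged throughout.

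The main obstacle is bookkeeping rather than conceptual: one must check that the contour radii $r\in(0,1)$ and $\delta\in(0,p^{-1})$, together with the $(q+p/2)^{\pm t}$ normalizations built into $\varphi_{t,a,n}$ and $\bar{\varphi}_{t,a,n}$, line up \emph{exactly} with the kernel of \cite[Prop.~2.3]{Matetski_2022} after the conjugations and change of variables, and that the epigraph expectation here matches their biorthogonalization datum under the identification of $Q$ and $\tau$. The remaining analytic point is justifying that the series $\sum_{r=-\infty}^a$ converges in trace norm so that the applications of Lem.~\ref{AB BA}--\ref{lem:det-transpose} are legitimate; this follows from the support properties ($\psi_{t,a,n}$ supported on $a\ge u-n$ and $\phi^{\mathbf{y}}_{t,a,n}$ on $v>y_n$, via $\tau<n$) and the geometric decay supplied by the $(1-pw)$ and $(q+pw)$ factors, which is the same decay mechanism used to verify $K^{\mathbf{y}}_{t,a,n}\in B_r(U,\mathcal{I}_1)$ in the subsequent proof of Thm.~\ref{Parallel TASEP part thm}.
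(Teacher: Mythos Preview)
Your proposal is correct and follows essentially the same route as the paper's proof: define $\mathcal{S}_{-t,-n}$, $\bar{\mathcal{S}}_{-t,n}$, and the epigraph kernel, invoke \cite[Prop.~2.3]{Matetski_2022} for the projected form $1_{\{u\le a\}}(\mathcal{S}^*_{-t,-n}\bar{\mathcal{S}}^{\mathrm{epi}(\mathbf{y})}_{-t,n})1_{\{v\le a\}}$, and then apply Lem.~\ref{AB BA}--\ref{lem:det-transpose} to reach the rank-one sum. The only cosmetic difference is that the paper defines $\mathcal{S}_{-t,-n}$ and $\bar{\mathcal{S}}_{-t,n}$ \emph{without} the $(q+p/2)^{\mp t}$ factors and inserts them at the end (``Conjugating by the appropriate terms''), whereas you build them in from the start; since these factors multiply to $1$ in each summand $\psi_{t,r,n}\otimes\phi^{\mathbf{y}}_{t,r,n}$, this is immaterial.
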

\begin{proof}
     Define
\begin{align*}
     \mathcal{S}_{-t, -n}(u, v) &=  \frac{q^{n-1}}{2\pi i} \oint_{\gamma_r}  \frac{1}{2^{v-u}w^{v-u+n+1}}(1-w)^n(q+pw)^{t-(n-1)}dw\\
     \bar{\mathcal{S}}_{-t, n}(u, v) &= \frac{q^{-(n-1)}}{2\pi i} \oint_{\gamma_{\delta}} \frac{(1-w)^{u-v+n-1}}{2^{v-u}w^n} (1-pw)^{-t+n-1}dw 
 \end{align*}
 and
 \begin{align*}
     \bar{\mathcal{S}}_{-t, n}^{\text{epi}(\mathbf{y})}(u, v) = \E_{B_0 = u} [\bar{\mathcal{S}}_{-t, n-\tau}(B_{\tau}, v)1_{\tau < n}].  
 \end{align*}
 A special case of \cite[Prop.~2.3]{Matetski_2022} gives the following trace-class integral operator on $\ell^2(\Z)$
 \begin{align*}
     K_{t,a, n}(u, v) &= 1_{\{u \leq a\}}\Bigl(\sum_{r\in\Z} \mathcal{S}_{-t,-n}(r, u)\bar{\mathcal{S}}^{\text{epi}(\mathbf{y})}_{-t, n}(r, v) \Bigr)1_{\{v \leq a\}}.
 \end{align*}
 Using Lem.\ \ref{AB BA}--\ref{lem:det-transpose}, we can therefore write 
    \begin{align*}
     K_{t, a, n}(u, v) &\rightarrow  \sum_{r=-\infty}^a \bar{\mathcal{S}}_{-t, n}^{\text{epi}({\mathbf{y}})}(u, r)\mathcal{S}_{-t, -n}(v, r) \rightarrow  \sum_{r=-\infty}^a \mathcal{S}_{-t, -n}(u, r)\bar{\mathcal{S}}_{-t, n}^{\text{epi}({\mathbf{y}})}(v, r),
 \end{align*}
 without changing the value of the Fredholm determinant.  Conjugating by the appropriate terms yields the stated representation. 
 \end{proof}
\begin{proof}[Proof of Thm.\ \ref{Parallel TASEP part thm}]
We show that the kernel given in \eqref{PTASEP FP kernel} satisfies the conditions of Thm.\ \ref{PTASEP K THM}. Note the $a$--flow is immediate from the integral representation. The $n$--flows and $t$--flows for $\psi_{t, a, n}, \phi_{t, a, n}^{\mathbf{y}}$ follows from \eqref{PTASEP n flows part}--\eqref{PTASEP t flows part} and noticing that $\bar{\varphi}_{t, a, n-\tau-1} = 0$ when $\tau = n-1$. Moreover, since $\psi_{t, a, n}(u)$ has support on $a \geq u-n$, and $\phi_{t, a, n}^{\mathbf{y}}(v)$ has support on $v > y_n$, it is clear $K_{t,a, n}^{\mathbf{y}} \in B_r$, as required. 
\end{proof}
\subsubsection{Bernoulli Jumps with Blocking}
Bernoulli Jumps with Blocking is another discrete-time variant of TASEP, except now the dynamics are updated sequentially from right to left as follows: at each update $t \mapsto t+1$, the $k$-th particle attempts to jump one unit to the right with probability $p$. Each particle's jump is performed only if the destination site is empty at time $t+1$, otherwise it is suppressed. 
\begin{theorem}\label{RB Part Thm}
    Fix one-sided initial data $\mathbf{y} = (y_1, y_2, \dots, y_N)$ with $y_1 > y_2 > \dots > y_N$ and let $V = \{(t, a, n) : t \in \Z_{>0}, a < y_n + t, 1 \leq n \leq N \}$ with
   \begin{align}
       F_{t,a, n} = \PP(Y_n^{\text{Bernoulli-Blocking}}(t) > a \, | \, \mathbf{Y}(0) = \mathbf{y}), \qquad (t, a, n) \in V. 
   \end{align}
   Then $F_{t, a, n}$ satisfies
   \begin{align}
       \left[e^{D_t} - pe^{-D_a} - (1-p)\right] F_{t,a,n}\cdot F_{t+1, a, n-1} &= 0, \quad (\nabla_t^+ + p\nabla_a^- )F_{t, a, 1} = 0,\quad F_{0, a, n} = 1_{y_n > a}.
   \end{align}
\end{theorem}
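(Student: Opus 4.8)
The plan is to follow the template of the proof of Thm.\ \ref{Parallel TASEP part thm}: I would start from a known Fredholm determinant representation $F_{t,a,n} = \det(I - K_{t,a,n}^{\mathbf{y}})$ for Bernoulli Jumps with Blocking, then exhibit a rank-one decomposition $\nabla_a^- K_{t,a,n}^{\mathbf{y}} = \psi_{t,a,n}\otimes\phi_{t,a,n}^{\mathbf{y}}$ whose factors satisfy the $n$- and $t$-flow identities of Thm.\ \ref{RBTASEP K THM}. Once the factors are verified to obey conditions (ii)--(iii) of that theorem for a pair of constants $\alpha,\beta$ with $1+\beta-\alpha\neq 0$, the bilinear equation follows immediately, with $p = \frac{\alpha\beta}{1+\beta-\alpha}$ forced to equal the model's jump probability.

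Concretely, I would first record the one-point formula for this sequential (right-to-left) update model from the discrete-time TASEP framework of \cite{Matetski_2022}, which produces a kernel built from contour-integral functions $\varphi_{t,a,n},\bar{\varphi}_{t,a,n}$ together with an epigraph functional $\bar{\mathcal{S}}^{\text{epi}(\mathbf{y})}$ encoding the initial data. I would then set $\psi_{t,a,n}(u) = \varphi_{t,a,n}(u)$ and $\phi_{t,a,n}^{\mathbf{y}}(v) = \E_{B_0=v}[\bar{\varphi}_{t,a,n-\tau}(B_\tau)1_{\tau<n}]$ for an appropriate geometric random walk $B_m$ and epigraph hitting time $\tau$, and use the conjugation and transpose lemmas (Lem.\ \ref{AB BA}--\ref{lem:det-transpose}) to recast the cited kernel into the summed rank-one form $K_{t,a,n}^{\mathbf{y}} = \sum_{r=-\infty}^a \psi_{t,r,n}\otimes\phi_{t,r,n}^{\mathbf{y}}$, so that the $a$-flow of Thm.\ \ref{RBTASEP K THM} holds by construction.

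The heart of the argument is choosing the contour weights so that $\varphi,\bar{\varphi}$ satisfy
\begin{align*}
    \nabla_n^+\varphi_{t,a,n} = \alpha\,\nabla_a^+\varphi_{t,a,n}, \qquad \nabla_t^+\varphi_{t,a,n} = \beta\,\nabla_a^-\varphi_{t,a,n},
\end{align*}
together with the conjugate relations for $\bar{\varphi}$; these are elementary residue computations once the $w$-dependence is fixed, in direct analogy with \eqref{Tasep psi n flow}--\eqref{TASEP psi t flows} and \eqref{PTASEP n flows part}--\eqref{PTASEP t flows part}. Transferring the $n$-flow through the hitting-time functional uses the vanishing $\bar{\varphi}_{t,a,n-\tau-1}=0$ at $\tau=n-1$, exactly as before. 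Regularity $K_{t,a,n}^{\mathbf{y}}\in B_r(U,\mathcal{I}_1)$ would follow from the support constraints on $\psi,\phi^{\mathbf{y}}$ and the geometric decay of the contour integrals. The initial condition $F_{0,a,n}=1_{y_n>a}$ is immediate from evaluating the probability at $t=0$, while the $n=1$ relation $(\nabla_t^+ + p\nabla_a^-)F_{t,a,1}=0$ follows from the bilinear equation itself after extending the configuration by $y_m=+\infty$ for $m<1$ (so $F_{t,a,0}\equiv 1$), mirroring the analogous remarks for the preceding models.

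I expect the main obstacle to be identifying the precise contour kernel for the sequential blocking dynamics and checking that its flow identities carry the specific constants $\alpha,\beta$ for which $p=\frac{\alpha\beta}{1+\beta-\alpha}$ reproduces the Bernoulli jump probability; the finiteness of the particle system (initial data $y_1>\dots>y_N$) may also demand a small modification to the support and decay bookkeeping relative to the one-sided infinite case. Once the kernel is in summed rank-one form with the correct $\alpha,\beta$, invoking Thm.\ \ref{RBTASEP K THM} closes the argument.
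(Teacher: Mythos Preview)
Your proposal is correct and follows exactly the paper's approach: the paper states the explicit contour integrals $\varphi_{t,a,n},\bar\varphi_{t,a,n}$ (with $\alpha=2$, $\beta=-\tfrac{p/2}{q+p/2}$, so $\tfrac{\alpha\beta}{1+\beta-\alpha}=p$), defines $\psi,\phi^{\mathbf{y}}$ via the Geom($1/2$) random walk and epigraph hitting time, cites \cite[Prop.~2.3]{Matetski_2022} for the Fredholm formula, and then declares the proof ``completely analogous to the proof of Thm.\ \ref{Parallel TASEP part thm}, and so omitted.'' Your outline reproduces this template step for step.
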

\begin{remark}
    By extending the initial condition by setting $y_m \equiv \infty$ if $m < 1$ (so $F_{t, a, m} \equiv 1$), the bilinear equation already implies the $n=1$ forward equation.
\end{remark}
Let
\begin{align*}
    \varphi_{t, a, n}(u) &= \frac{1}{2\pi i} \oint_{\gamma_r}  \frac{1}{2^{a-u}w^{a-u+n+1}}(1-w)^n (q+pw)^{t}(q+p/2)^{-t} dw \\
    \bar{\varphi}_{t, a, n}(v) &= \frac{1}{2 \pi i} \oint_{\gamma_{\delta}} \frac{(1-w)^{a-v +n-1}}{2^{v-a}w^n}(1-pw)^{- t}(q+p/2)^{t} dw, 
\end{align*}
where $q = 1-p$, and $\gamma_r$, $\gamma_{\delta}$ are simple counter-clockwise loops around the origin with radius $r\in (0, 1)$ and $\delta \in (0, p^{-1})$, respectively. It is elementary to see that these satisfy 
\begin{alignat}{2}
    \nabla_n^+ \varphi_{t, a, n} &= 2\nabla_a^+ \varphi_{t,a, n}, &\qquad \nabla_n^- \bar{\varphi}_{t, a, n} &= 2\nabla_a^-\bar{\varphi}_{t, a, n}, \\
    \nabla_t^+ \varphi_{t, a, n} &= -\tfrac{p/2}{q+p/2}\nabla_a^- \varphi_{t,a, n}, &\qquad \nabla_t^- \bar{\varphi}_{t, a, n} &= -\tfrac{p/2}{q+p/2} \nabla_a^+\bar{\varphi}_{t, a, n}.
\end{alignat}
Next, for given initial data $\mathbf{y}$, let $B_m$ be a discrete-time Geom(1/2) random walk with jumps strictly in the negative direction, and let $\tau = \min \{m \geq 0 : B_m > y_{m+1}\}$. Define
\begin{align}
    \psi_{t,a, n}(u) &= \varphi_{t, a, n}(u), \quad \phi_{t, a, n}^{\mathbf{y}}(v) = \E_{B_0 = v}\left[\bar{\varphi}_{t, a, n-\tau}(B_{\tau})1_{\tau < n}\right].
\end{align}
\begin{lemma}[{\cite[Prop.~2.3]{Matetski_2022}}]
    With $F_{t,a ,n}$ as above,
    \begin{align}
        F_{t, a, n} &= \det(I-K_{t,a, n}^{\mathbf{y}})_{\ell^2(\Z)}, \quad \text{ with } \quad K_{t, a, n}^{\mathbf{y}} = \sum_{r=-\infty}^a \psi_{t, r, n} \otimes \phi_{t,r, n}^{\mathbf{y}} \label{RBTASEP FP kernel}.
    \end{align}
\end{lemma}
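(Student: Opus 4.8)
The plan is to reuse verbatim the template from the Parallel TASEP representation lemma, specializing the biorthogonal kernel of \cite[Prop.~2.3]{Matetski_2022} to the sequential (blocking) update rule. First I would introduce the two-variable kernels obtained from $\varphi_{t,a,n}$ and $\bar{\varphi}_{t,a,n}$ defined just above by reinstating a second spatial variable and stripping the scalar conjugation factors $(q+p/2)^{\mp t}$, namely
\begin{align*}
    \mathcal{S}_{-t, -n}(u, v) &= \frac{1}{2\pi i} \oint_{\gamma_r} \frac{1}{2^{v-u}w^{v-u+n+1}}(1-w)^n (q+pw)^{t}\,dw, \\
    \bar{\mathcal{S}}_{-t, n}(u, v) &= \frac{1}{2\pi i} \oint_{\gamma_{\delta}} \frac{(1-w)^{u-v+n-1}}{2^{v-u}w^n}(1-pw)^{-t}\,dw,
\end{align*}
together with the epigraph kernel $\bar{\mathcal{S}}_{-t, n}^{\mathrm{epi}(\mathbf{y})}(u, v) = \E_{B_0 = u}\bigl[\bar{\mathcal{S}}_{-t, n-\tau}(B_\tau, v)\,1_{\tau < n}\bigr]$, with $\tau$ the epigraph hitting time of $\mathbf{y}$ introduced above. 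The only change from the Parallel TASEP kernels is in the $w$-exponents: the blocking dynamics produce $(q+pw)^{t}$ and $(1-pw)^{-t}$ in place of the shifted exponents $(q+pw)^{t-(n-1)}$ and $(1-pw)^{-t+n-1}$.

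Next I would invoke the corresponding special case of \cite[Prop.~2.3]{Matetski_2022}, which yields the trace-class operator on $\ell^2(\Z)$
\begin{align*}
    K_{t,a, n}(u, v) = 1_{\{u \leq a\}}\Bigl(\sum_{r\in\Z} \mathcal{S}_{-t,-n}(r, u)\,\bar{\mathcal{S}}^{\mathrm{epi}(\mathbf{y})}_{-t, n}(r, v)\Bigr)1_{\{v \leq a\}}.
\end{align*}
Exactly as before, applying Lem.\ \ref{AB BA}--\ref{lem:det-transpose} to commute and then transpose the two factors leaves the Fredholm determinant unchanged and gives
\begin{align*}
    K_{t, a, n}(u, v) &\rightarrow \sum_{r=-\infty}^a \bar{\mathcal{S}}_{-t, n}^{\mathrm{epi}(\mathbf{y})}(u, r)\,\mathcal{S}_{-t, -n}(v, r) \rightarrow \sum_{r=-\infty}^a \mathcal{S}_{-t, -n}(u, r)\,\bar{\mathcal{S}}_{-t, n}^{\mathrm{epi}(\mathbf{y})}(v, r).
\end{align*}
Finally, conjugating by the $t$-dependent scalars $(q+p/2)^{\mp t}$ restores the weights dropped in the first step and identifies $\psi_{t,r,n} = \varphi_{t,r,n}$ and $\phi^{\mathbf{y}}_{t,r,n}(v) = \E_{B_0=v}[\bar{\varphi}_{t,r,n-\tau}(B_\tau)1_{\tau<n}]$, producing the claimed representation $K^{\mathbf{y}}_{t,a,n} = \sum_{r\leq a}\psi_{t,r,n}\otimes\phi^{\mathbf{y}}_{t,r,n}$.

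The genuine content lies entirely in the bookkeeping of the first and last steps. The hard part will be confirming that the sequential blocking update specializes the Matetski--Remenik formula to precisely the exponents $(q+pw)^{t}$ and $(1-pw)^{-t}$, that the contours $\gamma_r,\gamma_\delta$ enclose only the origin with the stated radii, and that the epigraph substitution $n\mapsto n-\tau$ terminates correctly (with $\bar{\mathcal{S}}_{-t, n-\tau}$ vanishing at $\tau=n-1$). Because the conjugation factors depend on $t$ alone, they cancel in every pairing and hence leave the determinant invariant; trace-class membership and the validity of the commute/transpose step then follow from the support constraints ($\psi_{t,a,n}(u)$ supported on $a\geq u-n$ and $\phi^{\mathbf{y}}_{t,a,n}(v)$ on $v>y_n$), exactly as in the proof of Thm.\ \ref{TASEP part thm}.
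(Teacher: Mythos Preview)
Your proposal is correct and follows exactly the approach the paper intends: the paper explicitly states that this lemma's proof is ``completely analogous to the proof of Thm.~\ref{Parallel TASEP part thm}, and so omitted,'' and you have faithfully reproduced that template with the appropriate substitutions (no $q^{n-1}$ prefactors, exponents $(q+pw)^t$ and $(1-pw)^{-t}$, Geom$(1/2)$ walk for the epigraph hitting time). The only minor remark is that the scalar factors $(q+p/2)^{\mp t}$ cancel directly in each rank-one summand $\psi_{t,r,n}\otimes\phi^{\mathbf{y}}_{t,r,n}$ rather than requiring a genuine operator conjugation, but you already note this.
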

The proofs of the above lemma and Thm.\ \ref{RB Part Thm} are completely analogous to the proof of Thm.\ \ref{Parallel TASEP part thm}, and so omitted.  
\subsubsection{Bernoulli Jumps with Pushing}
Bernoulli Jumps with Pushing is a discrete-time variant of Push-TASEP, with dynamics updated sequentially from right to left as follows: for each update $t \mapsto t+1$, the $k$-th particle independently jumps leftward by one unit with probability $q$, and stays put with probability $p=1-q$, except that particle $k$ is forced to jump if particle $k-1$ arrives on top of it in the update to time $t+1$. 
\begin{theorem}\label{LB part thm}
    Fix one-sided initial data $\mathbf{y} = (y_1, y_2, \dots, y_N)$ with $y_1 > y_2 > \dots > y_N$ and let $V = \{(t, a, n) : t \in \Z_{>0}, a < y_n, 1 \leq n \leq N\}$ with
   \begin{align}
       F_{t,a, n} = \PP(Y_n^{\text{Bernoulli-Pushing}}(t) > a \, | \, \mathbf{Y}(0) = \mathbf{y}), \qquad (t, a, n) \in V. 
   \end{align}
   Then $F_{t, a, n}$ satisfies
   \begin{align}
       \left[e^{D_t} - qe^{D_a} - (1-q)\right] F_{t,a,n}\cdot F_{t+1, a+1, n-1} &= 0, \quad (\nabla_t^+ - q\nabla_a^+ )F_{t, a, 1} = 0, \quad F_{0, a, n} = 1_{y_n > a}.
   \end{align}
\end{theorem}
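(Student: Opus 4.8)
The plan is to follow the template already used for Thm.\ \ref{Parallel TASEP part thm} and Thm.\ \ref{RB Part Thm}: start from a known Fredholm determinant formula for the one-point distribution of Bernoulli Jumps with Pushing, recast its correlation kernel into the standard summed rank-one form $K_{t,a,n} = \sum_{r=-\infty}^a \psi_{t,r,n}\otimes\phi_{t,r,n}$, and then verify that $\psi,\phi$ satisfy the three flow conditions of Thm.\ \ref{LBTASEP K THM}. Since the bilinear output of that theorem is exactly $[e^{D_t} - q e^{D_a} - (1-q)]F_{t,a,n}\cdot F_{t+1,a+1,n-1}=0$ with $q = \beta(1-\aaa)/(\beta-\aaa)$, the entire task reduces to producing the correct biorthogonal data and matching the constants to the model's jump probability.

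Concretely, I would define contour-integral functions $\varphi_{t,a,n}$ and $\bar\varphi_{t,a,n}$ whose $w$-weights encode the leftward (pushing) dynamics, in direct analogy with the continuous Push-TASEP weights of Thm.\ \ref{Push TASEP part thm} (which carry the factors $e^{t(1/w-2)}$ and $e^{t(2-1/(1-w))}$) and with the discrete Bernoulli-Blocking weights of Thm.\ \ref{RB Part Thm}. As in those cases, I would set $\psi_{t,a,n}(u)=\varphi_{t,a,n}(u)$ and, for the given one-sided data $\mathbf{y}$, build $\phi^{\mathbf{y}}_{t,a,n}(v)=\E_{B_0=v}[\bar\varphi_{t,a,n-\tau}(B_\tau)1_{\tau<n}]$ from a Geom$(1/2)$ walk $B_m$ with strictly downward jumps and hitting time $\tau=\min\{m\ge 0: B_m>y_{m+1}\}$. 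The Fredholm formula itself I would take from the appropriate source (the discrete-time framework of \cite{Matetski_2022}, as for the other two discrete variants), and then apply Lem.\ \ref{AB BA}--\ref{lem:det-transpose} together with a conjugation by elementary factors to bring the kernel into the summed rank-one form, exactly mirroring the manipulation in the proof of Thm.\ \ref{Parallel TASEP part thm}.

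With the data in hand, the verification splits into the three flows. The $n$-flows give $\nabla_n^+\varphi = 2\nabla_a^+\varphi$ and $\nabla_n^-\bar\varphi = 2\nabla_a^-\bar\varphi$, i.e.\ $\aaa = 2$; the $t$-flows, which for pushing carry the \emph{forward} spatial gradient $\nabla_a^+$ (rather than the $\nabla_a^-$ appearing for blocking in Thm.\ \ref{RBTASEP K THM}), take the form $\nabla_t^+\varphi = \beta\nabla_a^+\varphi$ and $\nabla_t^-\bar\varphi = \beta\nabla_a^-\bar\varphi$ for a constant $\beta$ read off from the transition weights. One then checks that $q = \beta(1-\aaa)/(\beta-\aaa)$ recovers the model jump probability; with $\aaa=2$ this is the identity $\beta/(2-\beta)=q$, forcing $\beta = 2q/(1+q)$, which is the consistency requirement the weights must satisfy. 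Substituting into Thm.\ \ref{LBTASEP K THM} yields the bilinear equation, the initial condition $F_{0,a,n}=1_{y_n>a}$ is immediate, and the $n=1$ scalar equation $(\nabla_t^+ - q\nabla_a^+)F_{t,a,1}=0$ follows after extending $y_m\equiv\infty$ for $m<1$ (so that $F_{t,a,m}\equiv 1$) and reading off the single surviving rank-one flow, just as in the remarks accompanying the earlier theorems.

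The main obstacle is not the algebra of Thm.\ \ref{LBTASEP K THM}, which is already established, but rather (i) pinning down the correct biorthogonalization and kernel for the pushing dynamics and confirming it against the cited one-point formula, and (ii) the regularity bookkeeping: showing $K^{\mathbf{y}}_{t,a,n}\in B_r(U,\mathcal{I}_1)$ via the support properties of $\psi$ and $\phi^{\mathbf{y}}$ (with $\phi^{\mathbf{y}}_{t,a,n}$ supported on $\{v>y_n\}$ and $\psi$ on a half-line determined by $a$ and $n$), and verifying that the boundary contribution vanishes when $\tau$ attains its maximal value so that the stated $n$- and $t$-flows hold for $\phi^{\mathbf{y}}$ with no correction term. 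Once these are in place the proof is completely analogous to that of Thm.\ \ref{Parallel TASEP part thm}.
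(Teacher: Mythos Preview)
Your proposal is correct and follows exactly the paper's approach: the paper itself states that the proof is ``completely analogous to the proof of Thm.\ \ref{Parallel TASEP part thm}, and so omitted,'' after recording the explicit contour-integral data $\varphi,\bar\varphi$ and the flow identities with $\aaa=2$, $\beta=\tfrac{q}{q+p/2}=\tfrac{2q}{1+q}$, which are precisely the values you deduced. Your consistency check $\beta/(2-\beta)=q$ matches the paper's setup, and your outline of the regularity and boundary verifications is the same bookkeeping the paper invokes by analogy.
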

\begin{remark}
    By extending the initial condition by setting $y_m \equiv \infty$ if $m < 1$ (so $F_{t, a, m} \equiv 1$), the bilinear equation already implies the $n=1$ forward equation.
\end{remark}
Let
\begin{align*}
    \varphi_{t, a, n}(u) &= \frac{1}{2\pi i} \oint_{\gamma_r}  \frac{1}{ 2^{a-u}w^{a-u+n+1}}(1-w)^n (p + \frac{q}{w})^t (p+2q)^{-t} dw\\
    \bar{\varphi}_{t, a, n}(v) &= \frac{1}{2 \pi i} \oint_{\gamma_{\delta}} \frac{(1-w)^{a-v +n-1}}{2^{v-a}w^n}(p+\frac{q}{1-w})^{-t}(p+2q)^t dw, 
\end{align*}
where $p = 1-q$, and $\gamma_r$, $\gamma_{\delta}$ are simple counter-clockwise loops around the origin with radius $r, \delta \in (0, 1)$. It is elementary to see that these satisfy 
\begin{alignat}{2}
    \nabla_n^+ \varphi_{t, a, n} &= 2\nabla_a^+ \varphi_{t,a, n}, &\qquad \nabla_n^- \bar{\varphi}_{t, a, n} &= 2\nabla_a^-\bar{\varphi}_{t, a, n}, \\
    \nabla_t^+ \varphi_{t, a, n} &= \tfrac{q}{q+p/2}\nabla_a^+ \varphi_{t,a, n}, &\qquad \nabla_t^- \bar{\varphi}_{t, a, n} &=  \tfrac{q}{q+p/2} \nabla_a^-\bar{\varphi}_{t, a, n}.
\end{alignat}
Next, for given initial data $\mathbf{y}$, let $B_m$ be a discrete-time Geom(1/2) random walk with jumps strictly in the negative direction, and let $\tau = \min \{m \geq 0 : B_m > y_{m+1}\}$. Define
\begin{align}
    \psi_{t,a, n}(u) &= \varphi_{t, a, n}(u), \quad \phi_{t, a, n}^{\mathbf{y}}(v) = \E_{B_0 = v}\left[\bar{\varphi}_{t, a, n-\tau}(B_{\tau})1_{\tau < n}\right].
\end{align}
\begin{lemma}[{\cite[Prop.~2.7]{Matetski_2022}}]
    With $F_{t,a ,n}$ as above,
    \begin{align}
        F_{t, a, n} &= \det(I-K_{t,a, n}^{\mathbf{y}})_{\ell^2(\Z)}, \quad \text{ with } \quad K_{t, a, n}^{\mathbf{y}} = \sum_{r=-\infty}^a \psi_{t, r, n} \otimes \phi_{t,r, n}^{\mathbf{y}} \label{LBTASEP FP kernel}.
    \end{align}
\end{lemma}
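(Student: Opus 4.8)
The plan is to follow, nearly verbatim, the structure of the Fredholm determinant lemma for Parallel TASEP (where \cite[Prop.~2.3]{Matetski_2022} was used), but now invoking \cite[Prop.~2.7]{Matetski_2022} and the pushing integrands. Concretely, I would first promote the parameter $a$ appearing in the exponents of $\varphi_{t,a,n}$ and $\bar\varphi_{t,a,n}$ to a second spatial variable, producing two-variable building blocks
$$
\mathcal{S}_{-t,-n}(u,v)=\frac{1}{2\pi i}\oint_{\gamma_r}\frac{(1-w)^n}{2^{v-u}w^{v-u+n+1}}\Bigl(p+\tfrac{q}{w}\Bigr)^t\,dw,
$$
$$
\bar{\mathcal{S}}_{-t,n}(u,v)=\frac{1}{2\pi i}\oint_{\gamma_\delta}\frac{(1-w)^{u-v+n-1}}{2^{v-u}w^n}\Bigl(p+\tfrac{q}{1-w}\Bigr)^{-t}\,dw,
$$
together with the epigraph-stopped version $\bar{\mathcal{S}}_{-t,n}^{\mathrm{epi}(\mathbf{y})}(u,v)=\E_{B_0=u}\bigl[\bar{\mathcal{S}}_{-t,n-\tau}(B_\tau,v)1_{\tau<n}\bigr]$. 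A special case of \cite[Prop.~2.7]{Matetski_2022} then supplies the trace-class integral operator on $\ell^2(\Z)$,
$$
K_{t,a,n}(u,v)=1_{\{u\le a\}}\Bigl(\sum_{r\in\Z}\mathcal{S}_{-t,-n}(r,u)\,\bar{\mathcal{S}}_{-t,n}^{\mathrm{epi}(\mathbf{y})}(r,v)\Bigr)1_{\{v\le a\}}.
$$

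Next I would apply Lem.~\ref{AB BA} and Lem.~\ref{lem:det-transpose} in exactly the order used in the Parallel TASEP argument: viewing the inner sum as a composition $AB$, the cyclic identity $\det(I-AB)=\det(I-BA)$ together with transpose-invariance rearranges the kernel into the factorized form $\sum_{r=-\infty}^{a}\mathcal{S}_{-t,-n}(u,r)\,\bar{\mathcal{S}}_{-t,n}^{\mathrm{epi}(\mathbf{y})}(v,r)$ without altering the Fredholm determinant. A final conjugation by the appropriate factors reinstates the normalizations $(p+2q)^{\mp t}$ and the powers of $2$ that distinguish $\mathcal{S}_{-t,-n}$, $\bar{\mathcal{S}}_{-t,n}^{\mathrm{epi}(\mathbf{y})}$ from $\psi_{t,r,n}=\varphi_{t,r,n}$ and $\phi_{t,r,n}^{\mathbf{y}}$ (the $(p+2q)^{\mp t}$ being scalars that cancel in the product), and this conjugation preserves $\det(I-\cdot)$; one thereby obtains the stated representation $K_{t,a,n}^{\mathbf{y}}=\sum_{r=-\infty}^a\psi_{t,r,n}\otimes\phi_{t,r,n}^{\mathbf{y}}$.

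The step I expect to require the most care is not the algebraic rearrangement — which is identical in form to the Parallel TASEP and Blocking cases — but verifying that the pushing-specific integrand factors $(p+q/w)^t$ and $(p+q/(1-w))^{-t}$ remain compatible with these manipulations. In particular, I would check that the contour radii $r,\delta\in(0,1)$ keep the relevant geometric expansions absolutely convergent: here the pole of $(p+q/(1-w))^{-t}$ at $w=1$ forces $\delta<1$, in contrast with the wider range $\delta\in(0,p^{-1})$ permitted for the blocking model. I would also confirm that the diagonal conjugation and the $1_{\{u,v\le a\}}$ truncation preserve the trace-class property, so that Lem.~\ref{AB BA} and Lem.~\ref{lem:det-transpose} genuinely apply. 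Granting these convergence and integrability checks, the argument is completely analogous to the proof of Thm.~\ref{Parallel TASEP part thm}.
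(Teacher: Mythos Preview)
Your proposal is correct and matches the paper's approach: the paper explicitly states that the proof of this lemma is ``completely analogous to the proof of Thm.~\ref{Parallel TASEP part thm}'' and omits it, and your sketch faithfully reproduces that Parallel TASEP argument with the pushing-specific integrands and contour choices. The only superfluous detail is the mention of extra ``powers of $2$'' in the final conjugation---the $2^{v-u}$ factors already match between $\mathcal{S}_{-t,-n}(u,r)$ and $\varphi_{t,r,n}(u)$, so only the scalar $(p+2q)^{\mp t}$ needs inserting---but this does not affect the argument.
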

The proofs of the above lemma and Thm.\ \ref{LB part thm} are completely analogous to the proof of Thm.\ \ref{Parallel TASEP part thm}, and so omitted. 
\section{Scaling of Bilinear Equations}\label{sec: 4}
In this section we present a selection of scaling limits for our bilinear equations (see Fig.\ \ref{fig:scaling-limits}). These examples correspond to known or conjectured scaling limits of the associated KPZ models. An important disclaimer: in this section, our method is purely formal. By Taylor expanding the bilinear operators, we obtain our desired asymptotic equations at leading order, but we do not attempt a rigorous justification of convergence at the level of solutions. 
\begin{figure}[t]
\centering
\begin{tikzpicture}[scale=0.75,
  >=Latex,
  node distance=10mm and 18mm,
  every node/.style={transform shape},
  eqnode/.style={rounded rectangle, draw, thick, minimum width=30mm, align=center, inner sep=2pt},
  shown/.style={-Latex, semithick, shorten >=2pt, shorten <=2pt},
  notshown/.style={-Latex, dashed, line width=0.5pt, opacity=.35, shorten >=2pt, shorten <=2pt}
]

\node[eqnode]                  (ptasep) {Parallel\\TASEP};
\node[eqnode, right=of ptasep] (tasep)  {TASEP};
\node[eqnode, right=of tasep]  (rbm)    {RBM};

\node[eqnode, above=8mm of ptasep] (png) {2DTL (PNG)};
\node[eqnode, above=20mm of tasep] (kp)  {KP (KPZ FP)};

\node[eqnode, below=of ptasep] (rb)    {Bernoulli\\with Blocking};
\node[eqnode, below=of tasep]  (pusht) {Push-TASEP};
\node[eqnode] (lb) at ($(rbm|-rb)$) {Bernoulli\\with Pushing};

\coordinate (KP_inS)    at (kp.south);
\coordinate (KP_below)  at ($(kp.south)+(0,-2mm)$);
\coordinate (RBM_inW)   at ($(rbm.west)+(0,0)$);
\coordinate (TASEP_inW) at ($(tasep.west)+(-.5mm,0)$);
\coordinate (RBM_inS)   at ($(rbm.south)+(0,-.5mm)$);

\draw[shown] (ptasep.north)      to[bend left=10] (png.south);
\draw[shown] (ptasep.east)       to[bend left=8]  (TASEP_inW);
\draw[shown] (ptasep.east)       to[bend left=15] (RBM_inW);
\draw[shown] (ptasep.north east) to[bend left=12] (KP_inS);

\draw[shown] (tasep.east)        to[bend left=8]  (RBM_inW);
\draw[shown] (tasep.north)       to[bend left=10] (KP_inS);

\draw[shown] (rbm.north)         to[bend left=8]  (KP_inS);

\draw[notshown] (png.10) to[out=10, in=180] (KP_below) -- (KP_inS);

\draw[notshown] (rb.north east)  to[bend left=14] (TASEP_inW);
\draw[notshown] (rb.north east)  to[bend left=8] (RBM_inW);
\draw[notshown] (rb.north east)  to[bend left=18] (KP_inS);

\draw[notshown] (lb.west)        to[bend left=10] (pusht.east);
\draw[notshown] (lb.north)       to[bend left=10] (RBM_inS);
\draw[notshown] (lb.north west)  to[bend left=14] (KP_inS);

\draw[notshown] (pusht.north east) to[bend left=8]  (RBM_inS);
\draw[notshown] (pusht.north east) to[bend left=14] (KP_inS);

\end{tikzpicture}
\caption{Bilinear equations scaling relationships. Solid arrows indicate demonstrated scaling limits, dashed arrows indicate scaling limits not shown here.}
\label{fig:scaling-limits}
\end{figure}
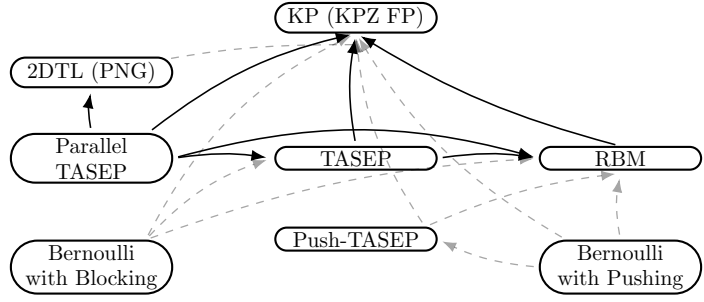
\subsection{RBM Equation}
\begin{example} (\text{KPZ Scaling}).
    Under the change of variables
    \begin{equation}
        T = \epsilon^{3/2}t, \quad X = \tfrac\epsilon2( t - n), \quad A = -\epsilon^{1/2}(t + a + n), \label{RBM KPZ coord}
    \end{equation}
  the RBM equation formally reduces, as $\epsilon \rightarrow 0$, to 
    \begin{align}
       \left[D_TD_A + \tfrac{1}{4}D_X^2 + \tfrac{1}{12}D_A^4\right]F \cdot F  = 0, \qquad F = F(T, X, A).\label{KP2 for RBM}
    \end{align}
\end{example}
    
\begin{proof}[Derivation] It will be convenient to relabel $n = \lfloor m/2\rfloor + 1$ so that the RBM equation becomes 
\begin{equation*}
    \left[D_t - \tfrac{1}{2}D_a^2 \right]e^{D_m} F \cdot F = 0.
\end{equation*}
Using the change of variables in \eqref{RBM KPZ coord}, we have 
\begin{gather*}
      D_t - \tfrac{1}{2}D_a^2 = - \epsilon^{1/2} D_A + \tfrac{\epsilon}{2}(D_X - D_A^2) + \epsilon^{3/2}D_T, \\
      \shortintertext{ and }
       e^{D_m} =  1 - \tfrac{\epsilon^{1/2}}{2}D_A - \tfrac{\epsilon}{4}(D_X - \tfrac{1}{2}D_A^2) + \tfrac{\epsilon^{3/2}}{8}(D_XD_A - \tfrac{1}{6}D_A^3) + o(\epsilon^{3/2}).
\end{gather*}
Therefore, as operators, we have 
\begin{equation}\label{RBM expansion}
\begin{split}
      \left[D_t - \tfrac{1}{2}D_a^2 \right]e^{D_m} 
    = &-\epsilon^{1/2} D_A +  \tfrac{\epsilon}{2}D_X + \epsilon^{3/2}\left[D_T + \tfrac{1}{8}D_A^3\right]  \\
    \quad &-\tfrac{\epsilon^2}{2} \left[D_TD_A+ \tfrac14D_X^2 - \tfrac18D_XD_A^2 + \tfrac{1}{12}D_A^4\right] + o(\epsilon^2).
\end{split}
\end{equation}
Now, applying \eqref{RBM expansion} to $F\cdot F$, and noting that $P(D_1, D_2, \dots) G\cdot G=0$ whenever $P$ is a polynomial of odd total order, we get to leading-order
\begin{align}
    0 &= \left[D_t - \frac{1}{2}D_a^2 \right]e^{D_m} F \cdot F 
    = -\tfrac{1}{2}\epsilon^2 \left[D_TD_A + \tfrac14D_X^2 + \tfrac{1}{12}D_A^4\right]F \cdot F + o(\epsilon^2) 
\end{align}
as desired.
\end{proof}
\subsection{TASEP Equation}
\begin{example}
We obtain the following scaling limits for the TASEP equation: 
\begin{enumerate}[label=(\roman*), leftmargin=2.4em, itemsep=2pt, topsep=2pt]
    \item \label{TSEPKPZSCALIN}(\text{KPZ Scaling}). Under the change of variables 
    \begin{equation}
        T = \tfrac{1}{2}\epsilon^{3/2}t, \quad X = \tfrac{\epsilon}{2}(a+2), \quad A= \epsilon^{1/2}\left(\tfrac{1}{2}t-2n-a\right),\label{cov for TASEP to KP}
    \end{equation}
  the TASEP equation formally reduces, as $\epsilon \rightarrow 0$, to
        \begin{align}
        \left[D_TD_A + \tfrac{1}{4}D_X^2 + \tfrac{1}{12}D_A^4\right]F \cdot F = 0. \label{KP2 for TASEP}
    \end{align}
    \item \label{TASEPRBMSCALEING}(\text{RBM Scaling}). Under the change of variables 
    \begin{equation}
        T= \epsilon^2 t, \qquad  A=\epsilon(a-t),
    \end{equation}
   the TASEP equation formally reduces, as $\epsilon \rightarrow 0$, to
    \begin{align}
        \left[D_T - \tfrac{1}{2}D_A^2\right]F_n \cdot F_{n-1} = 0. \label{RBM for TASEP}
    \end{align}
\end{enumerate}
\end{example}
\begin{proof}[Derivations]
\ref{TSEPKPZSCALIN}: Relabeling $n = \lfloor m/2\rfloor + 1$,  the TASEP equation becomes 
\begin{equation*}
    \left[D_t - (e^{-D_a} - 1)\right]e^{D_m} F\cdot F= 0.
\end{equation*}
Using the change of variables in \eqref{cov for TASEP to KP}, we have 
\begin{align*}
    D_t &=\tfrac12 \epsilon^{1/2}D_A +  \tfrac12 \epsilon^{3/2}D_T, \qquad  e^{D_m} = 1 - \epsilon^{1/2}D_A + \tfrac12 \epsilon D_A^2 - \tfrac16 \epsilon^{3/2} D_A^3 + o(\epsilon^{3/2}). \\
    \shortintertext{ and }
    e^{-D_a} -1 &=  \epsilon^{1/2}D_A  +\tfrac12\epsilon(D_A^2 -D_X) + \tfrac12\epsilon^{3/2}(-D_AD_X + \tfrac{1}{3}D_A^3)  + \tfrac{1}{4}\epsilon^2 (\tfrac12D_X^2 - D_A^2D_X + \tfrac{1}{6}D_A^4) \\
    &\qquad + o(\epsilon^2)
\end{align*}
Therefore, as operators, we have 
\begin{align*}
    \left[D_t - (e^{-D_a} - 1)\right]e^{D_m} &= -\tfrac{1}{2}\epsilon^{1/2}D_A + \tfrac12\epsilon  D_X + \tfrac12\epsilon^{3/2}(D_T + \tfrac16 D_A^3) \\
    &\quad -\tfrac12\epsilon^2( D_TD_A +  \tfrac14 D_X^2 + \tfrac1{12}D_A^4) + o(\epsilon^2).
\end{align*}
Applying this to $F\cdot F$ and dropping odd total order Hirota derivative terms, we obtain
\begin{align*}
    -\tfrac12\epsilon^2\left[D_TD_A + \tfrac{1}{4}D_X^2 + \tfrac{1}{12}D_A^4\right]F\cdot F + o(\epsilon^2) = 0.
\end{align*}

\noindent\ref{TASEPRBMSCALEING}: As operators, we have 
\begin{align*}
    \left[D_t -e^{-D_a} + 1\right] &=  \left[\epsilon^2 D_T - \epsilon D_A - (1- \epsilon D_A + \tfrac{\epsilon^2}{2}D_A^2) + 1\right] + o(\epsilon^2) \\
    &= \epsilon^2 \left[D_T - \tfrac{1}{2}D_A^2\right] + o(\epsilon^2).
\end{align*}
\end{proof}
\subsection{Parallel TASEP}\label{sec: Parallel-TASEP-Scal}
To start this subsection, let us first make the direct connection to the HBDE. Let $n=\lfloor m/2\rfloor + 1$, so the Parallel TASEP formula becomes 
\begin{align}\label{PTASEP with m COV}
    \left[e^{D_t} - pe^{-D_a} -(1-p)\right]e^{D_m}F_{t, a, m} \cdot F_{t, a, m}= 0. 
\end{align}
We now introduce the change of variables 
\begin{align}
    t= t, \quad x = a, \quad r = t-a-m. \label{PTASEP to HBDE COV}
\end{align}
In these new coordinates, the Parallel TASEP formula transforms into the familiar HBDE, i.e.\ 
\begin{align}
    \left[e^{D_t} -pe^{D_x} -(1-p)e^{D_r}\right]F_{t, x, r}\cdot F_{t, x, r} = 0. \label{ptasep HBDE form}
\end{align}
\begin{example}
We have the following scaling limits of the Parallel TASEP equation: 
\begin{enumerate}[label=(\roman*), leftmargin=2.4em, itemsep=2pt, topsep=2pt]
    \item \label{PTASEPSCALINGKPZ}(\text{KPZ scaling}). Under the change of variables, 
    \begin{equation}
        T = (\tfrac{p}{2q})^{1/4}\epsilon^{3/2}t, \quad X = \tfrac{\epsilon}{\sqrt{2p}}(a+2), \quad A= \epsilon^{1/2}\bigl((\tfrac{2^{1/4}(1-\sqrt{q})}{(pq)^{1/4}})t-(\tfrac{2}{pq})^{1/4}(2n+a)\bigr),
    \end{equation}
    with $p \in (0, 1)$, $q = 1-p$, the Parallel TASEP equation formally reduces, as $\epsilon \rightarrow 0$, to
        \begin{align}
        \left[D_TD_A + \frac{1}{4}D_X^2 + \frac{1}{12}D_A^4\right]F \cdot F = 0 . \label{KP2 for parallelTASEP}
    \end{align}
    \item \label{PTASEPSCALINGRBM}(\text{RBM scaling}). Under the change of variables
    \begin{equation}
        T= \epsilon^2t, \quad  A = \epsilon^{1/2}(a-\epsilon t), \quad p = \epsilon,
    \end{equation} 
     the Parallel TASEP equation formally reduces, as $\epsilon \rightarrow 0$, to
    \begin{align}
        \left[D_T - \frac{1}{2}D_A^2\right]F_n \cdot F_{n-1} = 0. \label{RBM for ParallelTASEP}
    \end{align}
    \item \label{PTASEPSCALING2DTL}(\text{2DTL Scaling}). Under the change of variables 
    \begin{equation}
        T=\epsilon t, \quad  X = \epsilon x, \quad  p = 1-4\epsilon^2,
    \end{equation}
    we have \eqref{ptasep HBDE form} formally reduces, as $\epsilon \rightarrow 0$, to 
    \begin{align}
        \left[\frac{1}{2}D_T^2 - \frac{1}{2}D_X^2 - 4(e^{D_r} - 1)\right]F \cdot F = 0. \label{2D Toda for ParallelTASEP}
    \end{align}
    \item \label{PTASEPSCALINGTASEP}(\text{TASEP Scaling}). Under the change of variables
    \begin{equation}
        T= \epsilon t, \quad p = \epsilon,
    \end{equation}
    the Parallel TASEP equation formally reduces, as $\epsilon \rightarrow 0$, to 
    \begin{align}
        \left[D_T - ( e^{-D_a} -1)\right]F_{n}\cdot F_{n-1} &= 0. \label{tasep for parallel tasep}
    \end{align}
\end{enumerate}
\end{example}
\begin{proof}[Derivations] \ref{PTASEPSCALINGKPZ}: 
    Relabeling $n = \lfloor m/2\rfloor + 1$, we have
\begin{align*}
    e^{D_t + D_m} &= e^{c_1 \epsilon^{3/2}D_T + \epsilon^{1/2}(c_2-c_4) D_A } \\
     &= 1 + \epsilon^{1/2}dD_A + \epsilon \frac{d^2}{2} D_A^2 + \epsilon^{3/2}(c_1 D_T + \frac{d^3}{6}D_A^3) + \epsilon^2( c_1d D_T D_A+\frac{d^4}{24}D_A^4 ) + o(\epsilon^2), 
\end{align*}
with $c_1 = (\tfrac{p}{2q})^{1/4}, c_2 = (\tfrac{2^{1/4}(1-\sqrt{q})}{(pq)^{1/4}}), c_4 = (\tfrac{2^{1/4}}{(pq)^{1/4}}), d= (c_2-c_4) = -\sqrt{q}c_4$. Also 
\begin{align*}
   -p e^{-D_a + D_m} &= -pe^{-c_3\epsilon D_X} = -p(1 - c_3 \epsilon D_X + \frac{c_3^2}{2}\epsilon^2 D_X^2) + o(\epsilon^2)
\end{align*}
with $c_3 = \frac{1}{\sqrt{2p}}$, and 
\begin{align*}
    -q e^{D_m} &= -q(1- \epsilon^{1/2}c_4 D_A + \epsilon \frac{c_4^2}{2} D_A^2 - \epsilon^{3/2}\frac{c_4^3}{6}D_A^3 + \epsilon^2 \frac{c_4^4}{24}D_A^4 ) + o(\epsilon^2).
\end{align*}
Therefore, applying \eqref{PTASEP with m COV} in the scaled variables to $F\cdot F$, dropping vanishing odd total order Hirota terms, and noticing $d = - \sqrt{q}c_4$ so the $D_A^2$ drops, we obtain  
\begin{align*}
    -\epsilon^2 \left[ D_T D_A + \frac{1}{4}D_X^2 + \frac{1}{12}D_A^4\right] F\cdot F + o(\epsilon^2) = 0. 
\end{align*}
\noindent\ref{PTASEPSCALINGRBM}: Expanding, we have
\begin{align*}
    0&= \left[1+ \epsilon^2 D_T - \epsilon^{3/2}D_A-\epsilon(1-\epsilon^{1/2}D_A + \frac{\epsilon}{2}D_A^2) - 1+ \epsilon\right]F_{n}\cdot F_{n-1} + o(\epsilon^2) \\
    &= \epsilon^2 \left[D_T - \frac{1}{2}D_A^2\right]F_n \cdot F_{n-1} + o(\epsilon^2).
\end{align*}

\noindent\ref{PTASEPSCALING2DTL}: We have 
\begin{align*}
    &\left[1+\epsilon D_T + \tfrac12\epsilon^2D_T^2 - (1-4\epsilon^2)(1+ \epsilon D_X + \tfrac{1}{2}\epsilon^2 D_X^2) -4\epsilon^2 e^{D_r} \right] + o(\epsilon^2)
\end{align*}
Applying this to $F\cdot F$ and dropping vanishing odd total order Hirota terms, we have 
\begin{align*}
    0&= \left[e^{D_t} -pe^{D_x} -(1-p)e^{D_r}\right]F\cdot F \\
    &= \epsilon^2\left[\tfrac12D_T^2 - \tfrac12D_X^2 - 4(e^{D_r}-1)\right]F\cdot F + o(\epsilon^2). 
\end{align*}

\noindent\ref{PTASEPSCALINGTASEP}: We have 
\begin{align*}
    0 &= \left[1+ \epsilon D_T - \epsilon e^{-D_a} - (1-\epsilon)\right] F_n \cdot F_{n-1}  + o(\epsilon) \\
    &=  \epsilon \left[D_T - ( e^{-D_a} -1)\right]F_{n}\cdot F_{n-1} + o(\epsilon).
\end{align*}
\end{proof}
\section{Zero-Curvature and Lax Pairs}\label{sec: 5}
\newcommand{\overbar}[1]{\mkern 1.5mu\overline{\mkern-1.5mu#1\mkern-1.5mu}\mkern 1.5mu}
\subsection{RBM Equation}
Fix a sequence of non-vanishing functions $\{F_{t, a, n}\}_{n \in \Z}$, and define 
\begin{align}
    a_{n} &= \frac{F_{n+1}F_{n-1}}{F_n^2}, \quad u_n = \partial_a \log(F_{n}), \quad \nabla^s_n u_n = \frac{u_{n+1}-u_{n-1}}{2}.
\end{align}
Let the backward shift $e^{-\partial_n}$ act on sequences by $(e^{-\partial_n}f)_n = f_{n-1}$ and define the  operator\footnote{Here, we mean $(\mathcal{R}f)_n = a_n f_{n-1}$, and similarly $(\mathcal{R}^2f)_n = a_na_{n-1}f_{n-2}$.}
\begin{align}
    \mathcal{R} \defeq a_ne^{-\partial_n}, \quad \text{ so that } \mathcal{R}^2 = a_n a_{n-1}e^{-2\partial_n}.
\end{align}
\begin{theorem}[\textbf{RBM Eq.\ Zero-Curvature Condition}]\label{ZC for RBM}
Fix $M \in \Z$ and a collection of non-vanishing functions $\{F_{t, a, n}\}_{n\in \Z}$ with boundary condition $F_{t,a,m} \equiv 1$ for all $m \leq M$. Consider the operators 
\begin{align}
    \mathcal{M} &\defeq \partial_t + (\nabla^s_n u_n)\mathcal{R} + \frac{1}{2}\mathcal{R}^2, \qquad 
    \bar{\mathcal{M}} \defeq \partial_a + \mathcal{R}, 
\end{align}
    acting on functions $f_n(t, a)$. Then the zero-curvature condition $[\mathcal{M}, \bar{\mathcal{M}}] = 0$ is equivalent to 
    \begin{align}
        \left[D_t - \frac{1}{2}D_a^2 \right]F_{t, a, n}\cdot F_{t,a, n-1} = 0. 
    \end{align}
\end{theorem}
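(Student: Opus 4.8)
The plan is to compute the commutator $[\mathcal{M},\bar{\mathcal{M}}]$ explicitly as an operator on sequences $f_n(t,a)$ and read off the zero-curvature condition coefficient by coefficient. Writing $v_n\defeq\nabla^s_n u_n$ and using $(\mathcal{R}f)_n=a_nf_{n-1}$, I would expand the six brackets coming from $\mathcal{M}=\partial_t+v_n\mathcal{R}+\tfrac12\mathcal{R}^2$ and $\bar{\mathcal{M}}=\partial_a+\mathcal{R}$. Two vanish at once: $[\partial_t,\partial_a]=0$ and $[\tfrac12\mathcal{R}^2,\mathcal{R}]=0$, since powers of $\mathcal{R}$ commute. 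Each surviving bracket is a scalar coefficient times a pure backward shift, namely $[\partial_t,\mathcal{R}]=(\partial_t a_n)e^{-\partial_n}$, $[v_n\mathcal{R},\partial_a]=-\partial_a(v_na_n)e^{-\partial_n}$, $[v_n\mathcal{R},\mathcal{R}]=a_na_{n-1}(v_n-v_{n-1})e^{-2\partial_n}$, and $[\tfrac12\mathcal{R}^2,\partial_a]=-\tfrac12\partial_a(a_na_{n-1})e^{-2\partial_n}$. Collecting by shift order and using that $e^{-\partial_n}$ and $e^{-2\partial_n}$ are independent as operators on arbitrary sequences, $[\mathcal{M},\bar{\mathcal{M}}]=0$ is equivalent to the pair of scalar identities
\[
\text{(A)}\quad \partial_t a_n=\partial_a(v_na_n),\qquad
\text{(B)}\quad a_na_{n-1}(v_n-v_{n-1})=\tfrac12\,\partial_a(a_na_{n-1}).
\]

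Next I would show that (B) holds automatically, so that all content lies in (A). Setting $G_n\defeq\log F_n$, one has $u_n=\partial_a G_n$ and $\log a_n=\nabla_n^+\nabla_n^- G_n=G_{n+1}-2G_n+G_{n-1}$, whence $\tfrac12\partial_a\log(a_na_{n-1})=\tfrac12(u_{n+1}-u_n-u_{n-1}+u_{n-2})$. On the other hand $v_n-v_{n-1}=\tfrac12(u_{n+1}-u_{n-1})-\tfrac12(u_n-u_{n-2})$ equals the same expression, so after dividing (B) by $a_na_{n-1}\neq0$ it reduces to a tautology.

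Finally I would identify (A) with the logarithmic form of the RBM equation recorded in the Remark following Thm.~\ref{RBM Kernel Thm}. Let $E_n\defeq\partial_t(\nabla_n^-G_n)-\tfrac12(\partial_a^2G_n+\partial_a^2G_{n-1})-\tfrac12(\partial_a\nabla_n^-G_n)^2$, so the RBM equation is $E_n=0$ for all $n$. Dividing (A) by $a_n$ gives $\partial_t\log a_n=\partial_a v_n+v_n\,\partial_a\log a_n$. Writing $w_n\defeq\nabla_n^- G_n$ so that $\log a_n=w_{n+1}-w_n$ and $v_n=\tfrac12(\partial_a w_{n+1}+\partial_a w_n)$, the three terms match exactly those of $E_{n+1}-E_n$: the time term is $\partial_t(w_{n+1}-w_n)$, the term $\partial_a v_n=\tfrac12(\partial_a^2 G_{n+1}-\partial_a^2 G_{n-1})$ matches the second-difference term, and the difference-of-squares factorization $v_n\,\partial_a\log a_n=\tfrac12\bigl((\partial_a w_{n+1})^2-(\partial_a w_n)^2\bigr)$ handles the quadratic term. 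Hence (A) is equivalent to $E_{n+1}-E_n=0$, i.e.\ to $E_n$ being independent of $n$. The boundary condition $F_{t,a,m}\equiv1$ for $m\le M$ forces $G_m\equiv0$ and so $E_m=0$ for $m\le M$, pinning the constant to $0$; thus $E_n\equiv0$, which is the RBM equation. Conversely, the RBM equation gives $E_n\equiv0$, hence $E_{n+1}-E_n=0$ and (A), while (B) always holds, so the commutator vanishes. The main obstacle is the algebra in recasting (A): the quadratic cross term must be reorganized via the difference-of-squares factorization to reveal the clean telescoping into $E_{n+1}-E_n$, and one must then invoke the boundary condition to upgrade ``constant in $n$'' to ``identically zero.''
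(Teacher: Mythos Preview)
Your argument is correct and follows essentially the same approach as the paper's proof: compute the commutator piece by piece, observe that the $e^{-2\partial_n}$ contributions cancel identically (your (B), the paper's identity $[\nabla_n^s u_n\,\mathcal{R},\mathcal{R}]=-\tfrac12[\mathcal{R}^2,\partial_a]$), and recognize the $e^{-\partial_n}$ coefficient as a telescoping difference of the RBM-equation expression, with the boundary condition pinning the constant to zero. The only cosmetic difference is that the paper writes the surviving coefficient directly as $a_n(\mathcal{K}_{n+1}-\mathcal{K}_n)$ with $\mathcal{K}_n=\frac{[D_t-\tfrac12 D_a^2]F_n\cdot F_{n-1}}{F_nF_{n-1}}$ in Hirota form, whereas you pass through the logarithmic variables $G_n=\log F_n$ and the expression $E_n$; but $E_n=\mathcal{K}_n$ (this is exactly the content of the Remark you cite), so the two computations are the same.
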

\begin{proof}
    First, notice 
    \begin{align*}
        [\mathcal{M}, \bar{\mathcal{M}}] &= [\partial_t, \mathcal{R}] + [\nabla_n^s u_n \mathcal{R}, \partial_a] + [\nabla_n^s u_n \mathcal{R}, \mathcal{R}] + \tfrac{1}{2}[\mathcal{R}^2, \partial_a], 
    \end{align*}
    by linearity and since $\partial_a,\partial_t$ commute as well as $\mathcal{R}, \mathcal{R}^2$. 
    Let $v_n = \partial_t \log(F_n)$. Then
    \begin{align*}
        [\partial_t, \mathcal{R}] &= \partial_t a_n e^{-\partial_n} = a_n (v_{n+1} - 2v_n + v_{n-1}) e^{-\partial_n} = a_n \left(\frac{D_t F_{n+1}\cdot F_n}{F_{n+1}F_n} - \frac{D_t F_{n}\cdot F_{n-1}}{F_n F_{n-1}}\right)e^{-\partial_n},
    \end{align*}
   and
    \begin{align*}
          [\nabla_n^s u_n \mathcal{R}, \partial_a]&= -\frac{a_n}{2}\left((u_{n+1} - 2u_n + u_{n-1})(u_{n+1}-u_{n-1}) + \partial_a u_{n+1} - \partial_a u_{n-1} \right)e^{-\partial_n} \\
        &= -\frac{a_n}{2}( \partial_a u_{n+1} + \partial_a u_n + (u_{n+1}- u_n)^2 - \partial_a u_n - \partial_a u_{n-1} - (u_n - u_{n-1})^2) e^{-\partial_n} \\
        &= -a_n \left(\frac{1}{2}\frac{D_a^2 F_{n+1}\cdot F_{n}}{F_{n+1}F_n} - \frac{1}{2}\frac{D_a^2F_n \cdot F_{n-1}}{F_{n}F_{n-1}}\right)e^{-\partial_n}.
    \end{align*}
    Moreover, 
    \begin{align*}
        [\nabla_n^s u_n \mathcal{R}, \mathcal{R}] &= \frac{1}{2}a_na_{n-1} \left( (u_{n+1}-u_{n}) - (u_{n-1}-u_{n-2}) \right)e^{-2\partial_n} = \frac{1}{2}\partial_a (a_n a_{n-1})e^{-2\partial_n} \\
        &= -\tfrac{1}{2}[\mathcal{R}^2, \partial_a].
    \end{align*}
    Therefore, we have
    \begin{align*}
        [\mathcal{M}, \bar{\mathcal{M}}] &= a_n\left(\mathcal{K}_{n+1} - \mathcal{K}_{n}\right)e^{-\partial_n}, \qquad  \mathcal{K}_{n} = \frac{D_t F_{n}\cdot F_{n-1} - \frac{1}{2}D_a^2 F_{n}\cdot F_{n-1}}{F_{n}F_{n-1}}.
    \end{align*}
    If we require $[\mathcal{M}, \bar{\mathcal{M}}] = 0$, then we must have $\mathcal{K}_{n}$ is independent of $n$.  Specializing to $n \leq M$, we see we must have
    \begin{align}
        \mathcal{K}_{n} \equiv 0. 
    \end{align}
    Moreover it is obvious that if $\mathcal{K}_n \equiv 0$, then $[\mathcal{M}, \bar{\mathcal{M}}] =0$ by the above calculation, giving the theorem.
\end{proof}
\begin{corollary}[\textbf{RBM Eq.\ Lax Pair}]
    In the same setting as Thm.\ \ref{ZC for RBM}, define 
    \begin{align}
        L = \partial_a + \mathcal{R}, \quad P = (\nabla^s_n u_n) \mathcal{R} + \frac{1}{2}\mathcal{R}^2.
    \end{align}
    Then $(L, P)$ form a Lax pair for the RBM equation, i.e.\ 
    \begin{align}
        \partial_t L + [P, L] = 0 \Longleftrightarrow \left[D_t - \frac{1}{2}D_a^2 \right]F_{t, a, n}\cdot F_{t,a, n-1} = 0. 
    \end{align}
\end{corollary}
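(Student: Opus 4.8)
The plan is to recognize that the Lax equation $\partial_t L + [P,L]=0$ is, after matching operators, identical to the zero-curvature condition already proved in Theorem~\ref{ZC for RBM}, so that the corollary is a direct repackaging of that theorem. First I would record the identification: with $L=\partial_a+\mathcal{R}$ and $P=(\nabla_n^s u_n)\mathcal{R}+\tfrac12\mathcal{R}^2$, the operators of the zero-curvature theorem are precisely $\bar{\mathcal{M}}=L$ and $\mathcal{M}=\partial_t+P$. Consequently, by bilinearity of the commutator,
\begin{align*}
    [\mathcal{M},\bar{\mathcal{M}}]=[\partial_t+P,\,L]=[\partial_t,L]+[P,L].
\end{align*}

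The only point that requires attention is the term $[\partial_t,L]$, where $L$ must be treated as a differential--difference operator with $t$-dependent coefficients. Since $L=\partial_a+a_n e^{-\partial_n}$ and the coefficient of $\partial_a$ is the constant $1$, all of the $t$-dependence is carried by the multiplication coefficient $a_n=F_{n+1}F_{n-1}/F_n^2$. Acting on a test function $f_n(t,a)$, a short computation gives $[\partial_t,L]f=(\partial_t a_n)f_{n-1}$, whence $[\partial_t,L]=(\partial_t a_n)e^{-\partial_n}=\partial_t L$, the operator obtained by differentiating the coefficients of $L$. This is the standard reconciliation between the operator commutator $[\partial_t,\cdot]$ and the coefficient derivative $\partial_t(\cdot)$, and it is the sole step where one must be careful with the operator bookkeeping.

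With this in hand I would conclude $[\mathcal{M},\bar{\mathcal{M}}]=\partial_t L+[P,L]$, so the Lax equation and the zero-curvature condition are the very same operator identity set to zero. Invoking Theorem~\ref{ZC for RBM}, which gives $[\mathcal{M},\bar{\mathcal{M}}]=0\Longleftrightarrow\left[D_t-\tfrac12 D_a^2\right]F_{t,a,n}\cdot F_{t,a,n-1}=0$, immediately yields the desired equivalence. The main (and essentially only) subtlety is the operator-bookkeeping identity $[\partial_t,L]=\partial_t L$; no new commutator computations are needed, since every bracket $[\partial_t,\mathcal{R}]$, $[(\nabla_n^s u_n)\mathcal{R},\partial_a]$, $[(\nabla_n^s u_n)\mathcal{R},\mathcal{R}]$, and $[\mathcal{R}^2,\partial_a]$ was already evaluated in the proof of Theorem~\ref{ZC for RBM}.
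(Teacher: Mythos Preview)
Your proposal is correct and matches the paper's intent: the corollary is stated without proof, as it is meant to follow immediately from Theorem~\ref{ZC for RBM} via exactly the identification $\bar{\mathcal{M}}=L$, $\mathcal{M}=\partial_t+P$ and the bookkeeping identity $[\partial_t,L]=\partial_t L$ that you spell out. There is nothing to add.
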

\subsection{TASEP Equation}
Fix a sequence of non-vanishing functions $\{F_{t, a, n}\}_{a, n \in \Z}$, and define 
\begin{align}
    r_{a, n} = \frac{F_{a-1, n+1}}{F_{a, n}}. 
\end{align} 
Consider the shift operators $e^{-\partial_n}, e^{\partial_a}$ acting on functions as $(e^{-\partial_n}f)_{a, n} = f_{a,n-1}$ and $(e^{\partial_a}f)_{a, n} = f_{a+1,n}$, respectively.
\begin{theorem}[\textbf{TASEP Eq.\ Zero-Curvature Condition}]\label{TASEP zero-curv thm}
    Fix $M \in \Z$ and a collection of non-vanishing functions $\{F_{t, a, n}\}_{a, n\in \Z}$ with boundary condition $F_{t,a,m} \equiv 1$ for all $m \leq M$. Consider the operators 
    \begin{align}
        \mathcal{M} &\defeq \partial_t - \frac{r_{a, n}}{r_{a+1, n-1}}e^{\partial_a - \partial_n}, \quad 
        \bar{\mathcal{M}} \defeq e^{-\partial_a} + \frac{r_{a, n}}{r_{a, n-1}}e^{-\partial_n},
    \end{align}
    acting on functions $f_{a, n}(t)$. Then the zero-curvature condition $[\mathcal{M}, \bar{\mathcal{M}}] = 0$ is equivalent to 
    \begin{align}
       \left[D_t - (e^{-D_a} -1)\right]F_{t, a, n}\cdot F_{t, a ,n-1}= 0. 
    \end{align}
\end{theorem}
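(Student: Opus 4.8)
The plan is to compute the commutator $[\mathcal{M},\bar{\mathcal{M}}]$ directly as an operator on sequences $f_{a,n}(t)$, in close analogy with the RBM zero-curvature argument (Thm.~\ref{ZC for RBM}). Writing $A_{a,n}\defeq r_{a,n}/r_{a+1,n-1}$ and $B_{a,n}\defeq r_{a,n}/r_{a,n-1}$, so that $\mathcal{M}=\partial_t-A_{a,n}e^{\partial_a-\partial_n}$ and $\bar{\mathcal{M}}=e^{-\partial_a}+B_{a,n}e^{-\partial_n}$, I would expand $\mathcal{M}\bar{\mathcal{M}}-\bar{\mathcal{M}}\mathcal{M}$ by bilinearity. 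Since $\partial_t$ commutes with all spatial shifts while the coefficients $A_{a,n},B_{a,n}$ depend on $t$ through $F$, the $\partial_t e^{-\partial_a}$ contributions and the $B_{a,n}e^{-\partial_n}\partial_t$ contributions cancel between the two orderings, leaving only two operator monomials: an $e^{-\partial_n}$ term and a double-shift $e^{\partial_a-2\partial_n}$ term.

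The double-shift term is the first thing to dispatch. Its coefficient is $B_{a,n}A_{a,n-1}-A_{a,n}B_{a+1,n-1}$, and substituting the definitions in terms of $r$ shows that both products telescope to $r_{a,n}/r_{a+1,n-2}$; hence this coefficient vanishes identically for every non-vanishing family $\{F_{t,a,n}\}$, with no appeal to the bilinear equation. This reduces $[\mathcal{M},\bar{\mathcal{M}}]$ to a single term $C_{a,n}e^{-\partial_n}$ with $C_{a,n}=\partial_t B_{a,n}-A_{a,n}+A_{a-1,n}$.

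It then remains to identify $C_{a,n}$, which is where the real content lies. The two ingredients are the algebraic identities, obtained by cancelling $r$-factors,
\[
A_{a,n}=B_{a,n}\,\frac{F_{a-1,n}F_{a+1,n-1}}{F_{a,n}F_{a,n-1}},\qquad A_{a-1,n}=B_{a,n}\,\frac{F_{a-2,n+1}F_{a,n}}{F_{a-1,n}F_{a-1,n+1}},
\]
which exhibit $A_{a,n}$ and $A_{a-1,n}$ as $B_{a,n}$ times the same cross-ratio evaluated at the sites $(a,n)$ and $(a-1,n+1)$, together with $\partial_t B_{a,n}=B_{a,n}\,\partial_t\log B_{a,n}$, where $\partial_t\log B_{a,n}$ is exactly the increment of $\partial_t\log F_{a,n}-\partial_t\log F_{a,n-1}$ along the step $(a,n)\mapsto(a-1,n+1)$. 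Assembling these, $C_{a,n}$ factors as $B_{a,n}$ times the difference, between $(a-1,n+1)$ and $(a,n)$, of the per-site quotient
\[
\mathcal{K}_{a,n}\defeq\frac{\bigl[D_t-(e^{-D_a}-1)\bigr]F_{a,n}\cdot F_{a,n-1}}{F_{a,n}F_{a,n-1}}.
\]
Since $B_{a,n}\neq 0$, the zero-curvature condition $C_{a,n}\equiv 0$ is therefore equivalent to the statement that $\mathcal{K}_{a,n}$ is constant along every anti-diagonal $\{a+n=\mathrm{const}\}$.

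Finally I would close the argument with the boundary condition. For $n\leq M$ we have $F_{t,a,m}\equiv 1$, so $\mathcal{K}_{a,n}\equiv 0$ there; since every anti-diagonal meets $\{n\leq M\}$, anti-diagonal constancy forces $\mathcal{K}_{a,n}\equiv 0$ everywhere, which is precisely the bilinear TASEP equation. The converse is immediate from the same factorization: if the bilinear equation holds then $\mathcal{K}\equiv 0$, so $C_{a,n}\equiv 0$ and $[\mathcal{M},\bar{\mathcal{M}}]=0$. I expect the main obstacle to be the bookkeeping in the reduction of $C_{a,n}$—packaging $\partial_t B_{a,n}$ together with $-A_{a,n}+A_{a-1,n}$ and recognizing the sum as a single clean anti-diagonal increment of $\mathcal{K}$; by contrast the telescoping cancellation of the double-shift term and the boundary/constancy step are routine.
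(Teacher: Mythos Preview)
Your plan is exactly the paper's: expand $[\mathcal{M},\bar{\mathcal{M}}]$ bilinearly, kill the $e^{\partial_a-2\partial_n}$ piece via the telescoping identity $B_{a,n}A_{a,n-1}=A_{a,n}B_{a+1,n-1}=r_{a,n}/r_{a+1,n-2}$, recognize the surviving $e^{-\partial_n}$ coefficient $C_{a,n}=\partial_t B_{a,n}-A_{a,n}+A_{a-1,n}$ as $B_{a,n}$ times an anti-diagonal increment of a per-site bilinear quotient, and pin the constant at the boundary $F_{t,a,m}\equiv 1$.

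The step that fails is the assembly of $C_{a,n}$. Your intermediate identities are all correct: $A_{a,n}=B_{a,n}Q_{a,n}$ and $A_{a-1,n}=B_{a,n}Q_{a-1,n+1}$ with $Q_{a,n}=\tfrac{F_{a-1,n}F_{a+1,n-1}}{F_{a,n}F_{a,n-1}}$, and $\partial_t B_{a,n}=B_{a,n}(P_{a-1,n+1}-P_{a,n})$ with $P_{a,n}=\tfrac{D_tF_{a,n}\cdot F_{a,n-1}}{F_{a,n}F_{a,n-1}}$. Substituting, however, gives
\[
C_{a,n}=B_{a,n}\bigl[(P+Q)_{a-1,n+1}-(P+Q)_{a,n}\bigr],
\]
the increment of $P+Q$, not of your $\mathcal{K}=P-Q+1$. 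Since $P+Q=1$ at the boundary, zero curvature with the operators as stated is equivalent to $[D_t+(e^{-D_a}-1)]F_{a,n}\cdot F_{a,n-1}=0$, the TASEP equation with the opposite sign. (Sanity check: with $F_{t,a,m}\equiv 1$ for $m\neq 1$ and $F_{t,a,1}=G$, one finds $C_{a,0}=\partial_t G(t,a{-}1)-\nabla_a^{-}G(t,a{-}1)$, not $\partial_t G+\nabla_a^{-}G$.) The paper's computation carries the very same slip: the first displayed line of its $[-Ae^{\partial_a-\partial_n},e^{-\partial_a}]$ block disagrees in sign with the two that follow. The clean fix is to flip the sign in $\mathcal{M}$, taking $\mathcal{M}=\partial_t+\tfrac{r_{a,n}}{r_{a+1,n-1}}e^{\partial_a-\partial_n}$; with that change your argument goes through verbatim and your $\mathcal{K}$ is exactly the quotient that appears.
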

\begin{proof}
    Let $v_{a,n} = \partial_t \log F_{t, a, n}$. We compute 
    \begin{align*}
        [\partial_t, r_{a, n}r_{a, n-1}^{-1}e^{- \partial_n}] &= \frac{r_{a, n}}{r_{a, n-1}}\left(v_{a-1, n+1} - v_{a-1, n} - (v_{a, n} - v_{a, n-1})\right)e^{-\partial_n} \\
        &= \frac{r_{a, n}}{r_{a, n-1}}\left(\frac{D_t F_{a-1, n+1}\cdot F_{a-1, n}}{F_{a-1, n+1}F_{a-1, n}} - \frac{D_t F_{a, n}\cdot F_{a, n-1}}{F_{a, n}F_{a, n-1}}\right)e^{-\partial_n}, 
    \end{align*}
    and
    \begin{align*}
        [ -r_{a, n}r_{a+1, n-1}^{-1}e^{\partial_a-\partial_n}, e^{-\partial_a}] &= -\left(r_{a-1, n}r_{a, n-1}^{-1} - r_{a, n}r_{a+1, n-1}^{-1}\right)e^{-\partial_n} \\
        &=  r_{a, n}r_{a, n-1}^{-1}\left(r_{a-1, n}r_{a, n}^{-1} - r_{a, n-1}r_{a+1, n-1}^{-1}\right)e^{-\partial_n} \\
        &= r_{a, n}r_{a, n-1}^{-1}\left(\frac{e^{-D_a }F_{a-1, n+1}\cdot F_{a-1, n}}{F_{a-1, n+1}F_{a-1, n}} - \frac{e^{-D_a}F_{a, n}\cdot F_{a, n-1}}{F_{a, n}F_{a, n-1}}\right)e^{-\partial_n}.
    \end{align*}
    We also have 
    \begin{align*}
        [r_{a, n}r_{a+1, n-1}^{-1}e^{\partial_a-\partial_n}, r_{a, n}r_{a, n-1}^{-1}e^{-\partial_n}] &= \left(r_{a, n}r_{a+1, n-2}^{-1} - r_{a, n}r_{a+1, n-2}^{-1}\right)e^{\partial_a - 2\partial_n} = 0
    \end{align*}
    and clearly $[\partial_t, e^{-\partial_a}] = 0$. Therefore, we have 
    \begin{align*}
        [\mathcal{M}, \bar{\mathcal{M}}] &= \frac{r_{a, n}}{r_{a, n-1}}(\mathcal{K}_{a-1, n+1} - \mathcal{K}_{a, n})e^{-\partial_n}, \qquad \mathcal{K}_{a, n} = \frac{D_t F_{a, n}\cdot F_{a, n-1} -e^{-D_a}F_{a, n}\cdot F_{a, n-1} }{F_{a, n}F_{a, n-1}}.
    \end{align*}
    If we require $[\mathcal{M}, \bar{\mathcal{M}}] = 0$, then we must have $\mathcal{K}_{a, n}$ is independent of shifts $(a, n)\rightarrow (a\pm1, n\mp1)$. Specializing to the boundary, we see we require   
    \begin{align}
        \mathcal{K}_{a, n} \equiv -1.
    \end{align}
    Moreover, it is obvious that if $\mathcal{K}_{a, n} \equiv -1$, then $[\mathcal{M}, \bar{\mathcal{M}}] =0$ by the above calculation, yielding the theorem.
\end{proof}
\begin{remark}
    From the proof, it is obvious that there are various boundary conditions we can consider for $F_{t, a, m}$ which will still give the above theorem. 
\end{remark}
\begin{corollary}[\textbf{TASEP Eq.\ Lax Pair}] In the same setting as Thm.\ \ref{TASEP zero-curv thm}, define 
\begin{align}
    L = e^{-\partial_a} + \frac{r_{a, n}}{r_{a, n-1}}e^{-\partial_n}, \quad P = -\frac{r_{a, n}}{r_{a+1, n-1}}e^{\partial_a -\partial_n}.
\end{align}
Then $(L, P)$ form a Lax pair for the TASEP equation, i.e.\ 
\begin{align*}
    \partial_t L + [P, L] = 0 \Longleftrightarrow \left[D_t - (e^{-D_a} -1)\right]F_{t, a, n}\cdot F_{t, a ,n-1}= 0.
\end{align*}
\end{corollary}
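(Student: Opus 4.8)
The plan is to recognize that this corollary is an immediate repackaging of Theorem~\ref{TASEP zero-curv thm}: the operators $L$ and $P$ are precisely the two pieces out of which $\mathcal{M}$ and $\bar{\mathcal{M}}$ were built, so the Lax equation and the curvature bracket are literally the same identity written in two notations. First I would record the tautological identifications
\[
\bar{\mathcal{M}} = L, \qquad \mathcal{M} = \partial_t + P,
\]
which one reads off directly by comparing the definitions of $P = -\tfrac{r_{a,n}}{r_{a+1,n-1}}e^{\partial_a-\partial_n}$ with the second term of $\mathcal{M}$, and $L$ with $\bar{\mathcal{M}}$.

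Next I would expand the curvature bracket using bilinearity of the commutator:
\[
[\mathcal{M}, \bar{\mathcal{M}}] = [\partial_t + P, L] = [\partial_t, L] + [P, L].
\]
The only point deserving a line of care is the term $[\partial_t, L]$. Since the shift operators $e^{-\partial_a}$ and $e^{-\partial_n}$ carry no $t$-dependence and the single $t$-dependent coefficient of $L$ is $r_{a,n}/r_{a,n-1}$, acting on a test function $f_{a,n}(t)$ gives $[\partial_t, L]f = \bigl(\partial_t \tfrac{r_{a,n}}{r_{a,n-1}}\bigr)e^{-\partial_n}f$, which is exactly the operator $\partial_t L$ obtained by differentiating the coefficients of $L$ with respect to $t$. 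Hence $[\mathcal{M}, \bar{\mathcal{M}}] = \partial_t L + [P, L]$, so the Lax equation $\partial_t L + [P, L] = 0$ is identical to the zero-curvature condition $[\mathcal{M}, \bar{\mathcal{M}}] = 0$.

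Finally, I would invoke Theorem~\ref{TASEP zero-curv thm}, which already establishes $[\mathcal{M}, \bar{\mathcal{M}}] = 0 \Longleftrightarrow \bigl[D_t - (e^{-D_a}-1)\bigr]F_{t,a,n}\cdot F_{t,a,n-1} = 0$. Chaining this equivalence with the identity of the previous paragraph yields the claimed equivalence for the Lax pair, completing the proof.

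I do not expect any genuine obstacle: the entire computational content lives in the preceding theorem, and the argument here is just the standard structural observation that a zero-curvature pair $(\partial_t + P, L)$ is the same datum as a Lax pair $(L, P)$. The sole subtlety is the identification $[\partial_t, L] = \partial_t L$, i.e.\ that differentiating an operator's coefficients coincides with commuting against $\partial_t$, which holds here precisely because the shift operators are $t$-independent.
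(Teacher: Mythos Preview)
Your proposal is correct and matches the paper's treatment: the paper states this result as a corollary of Theorem~\ref{TASEP zero-curv thm} with no separate proof, precisely because the identifications $\bar{\mathcal{M}}=L$, $\mathcal{M}=\partial_t+P$ and the standard identity $[\partial_t+P,L]=\partial_t L+[P,L]$ make the Lax equation literally the zero-curvature condition. Your one line of care about $[\partial_t,L]=\partial_t L$ is exactly the implicit step the reader is expected to supply.
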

\subsection{Parallel TASEP}
Fix a sequence of non-vanishing functions $\{F_{t, a, n}\}_{t, a, n \in \Z}$, and define 
\begin{align*}
    r_{t, a, n} &= \frac{F_{t+1, a-1, n+1}}{F_{t, a, n}}.
\end{align*}
Consider the shift operators $e^{-\partial_n}, e^{\pm\partial_a}, e^{\pm\partial_t}$ acting on functions as $(e^{-\partial_n}f)_{t, a, n} = f_{t, a,n-1}$, $(e^{\pm\partial_a}f)_{t, a, n} = f_{t, a\pm1,n}$, and $(e^{\pm\partial_t}f)_{t, a, n} = f_{t\pm1,a,n}$ respectively.
\begin{theorem}[\textbf{Parallel TASEP Eq.\ Zero-Curvature Condition}]
    Fix $M \in \Z$ and a collection of non-vanishing functions $\{F_{t, a, n}\}_{t, a, n\in \Z}$ with boundary condition $F_{t,a,m} \equiv 1$ for all $m \leq M$. Consider the operators 
    \begin{align}
        \mathcal{M} &= e^{\partial_t} - c \frac{r_{t, a, n}}{r_{t, a+1, n-1}}e^{\partial_a - \partial_n}, \quad 
        \bar{\mathcal{M}} = - \bar{c}e^{-\partial_a} + \frac{r_{t, a, n}}{r_{t-1, a, n-1}}e^{-\partial_t - \partial_n},
    \end{align}
    acting on functions $f_{t, a, n}$, where $c, \bar{c}$ are arbitrary constants such that $c\bar{c}= p$. Then the zero-curvature condition $[\mathcal{M}, \bar{\mathcal{M}}] = 0$ is equivalent to 
    \begin{align}
        \left[e^{D_t} -pe^{-D_a} -(1-p)\right] F_{t, a, n}\cdot F_{t, a, n-1} = 0.
    \end{align}
\end{theorem}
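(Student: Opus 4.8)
The plan is to follow the template of the RBM and TASEP zero-curvature proofs (Thms.~\ref{ZC for RBM} and \ref{TASEP zero-curv thm}): expand $[\mathcal{M}, \bar{\mathcal{M}}]$ by bilinearity, discard the pieces that vanish, and identify what remains with a difference of the normalized bilinear expression. Writing $\mathcal{M} = e^{\partial_t} + B$ and $\bar{\mathcal{M}} = -\bar c\, e^{-\partial_a} + D$ with $B = -c\,\tfrac{r_{t,a,n}}{r_{t,a+1,n-1}}e^{\partial_a-\partial_n}$ and $D = \tfrac{r_{t,a,n}}{r_{t-1,a,n-1}}e^{-\partial_t-\partial_n}$, bilinearity produces four commutators. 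The term $[e^{\partial_t}, -\bar c\, e^{-\partial_a}]$ vanishes immediately, since the two pure shifts act in disjoint variables. The three surviving pieces are $[e^{\partial_t}, D]$, $[B, -\bar c\, e^{-\partial_a}]$, and $[B, D]$.

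First I would compute $[e^{\partial_t}, D]$ and $[B, -\bar c\, e^{-\partial_a}]$ directly; each reduces to a scalar multiple of $e^{-\partial_n}$. Conjugating the coefficient through the relevant shift gives $\bigl(\tfrac{r_{t+1,a,n}}{r_{t,a,n-1}} - \tfrac{r_{t,a,n}}{r_{t-1,a,n-1}}\bigr)e^{-\partial_n}$ from the first, and, using $c\bar c = p$, the contribution $p\bigl(\tfrac{r_{t,a,n}}{r_{t,a+1,n-1}} - \tfrac{r_{t,a-1,n}}{r_{t,a,n-1}}\bigr)e^{-\partial_n}$ from the second. Next I would check that $[B, D] = 0$: both orders of the product carry the same shift $e^{\partial_a-\partial_t-2\partial_n}$, and after substituting $r_{t,a,n} = F_{t+1,a-1,n+1}/F_{t,a,n}$ the two coefficient products collapse to the identical $F$-ratio and cancel. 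This is a routine telescoping and I expect no difficulty here.

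The \emph{main obstacle} is the final algebraic identification. Introducing the normalized bilinear quantity
\[
    \mathcal{K}_{t,a,n} \defeq \frac{\bigl[e^{D_t} - p\,e^{-D_a} - (1-p)\bigr]F_{t,a,n}\cdot F_{t,a,n-1}}{F_{t,a,n}F_{t,a,n-1}},
\]
I would show that the sum of the two surviving scalar coefficients factors as a nonvanishing prefactor times a difference of $\mathcal{K}$ at indices related by the shift $(t,a,n)\mapsto(t+1,a-1,n+1)$, namely
\[
    [\mathcal{M},\bar{\mathcal{M}}] = \frac{F_{t,a,n-1}F_{t+1,a-1,n+1}}{F_{t+1,a,n}F_{t,a-1,n}}\bigl(\mathcal{K}_{t+1,a-1,n+1} - \mathcal{K}_{t,a,n}\bigr)e^{-\partial_n}.
\]
The bookkeeping is delicate: after clearing $r$-ratios to $F$, one must match the $e^{D_t}$- and $e^{-D_a}$-terms of $\mathcal{K}_{t+1,a-1,n+1}$ against the two ratios from $[e^{\partial_t},D]$, the two terms of $\mathcal{K}_{t,a,n}$ against those from $[B, -\bar c\, e^{-\partial_a}]$, verify that all four carry the common prefactor above, and observe that the two constants $-(1-p)$ cancel (explaining why no explicit constant survives in the commutator).

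Finally I would conclude the equivalence. Since the prefactor never vanishes, $[\mathcal{M},\bar{\mathcal{M}}] = 0$ holds iff $\mathcal{K}$ is invariant under $(t,a,n)\mapsto(t+1,a-1,n+1)$. Iterating the inverse shift lowers $n$ into the boundary region $n \leq M$, where every $F$ occurring in $\mathcal{K}$ equals $1$ and hence $\mathcal{K} = 1 - p - (1-p) = 0$; invariance then forces $\mathcal{K}_{t,a,n} \equiv 0$, which is exactly the Parallel TASEP bilinear equation. The converse direction is immediate from the displayed factorization.
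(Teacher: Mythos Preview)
Your proposal is correct and follows essentially the same route as the paper's proof: compute the four commutators, note two vanish, and factor the remaining two into the displayed prefactor times $(\mathcal{K}_{t+1,a-1,n+1}-\mathcal{K}_{t,a,n})e^{-\partial_n}$, then invoke the boundary to pin down the constant. One small slip in your bookkeeping description: the two ratios from $[e^{\partial_t},D]$ supply the $e^{D_t}$-pieces of \emph{both} $\mathcal{K}_{t+1,a-1,n+1}$ and $\mathcal{K}_{t,a,n}$ (and $[B,-\bar c\,e^{-\partial_a}]$ supplies the $-p\,e^{-D_a}$-pieces of both), not one whole $\mathcal{K}$ each---but your displayed factorization is exactly the paper's, so this would self-correct when you carry out the algebra.
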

\begin{remark}
    The presented zero-curvature conditions can be obtained through the change of variables from well-known zero-curvature conditions of the HBDE (see \cite{Zabrodin_19972}). 
\end{remark}
\begin{proof}
    We compute 
    \begin{align*}
        [e^{\partial_t}, r_{t, a, n}r_{t-1, a, n-1}^{-1}e^{-\partial_t - \partial_n}] &= \left(r_{t+1, a, n}r_{t, a, n-1}^{-1} - r_{t, a, n}r_{t-1, a, n-1}^{-1}\right)e^{-\partial_n} \\
        &= \left(\frac{F_{t+2, a-1, n+1} F_{t,a, n-1}}{F_{t+1, a, n}F_{t+1, a-1, n}} - \frac{F_{t+1, a-1, n+1}F_{t-1, a, n-1}}{F_{t, a, n}F_{t, a-1, n}}\right)\\
        &= \frac{F_{t+1, a-1, n+1}F_{t, a, n-1}}{F_{t+1, a, n} F_{t, a-1, n}}\left(\frac{F_{t+2, a-1, n+1}F_{t, a-1, n}}{F_{t+1, a-1, n+1}F_{t+1, a-1, n}} - \frac{F_{t+1, a, n} F_{t-1, a, n-1}}{F_{t, a, n}F_{t, a, n-1}}\right),
    \end{align*}
    and
    \begin{align*}
         &c\bar{c}[r_{t,a, n}r_{t, a+1, n-1}^{-1}e^{\partial_a -\partial_n},e^{-\partial_a}]
        = p\left(r_{t, a, n}r_{t, a +1, n-1}^{-1} - r_{t, a-1, n}r_{t, a, n-1}^{-1}\right)e^{-\partial_n} \\
        &\quad = p\left(\frac{F_{t+1, a-1, n+1}F_{t, a+1, n-1}}{F_{t, a, n}F_{t+1, a, n}}- \frac{F_{t+1, a-2, n+1}F_{t, a, n-1}}{F_{t, a-1, n}F_{t+1, a-1, n}}\right)e^{-\partial_n} \\
        &\quad = p\frac{F_{t+1, a-1, n+1}F_{t, a, n-1}}{F_{t+1, a, n}F_{t, a-1, n}}\left(\frac{F_{t, a-1, n}F_{t, a+1, n-1}}{F_{t, a, n}F_{t, a, n-1}}- \frac{F_{t+1, a-2, n+1}F_{t+1, a, n}}{F_{t+1, a-1, n+1}F_{t+1, a-1, n}}\right)e^{-\partial_n}.
    \end{align*}
    A straightforward calculation shows that all remaining terms commute. Therefore, we have
    \begin{align*}
        [\mathcal{M}, \bar{\mathcal{M}}]&= \frac{F_{t+1, a-1, n+1}F_{t, a, n-1}}{F_{t+1, a, n}F_{t, a-1, n}} \left(\mathcal{K}_{t+1, a-1, n+1} - \mathcal{K}_{t, a, n}\right)e^{-\partial_n}, \\
        \shortintertext{where}
        \mathcal{K}_{t,a, n} &= \frac{F_{t+1, a, n}F_{t-1, a, n-1} - pF_{t, a-1, n}F_{t, a+1, n-1}}{F_{t,a, n}F_{t,a, n-1}}.
    \end{align*}
    If we require $[\mathcal{M}, \bar{\mathcal{M}}] = 0$, then we must have $\mathcal{K}_{t, a, n}$ is independent of shifts $(t, a, n) \rightarrow (t\mp1, a\pm 1, n\mp1)$. Specializing to the boundary, we obtain
    \begin{equation}
        \mathcal{K}_{t,a, n} \equiv 1-p. 
    \end{equation}
    Moreover, it is obvious that if $\mathcal{K}_{t,a, n} \equiv 1-p$, then $[\mathcal{M}, \bar{\mathcal{M}}] = 0$ by the above calculation, yielding the theorem. 
\end{proof}

\appendix

\section{Fredholm Determinants and Elementary Lemmas}\label{Fredholm Determinants and Elemantary Lemmas}
If $K$ is a trace-class integral operator acting on the Hilbert space $\mathcal{H} = L^2(X, \mu)$ through its kernel $Kf(x) = \int_X K(x, y)f(y)d\mu(y)$, its Fredholm determinant is defined by
\begin{align}
    \det(I+K) = \sum_{n=0}^{\infty} \frac{1}{n!} \int_{X^n} \det \bigl[ K(x_i, x_j) \bigr]_{i,j=1}^n \, d\mu(x_1)\cdots d\mu(x_n).
\end{align}
We now collect several standard lemmas for Fredholm determinants that will be used throughout the paper. 
For background and proofs, see \cite{Sim05} or \cite[Sec.~2]{QR14}.
\begin{lemma}[\textbf{Cyclicity}]\label{AB BA}
If $A \in \mathcal{I}_1$ and $B$ is a bounded operator on $\mathcal{H}$, then $AB, BA \in \mathcal{I}_1$, and
    \begin{align*}
        \det(I-AB) &= \det(I-BA).
    \end{align*}
\end{lemma}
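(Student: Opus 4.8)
The plan is to split the claim into its two assertions: the ideal property ($AB,BA\in\mathcal{I}_1$) and the determinant identity. The ideal property is immediate from the fact that $\mathcal{I}_1(\mathcal{H})$ is a two-sided ideal in the algebra of bounded operators, with the trace-norm bounds $\|AB\|_1\le\|A\|_1\|B\|$ and $\|BA\|_1\le\|B\|\,\|A\|_1$. Since $A\in\mathcal{I}_1$ and $B$ is bounded, both products lie in $\mathcal{I}_1$; one sees this directly from the singular-value characterization of the trace norm (composing with a bounded operator does not increase singular values beyond the factor $\|B\|$).

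For the determinant identity, I would pass to the one-parameter family $z\mapsto\det(I-zAB)$, which is an entire function of $z\in\C$ for any trace-class operator (Plemelj--Smithies), and likewise for $BA$. For $|z|$ small enough that $|z|\,\|AB\|_1<1$ and $|z|\,\|BA\|_1<1$, the determinant-trace relation $\log\det(I-zT)=\operatorname{tr}\log(I-zT)$ gives the convergent expansions
\[
\log\det(I-zAB)=-\sum_{n\ge 1}\frac{z^n}{n}\operatorname{tr}\!\big((AB)^n\big),\qquad
\log\det(I-zBA)=-\sum_{n\ge 1}\frac{z^n}{n}\operatorname{tr}\!\big((BA)^n\big).
\]
The crux is the cyclicity of the trace: writing $(AB)^n=A\,(BA)^{n-1}B$ and using that $A\in\mathcal{I}_1$ while $(BA)^{n-1}B$ is bounded, one gets
\[
\operatorname{tr}\!\big((AB)^n\big)=\operatorname{tr}\!\big(A\,(BA)^{n-1}B\big)=\operatorname{tr}\!\big((BA)^{n-1}BA\big)=\operatorname{tr}\!\big((BA)^n\big)
\]
for every $n\ge 1$. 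Hence the two power series coincide term by term, so $\det(I-zAB)=\det(I-zBA)$ on a neighborhood of the origin. Since both sides are entire, the identity theorem propagates the equality to all of $\C$, and evaluating at $z=1$ yields $\det(I-AB)=\det(I-BA)$.

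The only genuine subtlety—minor, but worth flagging—is justifying the cyclicity $\operatorname{tr}(XY)=\operatorname{tr}(YX)$ when just one factor is trace-class and the other merely bounded; this is standard but is exactly the step that must be invoked with care. An alternative proof that sidesteps the logarithmic expansion is to approximate $A$ in trace norm by finite-rank operators $A_m$, invoke the elementary linear-algebra fact that $A_mB$ and $BA_m$ have the same nonzero eigenvalues with multiplicities (so that $\det(I-A_mB)=\det(I-BA_m)$), and then pass to the limit via the Lipschitz continuity of the Fredholm determinant in trace norm, $|\det(I-S)-\det(I-T)|\le\|S-T\|_1\,\exp\!\big(1+\|S\|_1+\|T\|_1\big)$, using $A_mB\to AB$ and $BA_m\to BA$ in $\mathcal{I}_1$. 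Either route is routine given the standard Fredholm-determinant machinery recalled above.
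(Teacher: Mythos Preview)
Your proof is correct. The paper does not actually prove this lemma; it is stated without proof in Appendix~\ref{Fredholm Determinants and Elemantary Lemmas}, with a blanket reference to \cite{Sim05} and \cite[Sec.~2]{QR14} for background and proofs of all the collected identities. Your argument via the Plemelj--Smithies expansion and trace cyclicity (with the finite-rank approximation alternative) is exactly the standard route found in those references, so there is nothing to compare.
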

\begin{lemma}[\textbf{Transpose Invariance}]\label{lem:det-transpose}
    Let $K$ be a trace-class integral operator, and let $K^t$ be its transpose. Then 
    \begin{align*}
        \det(I-K^t) = \det(I-K).
    \end{align*}
\end{lemma}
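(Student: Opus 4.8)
The plan is to argue directly from the series definition of the Fredholm determinant recorded at the start of this appendix, reducing the claim to the elementary linear-algebra fact that a finite matrix and its transpose share the same determinant. First I would recall that the transpose $K^t$ acts through the kernel $K^t(x,y) = K(y,x)$, and observe that $K^t$ is again trace-class: transposition preserves the singular values of a compact operator, so $\norm{K^t}_1 = \norm{K}_1 < \infty$. Consequently the defining series for $\det(I - K^t)$ converges absolutely, controlled by the very same bound that guarantees convergence of the series for $\det(I - K)$.

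Next I would fix $n$ and inspect the $n$-th term of each series. The integrand appearing in $\det(I - K^t)$ is $\det\bigl[K^t(x_i, x_j)\bigr]_{i,j=1}^n = \det\bigl[K(x_j, x_i)\bigr]_{i,j=1}^n$. The key step is to recognize that the matrix $\bigl(K(x_j, x_i)\bigr)_{i,j}$ is precisely the transpose of $\bigl(K(x_i, x_j)\bigr)_{i,j}$, and that determinants of finite matrices are invariant under transposition. Hence the two integrands agree pointwise for every $(x_1, \dots, x_n) \in X^n$, so the corresponding integrals over $X^n$ coincide, the series agree term by term, and summing over $n$ yields $\det(I - K^t) = \det(I - K)$.

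I do not expect a genuine obstacle here: the argument collapses to the pointwise identity $\det A = \det A^t$ applied inside each multiple integral. The only point meriting a word of care is the trace-class (hence convergence) claim for $K^t$, which follows at once from invariance of the singular values under transpose; alternatively one could invoke Lidskii's theorem together with the fact that $K$ and $K^t$ have identical spectra, but the term-by-term comparison of the defining series is cleaner and entirely self-contained.
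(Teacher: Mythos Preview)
Your argument is correct. The paper does not actually prove this lemma: it lists it among ``standard lemmas'' in Appendix~\ref{Fredholm Determinants and Elemantary Lemmas} and defers to \cite{Sim05} and \cite[Sec.~2]{QR14} for proofs. Your term-by-term comparison via $\det A = \det A^{t}$ on the finite matrices inside each multiple integral is exactly the standard elementary argument one finds in those references, and your remark that transposition preserves singular values (hence the trace norm and convergence of the series) is the right justification for $K^{t}\in\mathcal{I}_1$.
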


\begin{lemma}[\textbf{Parameter  Differentiation}]\label{Fredholm det der}
    Let $z \mapsto K_z$ be $C^1$ in trace norm on an open set, with $I-K_z$ invertible. Write $R_z = (I-K_z)^{-1}$. Then 
    \begin{align*}
        \partial_z \det(I - K_z) = -\det(I-K_z)\mathrm{tr}(R_z \partial_z K_z).
    \end{align*}
\end{lemma}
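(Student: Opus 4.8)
The plan is to reduce the statement to two classical facts about the Fredholm determinant on the trace ideal $\mathcal{I}_1$: its multiplicativity, $\det((I+A)(I+B)) = \det(I+A)\det(I+B)$ for $A,B \in \mathcal{I}_1$, and its first-order expansion at the identity, $\det(I+C) = 1 + \mathrm{tr}(C) + O(\norm{C}_1^2)$. Both are standard (see \cite{Sim05}); the expansion follows from the exterior-power series $\det(I+C) = \sum_{n\geq 0}\mathrm{tr}(\Lambda^n C)$ together with the bound $\abs{\mathrm{tr}(\Lambda^n C)} \leq \norm{C}_1^n/n!$, which yields the uniform estimate $\abs{\det(I+C) - 1 - \mathrm{tr}(C)} \leq \tfrac12 e^{\norm{C}_1}\norm{C}_1^2$.

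With these in hand, I would argue as follows. Fix $z$ with $I-K_z$ invertible, write $R_z = (I-K_z)^{-1}$, and for small $h$ factor
\[
    I - K_{z+h} = (I-K_z)\bigl[I - R_z(K_{z+h}-K_z)\bigr].
\]
Since $R_z$ is bounded and $z\mapsto K_z$ is $C^1$ in trace norm, the operator $C_h \defeq R_z(K_{z+h}-K_z)$ lies in $\mathcal{I}_1$ with $\norm{C_h}_1 = O(h)$, by the ideal inequality $\norm{AB}_1 \leq \norm{A}\,\norm{B}_1$. Multiplicativity gives $\det(I-K_{z+h}) = \det(I-K_z)\det(I-C_h)$, and the expansion above gives $\det(I-C_h) = 1 - \mathrm{tr}(C_h) + O(h^2)$. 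Subtracting $\det(I-K_z)$ and dividing by $h$, then letting $h\to 0$ — using $h^{-1}(K_{z+h}-K_z) \to \partial_z K_z$ in trace norm and the continuity estimate $\abs{\mathrm{tr}(T)} \leq \norm{T}_1$ — produces
\[
    \partial_z \det(I-K_z) = -\det(I-K_z)\,\mathrm{tr}\bigl(R_z\,\partial_z K_z\bigr),
\]
as claimed.

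The routine verifications here are the trace-norm smallness of $C_h$ and the passage to the limit, both immediate from the $C^1$ hypothesis. The one step that genuinely requires care — and which I regard as the main obstacle — is justifying the first-order expansion of $\det(I+C)$ with a quadratic remainder that is uniform on trace-norm balls; this is precisely where the finite-dimensional Jacobi formula $\tfrac{d}{dz}\det M = \det M\,\mathrm{tr}(M^{-1}\partial_z M)$ must be upgraded to the infinite-dimensional setting. I would extract it from the exterior-power series rather than from $\log\det(I+C) = \mathrm{tr}\log(I+C)$, since the latter identity is directly available only when $\norm{C} < 1$, whereas the series argument imposes no such restriction. Once the expansion is in place, the chain-rule computation above is entirely formal.
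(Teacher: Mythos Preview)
Your argument is correct and is in fact the standard proof. The paper does not give its own proof of this lemma: it is collected in Appendix~\ref{Fredholm Determinants and Elemantary Lemmas} as a standard fact, with the sentence ``For background and proofs, see \cite{Sim05} or \cite[Sec.~2]{QR14}.'' So there is nothing to compare against beyond noting that your derivation is exactly the one found in those references.
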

\begin{lemma}[\textbf{Resolvent Derivative}]\label{lem:resolvent-derivative}
    Under the assumptions of Lem.\ ~\ref{Fredholm det der}, 
    \begin{align*}
        \partial_z R_z = R_z (\partial_z K_z ) R_z
    \end{align*}
\end{lemma}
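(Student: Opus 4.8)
The plan is to obtain the identity directly from the resolvent, using the second resolvent identity to establish differentiability and produce the formula in one stroke. By hypothesis $z \mapsto K_z$ is $C^1$ in trace norm on an open set on which $I-K_z$ is invertible with bounded inverse $R_z$. For $z,w$ in this set I would first record the purely algebraic identity
\begin{align*}
    R_z - R_w = R_z\bigl[(I-K_w)-(I-K_z)\bigr]R_w = R_z(K_z - K_w)R_w,
\end{align*}
obtained by inserting $R_z(I-K_z)=I$ on the left and $(I-K_w)R_w=I$ on the right of the bracketed difference.

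Next I would divide by $z-w$ and pass to the limit $w\to z$. By the $C^1$ hypothesis the difference quotient $\tfrac{1}{z-w}(K_z-K_w)$ converges to $\partial_z K_z$ in trace norm, while $R_w\to R_z$ in operator norm, since the identity above gives $\|R_z-R_w\|\le \|R_z\|\,\|K_z-K_w\|\,\|R_w\|$ and $\|R_w\|$ is locally bounded near $z$ via the Neumann series for $(I-K_w)^{-1}$. Because $\mathcal{I}_1$ is a two-sided ideal on which left and right multiplication by a fixed bounded operator are continuous, the product $R_z\cdot\tfrac{1}{z-w}(K_z-K_w)\cdot R_w$ converges in trace norm, yielding simultaneously the existence of $\partial_z R_z$ and the stated formula
\begin{align*}
    \partial_z R_z = R_z(\partial_z K_z)R_z.
\end{align*}
A shorter alternative is to differentiate $R_z(I-K_z)=I$ by the product rule, obtaining $(\partial_z R_z)(I-K_z)-R_z(\partial_z K_z)=0$ and then multiplying on the right by $R_z$; this, however, presupposes differentiability of $R_z$, which the resolvent-identity argument instead produces as a byproduct.

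The main obstacle is entirely topological rather than algebraic: one must verify that every limit is taken in a sense compatible with trace-class convergence, so that both the passage to the limit and the product rule are justified. The two delicate points are the local uniform boundedness of $\|R_w\|$ near $z$ (needed to control the error arising from replacing $R_w$ by $R_z$) and the continuity of left/right multiplication by fixed bounded operators as maps on $\mathcal{I}_1$; both follow from the ideal property of the trace class together with the standard Neumann series estimate for the inverse, so no genuinely new analysis is required beyond careful bookkeeping of the operator topologies involved.
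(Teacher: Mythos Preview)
Your proof is correct and follows the standard resolvent-identity argument. Note that the paper does not actually prove this lemma: it is stated in the appendix among several standard Fredholm determinant facts, with the remark ``For background and proofs, see \cite{Sim05} or \cite[Sec.~2]{QR14}.'' Your argument is essentially the textbook one found in those references.
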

\begin{lemma}[\textbf{Rank-One Perturbations}] \label{Fredholm rank one}
Let $A, B$ be bounded with $I-A$, $I-B$ invertible, and suppose $A = B + \psi \otimes \phi$. Let $ F_A = \det(I-A),  F_B = \det(I-B), R_A = (I-A)^{-1}, R_B = (I-B)^{-1}$. Then
\begin{align*}
    \frac{F_A}{F_B} &= 1- \langle R_B \psi, \phi\rangle, \quad \frac{F_B}{F_A} = 1+ \langle R_A \psi, \phi \rangle. 
\end{align*}
\end{lemma}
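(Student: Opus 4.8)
The plan is to reduce everything to the Fredholm determinant of a rank-one perturbation of the identity, which can be evaluated in closed form. First I would factor $I-A$ through $I-B$. Since $A = B + \psi \otimes \phi$ and $I-B$ is invertible with resolvent $R_B$, I can write
\[
 I - A = (I - B) - \psi \otimes \phi = (I - B)\bigl(I - R_B(\psi \otimes \phi)\bigr).
\]
Because $\psi \otimes \phi$ is rank one, hence trace-class, and $R_B$ is bounded, the operator $R_B(\psi \otimes \phi)$ is again trace-class, so the multiplicativity of the Fredholm determinant (see \cite{Sim05}) applies and gives $F_A = F_B \cdot \det\bigl(I - R_B(\psi\otimes\phi)\bigr)$.

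The key step is then to observe that $R_B(\psi \otimes \phi) = (R_B \psi)\otimes \phi$ is itself rank one, which follows from the action $(\psi\otimes\phi)f = \psi\,\langle \phi, f\rangle$ and the linearity of $R_B$. For any rank-one operator $u \otimes v$ the Fredholm determinant expansion truncates after the first-order term: every Gram determinant $\det[(u\otimes v)(x_i,x_j)]_{i,j=1}^n$ with $n\ge 2$ has rank one and therefore vanishes, so only the $n=0$ and $n=1$ terms survive and one obtains $\det(I - u\otimes v) = 1 - \operatorname{tr}(u\otimes v) = 1 - \langle u, v\rangle$. Applying this with $u = R_B\psi$ and $v = \phi$ gives $\det\bigl(I - R_B(\psi\otimes\phi)\bigr) = 1 - \langle R_B\psi,\phi\rangle$, and hence $F_A/F_B = 1 - \langle R_B\psi,\phi\rangle$, the first identity.

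For the second identity I would simply exchange the roles of $A$ and $B$. Writing $B = A - \psi\otimes\phi$ and repeating the same factorization through $I-A$ yields $I - B = (I-A)\bigl(I + R_A(\psi\otimes\phi)\bigr)$, so by the identical rank-one evaluation (now with the opposite sign) I get $F_B/F_A = \det\bigl(I + (R_A\psi)\otimes\phi\bigr) = 1 + \langle R_A\psi,\phi\rangle$.

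The only point requiring care -- and the main potential obstacle -- is the bookkeeping around trace-class membership and the validity of the multiplicative factorization $\det(ST)=\det(S)\det(T)$: one must confirm that the objects whose determinants appear differ from the identity by trace-class operators so that all Fredholm determinants are well-defined and multiplicativity is licensed. In the applications of interest $K$ is trace-class, so $A$ and $B$ are trace-class and this is immediate; alternatively, the argument can be phrased entirely at the level of the ratio $F_A/F_B$, which depends only on the trace-class difference $\psi\otimes\phi$.
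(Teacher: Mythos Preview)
Your argument is correct and is the standard proof of this identity. The paper, however, does not supply its own proof of this lemma: it is listed in Appendix~\ref{Fredholm Determinants and Elemantary Lemmas} as a standard fact with the note ``For background and proofs, see \cite{Sim05} or \cite[Sec.~2]{QR14}.'' So there is nothing to compare against beyond confirming that your factorization $I-A=(I-B)(I-R_B(\psi\otimes\phi))$ together with multiplicativity and the rank-one evaluation $\det(I-u\otimes v)=1-\langle u,v\rangle$ is exactly the textbook route. Your closing caveat about trace-class bookkeeping is also apt: the lemma as stated only assumes $A,B$ bounded, but the Fredholm determinants $F_A,F_B$ are well-defined in the applications because the ambient operators are trace-class.
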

\begin{lemma}[\textbf{Rank-One Resolvent Identity}]\label{Fredholm fg}
    Under the assumptions of Lem.\ ~\ref{Fredholm rank one}, we have for any $f, g \in H$, 
    \begin{align} \label{rank one id}
        \langle R_A f, g\rangle   = \langle R_Bf, g\rangle   +  \frac{F_B}{F_A}   \langle R_B\psi, g\rangle  \langle R_Bf, \phi\rangle .
    \end{align}
    In particular, 
    \begin{align} \label{sp rank one id}
        \langle R_A \psi, g\rangle   = \frac{F_B}{F_A}\langle R_B\psi, g\rangle  , \quad \langle R_A f, \phi\rangle   = \frac{F_B}{F_A}\langle R_B f, \phi\rangle  .
    \end{align}
\end{lemma}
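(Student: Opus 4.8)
The plan is to read \eqref{rank one id} as the Sherman--Morrison resolvent identity for the rank-one perturbation $A = B + \psi\otimes\phi$, and then to deduce \eqref{sp rank one id} by specialization together with Lem.\ \ref{Fredholm rank one}. First I would factor the resolvent: using $A = B + \psi\otimes\phi$ and the existence of $R_B = (I-B)^{-1}$,
\[
  I - A \;=\; (I-B) - \psi\otimes\phi \;=\; (I-B)\bigl(I - R_B(\psi\otimes\phi)\bigr),
\]
so that $R_A = \bigl(I - R_B(\psi\otimes\phi)\bigr)^{-1}R_B$. The perturbing factor is itself rank one: applied to $f$ it gives $R_B\psi\cdot\langle\phi,f\rangle$, whence $R_B(\psi\otimes\phi) = (R_B\psi)\otimes\phi$, using the paper's convention $(\psi\otimes\phi)f = \psi\,\langle\phi,f\rangle$.

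Second, I would invert this rank-one factor via the elementary identity $(I - u\otimes v)^{-1} = I + (1-\langle v,u\rangle)^{-1}\,u\otimes v$, valid whenever $\langle v,u\rangle\neq 1$ and verified directly from $(u\otimes v)^2 = \langle v,u\rangle\,(u\otimes v)$. Taking $u=R_B\psi$ and $v=\phi$ yields
\[
  R_A \;=\; \Bigl(I + \tfrac{1}{1-\langle R_B\psi,\phi\rangle}\,(R_B\psi)\otimes\phi\Bigr)R_B.
\]
At this step I invoke Lem.\ \ref{Fredholm rank one}, which gives $1-\langle R_B\psi,\phi\rangle = F_A/F_B$; in particular the denominator is nonzero precisely because $I-A$ is invertible, legitimizing the inverse. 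Pairing $R_A f$ against $g$ and using $\bigl((R_B\psi)\otimes\phi\bigr)R_B f = (R_B\psi)\,\langle R_B f,\phi\rangle$ then produces exactly \eqref{rank one id}, with the scalar prefactor equal to $F_B/F_A$.

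Finally, for \eqref{sp rank one id} I would set $f=\psi$ (resp.\ $g=\phi$) in \eqref{rank one id}, factor out $\langle R_B\psi,g\rangle$ (resp.\ $\langle R_B f,\phi\rangle$), and simplify the leftover scalar $1 + \tfrac{F_B}{F_A}\langle R_B\psi,\phi\rangle$ to $F_B/F_A$ by substituting $\langle R_B\psi,\phi\rangle = 1 - F_A/F_B$ from Lem.\ \ref{Fredholm rank one} once more. The argument is essentially bookkeeping; the only two points needing care are (a) tracking the pairing convention used in the definition of $\psi\otimes\phi$, so that the $\langle v,u\rangle$ of the abstract rank-one inverse coincides with the paper's $\langle R_B\psi,\phi\rangle$, and (b) justifying the rank-one inversion, i.e.\ that $\langle R_B\psi,\phi\rangle\neq 1$ — which is exactly where the hypothesis that $I-A$ is invertible (equivalently $F_A\neq 0$) enters, again through Lem.\ \ref{Fredholm rank one}.
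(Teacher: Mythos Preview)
Your proof is correct and follows essentially the same approach as the paper: both derive the Sherman--Morrison resolvent identity $R_A = R_B + (F_B/F_A)\,R_B\,\psi\otimes\phi\,R_B$ and then pair against $f,g$, with \eqref{sp rank one id} obtained by specialization. The paper simply states this operator identity (citing Lem.~\ref{Fredholm rank one} for the scalar), whereas you carry out the factorization $(I-A)=(I-B)(I-(R_B\psi)\otimes\phi)$ and the rank-one inversion explicitly, but the underlying argument is the same.
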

\begin{proof}
     Since $A, B$ are rank-one perturbations, we have (together with Lem.\ ~\ref{Fredholm rank one})
    \begin{align*}
        R_A = R_B + (F_B/F_A) R_B \psi \otimes \phi R_B.
    \end{align*}
    Taking inner products with $f, g \in \mathcal{H}$ gives \eqref{rank one id}, and \eqref{sp rank one id} follows by setting  $f = \psi$ or $g=\phi$.  
\end{proof}
\section{The KPZ Fixed Point and KP}\label{sec: B}
In this section, we rederive the fact that the one-point distributions of the KPZ fixed point satisfy the bilinear form of KP. First, we start with a general Fredholm determinant solution theorem, which is in essence the one-point case of \cite[Thm.\ 1.3]{Quastel_2022} (see also \cite[Sec.\ 4]{Quastel_2022} for a short history and the relationship between KP and Fredholm determinant solutions). The only real difference is that by pushing the differential relations explicitly onto the parameter space (as was the technique to produce solutions to our equations in Sec.\ 2), we believe we obtain a simpler proof.

First, we define a sufficient regularity class. Let $U = I \times S \times \R$, with $I, S \subseteq \R$ open. We say a family of trace-class integral operators $K_{t, x, a} \in C_r^{1, 2, 4}(U, \mathcal{I}_1)$ if for all $(t, x, a) \in U$, and every multindex $\aaa = (\aaa_1, \aaa_2, \aaa_3)$ with $0\leq \aaa_1 \leq 1, 0 \leq \aaa_2 \leq 2, 0\leq \aaa_3 \leq 4$,  we have $\partial^{\mathbf{\aaa}}K_{t, x, a}$ exists and depends continuously on $(t, x, a)$ in trace norm, and for a.e.\ $(u, v) \in X \times X, \lim_{r \rightarrow \infty} K_{t, x, r}(u, v) = 0$ with $\int_a^{\infty} \abs{\partial^{\mathbf{\aaa}}K_{t, x, r}(u, v)} dr < \infty $ for all such $\aaa \neq (0, 0, 0)$. 

\begin{theorem}
    Let $K_{t, x, a} \in C^{1,2,4}_r \bigl(U,    \mathcal{I}_1\bigr)$ be a family of trace-class integral operators acting on $ L^2(X, \mu)$ such that the following three conditions hold: 
    \begin{enumerate}
        \item ($a$--flows): 
       $\begin{aligned}[t]\label{KP a flows}
            \partial_a K_{t, x, a} &= -\psi_{t, x, a} \otimes \phi_{t, x, a}.  
        \end{aligned}$
        \item ($x$--flows): $\begin{aligned}
            \partial_x \psi_{t, x, a} = \partial_a^2 \psi_{t, x, a}, \quad \partial_x \phi_{t ,x, a}  =-\partial_a^2 \phi_{t, x, a}.
        \end{aligned}$
        \item($t$--flows): $\begin{aligned}\label{KP t flows}
            \partial_t \psi_{t, x, a} &= -\frac{1}{3}\partial_a^3 \psi_{t, x, a}, \medspace  \partial_t \phi_{t, x, a} = -\frac{1}{3}\partial_a^3 \phi_{t, x, a}.
        \end{aligned}$
    \end{enumerate}
Suppose further $I- K_{t, x, a}$ is invertible for all $(t, x, a) \in V$, with $V \subseteq U$ open. Then 
\begin{equation}
    F_{t, x, a} = \det(I-K_{t, x, a})_{L^2(X, \mu)}, \qquad (t, x, a) \in V
\end{equation} satisfies
    \begin{align}\label{Appendix KP}
        \Bigl[D_tD_a + \frac{1}{4}D_x^2 + \frac{1}{12}D_a^4\Bigr]F_{t, x, a}\cdot F_{t, x, a} = 0. 
    \end{align}
\end{theorem}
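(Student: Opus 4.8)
The plan is to follow the strategy of Thm.~\ref{RBM Kernel Thm}: push every derivative onto the rank-one generators $\psi,\phi$, express all derivatives of $F$ through resolvent inner products, and then verify an algebraic cancellation. Write $\psi^{(i)} = \partial_a^i\psi_{t,x,a}$ and $\phi^{(j)} = \partial_a^j\phi_{t,x,a}$. First I would integrate condition (i) against the decay hypothesis to get $K_{t,x,a} = \int_a^\infty \psi_{t,x,r}\otimes\phi_{t,x,r}\,dr$. Differentiating under the integral and substituting the $x$- and $t$-flows converts the integrands into exact $\partial_r$-derivatives, which telescope (using the boundary vanishing at $+\infty$) to the closed expressions
\begin{align*}
    \partial_a K &= -\psi\otimes\phi, \qquad \partial_x K = -\psi'\otimes\phi + \psi\otimes\phi', \\
    \partial_t K &= \tfrac13\left(\psi''\otimes\phi - \psi'\otimes\phi' + \psi\otimes\phi''\right),
\end{align*}
all evaluated at $r=a$; the relevant telescoping identities are $\psi''\otimes\phi - \psi\otimes\phi'' = \partial_r(\psi'\otimes\phi - \psi\otimes\phi')$ and $\psi'''\otimes\phi + \psi\otimes\phi''' = \partial_r(\psi''\otimes\phi - \psi'\otimes\phi' + \psi\otimes\phi'')$.

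Next I would reduce the bilinear claim to potential-KP form. Setting $g = \log F$ and using the standard Hirota identities $\tfrac{D_aD_t F\cdot F}{2F^2} = \partial_a\partial_t g$, $\tfrac{D_x^2 F\cdot F}{2F^2} = \partial_x^2 g$, and $\tfrac{D_a^4 F\cdot F}{2F^2} = \partial_a^4 g + 6(\partial_a^2 g)^2$ (each obtained by substituting $F = e^g$), the target \eqref{Appendix KP} is equivalent to
\begin{align*}
    \partial_t\partial_a g + \tfrac14\partial_x^2 g + \tfrac1{12}\partial_a^4 g + \tfrac12(\partial_a^2 g)^2 = 0.
\end{align*}

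With $R = (I-K)^{-1}$ and the shorthand $c_{ij} \defeq \langle R\psi^{(i)},\phi^{(j)}\rangle$, Lem.~\ref{Fredholm det der} gives $\partial_a g = c_{00}$, while Lem.~\ref{lem:resolvent-derivative} together with the rank expressions above yields the master recursions
\begin{align*}
    \partial_a c_{ij} &= c_{i+1,j} + c_{i,j+1} - c_{i0}\,c_{0j}, \\
    \partial_x c_{ij} &= c_{i+2,j} - c_{i,j+2} - c_{i0}\,c_{1j} + c_{i1}\,c_{0j},
\end{align*}
where the quadratic terms arise from $\partial_z R = R(\partial_z K)R$ and the symmetry $\langle R^{\top}\phi^{(j)},\psi^{(i)}\rangle = c_{ij}$ of the bilinear pairing. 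Iterating these, I would write $\partial_a^2 g, \partial_a^3 g, \partial_a^4 g$, $\partial_x g, \partial_x^2 g$, and $\partial_t g, \partial_a\partial_t g$ as explicit polynomials in the $c_{ij}$ (for instance $\partial_a^2 g = c_{10} + c_{01} - c_{00}^2$ and $\partial_a\partial_t g = -\tfrac13(c_{30} + c_{03} - c_{00}c_{20} - c_{00}c_{02} + c_{10}c_{01})$), substitute into the displayed equation, and check that every monomial cancels.

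The main obstacle is organizational rather than conceptual: the resolvent-derivative cross terms produce a proliferation of quadratic, cubic, and quartic products of the $c_{ij}$, and the vanishing is a genuine conspiracy among the linear term $\tfrac1{12}\partial_a^4 g$, the cross terms buried in $\partial_x^2 g$ and $\partial_a\partial_t g$, and the nonlinearity $\tfrac12(\partial_a^2 g)^2$. Keeping the bookkeeping consistent --- especially using the pairing symmetry so that differentiation of $c_{ij}$ stays within the $c_{\bullet\bullet}$ family --- is where all the care lies. Once the monomials are collected the cancellation is exact: the third-order terms $c_{30}, c_{03}$ cancel through the coefficient sum $-\tfrac13 + \tfrac14 + \tfrac1{12} = 0$, the mixed $c_{21}, c_{12}$ cancel between $\tfrac14\partial_x^2 g$ and $\tfrac1{12}\partial_a^4 g$, and the remaining products ($c_{00}c_{20}$, $c_{00}c_{11}$, $c_{10}c_{01}$, $c_{10}^2$, $c_{00}^2 c_{10}$, $c_{00}^4$, and so on) each cancel against a matching contribution from the nonlinearity, confirming the identity.
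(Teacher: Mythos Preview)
Your proposal is correct and follows essentially the same route as the paper: both derive the rank structure of $\partial_x K$ and $\partial_t K$ via the identical telescoping argument, express every derivative through the resolvent pairings $\langle R\psi^{(i)},\phi^{(j)}\rangle$, and then verify an algebraic cancellation. The only difference is packaging---you pass to $g=\log F$ and organize the computation through the master recursions for $c_{ij}$, whereas the paper differentiates $F$ itself and substitutes directly into the explicit bilinear form $F\partial_{ta}F-\partial_tF\partial_aF+\tfrac14(F\partial_x^2F-(\partial_xF)^2)+\tfrac1{12}(F\partial_a^4F-4\partial_aF\partial_a^3F+3(\partial_a^2F)^2)$; both collapse to the same polynomial identity in the $c_{ij}$.
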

\begin{proof}
    First, note that due to \eqref{KP a flows}--\eqref{KP t flows} and since $K_{t, x,a} \in C_r^{1,2, 4}(U, \mathcal{I}_1)$, we have 
    \begin{align}
        K_{t, x, a} &= \int_a^{\infty}\psi_{t, x, r}\otimes \phi_{t, x, r}dr,  \\
        \partial_x K_{t, x, a} &= -\partial_a \psi_{t, x, a} \otimes \phi_{t, x, a} + \psi_{t, x, a} \otimes \partial_a \phi_{t, x ,a} , \label{KP part x}\\
        \partial_t K_{t, x, a} &= \frac{1}{3}(\partial_a^2 \psi_{t, x, a}\otimes \phi_{t, x, a} - \partial_a \psi_{t, x, a}\otimes \partial_a \phi_{t, x, a} + \psi_{t, x, a}\otimes \partial_a^2 \phi_{t, x, a}). \label{KP part t}
    \end{align}
    To see \eqref{KP part x}, we compute 
    \begin{align*}
        \partial_x K_{t, x, a}  &= \int_a^{\infty} \partial_r^2 \psi_{t,x, r} \otimes \phi_{t, x, r} - \psi_{t, x, r} \otimes \partial_r^2 \phi_{t, x, r} dr = \int_a^{\infty} \partial_r (\partial_r \psi_{t,x, r} \otimes \phi_{t, x, r} - \psi_{t, x, r} \otimes \partial_r \phi_{t, x, r} )dr \\
        &= -\partial_a \psi_{t, x, a} \otimes \phi_{t, x, a} + \psi_{t, x, a} \otimes \partial_a \phi_{t, x ,a} .
    \end{align*}
    To see \eqref{KP part t}, we compute 
\begin{align*}
    \partial_t K_{t, x, a} &= -\frac{1}{3}\int_{a}^{\infty} \partial_r^3 \psi_{t, x, r} \otimes \phi_{t, x, r} + \psi_{t,x, r} \otimes \partial_r^3 \phi_{t, x, r} dr \\
    &= -\frac{1}{3}\int_a^{\infty} \partial_r(\partial_r^2 \psi \otimes \phi - \partial_r \psi \otimes \partial_r \phi + \psi \otimes \partial_r^2 \phi) dr =  \frac{1}{3}(\partial_a^2 \psi \otimes \phi - \partial_a \psi \otimes \partial_a \phi + \psi \otimes \partial_a^2 \phi).
\end{align*}
Using Lem.\ \ref{Fredholm det der}, we have the following first order derivatives
\begin{gather*}
    \partial_a F = F \langle R \psi, \phi\rangle, \quad 
    \partial_x F = F (\langle R \partial_a \psi, \phi\rangle - \langle R \psi, \partial_a \phi \rangle ),\\
    \partial_t F = -\frac{1}{3}F(\langle R \partial_a^2 \psi , \phi\rangle - \langle R \partial_a \psi, \partial_a \phi \rangle + \langle R\psi, \partial_a^2 \phi\rangle ). 
\end{gather*}
Similarly, using Lem.\ \ref{Fredholm det der}--\ref{lem:resolvent-derivative} we obtain
\begin{align*}
    \partial_a \partial_t F &= F^{-1}\partial_a F \partial_t F -\frac{1}{3}F(-\langle R\psi, \phi\rangle \langle R \partial_a^2 \psi, \phi\rangle + \langle R \partial_a^3 \psi, \phi\rangle )\\
    &\qquad -\frac{1}{3}F( \langle R\psi, \partial_a \phi\rangle \langle R \partial_a \psi, \phi\rangle - \langle R \psi, \partial_a^2 \phi\rangle \langle R \psi, \phi\rangle  + \langle R \psi, \partial_a^3 \phi\rangle), \\ 
    \partial_x^2 F
    &= F^{-1}(\partial_x F)^2 - F (\langle R\partial_a \psi, \phi\rangle^2  -2\langle R \psi, \phi\rangle \langle R\partial_a \psi, \partial_a \phi\rangle + \langle R \psi, \partial_a \phi\rangle^2)\\
    &\qquad - F(-\langle R \partial_a^3\psi, \phi\rangle + \langle R \partial_a \psi, \partial_a^2 \phi\rangle  + \langle R \partial_a^2 \psi, \partial_a \phi\rangle - \langle R \psi, \partial_a^3 \phi\rangle ),  
\end{align*}
and (note the quadratic terms cancel)
\begin{align*}
    \partial_a^2 F &= F ( \langle R \partial_a \psi, \phi\rangle + \langle R \psi, \partial_a \phi \rangle ), \quad
    \partial_a^3 F = F (\langle R \partial_a^2 \psi, \phi\rangle + 2\langle R \partial_a \psi, \partial_a \phi\rangle + \langle R \psi, \partial_a^2 \phi\rangle ),
\end{align*}
so that at fourth order we have
\begin{align*}
    \partial_a^4 F &= 2F ( \langle R \psi, \phi\rangle \langle R \partial_a \psi, \partial_a \phi\rangle - \langle R \psi, \partial_a \phi\rangle \langle R \partial_a \psi, \phi\rangle ) \\
    &\qquad + F (\langle R \partial_a^3 \psi, \phi \rangle + 3\langle R \partial_a^2 \psi, \partial_a \phi \rangle + 3 \langle R \partial_a \psi, \partial_a^2 \phi \rangle + \langle R \psi, \partial_a^3 \phi \rangle).
\end{align*}
Therefore, writing out \eqref{Appendix KP} explicitly and collecting terms, we obtain
\begin{align*}
    &\frac{1}{F^2}[D_tD_a + \frac{1}{4}D_x^2 + \frac{1}{12}D_a^4] F\cdot F \\
    &=  \frac{1}{F^2}( (F \partial_{t, a}F - \partial_{t}F \partial_a F)+ \frac{1}{4}(F \partial_x^2 F - (\partial_x F)^2) + \frac{1}{12}(F\partial_a^4 F - 4 \partial_a F \partial_a^3 F + 3 (\partial_a^2 F)^2)) \\
    &= 0,
\end{align*}
as required.
\end{proof}
Next, we confirm the one-point distributions of the KPZ fixed point satisfy the conditions of the above theorem. For brevity, we will only work with one-sided data for the fixed point, i.e.\ we will consider $\hh_0 \in \text{UC}$ with $\hh_0(x) = -\infty$ for $x > L$ for some arbitrary $L$. By shift invariance, we might as well take $L = 0$. Now in the notation of \cite{MQR17}, define 
\begin{align*}
    \mathcal{S}_{t, x}(u, v) &= t^{-1/3}e^{\frac{2x^3}{3t^2}-\frac{(u-v)x}{t}} \mathrm{Ai}(-t^{-1/3}(u-v) + t^{-4/3}x^2). \\
    \mathcal{S}_{t, x}^{\text{hypo}(\hh_0^-)}(u, v) &= \E_{B(0) = u}[\mathcal{S}_{t, x-\tau}(B(\tau), v)1_{\tau < \infty}],
\end{align*}
where $\mathrm{Ai}(\cdot)$ is the Airy function, $B(\cdot)$ is a Brownian motion with diffusion coefficient $2$, and $\tau$ is the hitting time of the hypograph of $\hh_0^-(\cdot) = \hh_0(-\cdot)$. 
From \cite[Prop.\ 3.6]{MQR17}, we have 
\begin{align}\label{fpppp kernel}
    \PP_{\hh_0}(\hh(t, x) \leq a) = \det(I - \chi_a (\mathcal{S}_{t, -x}^{\text{hypo}(\hh_0^-)})^* \mathcal{S}_{t, x} \chi_a)_{L^2(\R)},
\end{align}
where $\chi_a(z) =1_{\{z \geq a\}}$. Let $A(u, v) = \chi_{a}(u)\mathcal{S}_{t, -x}^{\text{hypo}(\hh_0^-)}(v, u)$, $B(u, v) = \mathcal{S}_{t, x}(u, v) \chi_a(v)$. Take 
\begin{align}
    \psi_{t, x, a}(u) = \mathcal{S}_{t, x}(u, a), \quad \phi_{t, x, a}^{\text{hypo}(\hh_0^-)}(v) = \mathcal{S}_{t, -x}^{\text{hypo}(\hh_0^-)}(v, a).
\end{align}
Then, using Lem.\ \ref{AB BA}, we can rewrite the kernel appearing in \eqref{fpppp kernel} as 
\begin{align*}
    K_{t, x, a}= \int_a^{\infty} \psi_{t, x, r} \otimes \phi_{t, x, r}^{\text{hypo}(\hh_0^-)}dr,
\end{align*}
without changing the value of the Fredholm determinant. The flow conditions now follow from a straightforward computation, using $\mathrm{Ai}''(z) = z\mathrm{Ai}(z)$, and the required regularity follows from \cite[Appendix A]{MQR17}.
\bibliographystyle{amsalpha}   
\bibliography{Hrefs}

\providecommand{\bysame}{\leavevmode\hbox to3em{\hrulefill}\thinspace}
\providecommand{\MR}{\relax\ifhmode\unskip\space\fi MR }
\providecommand{\MRhref}[2]{%
  \href{http://www.ams.org/mathscinet-getitem?mr=#1}{#2}
}
\providecommand{\href}[2]{#2}
\begin{thebibliography}{KLWZ97}

\bibitem[ACH24]{aggarwal2024scalinglimitcoloredasep}
Amol Aggarwal, Ivan Corwin, and Milind Hegde, \emph{Scaling limit of the colored {ASEP} and stochastic six-vertex models}, 2024.

\bibitem[BP16]{borodin2016lecturesintegrableprobabilitystochastic}
Alexei Borodin and Leonid Petrov, \emph{Lectures on integrable probability: Stochastic vertex models and symmetric functions}, 2016.

\bibitem[Cor11]{corwin2011kardarparisizhangequationuniversalityclass}
Ivan Corwin, \emph{The {Kardar-Parisi-Zhang} equation and universality class}, 2011.

\bibitem[DOV22]{Dauvergne_2022}
Duncan Dauvergne, Janosch Ortmann, and Bálint Virág, \emph{The directed landscape}, Acta Mathematica \textbf{229} (2022), no.~2, 201–285.

\bibitem[Hai12]{hairer2012solvingkpzequation}
Martin Hairer, \emph{Solving the {KPZ} equation}, 2012.

\bibitem[Hir81]{doi:10.1143/JPSJ.50.3785}
Ryogo Hirota, \emph{Discrete analogue of a generalized {Toda} equation}, Journal of the Physical Society of Japan \textbf{50} (1981), no.~11, 3785--3791.

\bibitem[JQ14]{QR14}
Daniel~Remenik Jeremy~Quastel, \emph{Airy processes and variational problems}, PASI Proceedings: Topics in percolative and disordered systems (2014).

\bibitem[KLWZ97]{Krichever_1997}
I.~Krichever, O.~Lipan, P.~Wiegmann, and A.~Zabrodin, \emph{Quantum integrable models and discrete classical {Hirota} equations}, Communications in Mathematical Physics \textbf{188} (1997), no.~2, 267–304.

\bibitem[KPZ86]{KPZ}
Mehran Kardar, Giorgio Parisi, and Yi-Cheng Zhang, \emph{Dynamic scaling of growing interfaces}, Phys. Rev. Lett. \textbf{56} (1986), 889--892.

\bibitem[MQR21]{MQR17}
Konstantin Matetski, Jeremy Quastel, and Daniel Remenik, \emph{The {KPZ} fixed point}, Acta Mathematica \textbf{227} (2021), no.~1, 115–203.

\bibitem[MQR24]{matetski2024polynucleargrowthtodalattice}
Konstantin Matetski, Jeremy Quastel, and Daniel Remenik, \emph{Polynuclear growth and the {Toda} lattice}, 2024.

\bibitem[MR22]{Matetski_2022}
Konstantin Matetski and Daniel Remenik, \emph{{TASEP} and generalizations: method for exact solution}, Probability Theory and Related Fields \textbf{185} (2022), no.~1–2, 615–698.

\bibitem[NQR20a]{MQR16}
Mihai Nica, Jeremy Quastel, and Daniel Remenik, \emph{One-sided reflected {Brownian} motions and the {KPZ} fixed point}, Forum of Mathematics, Sigma \textbf{8} (2020).

\bibitem[NQR20b]{Nica_2020}
\bysame, \emph{Solution of the {Kolmogorov} equation for {TASEP}}, The Annals of Probability \textbf{48} (2020), no.~5.

\bibitem[QR22]{Quastel_2022}
Jeremy Quastel and Daniel Remenik, \emph{{KP} governs random growth off a 1-dimensional substrate}, Forum of Mathematics, Pi \textbf{10} (2022).

\bibitem[Qua12]{Quas11}
Jeremy Quastel, \emph{Introduction to {KPZ}}, Current developments in mathematics, 2011, Int. Press, Somerville, MA, 2012, pp.~125--194. \MR{3098078}

\bibitem[Rah25]{rahman2025}
Mustazee Rahman, \emph{On the duality between particles and polymers}, 2025.

\bibitem[Sim79]{Sim05}
Barry Simon, \emph{Trace ideals and their applications}, Cambridge University Press, 1979.

\bibitem[SS87]{gauge}
S.~Saito and N.~Saitoh, \emph{Gauge and dual symmetries and linearization of {Hirota}’s bilinear equations}, Journal of Mathematical Physics \textbf{28} (1987), no.~5, 1052--1055.

\bibitem[Zab97a]{Zabrodin_1997}
A.~Zabrodin, \emph{Discrete {Hirota}’s equation in quantum integrable models}, International Journal of Modern Physics B \textbf{11} (1997), no.~26n27, 3125–3158.

\bibitem[Zab97b]{Zabrodin_19972}
A.~V. Zabrodin, \emph{Hirota’s difference equations}, Theoretical and Mathematical Physics \textbf{113} (1997), no.~2, 1347–1392.

\bibitem[Zab12]{zabrodin2012betheansatzhirotaequation}
A.~Zabrodin, \emph{Bethe ansatz and {Hirota} equation in integrable models}, 2012.

\end{thebibliography}
\end{document}